\setlist[itemize]{leftmargin=*}
\setlist[enumerate]{leftmargin=*}
\newtheorem{PARA}{}[subsection]
\newtheorem{theorem}[PARA]{Theorem}
\newtheorem{lemma}[PARA]{Lemma}
\newtheorem{definition}[PARA]{Definition}
\newtheorem{remark}[PARA]{Remark}
\newtheorem{example}[PARA]{Example}
\newtheorem{conjecture}[PARA]{Conjecture}
\numberwithin{equation}{subsection}
\newcommand\cA{\mathcal{A}}
\newcommand\cC{\mathcal{C}}
\newcommand\cD{\mathcal{D}}
\newcommand\cE{\mathcal{E}}
\newcommand\cF{\mathcal{F}}
\newcommand\cG{\mathcal{G}}
\newcommand\cH{\mathcal{H}}
\newcommand\cL{\mathcal{L}}
\newcommand\cP{\mathcal{P}}
\newcommand\cQ{\mathcal{Q}}
\newcommand\cS{\mathcal{S}}
\newcommand\cY{\mathcal{Y}}
\newcommand{\N}{\mathbb{N}}
\newcommand{\R}{\mathbb{R}}
\newcommand{\Z}{\mathbb{Z}}
\newcommand{\CP}{\mathbb{CP}}
\newcommand{\RP}{\mathbb{RP}}
\newcommand{\on}{\operatorname}
\newcommand{\Don}{\on{Don}}
\newcommand{\Cat}{\on{Cat}}
\newcommand{\Fun}{\on{Fun}}
\newcommand{\Hom}{\on{Hom}}
\newcommand{\Obj}{\on{Obj}}
\newcommand{\Mor}{\on{Mor}}
\newcommand{\SMor}{\on{SMor}}
\newcommand{\Bor}{{\rm Bor}}
\newcommand{\Man}{{\rm Man}}
\newcommand{\Symp}{{\rm Symp}}
\newcommand{\id}{{\rm id}}
\newcommand\pt{{\rm pt}}
\newcommand{\im}{{\rm im}}
\newcommand{\cg}{\mathfrak g}
\newcommand\eps{\epsilon}
\newcommand\io{\iota}
\newcommand\Om{\Omega}
\def\tint{{\textstyle\int}}
\newcommand{\uL}{\underline{L}}
\newcommand{\less}{{\smallsetminus}}
\newcommand{\ul}{\underline}
   \newcounter{qcounter}
\newcommand\squ{/\kern-.7ex/} % symplectic quotients
\newcommand\lsqu{\backslash \kern-.7ex \backslash} % Categorical quotients
\newcommand\quotient[2]{
        \mathchoice
            {% \displaystyle
                \text{\raise1ex\hbox{$#1$}\big/\lower1ex\hbox{$#2$}}%
            }
            {% \textstyle
                #1\,/\,#2
            }
            {% \scriptstyle
                #1\,/\,#2
            }
            {% \scriptscriptstyle
                #1\,/\,#2
            }
    }
\newcommand\quot[2]{
                \text{\raise1ex\hbox{$#1$}\big/\lower1ex\hbox{$\scriptstyle#2$}}
  }
\newcommand\quo[2]{
                \text{\raise1ex\hbox{$#1\!\!$}\big/\lower1ex\hbox{$\!\scriptstyle#2$}}
  }
\newcommand\qu[2]{
                \text{\raise.8ex\hbox{$\scriptstyle#1\!$}/\lower.8ex\hbox{$\!\scriptstyle#2$}}
  }
\newcommand\qq[2]{
                \text{\raise.8ex\hbox{$#1\!$}/\lower.8ex\hbox{$#2$}}
}
\title{Floer field philosophy}
\author{Katrin Wehrheim}
\date{}
\address{Department of Mathematics, UC Berkeley CA 94720; katrin@math.berkeley.edu}
\begin{document}

\maketitle

\begin{abstract}
Floer field theory is a construction principle for e.g.\ 3-manifold invariants via decomposition in a bordism category and a functor to the symplectic category, and is conjectured to have natural 4-dimensional extensions.
This survey provides an introduction to the categorical language for the construction and extension principles and provides the basic intuition for two gauge theoretic examples which conceptually frame Atiyah-Floer type conjectures in Donaldson theory as well as the relations of Heegaard Floer homology to Seiberg-Witten theory.
\end{abstract}

\tableofcontents

\section{Introduction} 

In the 1980s the areas of low dimensional topology and symplectic geometry both saw important progress arise from the study of moduli spaces of solutions of nonlinear elliptic PDEs.
In the study of smooth four-manifolds, Donaldson \cite{donaldson} introduced the use of 
ASD Yang-Mills instantons\footnote{
A smooth four manifold can be thought of as a curved $4$-dimensional space-time. ASD (anti-self-dual)
instantons in this space-time satisfy a reduction of Maxwell's equations for the electro-magnetic potential in vacuum, which has an infinite dimensional gauge symmetry.
}, which were soon followed by Seiberg-Witten equations \cite{morgan} --  another gauge theoretic\footnote{In mathematics, ``gauge theory''  refers to the study of connections on principal bundles, where ``gauge symmetries'' arise from the pullback action by bundle isomorphisms; see e.g. \cite[App.A]{W:book}.} PDE.
In the study of symplectic manifolds, Gromov \cite{G} introduced pseudoholomorphic curves\footnote{Symplectic manifolds can be thought of as the configuration spaces of classical mechanical systems, with the position-momentum pairing providing the symplectic structure as well as a class of almost complex structures $J$. 
Pseudoholomorphic curves can then be thought of as $2$-dimensional surfaces in a $2n$-dimensional symplectic ambient space, which can be locally described as the image of $2n$ real valued functions $\ul u$ of a complex variable $z=x+iy$ that satisfy a generalized Cauchy-Riemann equation $\partial_x \ul u = J(u) \, \partial_y \ul u$.} In both subjects Floer \cite{Floer:inst,Floer:Lag} then introduced a new approach to infinite dimensional Morse theory\footnote{
Morse theory captures the topological shape of a space by studying critical points of a function and flow lines of its gradient vector field. In finite dimensions it yields a complex whose homology is independent of choices (e.g.\ of function) and in fact equals the singular homology of the space.
 } 
based on the respective PDEs. 
This sparked the construction of various algebraic structures -- such as the Fukaya $A_\infty$-category of a symplectic manifold \cite{seidel}, a Chern-Simons field theory for 3-manifolds and 4-cobordisms 
\cite{Don:book}, and analogous Seiberg-Witten 3-manifold invariants \cite{KM} -- from these and related PDEs, which encode significant topological information on the underlying manifolds.
Chern-Simons field theory in particular comprises the Donaldson invariants of 4-manifolds, together with algebraic tools to calculate these by decomposing a closed 4-manifold into 4-manifolds whose common boundary is given by a 3-dimensional submanifold. 
This strategy of decomposition into simpler pieces inspired the new topic of ``topological (quantum) field theory'' \cite{atiyah:field,lurie,segal,witten:field}, in which the properties of such theories are described and studied.

In trying to extend the field-theoretic strategy to the decomposition of 3-manifolds along 2-dimensional submanifolds, Floer and Atiyah \cite{Atiyah} realized a connection to symplectic geometry: A degeneration of the ASD Yang-Mills equation on a 4-manifold with 2-dimensional fibers $\Sigma$ yields the Cauchy-Riemann equation on a (singular) symplectic manifold $M_\Sigma$ given by the flat connections on $\Sigma$ modulo gauge symmetries.
Along with this, 3-dimensional handlebodies $H$ with boundary $\partial H=\Sigma$ induce Lagrangian submanifolds $L_H\subset M_\Sigma$ given by the boundary restrictions of flat connections on $H$.
Now Lagrangians\footnote{Throughout this paper, the term ``Lagrangian'' refers to a half-dimensional isotropic submanifold of a symplectic manifold -- corresponding to fixing the integrals of motion, e.g.\ the momentums.}
are the most fundamental topological object studied in symplectic geometry. They are often studied by means of the Floer homology $HF(L_0,L_1)$ of pairs of Lagrangians, which arises from a complex that is generated by the intersection points $L_0\cap L_1$ and whose homology is invariant under Hamiltonian deformations of the Lagrangians.
For the pair $L_{H_0},L_{H_1} \subset M_\Sigma$ arising from the splitting $Y=H_0\cup_\Sigma H_1$ of a 3-manifold into two handlebodies $H_0,H_1$, these generators are naturally identified with the generators of the instanton Floer homology $HF_{\rm inst}(Y)$, given by flat connections on $Y$ modulo gauge symmetries. (Indeed, restricting the latter to $\Sigma\subset Y$ yields a flat connection on $\Sigma$ that extends to both $H_0$ and $H_1$ -- in other words, an intersection point of $L_{H_0}$ with $L_{H_1}$.)

These observations inspired the {\bf Atiyah-Floer conjecture}
$$
HF_{\rm inst}(H_0\cup_\Sigma H_1) \simeq HF(L_{H_0},L_{H_1}),
$$ 
which asserts an equivalence between the differentials on the Floer complexes -- arising from ASD instantons on $\R\times Y$ and pseudoholomorphic maps $\R\times[0,1]\to M_\Sigma$ with boundary values on $L_{H_0},L_{H_1}$, respectively.
While this conjecture is not well defined due to singularities in the symplectic manifolds $M_\Sigma$, 
and the proof of a well defined version by Dostoglou-Salamon \cite{DS} required hard adiabatic limit analysis,
the underlying ideas sparked inquiry into relationships between low dimensional topology and symplectic geometry.
At this point, the two fields are at least as tightly intertwined as algebraic and symplectic geometry (via mirror symmetry), most notably through the {\bf Heegaard-Floer invariants for 3- and 4-manifolds} (as well as knots and links), which were discovered by Ozsvath-Szabo \cite{OS1} by following the line of argument of Atiyah and Floer in the case of Seiberg-Witten theory.
In both cases the concept for the construction of an invariant of 3-manifolds $Y$ is the same:
\begin{enumerate}
\item
Split $Y=H_0\cup_\Sigma H_1$ along a surface $\Sigma$ into two handlebodies $H_i$ with $\partial H_i=\Sigma$. 
\item
Represent the dividing surface $\Sigma$ by a symplectic manifold $M_\Sigma$ and the two handlebodies by Lagrangians $L_{H_i}\subset M_\Sigma$ arising from dimensional reductions of a gauge theory which is known to yield topological invariants.
\item
Take the Lagrangian Floer homology $HF(L_{H_0},L_{H_1})$ of the pair of Lagrangians.
\item
Argue that different splittings yield isomorphic Floer homology groups -- due to an isomorphism to a gauge theoretic invariant of $Y$ or by direct symplectic isomorphisms 
$HF(L_{H_0},L_{H_1})\simeq HF(L_{\widetilde H_0},L_{\widetilde H_1})$
for different splittings ${Y=\widetilde H_0\cup_{\widetilde \Sigma} \widetilde H_1}$.
\end{enumerate}
{\bf Floer field theory} is an extension of this approach to more general decompositions of 3-manifolds, by phrasing Step 4 above as the existence of a functor between topological and symplectic categories that extends the association 
$$
\Sigma\mapsto M_\Sigma ,\qquad\qquad H \mapsto L_H, \qquad\qquad \partial H = \Sigma \;\Rightarrow\; L_H\subset M_\Sigma .
$$
It gives a conceptual explanation for Step 4 invariance proofs such as \cite{OS1} which bypass a comparison to the gauge theory by directly relating the Floer homologies of Lagrangians $L_{H_0},L_{H_1} \subset M_\Sigma$ and
$L_{\widetilde H_0},L_{\widetilde H_1}\subset M_{\widetilde \Sigma}$.
Since these can arise from surfaces $\Sigma\not\simeq\widetilde \Sigma$ of different genus, the comparison between pseudoholomorphic curves in symplectic manifolds $M_\Sigma\not\simeq M_{\widetilde \Sigma}$ of different dimension must crucially use the fact that the Lagrangian boundary conditions encode different splittings of the same 3-manifold.
Floer field theory encodes this as an isomorphism between algebraic compositions of the Lagrangians, which in turn yields isomorphic Floer homologies
(a strategy that we elaborate on in \S\ref{ss:field} and \S\ref{ss:symp2}),
\begin{align*}
H_0\cup_\Sigma H_1 \simeq \widetilde H_0\cup_{\widetilde \Sigma} \widetilde H_1
\quad\Longrightarrow\quad &
L_{H_0}\# L_{H_1} \sim L_{\widetilde H_0}\# L_{\widetilde H_1} \\
\quad\Longrightarrow\quad &
HF(L_{H_0},L_{H_1}) \simeq HF(L_{\widetilde H_0}, L_{\widetilde H_1}).
\end{align*}
Floer field theory, in particular its key isomorphism of Floer homologies \cite{ww:isom} hinted at above, was discovered by the author and Woodward \cite{ww:fielda,ww:fieldb} when attempting to formulate well defined versions of the Atiyah-Floer conjecture. While the isomorphism of Floer homologies in \cite{ww:isom} is usually formulated in terms of strip-shrinking in a new notion of quilted Floer homology \cite{ww:qhf}, it can be expressed purely in terms of Floer homologies of pairs of Lagrangians, which lie in different products of symplectic manifolds. 
In this language, strip shrinking then is a degeneration of the Cauchy-Riemann operator to a limit in which the curves in one factor of the product of symplectic manifolds become trivial. (For more details, see \S\ref{ss:symp2}.)
This relation between pseudoholomorphic curves in different symplectic manifolds then provides a purely symplectic analogue of the adiabatic limit in \cite{DS}, which relates ASD instantons to pseudoholomorphic curves.

\smallskip
{\bf The value of Floer field theory to 3-manifold topology is mostly of philosophical nature} -- giving a conceptual understanding for invariance proofs and a general construction principle for 3-manifold invariants (and similarly for knots and links), which has since been applied in a variety of contexts \cite{auroux_hf,lekili,MWcr,reza,ww:fielda,ww:fieldb}. One main purpose of this paper and the content of \S\ref{s:fft} is to explain this philosophy and cast the construction principle into rigorous mathematical terms.
For that purpose 
\S\ref{ss:cat} gives brief expositions of the notions of categories and functors, the category $\Cat$, and bordism categories $\Bor_{d+1}$. 
After introducing the symplectic category $\Symp$ in \S\ref{ss:symp}, the categorical structure in symplectic geometry that can be related to low dimensional topology, in \S\ref{ss:cerf} we cast the concept of Cerf decompositions (cutting manifolds into simple cobordisms) into abstract categorical terms that apply equally to bordism categories and our construction of the symplectic category.
We then exploit the existence of Cerf decompositions in $\Bor_{d+1}$ and $\Symp$ together with a Yoneda functor $\Symp\to\Cat$ (see Lemma~\ref{le:sympcat}) to formulate a general construction principle for Floer field theories. This notion of Floer field theory is defined in \S\ref{ss:field} as a functor $\Bor_{d+1}\to\Cat$ that factors through $\Symp$.
This construction is exemplified in \S\ref{ss:ex} by naive versions of two gauge theoretic examples related to Yang-Mills-Donaldson resp.\ Seiberg-Witten theory in dimensions 2+1. Finally, \S\ref{ss:af} explains how this yields conjectural symplectic versions of the gauge theoretic 3-manifold invariants, as predicted by Atiyah and Floer.

\medskip

The second purpose of this paper and content of \S\ref{s:ext} is to lay some foundations for an {\bf extension of Floer field theories to dimension 4}.
Our goal here is to provide a rigorous exposition of the algebraic language in which this extension principle can be formulated -- at a level of sophistication that is easily accessible to geometers while sufficient for applications.
Thus we review in detail the notions of 2-categories and bicategories in \S\ref{ss:2cat}, including the 2-category $\Cat$ of (categories, functors, natural transformations), explicitly construct a bordism bicategory $\Bor_{2+1+1}$ in \S\ref{ss:bord}, and summarize notions and Yoneda constructions of 2-functors between these higher categories in \S\ref{ss:funk}.
Moreover, \S\ref{ss:symp2} outlines the construction of symplectic 2-categories, based on abstract categorical notions of adjoints and quilt diagrams that we develop in \S\ref{ss:quilt}. The latter transfers notions of adjunction and spherical string diagrams from monoidal categories into settings without natural monoidal structure.
This provides sufficient language to at least advertise an extension principle
which we further discuss in \cite{W:ext}:

\smallskip
\noindent
{\it 
Any Floer field theory $\Bor_{2+1}\to\Symp\to\Cat$ which satisfies a quilted naturality axiom has a natural extension to a 2-functor $\Bor_{2+1+1}\to\Symp\to\Cat$.
}
\smallskip

This says in particular that any 3-manifold invariant which is constructed along the lines of the Atiyah-Floer conjecture naturally induces a 4-manifold invariant. 
While it does seem surprising, such a result could be motivated from the point of view of gauge theory, since the Atiyah-Floer conjecture and Heegaard-Floer theory were inspired by dimensional reductions of 3+1 field theories $\Bor_{3+1}\to\cC$.
It also can be viewed as a pedestrian version of the cobordism hypothesis \cite{lurie}\footnote{
Lurie's constructions involve the canonical extension of a functor $\Bor_{0+1+\ldots+\eps}\to\cC$ to $\Bor_{0+1+\ldots+1}\to\cC$. However, this requires an extension of the field theory to dimensions 1 and 0 (which we do not even have ideas for) as well as a monoidal structure on the target category $\cC$ (which is lacking at present because the gauge theoretic functors are well defined only on the connected bordism category).
On the other hand, we have other categorical structures at our disposal, which we formalize in \S\ref{ss:quilt} as the notion of a quilted 2-category (akin to a spherical 2-category as described in \cite{spherical}).
In that language, the diagram of a Morse 2-function as in \cite{GayKirby} expresses a 4-manifold as a quilt diagram in the bordism bicategory $\Bor_{2+1+1}$. Now the key idea for \cite{W:ext} is that a functor $\Bor_{2+1}\to\Symp$ translates the diagram of a 4-manifold into a quilt diagram in the symplectic 2-category in \S\ref{ss:symp2}, where it is reinterpreted in terms of pseudoholomorphic curves.}, saying that a functor $\Bor_{2+1+\eps}\to\Symp$ (where the $\epsilon$ stands for compatibility with diffeomorphisms of 3-manifolds) has a canonical extension $\Bor_{2+1+1}\to\Symp$.

Finally, the extensions of Floer field theory to dimension 4 are again expected to be isomorphic to the associated gauge theoretic 4-manifold invariants, in a way that is compatible with decomposition into 3- and 2-manifolds. We phrase these expectations in  \S\ref{ss:gauge2} as {\bf quilted Atiyah-Floer conjectures}, which identify field theories $\Bor_{2+1+1}\to\cC$.
The last section \S\ref{ss:gauge2} also demonstrates the construction principle for 2-categories via associating elliptic PDEs to quilt diagrams in several more gauge theoretic examples, which provide not only the proper context for stating all the generalized Atiyah-Floer conjectures, but also yield conceptually clear contexts for the various approaches to their proofs.

\medskip

While this 2+1 field theoretic circle of ideas has been and used in various publications, its rigorous abstract formulation in terms of a notion of ``category with Cerf decompositions'' is new to the best of the author's knowledge.
Similarly, the notions of bordism bicategories, the symplectic 2-category, generalized string diagrams, and field theoretic proofs of Floer homology isomorphisms have been known and (at least implicitly) used in similar contexts, but are here cast into a new concept of ``quilted bicategories'' which will be central to the extension principle -- both of which seem significantly beyond the known circle of ideas.
Finally, note that Floer field theory should not be confused with the symplectic field theory (SFT) introduced by \cite{egh}, in which another symplectic category -- given by contact-type manifolds and symplectic cobordisms -- is the domain, not the target of a functor.

We end this introduction by a more detailed explanation of the notion of an ``invariant'' as it applies to the study of topological or smooth compact manifolds, and a very brief introduction to the resulting classification of manifolds.

\subsection{A brief introduction to invariants of manifolds} \label{sec:invariant}
In order to classify manifolds of a fixed dimension $n$ up to diffeomorphism, one would ideally like to have a complete invariant $I: \Man_n \to \cC$. 
Here $\Man_n$ is the category of $n$-manifolds and diffeomorphisms between them (see Example~\ref{ex:man}), and  $\cC$ is a category such as $\cC=\Z$ with trivial morphisms or the category $\cC={\rm Gr}$ of groups and homomorphisms. Such $I$ is an invariant if it is a functor (see Definition~\ref{def:functor}), since this guarantees that diffeomorphic manifolds are mapped to isomorphic objects of $\cC$ (e.g.\ the same integer or isomorphic groups). 
In other words, functoriality guarantees that $I$ induces a well defined map $|I|:|\Man_n| \to |\cC|$ from diffeomorphism classes of manifolds to e.g.\ $\Z$ or isomorphism classes of groups.
Such an invariant lets us distinguish manifolds: If $I(X), I(Y)$ are not isomorphic (i.e.\ $|I|([X])\neq |I|([Y])$) then $X$ and $Y$ cannot be diffeomorphic.
Moreover, an invariant is called ``complete'' if an isomorphism $I(X)\simeq I(Y)$ implies the existence of a diffeomorphism $X\simeq Y$, i.e.\ $|I|([X])= |I|([Y]) \Rightarrow [X]=[Y]$.

Simple examples of invariants -- when restricting $\Man_n$ to compact oriented manifolds -- are the homology groups $H_k:\Man_n\to {\rm Groups}$ for fixed $k\in\N_0$ or their rank, i.e.\ the Betti numbers $\beta_k: \Man_n\to\Z$. These are in fact topological -- rather than smooth -- invariants since homeomorphic -- rather than just diffeomorphic -- manifolds have isomorphic homology groups.
The 0-th Betti number $\beta_0$ is complete for $n=0,1$ since it determines the number of connected components, and there is only one compact, connected manifold of dimension 0 (the point) or 1 (the circle). The first more nontrivial complete invariant -- now also restricting to connected manifolds -- is the first Betti number $\beta_1: \Man_2\to\Z$, since compact, connected, oriented 2-manifolds are determined by their genus $g=\frac 12 \beta_1$. 

The fundamental group $\pi_1:\Man_n\to {\rm Gr}$ is not strictly well defined since it requires the choice of a base point and thus is a functor on the category of manifolds with a marked point. However, for connected manifolds it still induces a well defined map $|\pi_1|: |\Man_n|\to|{\rm Gr}|$ from manifolds modulo diffeomorphism to groups modulo isomorphism, since change of base point induces an isomorphism of fundamental groups. Viewing this as an invariant, it is complete for $n=0,1,2$. In dimension $n=3$, completeness would mean that the isomorphism type of the fundamental group of a (compact, connected) 3-manifold determines the 3-manifold up to diffeomorphism. 
This is true in the case of the trivial fundamental group: By the Poincar\'e conjecture, any simply connected 3-manifold ``is the 3-sphere'', i.e.\ is diffeomorphic to $S^3$.
It is also true for a large class (irreducible, non-spherical) of 3-manifolds, but there are plenty of groups that can be represented by many non-diffeomorphic 3-manifolds, e.g.\ lens spaces and connected sums with them (see \cite{hatcher,AFW} for surveys).
Thus $|\pi_1|$ is a useful but incomplete invariant of closed, connected 3-manifolds. 
In dimension $n\ge 4$ however, the classification question should be posed for fixed $|\pi_1|$ since on the one hand any finitely presented group appears as the fundamental group of a closed, connected $n$-manifold, and on the other hand  the classification of finitely presented groups is a wide open problem itself.\footnote{
As a matter of curiosity: The group isomorphism problem -- determining whether different finite group presentations define isomorphic groups -- is undecidable, i.e.\ cannot be solved for all general presentations by an algorithm; see e.g.\
\cite{johnson}.}

Moreover, while in dimension $n\leq 3$, the classifications up to homeomorphism and up to diffeomorphism coincide (i.e.\ topological $n$-manifolds can be equipped with a unique smooth structure), these differ in dimensions $n\geq 4$. 
In dimension $n\ge 5$, both classifications can be undertaken with the help of surgery theory introduced by Milnor \cite{milnor}. In dimension 4, the classification of smooth 4-manifolds differs drastically from that of topological manifolds (see \cite{wild} for a survey). Here gauge theory -- starting with the work of Donaldson, and continuing with Seiberg-Witten theory -- is the main source of invariants which can differentiate between different smooth structures on the same topological manifold.
In particular, Donaldson's first results using ASD Yang-Mills instantons \cite{donaldson} showed that a large number of topological manifolds (those with non-diagonalizable definite intersection form $H_2(X;\Z)\times H_2(X;\Z)\to\Z$) in fact do not support any smooth structure.

\section{Floer field theory} \label{s:fft}

\subsection{Categories and functors} \label{ss:cat}

\begin{definition}
A {\bf category}\footnote{
Throughout, all categories are meant to be small, i.e.\ consist of sets of objects and morphisms. However, we will usually neglect to specify constructions in sufficient detail -- e.g.\ require manifolds to be submanifolds of some $\R^N$ -- in order to obtain sets.
}
 $\cC$ consists of
\begin{itemize}
\item
a set $\Obj_\cC$ of {\bf objects},
\item
for each pair $x_1,x_2\in \Obj_\cC$ a set of {\bf morphisms} $\Mor_\cC(x_1,x_2)$,
\item
for each triple $x_1,x_2,x_3\in \Obj_\cC$ a {\bf composition map} 
$$
\Mor_\cC(x_1,x_2)\times \Mor_\cC(x_2,x_3)\to \Mor_\cC(x_1,x_3),\quad 
(f_{12},f_{23}) \mapsto f_{12}\circ f_{23} ,
$$
\end{itemize}
such that
\begin{itemize}
\item
composition is {\it associative}, i.e.\ we have $\bigl(f_{12}\circ f_{23}\bigr) \circ f_{34} = f_{12}\circ  \bigl( f_{23} \circ f_{34}\bigr)$ for any triple of composable morphisms $f_{12}, f_{23}, f_{34}$,
\item
composition has {\it identities}, i.e.\ for each $x\in\Obj_\cC$ there exists a unique\footnote{
Note that uniqueness follows immediately from the defining properties: If $\id'_x$ is another identity morphism then we have $\id'_x = \id'_x\circ \id_x = \id_x$.
} 
morphism $\id_x\in\Mor_\cC(x,x)$ such that $\id_x\circ f = f$ and $g\circ \id_x= g$ hold for any $f\in\Mor_\cC(x,y)$ and $g\in\Mor_\cC(y,x)$.
\end{itemize}
\end{definition}

The very first example of a category consists of objects that are sets (possibly with extra structure such as a linear structure, metric, or smooth manifold structure), morphisms that are maps (preserving the extra structure), composition given by composition of maps, and identities given by the identity maps.
The following bordism categories contain more general morphisms, which are more rigorously constructed in Remark~\ref{rmk:1epsbor}.

\begin{figure}[!h]
\centering
\includegraphics[width=5.5in]{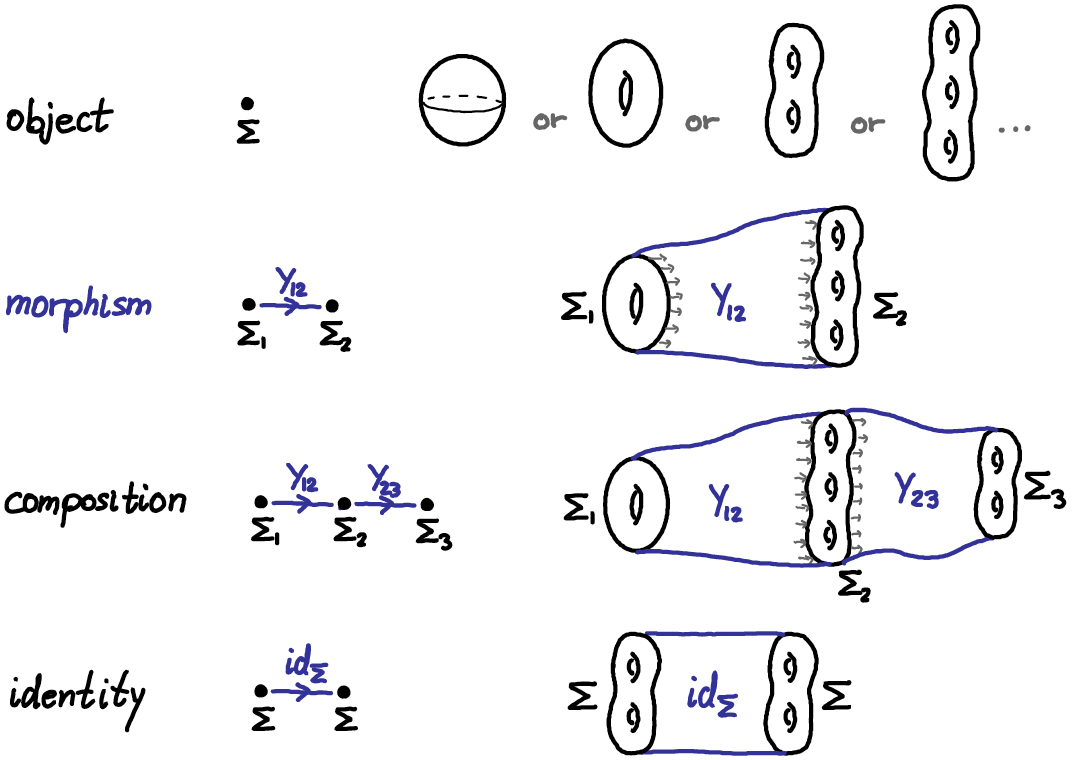}
\caption{The bordism category $\Bor_{2+1}$.}
\label{fig:bor}
\end{figure}

\begin{example}\rm \label{ex:1bor}
The {\bf bordism category ${\rm \mathbf{Bor}}_{\mathbf{d+1}}$} in dimension $d\ge 0$ is roughly defined as follows; see Figure~\ref{fig:bor} for illustration.
\begin{itemize}
\item
Objects are the closed, oriented, $d$-dimensional manifolds $\Sigma$.
\item
Morphisms in $\Mor(\Sigma_1,\Sigma_2)$ are the compact, oriented, $(d+1)$-dimensional cobordisms $Y$ with identification of the boundary $\partial Y \simeq \Sigma_1^- \sqcup \Sigma_2$, modulo diffeomorphisms relative to the boundary.
\item
Composition of morphisms $[Y_{12}]\in\Mor(\Sigma_1,\Sigma_2)$ and $[Y_{23}]\in\Mor(\Sigma_2,\Sigma_3)$ is given by gluing $[Y_{12}]\circ[Y_{23}]:= [Y_{12}\cup_{\Sigma_2} Y_{23}]\in\Mor(\Sigma_1,\Sigma_3)$ along the common boundary.
\end{itemize}
Here one needs to be careful to include the choice of boundary identifications in the notion of morphism. Thus a diffeomorphism $\phi:\Sigma_0\to\Sigma_1$ can be cast as a morphism 
\begin{equation}\label{eq:Zphi}
Z_\phi := \bigl[ \bigl( [0,1]\times \Sigma_1 , \{0\}\times\phi , \{1\}\times\id_{\Sigma_1} \bigr) \bigr]
\in\Mor_{\Bor_{d+1}}(\Sigma_0,\Sigma_1) 
\end{equation}
given by the cobordism
$[0,1]\times \Sigma_1$ with boundary identifications $\{0\}\times\phi: \Sigma_0\to\{0\}\times\Sigma_1$ and $\{1\}\times\id_{\Sigma_1}: \Sigma_1\to\{1\}\times\Sigma_1$, as illustrated in Figure~\ref{fig:cyl}. In that sense, the identity morphisms $\id_\Sigma=Z_{\id_\Sigma}$ are given by the identity maps $\id_\Sigma:\Sigma\to\Sigma$.

Equipping the composed morphism $[Y_{12}]\circ[Y_{23}]$ with a smooth structure moreover requires a choice of tubular neighbourhoods of $\Sigma_2$ in the gluing operation. The good news is that gluing with respect to different choices yields diffeomorphic results, so that composition is well defined.
The interesting news is that this ambiguity in the composition precludes the extension to a 2-category; see Example~\ref{ex:2bor}.
\end{example}

The notion of categories becomes most useful in the notion of a functor relating two categories, since preservation of various structures (composition and identities) can be expressed efficiently as ``functoriality''.

\begin{definition} \label{def:functor}
A {\bf functor} $\cF:\cC\to\cD$ between two categories $\cC,\cD$ consists of
\begin{itemize}
\item
a map $\cF:\Obj_\cC\to \Obj_\cD$ between the sets of objects,
\item
for each pair $x_1,x_2\in \Obj_\cC$ a map $\cF_{x_1,x_2}:\Mor_\cC(x_1,x_2) \to \Mor_\cD(\cF(x_1),\cF(x_2))$,
\end{itemize}
%KPBF  that are
%\begin{itemize}
%\item
%compatible with identities, i.e.\ $\id_{\cF(x)} = \cF_{x,x}(\id_x)$,
%\item
%compatible with composition, i.e.\  $\cF_{x_1,x_3}(f_{12}\circ f_{23}) = \cF_{x_1,x_2}(f_{12}) \circ \cF_{x_2,x_3}(f_{23})$.
%\end{itemize}
%that are compatible with identities, i.e.\ $\id_{\cF(x)} = \cF_{x,x}(\id_x)$,
%and
%compatible with composition, i.e.\  $\cF_{x_1,x_3}(f_{12}\circ f_{23}) = \cF_{x_1,x_2}(f_{12}) \circ \cF_{x_2,x_3}(f_{23})$.
that are compatible with identities and composition, i.e.\  
$$
\id_{\cF(x)} = \cF_{x,x}(\id_x)
\qquad\qquad
\cF_{x_1,x_3}(f_{12}\circ f_{23}) = \cF_{x_1,x_2}(f_{12}) \circ \cF_{x_2,x_3}(f_{23}).
$$
\end{definition}

For example, the inclusion of diffeomorphisms into the bordism category in Example~\ref{ex:1bor} can be phrased as a functor as follows.

\begin{example}\rm  \label{ex:man}
Let $\Man_d$ be the category consisting of the same objects as $\Bor_{d+1}$, morphisms given by diffeomorphisms, and composition given by composition of maps.
Then there is a functor $\Man_d \to \Bor_{d+1}$ given by
\begin{itemize}
\item
the identity map between the sets of objects,
\item
for each pair $\Sigma_0,\Sigma_1$ of diffeomorphic $d$-manifolds
the map $\Mor_{\Man_d}(\Sigma_0,\Sigma_1) \to \Mor_{\Bor_{d+1}}(\Sigma_0,\Sigma_1)$ that associates to a diffeomorphism $\phi$ the cobordism $Z_\phi$ defined in \eqref{eq:Zphi}.
\end{itemize}
\end{example}

A more algebraic example of a category is given by categories and functors.

\begin{example}\rm \label{ex:1cat}
The {\bf category of categories} ${\rm Cat}$ consists of
\begin{itemize}
\item
objects given by categories $\cC$,
\item
morphisms in $\Mor_{\rm Cat}(\cC_1,\cC_2)$ given by functors $\cF_{12}:\cC_1\to\cC_2$,
\item
composition of morphisms given by composition of functors -- i.e.\ composition of the maps on both object and morphism level.
\end{itemize}
\end{example}

\subsection{The symplectic category} \label{ss:symp}

The vision of Alan Weinstein \cite{weinstein} was to construct a symplectic category along the following lines. (See \cite{cannas,ms} for introductions to symplectic topology.)

\begin{itemize}
\item
Objects are the symplectic manifolds $M:=(M,\omega)$.
\item
Morphisms are the Lagrangian submanifolds\footnote{
Other terms for a Lagrangian, viewed as a morphism $M_1 \to M_2$, are ``Lagrangian relation'' or ``Lagrangian correspondence'', but we will largely avoid such distinctions in this paper.
} 
$L\subset M_1^-\times M_2$, where we denote by $M_1^-:=(M_1,-\omega_1)$ the same manifold with reversed symplectic structure.
\item
Composition of morphisms $L_{12}\subset M_1^-\times M_2$ and $L_{23}\subset M_2^-\times M_3$ is defined by the {\bf geometric composition} (where $\Delta_M\subset M\times M^-$ denotes the diagonal)
$$
L_{12}\circ L_{23}:= {\rm pr}_{M_1^-\times M_3}\bigl(L_{12}\times L_{23} \, \cap  \, M_1^-\times \Delta_{M_2}\times M_3 \bigr) \subset M_1^-\times M_3.
$$
\end{itemize}
This notion includes symplectomorphisms $\phi:M_1\to M_2$, $\phi^*\omega_2=\omega_1$ as morphisms given by their graph ${\rm gr}(\phi)=\{(x,\phi(x))\,|\,x\in M_1\}\subset M_1^-\times M_2$. Also, geometric composition is defined exactly so as to generalize the composition of maps. That is, we have ${\rm gr}(\phi) \circ {\rm gr}(\psi) = {\rm gr}(\psi\circ\phi)$.
On the other hand, this more generalized notion allows one to view pretty much all constructions in symplectic topology as morphisms -- for example, symplectic reduction from $\CP^2$ to $\CP^1$ is described by a Lagrangian $3$-sphere $\Lambda \subset (\CP^2)^-\times\CP^1$; see \cite{gu:rev,weinstein,ww:qhf} for details and more examples.

Unfortunately, geometric composition generally -- even after allowing for perturbations (e.g.\ isotopy through Lagrangians) -- at best yields immersed or multiply covered Lagrangians.\footnote{
Even the question of finding a Lagrangian $L\subset\CP^2$ with embedded composition $L\circ\Lambda \subset \CP^1$ was open until the recent construction of a new Lagrangian embedding $\RP^2\hookrightarrow L\subset\CP^2$ in \cite{c-lag}.}
However, Floer homology\footnote{Floer homology is a central tool in symplectic topology introduced by Floer \cite{Floer:Lag} in the 1980s, inspired by Gromov \cite{G} and Witten \cite{witten:morse}. It has been extended to a wealth of algebraic structures such as Fukaya categories; see e.g.\ \cite{seidel}.
It can be thought of as the Morse homology of a symplectic action functional on the space of paths connecting two Lagrangians, and recasts the ill posed gradient flow ODE as a Cauchy-Riemann PDE (whose solutions are pseudoholomorphic curves).
} 
is at most expected to be invariant under {\bf embedded geometric composition}, i.e.\ when the intersection in
\begin{equation}\label{eq:embedded}
{\rm pr}_{M_1^-\times M_3}: \; L_{12}\times_{M_2} L_{23}:= L_{12}\times L_{23} \, \cap  \, M_1^-\times \Delta_{M_2}\times M_3 \; \longrightarrow\;  M_1^-\times M_3
\end{equation}
is transverse, and the projection is an embedding.
In the linear case -- for symplectic vector spaces and linear Lagrangian subspaces -- 
this issue was resolved in \cite{gu:rev} by observing that linear composition, even if not transverse, always yields another Lagrangian subspace. In higher generality, and compatible with Floer homology, a symplectic category $\Symp=\Symp^\#/\!\sim\,$ was constructed in \cite{ww:cat} by the following general algebraic completion construction for a partially defined composition.

\begin{definition}\label{def:extsymp}
The {\bf extended symplectic category} $\Symp^\#$ is defined as follows.
\begin{itemize}
\item
Objects are the symplectic manifolds $(M,\omega)$.
\item
Simple morphisms $L_{12}\in\SMor(M_1,M_2)$ are the Lagrangian submanifolds $L_{12}\subset M_1^-\times M_2$.
\item
General morphisms $\uL=(L_{01},\ldots,L_{(k-1)k})\in\Mor_{\Symp^\#}(M,N)$ are the composable chains of simple morphisms $L_{ij}\in \SMor(M_i, M_j)$ between symplectic manifolds $M=M_0, M_1, \ldots, M_k=N$.
\item
Composition of morphisms $\uL=(L_{01},\ldots,L_{(k-1)k})\in\Mor_{\Symp^\#}(M,N)$ and $\uL'=(L'_{01},\ldots,L'_{(k'-1)k'})\in\Mor_{\Symp^\#}(N,P)$ is given by algebraic concatenation $\uL \# \uL' := (L_{01},\ldots,L_{(k-1)k},L'_{01},\ldots,L'_{(k'-1)k'})$.
\end{itemize}
For this to form a strict category, we include trivial chains $(\;)\in\Mor(M,M)$ of length $k=0$ as identity morphisms.
\end{definition}

While this is a well defined category, its composition notion is not related to geometric composition yet. However, the following quotient construction ensures that composition is given by geometric composition when the result is embedded.

\begin{definition}\label{def:1symp}
The {\bf symplectic category} $\Symp$ is defined as follows.
\begin{itemize}
\item
Objects are the symplectic manifolds $(M,\omega)$.
\item
Morphisms are the equivalence classes in $\Mor_{\Symp}(M,N):=\Mor_{\Symp^\#}(M,N)/\!\sim$. 
\item
Composition $[\uL]\circ[\uL']:=[\uL \# \uL']$ is induced by the composition in $\Symp^\#$.
\end{itemize}
Here the composition-compatible equivalence relation $\sim$ on the morphism spaces of $\Symp^\#$ is obtained as follows.
\begin{itemize}
\item
The subset of {\bf geometric composition moves} ${\rm Comp}\subset \Mor_{\Symp^\#} \times \Mor_{\Symp^\#}$ consists of all pairs $\bigl( (L_{12},L_{23}), L_{12}\circ L_{23}\bigr)$ and $\bigl( L_{12}\circ L_{23}, (L_{12},L_{23}) \bigr)$ for which the geometric composition $L_{12}\circ L_{23}$ is embedded as in \eqref{eq:embedded}.
\item
The equivalence relation $\sim$ on $\Mor_{\Symp^\#}$ is defined by $\uL\sim\tilde{\uL}$ 
if there is a finite sequence of moves
$\uL \leadsto \uL' \leadsto \uL'' \ldots \leadsto \uL^{(N)}=\tilde{\uL}$
in which each move replaces one subchain of simple morphisms by another, 
\begin{align*}
 \uL^{(k)}=\bigl( \ldots , L_{ij}, L_{jl}, \ldots \bigr) & \;\leadsto\; \uL^{(k+1)}= \bigl( \ldots, L_{ij}\circ L_{jl}, \ldots\bigr) \\
\quad\mbox{resp.}\quad
\uL^{(k)}=\bigl( \ldots, L_{ij}\circ L_{jl}, \ldots\bigr) & \; \leadsto\; \uL^{(k+1)}= \bigl( \ldots , L_{ij}, L_{jl}, \ldots \bigr)
\end{align*}
according to a geometric composition move
$\bigl( (L_{ij}, L_{jl}) , L_{ij}\circ L_{jl} \bigr) \in {\rm Comp}$ resp.\ 
$\bigl( L_{ij}\circ L_{jl},  (L_{ij}, L_{jl}) \bigr) \in {\rm Comp}$.
\end{itemize}
\end{definition}

The result of this quotient construction is that the composition of morphisms is given by geometric composition  ${[L_{12}]\circ[L_{23}]= [L_{12}\circ L_{23}]}$ if the latter is embedded.
We will later recast this construction in terms of an extension of the symplectic category $\Symp^\#$  to a 2-category in which the equivalence relation $\sim$ is obtained from 2-isomorphisms; see Example~\ref{ex:ext0} and \S\ref{ss:symp2}.

\begin{remark} \rm 
The present equivalence relation does not identify a Lagrangian $L\subset M^-\times N$ with its image $\phi_H(L)\subset M^-\times N$ under a Hamiltonian symplectomorphism $\phi_H$. Indeed, any morphism $\ul L$ in $\Mor_{\Symp^\#}(M,N)$ induces a (Lagrangian where immersed) subset of $M^-\times N$ by complete geometric composition, and this subset is invariant under geometric composition moves.
However, such equivalences under Hamiltonian deformation can also be cast as 2-isomorphisms; see Example~\ref{ex:2symp}.
\end{remark}

\subsection{Categories with Cerf decompositions} \label{ss:cerf}

The basic idea of Cerf decompositions is to decompose a $(d+1)$-manifold $Y=Y_{01}\cup_{\Sigma_1} Y_{12} \ldots \cup_{\Sigma_{k-1}}Y_{(k-1)k}$ into simpler pieces $Y_{ij}=f^{-1}([b_i,b_j])$ by cutting at regular level sets $\Sigma_i=f^{-1}(b_i)$ of a Morse function $Y\to\R$ as illustrated in Figure~\ref{fig:Cerf} below.
By viewing $Y$ as a cobordism between empty sets, i.e.\ as a morphism in $\Mor_{\Bor_{d+1}}(\emptyset,\emptyset)$, this can be seen as a factorization $[Y]=[Y_{01}]\circ [Y_{12}]\circ \ldots \circ [Y_{(k-1)k}]$ in $\Bor_{d+1}$. Here the Morse function $f$ and regular levels $b_i$ can be chosen such that each piece $Y_{i(i+1)}$ contains either none or one critical point, and thus is either a cylindrical cobordism -- diffeomorphic to the product cobordism $Z_\phi=[0,1]\times \Sigma_j$ as in \eqref{eq:Zphi} -- or a handle attachment as in the following remark. These ``simple cobordisms'' are illustrated in figures~\ref{fig:cyl} and \ref{fig:handle}.

\begin{figure}[!h]
\centering
\includegraphics[width=5.5in]{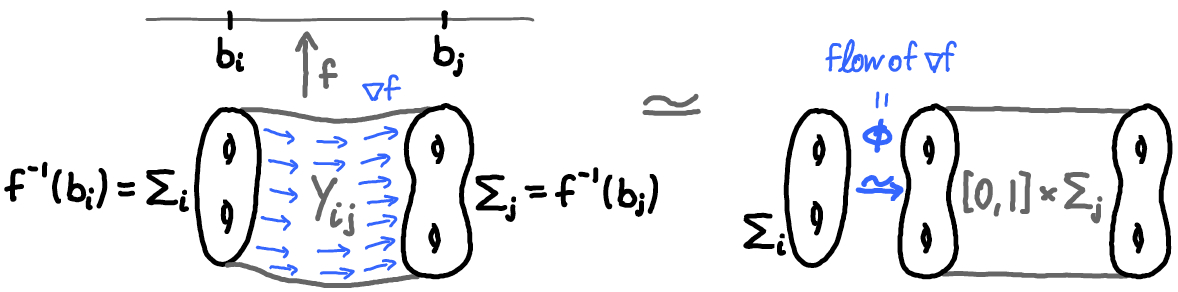}
\caption{A cylindrical cobordism supports a Morse function without critical points, whose gradient flow induces a diffeomorphism to the product cobordism $[0,1]\times \Sigma_j$ with natural identification $\Sigma_j\cong \{1\}\times\Sigma_j$ and boundary identification $\phi:\Sigma_i \overset{\sim}{\to} \{0\}\times\Sigma_j$ arising from the flow.}
\label{fig:cyl}
\end{figure}

\begin{remark}\label{rmk:handleattach} \rm 
A {\bf k-handle attachment} $Y_\alpha$ of index $0\leq k\leq d+1$ is a $(d+1)$-dimensional cobordism, which is obtained by attaching to a cylinder $[0,1]\times \Sigma$ a handle $B^{k} \times B^{d+1-k}$ along an attaching cycle $S^{k-1} \hookrightarrow \alpha\subset \{1\}\times\Sigma$, as illustrated in figure~\ref{fig:handle}. Here $B^k$ denotes a $k$-dimensional ball with boundary $\partial B^k=S^{k-1}$.

\begin{figure}[!h]
\centering
\includegraphics[width=5.5in]{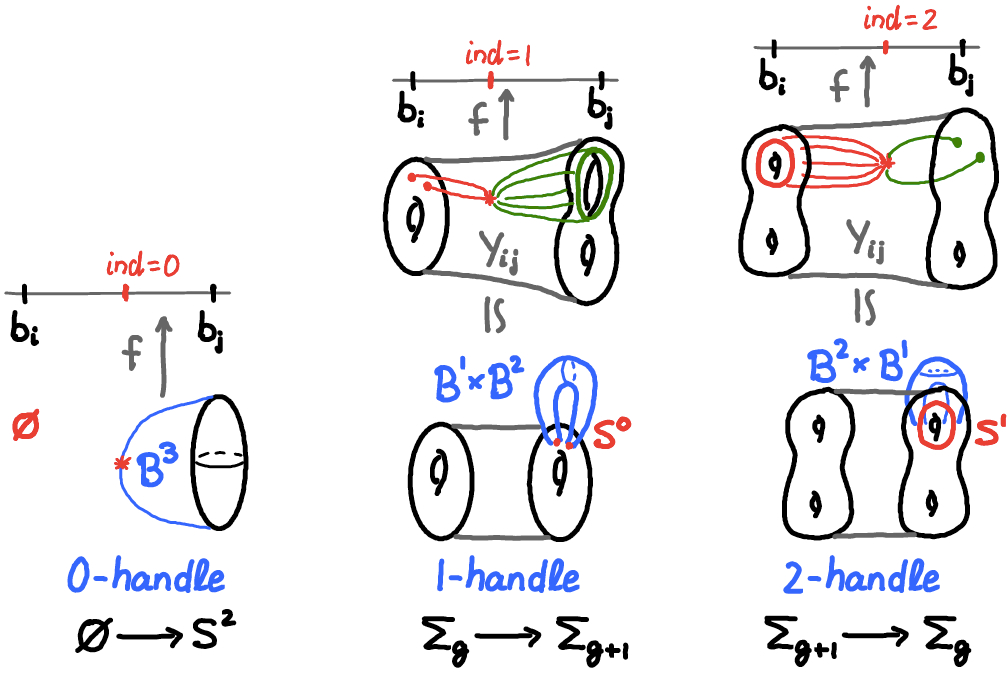}
\caption{Handle attachments in dimension $d=2$ are ``simple cobordisms'' which support a Morse function $f$ with a single critical point of index $0\leq k \leq 3$. The attaching cycles are given by intersection of the unstable manifold (in red) with the boundary.
Index $k=0$ and $k=3$ handle attachments are adjoint via orientation reversal and only appear between the empty set and sphere $S^2$. 
Index $k=1$ and $k=2$ handle attachments are adjoint via orientation reversal (which interchanges unstable and stable manifolds) and appear between surfaces $\Sigma_g, \Sigma_{g+1}$ of adjacent genus $g,g+1\in\N_0$. 
}
\label{fig:handle}
\end{figure}

By reversing the orientation and boundary identifications of any $k$-handle attachment $Y_\alpha$ from $\Sigma$ to $\Sigma'$, we obtain a cobordism  $Y_\alpha^-$ from $\Sigma'$ to $\Sigma$.
This reversed cobordism is also a $d+1-k$-handle attachment $Y_\alpha^-=Y_{\alpha^*}$ for an attaching cycle $S^{d-k} \hookrightarrow \alpha^* \subset \{1\}\times\Sigma'$. It moreover is the adjoint of $Y_\alpha$ in the sense of Remark~\ref{rmk:adj} and will become useful in the formulation of Cerf moves below.

Specifying to dimension $d=2$ and the connected bordism category, it will suffice to consider $2$-handle attachments (and their adjoints) with attaching circles that are homologically nontrivial and thus do not disconnect the surface.
More precisely, any attaching circle $S^1\simeq\alpha\subset \Sigma$ in a closed surface $\Sigma$ determines a $2$-handle attachment as follows:
Replacing an annulus neighbourhood of $\alpha$ by two disks specifies a lower genus surface $\Sigma'=\Sigma_\alpha$ together with a diffeomorphism $\pi_\alpha:\Sigma\less\alpha \to \Sigma'\less\{\text{2 points}\}$. Given this construction, the $2$-handle attaching cobordism $Y_\alpha$ from $\Sigma$ to $\Sigma'$ is unique up to diffeomorphism fixing the boundary.
\end{remark}

More detailed introductions to Cerf theory can be found in e.g.\ \cite{ce:st,gww,milnor:hcob}. 
Here we concentrate on the algebraic structure that it equips the bordism categories with.
To describe this structure, we may think of Cerf decompositions as a prime decomposition of $(d+1)$-manifolds, and more generally of $(d+1)$-cobordisms: A decomposition into simple cobordisms (cylindrical cobordisms and handle attachments) always exists and simple cobordisms have no further simplifying decomposition. And while these Cerf decompositions are not unique, any two choices of decomposition are related via just a few moves, some of which are shown in Figure~\ref{fig:Cerf}.
These moves reflect changes in the Morse function (critical point cancellations and critical point switches), cutting levels (cylinder cancellation), and the ways in which pieces are glued together (diffeomorphism equivalences which in particular encode handle slides). All of these Cerf moves are local in the sense\footnote{
While a diffeomorphism equivalence is not local, it decomposes into a sequence of local moves.
} 
that they replace only one or two consecutive cobordisms by one or two consecutive cobordisms with the same composition. That is, the moves are of one of three forms:
\begin{align*}
\ldots \cup_{\scriptscriptstyle\Sigma_i} Y_{ij}\cup_{\scriptscriptstyle\Sigma_j} Y_{jl} \cup_{\scriptscriptstyle\Sigma_l}\ldots  
&\;=\;
\ldots \cup_{\scriptscriptstyle\Sigma_1} \tilde Y_{ij}\cup_{\scriptscriptstyle\tilde \Sigma_j} \tilde Y_{jl} \cup_{\scriptscriptstyle\Sigma_l}\ldots  
\qquad\mbox{for}\; Y_{ij}\cup_{\scriptscriptstyle\Sigma_j} Y_{jl} =  \tilde Y_{ij}\cup_{\scriptscriptstyle\tilde \Sigma_j} \tilde Y_{jl} , 
\\
\ldots \cup_{\scriptscriptstyle\Sigma_1} Y_{ij}\cup_{\scriptscriptstyle\Sigma_j} Y_{jl} \cup_{\scriptscriptstyle\Sigma_l}\ldots  
&\;=\;
\ldots \cup_{\scriptscriptstyle\Sigma_1} \tilde Y_{il} \cup_{\scriptscriptstyle\Sigma_l}\ldots  
\qquad\qquad\quad\!\mbox{for}\; Y_{ij}\cup_{\scriptscriptstyle\Sigma_j} Y_{jl} =  \tilde Y_{il} , 
\\
\ldots \cup_{\scriptscriptstyle\Sigma_1} Y_{il} \cup_{\scriptscriptstyle\Sigma_l}\ldots  
&\;=\;
\ldots \cup_{\scriptscriptstyle\Sigma_1} \tilde Y_{ij}\cup_{\scriptscriptstyle\tilde \Sigma_j} \tilde Y_{jl} \cup_{\scriptscriptstyle\Sigma_l}\ldots  
\qquad\mbox{for}\; Y_{il} =  \tilde Y_{ij}\cup_{\scriptscriptstyle\tilde \Sigma_j} \tilde Y_{jl} .
\end{align*}
In the following, we will cast this notion -- decompositions into simple pieces that are unique up to a set of moves -- into more formal terms.
For that purpose we denote the union of all morphisms of a category $\cC$ by 
$\Mor_\cC := \bigcup_{x_1,x_2\in\Obj_\cC}\Mor_\cC(x_1,x_2)$, 
and we denote all relations between composable chains\footnote{
Throughout, we will use the term ``composable chain'' to denote ordered tuples of morphisms, in which each consecutive pair is composable, so that the entire tuple -- by associativity of composition -- has a well defined composition.
} 
of morphisms by 
$$
{\rm Rel}_\cC:= \bigcup_{k,\ell\in\N} \bigl\{ \bigl( (f_i), (g_j) \bigr) \in (\Mor_\cC)^k \times (\Mor_\cC)^\ell \,\big|\, 
f_1\circ\ldots\circ f_k = g_1 \circ \ldots\circ g_\ell \bigr\} .
$$

\begin{definition} \label{def:Cerf}
A {\bf category with Cerf decompositions} is a category $\cC$ together with
\begin{itemize}
\item
a subset $\SMor \subset \Mor_\cC$ of {\bf simple morphisms},
\item
a subset ${\rm Cerf}\subset {\rm Rel}_\cC$ of {\bf local Cerf moves}, which is symmetric (under exchanging the factors) and consists of pairs of composable chains of simple morphisms $f_{12}, \ldots, f_{(k-1)k}\in \SMor$, $g_{12}, \ldots, g_{(\ell-1)\ell} \in \SMor$ whose compositions are equal,
\end{itemize}
such that
\begin{itemize}
\item
the simple morphisms generate all morphisms, i.e.\ for any $m\in\Mor_\cC$ there exist $h_{12}, \ldots, h_{(n-1)n}\in\SMor$ such that $m=h_{12}\circ \ldots \circ h_{(n-1)n}$,
\item
the presentation in terms of simple morphisms is unique up to {\bf Cerf moves}, i.e.\ any two presentations of the same morphism in terms of $h_{12}, \ldots , h_{(n-1)n}\in\SMor$ and $\tilde h_{12}, \ldots ,\tilde h_{(\tilde n-1)\tilde n}\in\SMor$
are related by a finite sequence\footnote{
Throughout, we will use the term ``sequence'' to denote a finite totally ordered set.
} 
of identities
$$
h_{12}\circ \ldots \circ h_{(n-1)n}=h'_{12}\circ \ldots \circ h'_{(n'-1)n'} = \ldots = 
\tilde h_{12}\circ \ldots \circ \tilde h_{(\tilde n-1)\tilde n}
$$
in which each equality replaces one subchain of simple morphisms by another, 
$$
\ldots \circ f_{12}\circ \ldots\circ f_{(k-1)k} \circ \ldots = \ldots \circ g_{12}\circ \ldots\circ g_{(\ell-1)\ell}  \circ \ldots
$$
according to a local Cerf move $\bigl( (f_{12}, \ldots, f_{(k-1)k}), (g_{12}, \ldots, g_{(\ell-1)\ell}) \bigr)\in{\rm Cerf}$.
\end{itemize}
\end{definition}

The bordism categories $\Bor_{d+1}$ are the motivating example of categories with Cerf decompositions, with $\SMor$ and ${\rm Cerf}$ given by the simple cobordisms and Cerf moves as discussed above (for a more detailed exposition see \cite{gww}).
However, in the examples arising from gauge theory, we consider the $2+1$-dimensional connected bordism category, the $d=2$ case of the following general notion for $d\ge 2$.\footnote{
We restrict to dimension $d\ge 2$ when discussing connected bordisms since the handle attachments in dimension $d=1$ are morphisms between generally disconnected $1$-manifolds, so that $\Bor^{\rm conn}_{1+1}$ does not have useful connected Cerf decompositions.}

\begin{example}\rm \label{ex:connbor}
The {\bf connected bordism category} ${\rm \mathbf {Bor}}^{\rm \mathbf{conn}}_{\mathbf{d+1}}$ is defined as follows.
\begin{itemize}
\item
Objects are the closed, connected, oriented $d$-dimensional manifolds.
\item
Morphisms are the compact, connected, oriented $d+1$-dimensional cobordisms with identification of the boundary, and modulo diffeomorphisms as in $\Bor_{d+1}$.
\item
Composition is by gluing via boundary identifications as in $\Bor_{d+1}$.
\end{itemize}
If we allow $\Sigma=\emptyset$ as object, then closed, connected, oriented $d+1$-manifolds are contained in this category as morphisms from $\emptyset$ to $\emptyset$.
\end{example}

In this language, the Cerf decomposition theorem for $3$-manifolds -- in the connected case proven in \cite{GayKirby} and reviewed in \cite{gww} -- can be stated as in the following theorem, and is illustrated in Figure~\ref{fig:Cerf} and further explained in Remark~\ref{rmk:borCerf}.
Here, in strict categorical language, a $3$-cobordism from $\Sigma_-$ to $\Sigma_+$ is an equivalence class $[(Y,\iota^-,\iota^+)]$ of $3$-cobordisms and embeddings $\iota^\pm:\Sigma_\pm \to \partial Y$ modulo diffeomorphisms relative to the boundary identifications $\iota^\pm$. 
However, the decomposition and boundary identifications are actually induced by a decomposition of representatives, thus we drop the brackets and embeddings -- see \cite{gww} and \S\ref{ss:bord} for more deliberations on this.
Moreover, we may again generalize to dimension $d\ge 2$.

\begin{figure}[!h]
\centering
\includegraphics[width=5.5in]{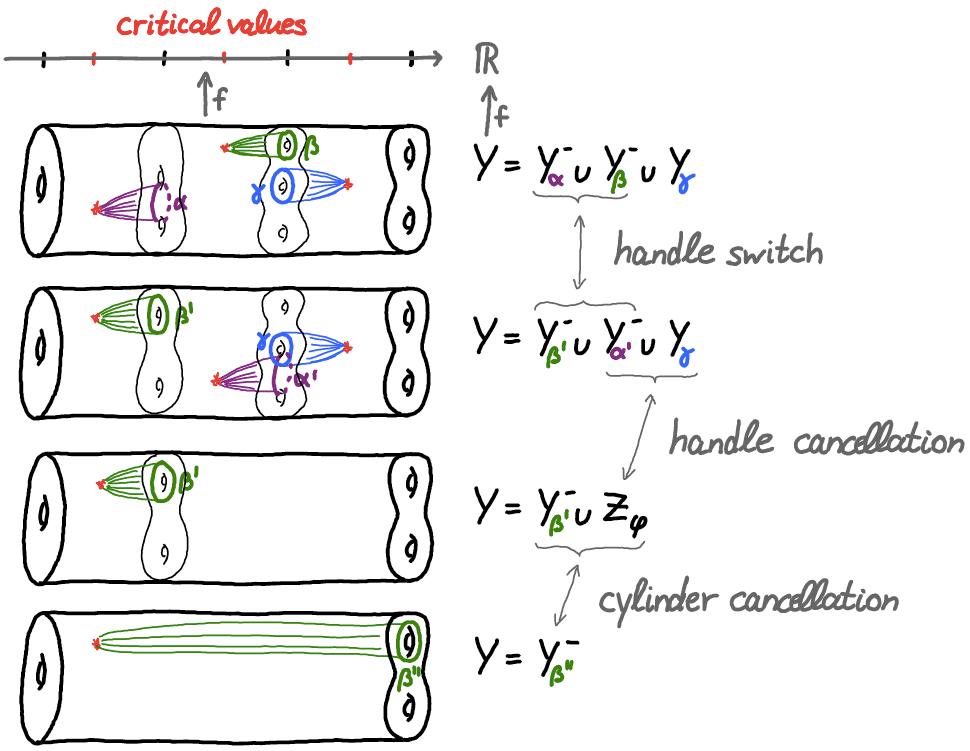}
\caption{Cerf decompositions of a 3-cobordism $Y$ and Cerf moves between them.}
\label{fig:Cerf}
\end{figure}

\begin{theorem}\label{thm:borCerf}
$\Bor^{\rm conn}_{d+1}$ is a category with Cerf decompositions as follows.
\begin{itemize}
\item
The set of simple morphisms $\SMor\subset\Mor_{\Bor^{\rm conn}_{d+1}}$ consists of 
\begin{itemize}
\item
cylindrical cobordisms $Z_\phi$ for diffeomorphisms $\phi:\Sigma\to\Sigma'$ as in \eqref{eq:Zphi},
\item
k-handle attachments $Y_\alpha\in\Mor(\Sigma,\Sigma')$ for $1\leq k \leq d$ as in Remark~\ref{rmk:handleattach}.

\end{itemize}
\item
The Cerf moves ${\rm Cerf}\subset {\rm Rel}_{\Bor^{\rm conn}_{d+1}}$ are the following and their transpositions:
\begin{itemize}
\item
Cylinder cancellations $\bigl( (Z_\phi , Z_\psi) , Z_{\psi\circ\phi} \bigr)$ for all composable pairs of diffeomorphisms $\phi,\psi$.
\item
Cylinder cancellations $\bigl( (Z_\phi , Y) , Y' \bigr)$ resp.\ $\bigl( ( Y, Z_\phi) , Y' \bigr)$ in which $Y'$ is the same cobordism as $Y$ (up to diffeomorphism), but with incoming resp.\ outgoing boundary inclusion pre- resp.\ post-composed with a diffeomorphism $\phi$.
\item
Critical point cancellations $\bigl( (Y_\alpha^-, Y_\beta) , Z_{\phi} \bigr)$ occur for attaching cycles $\alpha,\beta\subset \Sigma$ with transverse intersection in a single point;
these give rise to a pair of cobordisms 
$Y_\alpha^-\in\Mor(\Sigma',\Sigma)$, $Y_\beta\in\Mor(\Sigma,\Sigma'')$
whose composition is a cylindrical cobordism representing a diffeomorphism $\phi:\Sigma'\to\Sigma''$.
\item
Critical point switches $\bigl( (Y_\alpha, Y'_\beta) , (Y_\beta, Y'_\alpha) \bigr)$ 
and $\bigl( (Y_\alpha^-, Y_\beta) , (Y'_\beta, {Y'_\alpha}^-) \bigr)$
occur for disjoint attaching cycles $\alpha,\beta\subset \Sigma$; these give rise to a pair of cobordisms\footnote{
See Remark~\ref{rmk:field2} for more details on the notation used here.
} 
$Y_\alpha\in\Mor(\Sigma,\Sigma_\alpha')$, $Y'_\beta\in\Mor(\Sigma_\alpha',\Sigma'')$
whose composition is the same
as that of the pair
$Y_\beta\in\Mor(\Sigma,\Sigma_\beta')$, $Y'_\alpha\in\Mor(\Sigma_\beta',\Sigma'')$.
\end{itemize} 
\end{itemize}
\end{theorem}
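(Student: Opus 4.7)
The plan is to use Morse theory to produce decompositions and parameterized Morse (Cerf) theory to match any two. Given a representative $(Y,\iota^-,\iota^+)$ of a morphism in $\Mor_{\Bor^{\rm conn}_{d+1}}(\Sigma_-,\Sigma_+)$, I would first choose a Morse function $f:Y\to[0,1]$ with $f^{-1}(i)=\iota^\pm(\Sigma_\pm)$ for $i=0,1$, pairwise distinct critical values in $(0,1)$, and a gradient-like vector field. Selecting regular values $0=b_0<b_1<\ldots<b_n=1$ separating the critical values and letting $Y_{i(i+1)}=f^{-1}([b_i,b_{i+1}])$, each piece without a critical point is diffeomorphic via the downward gradient flow to a cylinder with boundary identifications exactly as in \eqref{eq:Zphi}, hence represents some $Z_\phi$; each piece containing one critical point of index $k$ is diffeomorphic to the handle attachment $Y_\alpha$ of Remark~\ref{rmk:handleattach}. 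Connectedness of $Y$ together with the assumption $d\ge 2$ allows, after small perturbation and handle trading, the function $f$ to be taken with no critical points of index $0$ or $d+1$, so that only the allowed simple morphisms appear.

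For uniqueness, I would show that the decompositions arising from two Morse data $(f_0,\{b^0_i\})$ and $(f_1,\{b^1_i\})$ can be connected by a finite sequence of the listed local Cerf moves. Standard Cerf theory produces a generic smooth path $(f_t)_{t\in[0,1]}$ of functions on $Y$ fixing the boundary data, which is Morse with distinct critical values except at finitely many times where exactly one of two codimension-one events occurs: a birth or death of a cancelling pair of critical points of adjacent indices, or the crossing of two critical values whose descending manifolds are disjoint at the crossing moment. I would translate each event into the listed Cerf moves: a birth-death event inserts or removes a pair $(Y_\alpha^-, Y_\beta)$ whose composition is a cylindrical cobordism with $\alpha$ and $\beta$ meeting transversally in a single point, which is exactly a critical point cancellation; a critical value crossing with disjoint attaching data produces a critical point switch of one of the two forms listed. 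In between these events, changes in the choice of separating regular values correspond to cylinder cancellations of the first two types, and changes of representative or of the embeddings $\iota^\pm$ are absorbed into pre- and post-composition with cylinders $Z_\phi$.

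The main obstacle will be keeping the entire parameterized Morse argument within the connected bordism category: the generic path $(f_t)$ must be further modified so that no critical point of index $0$ or $d+1$ is ever introduced, and each intermediate level set must remain connected. This requires handle trading, converting a spurious low-index handle against a neighbouring higher-index handle and dually, which in dimension $d=2$ is the content of the connected stable presentation theorem \cite{GayKirby} reviewed in \cite{gww}, and extends to general $d\ge 2$ by the analogous arguments in \cite{milnor:hcob}. A secondary technical point is the bookkeeping for boundary identifications: one must verify that the diffeomorphism between $Y$ and the glued chain $Y_{01}\cup\ldots\cup Y_{(n-1)n}$ produced by two Morse decompositions differs only by pre- and post-composition with diffeomorphisms of $\Sigma_\pm$, which are absorbed by the cylindrical generators, so that the equivalence really holds as morphisms in $\Bor^{\rm conn}_{d+1}$ rather than merely up to unrecorded reidentification.
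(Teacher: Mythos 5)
The paper does not actually supply a proof of Theorem~\ref{thm:borCerf}; the text preceding it defers to \cite{GayKirby} (for $d=2$) and the review \cite{gww}, so I will compare your proposal against the standard Cerf-theoretic argument that underlies those references, which your sketch does follow in its main lines: existence via a Morse function and separating regular values, uniqueness via a generic one-parameter family, connectedness via index-$0$/$(d+1)$ elimination and handle trading. This is the correct skeleton.

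There is, however, one genuine omission. In the uniqueness step you enumerate only two codimension-one phenomena along a generic path of Morse data -- birth/death of a cancelling pair, and critical value crossing with independent attaching data -- and assert that everything else ``in between'' amounts to changing the cutting levels or the ambient identification. But a generic path of (Morse function, gradient-like vector field) pairs has a third codimension-one event: a flow line appearing between two critical points of the \emph{same} index lying at \emph{different} critical values. Across such an event the attaching sphere of the upper handle, read in a fixed intermediate cutting surface, changes by a handle slide over the lower handle, and this is precisely the phenomenon the paper's discussion before Definition~\ref{def:Cerf} singles out (``diffeomorphism equivalences which in particular encode handle slides''). Your closing remark that ``changes of representative or of the embeddings $\iota^\pm$ are absorbed into pre- and post-composition with cylinders $Z_\phi$'' gestures at this, but absorption into a cylinder cancellation is exactly what must be \emph{proved}: one needs to exhibit a diffeomorphism of the intermediate level surface that implements the slide and extends over the lower handle attachment fixing its incoming boundary (sliding over the cocore disk of that handle), so that the two decompositions differ only by a $\bigl((Z_\phi,Y),Y'\bigr)$-type move. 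Without this step the translation of the generic path into the listed local Cerf moves is incomplete. The rest of the proposal -- including your correct identification of the connectedness issue and its resolution by handle trading, and the bookkeeping of boundary embeddings -- is sound and in agreement with the cited sources.
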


\begin{remark}\label{rmk:borCerf} \rm 
For $d=2$ the objects of $\Mor_{\Bor^{\rm conn}_{2+1}}$ -- closed, connected, oriented surfaces -- can be classified up to diffeomorphism by their genus.
Moreover, the simple morphisms $\SMor\subset\Mor_{\Bor^{\rm conn}_{2+1}}$ can be further specified:
\begin{itemize}
\item
Cylindrical cobordisms $Z_\phi$ represent diffeomorphisms $\phi:\Sigma\to\Sigma'$ between surfaces of the same genus as in \eqref{eq:Zphi}.
\item
2-Handle attachments $Y_\alpha\in\Mor(\Sigma,\Sigma')$ specified by a homologically nontrivial circle $S^1\simeq \alpha\subset \Sigma$ are simple morphisms\footnote{
More precisely, $Y_\alpha$ is obtained by attaching to the cylindrical cobordism $[0,1]\times \Sigma$ a 2-handle $B^2 \times [-\eps,\eps]$ along a thickening $[-\eps,\eps]\times S^1 \subset \{1\}\times\Sigma$ of the attaching circle. 
} 
from a surface $\Sigma$ of genus $g$ to a surface $\Sigma'$ of genus $g-1$.
\item
1-Handle attachments are 2-handle attachments with reversed orientation, i.e.\ the simple morphisms $Y_\alpha^-\in\Mor(\Sigma',\Sigma)$ from a surface $\Sigma'$ of genus $g-1$ to a surface $\Sigma$ of genus $g$.
\end{itemize}
\end{remark}

The structural similarities between the symplectic and bordism categories
can now be phrased in terms of abstract Cerf decompositions.

\begin{lemma}
The symplectic category $\Symp$ from Definition~\ref{def:1symp} is a category with Cerf decompositions as follows:
\begin{itemize}
\item
The set of simple morphisms $\SMor \subset \Mor_{\Symp}$ consists of the equivalence classes $[L_{12}]$ of Lagrangian submanifolds $L_{12}\subset M_1^-\times M_2$.
\item
The set of local Cerf moves ${\rm Cerf}\subset {\rm Rel}_{\Symp}$ consists of the relations
$$
\bigl( \, [ (L_{01},L_{12}) ] \,,\, [L_{01}\circ L_{12}] \, \bigr)
\qquad\mbox{and}\qquad
\bigl( \, [L_{01}\circ L_{12}]\,,\,  [ (L_{01},L_{12})] \, \bigr)
$$
for embedded geometric compositions $L_{01}\circ L_{12}$ as in \eqref{eq:embedded}.
\end{itemize}
\end{lemma}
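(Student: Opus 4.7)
The plan is to verify the two axioms of Definition~\ref{def:Cerf} -- generation of morphisms by simple ones, and uniqueness of such presentations up to local Cerf moves -- by directly unwinding the quotient $\Mor_\Symp = \Mor_{\Symp^\#}/\!\sim$ of Definition~\ref{def:1symp}. In both cases the statement will turn out to be essentially tautological, since the extended category $\Symp^\#$ was set up so that every morphism is already a concatenation of single Lagrangians, and the equivalence relation $\sim$ is generated precisely by replacing a contiguous subchain $(L_{ij}, L_{jl})$ by the single Lagrangian $L_{ij}\circ L_{jl}$ (or vice versa), subject to the embedding condition~\eqref{eq:embedded}.

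For generation, any morphism in $\Symp$ is by definition a class $[\uL]$ for some composable chain $\uL = (L_{01}, \ldots, L_{(k-1)k})$ in $\Symp^\#$. Since composition in $\Symp$ is induced by concatenation in $\Symp^\#$, we obtain
$$
[\uL] \;=\; [L_{01}] \circ [L_{12}] \circ \ldots \circ [L_{(k-1)k}],
$$
which exhibits $[\uL]$ as a composition of simple morphisms. For uniqueness, suppose two chains of simple morphisms represent the same morphism, i.e.\ $[(h_{12}, \ldots, h_{(n-1)n})] = [(\tilde h_{12}, \ldots, \tilde h_{(\tilde n - 1)\tilde n})]$ in $\Symp$. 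By Definition~\ref{def:1symp} this means the two chains are related by a finite sequence of geometric composition moves in $\Symp^\#$. Each such move alters the chain by either collapsing a contiguous subchain $(L_{ij}, L_{jl})$ into the single Lagrangian $L_{ij}\circ L_{jl}$ or expanding the latter back into the former, with $L_{ij}\circ L_{jl}$ embedded. Taking equivalence classes entry by entry, each such move translates to an equality of compositions of simple morphisms in $\Symp$ in which a length-two subchain $([L_{ij}], [L_{jl}])$ is replaced by the length-one subchain $([L_{ij}\circ L_{jl}])$ -- which is precisely one of the local Cerf moves declared in the lemma.

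The only bookkeeping required is to check that the two formulations of ``local move'' agree: in Definition~\ref{def:1symp} a move acts on a chain of arbitrary length by altering a specified block of consecutive entries, whereas in Definition~\ref{def:Cerf} a local Cerf move is an abstract pair of composable chains of simple morphisms with equal compositions, to be applied inside a longer composition. These two notions match by the associativity of concatenation, so performing a move at a given position in $\Symp^\#$ and projecting to $\Symp$ yields the same identity as applying the corresponding Cerf move at that position. The symmetry requirement on ${\rm Cerf}$ is satisfied because the lemma lists both directions of each move. I do not expect any genuine obstacle; the content of the lemma is in recognizing that the algebraic completion construction of $\Symp$ in Definition~\ref{def:1symp} is exactly the axiomatics of Definition~\ref{def:Cerf} specialized to the symplectic setting.
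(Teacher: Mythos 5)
Your proof takes essentially the same approach as the paper's: you verify generation by observing that any morphism $[\uL]=[L_{01}]\circ\ldots\circ[L_{(k-1)k}]$ is a composition of the singleton classes, and you verify uniqueness by noting that equality of compositions in $\Symp$ unwinds to equivalence of representing chains in $\Symp^\#$, which by construction is a finite sequence of geometric composition moves that project one-for-one to the declared Cerf moves. The extra bookkeeping remark at the end is a reasonable clarification but does not change the argument, which matches the paper's proof line by line.
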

\begin{proof}
To check that the simple morphisms generate all morphisms, consider a general morphism $\uL\in\Mor_{\Symp}(M,N)$ and pick a representative $(L_{01},\ldots,L_{(k-1)k})$, given by a composable chain of Lagrangian submanifolds $L_{ij}\subset M_i^-\times M_j$ from $M_0=M$ to $M_k=N$.
The definition of composition in $\Symp$ yields the identity
$$
\uL = \bigl[ (L_{01},\ldots,L_{(k-1)k}) \bigr] 
= \bigl[ L_{01}\# \ldots \# L_{(k-1)k} \bigr] 
= [L_{01}] \circ \ldots \circ [L_{(k-1)k}].
$$
Since each $[L_{ij}]$ is a simple morphism, this is the required decomposition of $\uL$ into simple morphisms.
To show that these decompositions are unique up to the given Cerf moves, note that an equality
$$
[L_{01}] \circ \ldots \circ [L_{(k-1)k}] = [L'_{01}] \circ \ldots \circ [L'_{(k'-1)k'}]
$$
in $\Mor_{\Symp}$ means by definition that the corresponding morphisms in $\Symp^\#$ are equivalent
$$
( L_{01} , \ldots , L_{(k-1)k} ) \sim ( L'_{01} , \ldots , L'_{(k'-1)k'} ) 
$$
under the equivalence relation $\sim$ given in Definition~\ref{def:1symp}.
Recall that this relation is generated by the geometric composition moves ${\rm Comp}\subset \Mor_{\Symp^\#}\times\Mor_{\Symp^\#}$, so that there is a sequence of moves from $( L_{01} , \ldots , L_{(k-1)k} )$ to $( L'_{01} , \ldots , L'_{(k'-1)k'} )$ in which adjacent pairs are replaced by their embedded geometric composition. 
Our definition of ${\rm Cerf}\subset {\rm Rel}_{\Symp}$ by moves on equivalence classes encoded by ${\rm Comp}$ translates this into a sequence of Cerf moves from $[L_{01}] \circ \ldots [L_{(k-1)k}]$ to $[L'_{01}] \circ \ldots  [L'_{(k'-1)k'}]$.
\end{proof}

\subsection{Construction principle for Floer field theories} \label{ss:field}

The algebraic background of Floer field theory is the following construction principle for functors between categories with Cerf decompositions.

\begin{lemma} \label{le:field0}
Let $\cC,\cD$ be two categories with Cerf decompositions and a {\bf{\rm\bf Cerf}-compatible partial functor} $\cF: (\Obj_\cC,\SMor_\cC) \to (\Obj_\cD,\SMor_\cD)$ consisting of
\begin{itemize}
\item
a map $\Obj_\cC \to \Obj_\cD$, 
\item
a map $\SMor_\cC \to \SMor_\cD$ which induces 
a map ${\rm Cerf}_\cC \to {\rm Cerf}_\cD$ given by 
$$
\bigl( (f_{(i-1)i})_{i=1,\ldots,k} , (g_{(j-1)j})_{j=1,\ldots,\ell} \bigr)  
\mapsto \bigl( (\cF(f_{(i-1)i}))_{i=1,\ldots,k} , (\cF(g_{(j-1)j}))_{j=1,\ldots,\ell}  \bigr) .
$$
\end{itemize}
Then $\cF$ has a unique extension to a functor $\overline\cF: \cC\to\cD$ which restricts to $\cF$ on $\Obj_\cC$ and $\SMor_\cC\subset\Mor_\cC$.
\end{lemma}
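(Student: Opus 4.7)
The plan is to construct $\overline\cF:\cC\to\cD$ by setting $\overline\cF(x):=\cF(x)$ on objects and, for a morphism $m\in\Mor_\cC$, by choosing any simple decomposition $m = h_{12}\circ\ldots\circ h_{(n-1)n}$ (which exists by Definition~\ref{def:Cerf}) and declaring
$$
\overline\cF(m) \;:=\; \cF(h_{12})\circ\ldots\circ \cF(h_{(n-1)n}).
$$
Uniqueness of the extension is immediate from this formula: any functor extending $\cF$ must agree with $\cF$ on $\SMor_\cC$ and respect composition, so it must coincide with $\overline\cF$ on every morphism.

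The substantive step is well-definedness, i.e.\ independence of the chosen simple decomposition. By Definition~\ref{def:Cerf}, any two simple decompositions of $m$ are linked by a finite sequence of equalities in $\cC$, in each of which a single subchain $(f_{12},\ldots, f_{(k-1)k})$ is replaced by another $(g_{12},\ldots, g_{(\ell-1)\ell})$ according to a local Cerf move $\bigl((f_{12},\ldots, f_{(k-1)k}),(g_{12},\ldots, g_{(\ell-1)\ell})\bigr)\in{\rm Cerf}_\cC$. By hypothesis, $\cF$ sends this to a pair in ${\rm Cerf}_\cD\subset{\rm Rel}_\cD$, which by definition of ${\rm Rel}_\cD$ encodes the equality $\cF(f_{12})\circ\ldots\circ \cF(f_{(k-1)k}) = \cF(g_{12})\circ\ldots\circ\cF(g_{(\ell-1)\ell})$ in $\cD$. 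Substituting this identity term-by-term into the composition of $\cF$-images along the given sequence of Cerf moves shows that the compositions in $\cD$ agree for any two simple decompositions of $m$.

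Functoriality is then straightforward: for composable $m_1, m_2$ the concatenation of chosen decompositions is a simple decomposition of $m_1\circ m_2$, whence $\overline\cF(m_1\circ m_2)=\overline\cF(m_1)\circ\overline\cF(m_2)$ directly from the definition. For identities, one must verify $\overline\cF(\id_x)=\id_{\cF(x)}$. In the motivating categories this is automatic: in $\Bor^{\rm conn}_{d+1}$ the identity $\id_\Sigma=Z_{\id_\Sigma}$ is itself simple and a cylinder-cancellation Cerf move of the form $\bigl((Z_{\id_\Sigma},Z_\psi),Z_\psi\bigr)$ (whose image lies in ${\rm Cerf}_\cD$) forces $\cF(Z_{\id_\Sigma})\circ\cF(Z_\psi)=\cF(Z_\psi)$, pinning down $\cF(Z_{\id_\Sigma})=\id_{\cF(\Sigma)}$; in $\Symp$ the identity is the length-$0$ chain, whose natural image is the length-$0$ chain in $\cD$, i.e.\ $\id_{\cF(M)}$.

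I expect the only real difficulty to be exactly this identity-preservation subtlety at the abstract level: Definition~\ref{def:Cerf} does not explicitly prescribe how $\id_x$ is encoded in $\SMor_\cC$, so a clean proof requires either allowing length-$0$ simple decompositions (sent to length-$0$ decompositions in $\cD$), or building $\cF(\id_x)=\id_{\cF(x)}$ into the notion of Cerf-compatible partial functor whenever $\id_x$ is simple. In both the bordism and symplectic examples of interest this is automatic, as sketched above. Once this convention is settled, the three steps -- definition via simple decompositions, well-definedness by Cerf-compatibility, and functoriality by concatenation -- assemble into the desired unique extension $\overline\cF$.
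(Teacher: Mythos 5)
Your proof follows essentially the same route as the paper's: define $\overline\cF$ on a general morphism via a chosen simple decomposition, use the hypothesis that $\cF$ sends ${\rm Cerf}_\cC$ to ${\rm Cerf}_\cD\subset{\rm Rel}_\cD$ to see the value is independent of that choice, and conclude compatibility with composition by concatenation of decompositions. The paper's published proof is in fact terser than yours -- it asserts well-definedness from the induced map on Cerf moves and compatibility with composition ``by construction'' and stops there, without touching identity morphisms at all.

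Your observation about identity preservation is a genuine and worthwhile one. Definition~\ref{def:functor} requires $\overline\cF(\id_x)=\id_{\cF(x)}$, and Cerf-compatibility as stated only forces $\cF(\id_x)$ to act as a (two-sided) unit on morphisms in the \emph{image} of $\cF$, not on all of $\Mor_\cD$ based at $\cF(x)$; so without an extra convention one could, in principle, cook up a target category $\cD$ with a non-identity idempotent at $\cF(x)$ for which the construction produces a composition-preserving map that is not unital. The paper's proof silently elides this. Your two proposed fixes -- allow length-$0$ simple decompositions of $\id_x$ mapped to the empty chain, or simply build $\cF(\id_x)=\id_{\cF(x)}$ into the definition of Cerf-compatible partial functor when $\id_x\in\SMor_\cC$ -- both close the gap, and your verification that the convention is automatic in $\Bor^{\rm conn}_{d+1}$ (via cylinder cancellation) and in $\Symp$ (via length-$0$ chains) matches exactly how the paper's subsequent applications work. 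This is a small but real improvement on the proof as written.
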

\begin{proof}
Compatibility of $\overline\cF$ with composition requires its value on a general morphism $f\in\Mor_\cC$ to be $\overline\cF(f) = \cF(f_{01}) \circ \ldots \circ \cF(f_{(k-1)k})$ for any Cerf decomposition $f= f_{01}\circ \ldots \circ f_{(k-1)k}$ into simple morphisms $f_{ij}\in\SMor_\cC$. 
The induced map ${\rm Cerf}_\cC \to {\rm Cerf}_\cD$ guarantees that this definition of $\overline\cF(f)$ is independent of the choice of decomposition, thus yields a well defined map $\Mor_\cC\to\Mor_\cD$. Moreover, this map is compatible with composition by construction. Thus a well defined functor $\overline\cF$ is uniquely determined by $\cF$. 
\end{proof}

The next Lemma specializes this abstract construction principles to $\cC=\Bor^{\rm conn}_{d+1}$ and $\cD=\Symp$ and is illustrated in Figure~\ref{fig:Ffield}. It can be read in two ways: In the strictly categorical sense, a partial functor should assign to a class $[Y]$ of simple cobordisms modulo diffeomorphisms relative to the boundary identifications a class $[L_Y]$ of Lagrangian submanifolds modulo embedded geometric composition. In practice, this will be achieved by assigning to each simple cobordism $Y$ a Lagrangian submanifold $L_Y$ in a way that is compatible with diffeomorphisms.

\begin{figure}[!h]
\centering
\includegraphics[width=5.5in]{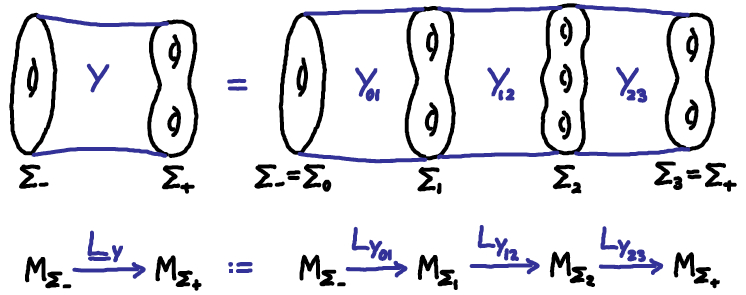}
\caption{Construction principle for Floer field theory: A functor $\Bor^{\rm conn}_{2+1}\to\Symp$ can be specified by associating symplectic manifolds $M_\Sigma$ to surfaces $\Sigma$ and simple Lagrangians $L_Y$ to simple 3-cobordisms $Y$ in a way that is compatible with Cerf moves.}
\label{fig:Ffield}
\end{figure}

Strictly speaking, the following is a mild generalization of Lemma~\ref{le:field0} because critical point switches really correspond to two Cerf moves in $\Symp$, for example
$$
Y_{01} \cup_{\Sigma_1} Y_{12} \simeq Z_{01}' \cup_{\Sigma_1'} Y'_{12}
\quad\Longrightarrow\quad
\bigl( L_{Y_{01}} , L_{Y_{12}} \bigr) \sim  
L_{Y_{01}} \circ L_{Y_{12}} = L_{Y'_{01}} \circ L_{Y'_{12}}
\sim \bigl( L_{Y'_{01}} , L_{Y'_{12}} \bigr).
$$
Examples of Floer field theories constructed in this way will be discussed in \S\ref{ss:ex}.

\begin{lemma}\label{le:field1}
Let $\cF: (\Obj_{\Bor^{\rm conn}_{d+1}},\SMor_{\Bor^{\rm conn}_{d+1}}) \to (\Obj_{\Symp},\SMor_{\Symp})$ be a {\rm Cerf}-compatible partial functor consisting of the following:
\begin{itemize}
\item
symplectic manifolds $M_\Sigma$ for each $d$-manifold $\Sigma\in\Obj_{\Bor^{\rm conn}_{d+1}}$;
\item
Lagrangian submanifolds $L_{[Y]}\in \SMor_{\Symp}(M_\Sigma, M_{\Sigma'})$ for each simple $d+1$-cobordism $[Y]\in\SMor_{\Bor^{\rm conn}_{d+1}}(\Sigma,\Sigma')$; \\
$\!\!\big(\!\!$ More precisely, this requires the following:

\begin{itemize}
\item
Lagrangian submanifolds $L_Y\subset M_{\partial^- Y}^-\times M_{\partial^+ Y}$ for each 
handle attachment $Y$ with partitioned boundary $\partial Y= \partial^- Y \sqcup \partial^+ Y$,
\item
symplectomorphisms $M_\Sigma \to M_{\Sigma'}$ denoted by their graphs $L_\phi \subset M_\Sigma^- \times M_{\Sigma'}$ for the cylindrical cobordisms $Z_{\phi}$ representing each diffeomorphism $\phi:\Sigma\to\Sigma'$,
\item
identities\footnote{
In these identities $L_\phi$ is the graph of a map so that $\circ$ is geometric composition of Lagrangians.
Viewing $L_\phi$ as map, they could be rewritten as
$L_{\psi\circ\phi} = L_\psi \circ L_\phi$ and 
 $L_{\Psi(Y)} = \bigl( L_{\Psi|_{\partial^-Y}}\times L_{\Psi|_{\partial^+Y}}\bigr)(L_Y)$.
} 
$L_{\psi\circ\phi} = L_\phi \circ L_\psi$ for diffeomorphisms $\phi:\Sigma\to\Sigma'$, $\psi:\Sigma'\to\Sigma''$
and
$L_{\Psi(Y)} = L_{\Psi|_{\partial^-Y}^{-1}} \circ L_Y \circ L_{\Psi|_{\partial^+Y}}$
for $\Psi:Y\to Z$.
\end{itemize}

Then any choice of a representative cobordism $Y$ with orientation preserving diffeomorphisms $\iota^-:\Sigma^- \to \partial^- Y$, $\iota^+:\Sigma' \to \partial^+ Y$ induces well defined morphism
$$
L_{[Y]} \,:=\; \bigl[\bigl(L_{\iota^-}, L_Y, L_{(\iota^+)^{-1}} \bigr)\bigr]
\;=\;
\bigl[ \, \bigl(L_{\iota^-}^{-1} \times L_{\iota^+}^{-1} \bigr)( L_Y) \subset M_\Sigma^- \times M_{\Sigma'} \,\bigr] ,
$$
where in the last equality we view $L_{\iota^\pm}: M_{\Sigma^\pm}\to M_{\partial^\pm Y}$ as maps.$\big)$
\item
 identities of Lagrangians for each local Cerf move
\begin{align*}
\bigl( (X,Y), Z \bigr) \in {\rm Cerf}_{\Bor^{\rm conn}_{d+1}} & \quad\Rightarrow\quad L_X\circ L_Y = L_Z , \\
\bigl( X, (Y,Z) \bigr) \in {\rm Cerf}_{\Bor^{\rm conn}_{d+1}} & \quad\Rightarrow\quad L_X = L_Y\circ L_Z  ,\\
\bigl( (V,W), (X,Y) \bigr) \in {\rm Cerf}_{\Bor^{\rm conn}_{d+1}} & \quad\Rightarrow\quad L_V\circ L_W = L_X \circ L_Y ,
\end{align*}
where all geometric compositions on the right hand side are embedded as in \eqref{eq:embedded}.
\end{itemize}
Then $\cF$ has a unique extension to a functor $\overline\cF:\Bor^{\rm conn}_{d+1}\to\Symp$.

Moreover, if $\cF$ takes values in an exact or monotone symplectic category $\Symp^\tau$ (see Remark~\ref{rmk:monotone}), then $\cF$ induces a functor $\Bor^{\rm conn}_{d+1}\to\Cat$.
\end{lemma}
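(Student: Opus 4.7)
The plan is to reduce this to the abstract extension principle of Lemma~\ref{le:field0}, after handling two bookkeeping items: well-definedness of $L_{[Y]}$ on diffeomorphism classes, and the translation of each local Cerf move in $\Bor^{\rm conn}_{d+1}$ into a sequence of Cerf moves in $\Symp$. The main subtlety is not analytic but notational: keeping track of the convention $L_{\psi\circ\phi}=L_\phi\circ L_\psi$ (with $\circ$ on the right denoting geometric composition of graph Lagrangians) throughout.

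First, I would check that $L_{[Y]}$ does not depend on the chosen representative $(Y,\iota^-,\iota^+)$. Any two representatives are related by a diffeomorphism $\Psi:Y\to\tilde Y$ with $\tilde\iota^\pm=\Psi|_{\partial^\pm Y}\circ\iota^\pm$, so that
$$L_{\tilde\iota^-}=L_{\iota^-}\circ L_{\Psi|_{\partial^-Y}}, \qquad L_{(\tilde\iota^+)^{-1}}=L_{\Psi|_{\partial^+Y}^{-1}}\circ L_{(\iota^+)^{-1}},$$
and $L_{\tilde Y}=L_{\Psi|_{\partial^-Y}^{-1}}\circ L_Y\circ L_{\Psi|_{\partial^+Y}}$. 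Since graphs of diffeomorphisms always have embedded geometric composition, successive Cerf moves in $\Symp$ contract $L_{\Psi|_{\partial^\pm Y}}\circ L_{\Psi|_{\partial^\pm Y}^{-1}}$ to the identity Lagrangian, identifying the two chains in $\Mor_{\Symp}(M_\Sigma,M_{\Sigma'})$.

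Second, I would translate each of the three shapes of local Cerf move in $\Bor^{\rm conn}_{d+1}$ into Cerf moves in $\Symp$. Moves of shape $((X,Y),Z)$ and $(X,(Y,Z))$ become single Cerf moves by the assumed identities $L_X\circ L_Y=L_Z$ and the embeddedness hypothesis on the geometric composition. Critical-point-switch moves $((V,W),(X,Y))$, which do not appear as a single move in $\Symp$, decompose into two consecutive ones,
$$\bigl[(L_V,L_W)\bigr]\;\sim\;\bigl[L_V\circ L_W\bigr]\;=\;\bigl[L_X\circ L_Y\bigr]\;\sim\;\bigl[(L_X,L_Y)\bigr],$$
using the embeddedness hypothesis for \emph{both} $L_V\circ L_W$ and $L_X\circ L_Y$.

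Third, I would apply Lemma~\ref{le:field0} in the mildly generalized form in which a single source Cerf move corresponds to a finite sequence of target Cerf moves -- a generalization whose proof is literally that of Lemma~\ref{le:field0}, since any such sequence still preserves the composite of the target chain. This gives the unique extension $\overline\cF:\Bor^{\rm conn}_{d+1}\to\Symp$ with $\overline\cF(f):=\cF(f_{01})\circ\cdots\circ\cF(f_{(k-1)k})$ for any Cerf decomposition $f=f_{01}\circ\cdots\circ f_{(k-1)k}$. For the final assertion, I would post-compose $\overline\cF$ with the Yoneda-type functor $\Symp^\tau\to\Cat$ of Lemma~\ref{le:sympcat}, which is defined only in the exact or monotone setting in order for the relevant Lagrangian Floer homologies, and hence the associated module-category structures, to be well defined and invariant under embedded geometric composition.
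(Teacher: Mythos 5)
Your proposal is correct and takes essentially the same approach as the paper: first verifying that $L_{[Y]}$ is well defined on diffeomorphism classes of simple cobordisms via the given identities, then reducing to the mildly generalized form of Lemma~\ref{le:field0} (with the observation that critical point switches translate into two geometric composition moves), and finally postcomposing with the Yoneda functor of Lemma~\ref{le:sympcat}. The only difference is bookkeeping: the paper additionally carries along the boundary-identification Lagrangians $L_{\iota^\pm_Y}$ and spells out an explicit diffeomorphism-invariance computation for general cobordisms, which you absorb into the invocation of Theorem~\ref{thm:borCerf} at the level of equivalence classes.
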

\begin{proof}
To check that the construction of simple morphisms in the second bullet point is well defined we need to consider a diffeormorphism $\Psi:Y\to Z$ which preserves the partition of boundary components, i.e.\ $\Psi_{\partial^\pm Y}$ maps $\partial^\pm Y$ to $\partial^\pm Z$. 
Then $Y$ and $Z=\Psi(Y)$ with the corresponding boundary identifications yields the same Lagrangian submanifold $L_{[\Psi(Y)]}= L_{[Y]}$ since we have
\begin{align*}
& \bigl(\, L_{\Psi|_{\partial^-Y}\circ\iota^-} \,,\, L_{\Psi(Y)} \,,\,  L_{(\Psi|_{\partial^+Y}\circ\iota^+)^{-1}} \,\bigr)\\
&\quad=\;
\bigl(\, L_{\iota^-}\circ L_{\Psi|_{\partial^-Y}}\,,\, L_{\Psi|_{\partial^-Y}^{-1}} \circ L_Y \circ  L_{\Psi|_{\partial^+Y}} \,,\,  L_{(\Psi|_{\partial^+Y})^{-1}} \circ  L_{(\iota^+)^{-1}} \,\bigr) \\
&\quad\sim\;
\bigl(\, L_{\iota^-}\circ L_{\Psi|_{\partial^-Y}}\circ L_{(\Psi|_{\partial^-Y})^{-1}} \,,\,  L_Y \,,\,  L_{\Psi|_{\partial^+Y}} \circ L_{(\Psi|_{\partial^+Y})^{-1}}\circ L_{(\iota^+)^{-1}} \,\bigr) \\
&\quad=\;
\bigl(\, L_{(\Psi|_{\partial^-Y})^{-1} \circ \Psi|_{\partial^-Y} \circ \iota^- } \,,\,  L_Y \,,\,  L_{ (\iota^+)^{-1} \circ (\Psi|_{\partial^+Y})^{-1} \circ \Psi|_{\partial^+Y} }  \,\bigr)
\;=\;
\bigl(\, L_{\iota^-} \,,\,  L_Y \,,\,  L_{(\iota^+)^{-1}} \,\bigr) .
\end{align*}
Now on objects $\Sigma$, the functor $\overline\cF$ is determined by the symplectic manifolds $M_\Sigma$. For a morphism $[Y]\in\Bor^{\rm conn}_{d+1}(\Sigma,\Sigma')$, pick a representative cobordism $Y$ with orientation preserving embeddings $\iota^-_Y:\Sigma^- \to \partial Y$, $\iota^+_Y:\Sigma' \to \partial Y$ to the respective boundary components.
By the Cerf decomposition Theorem~\ref{thm:borCerf}, there exists a decomposition
$Y=Y_{01}\cup_{\Sigma_1}Y_{12}\cup\ldots\cup_{\Sigma_{n-1}}Y_{(n-1)n}$ into 
simple morphisms which are either handle attachments $Y_{i(i+1)}$ with boundary identifications $\iota_i^-:\Sigma^-_i\to\partial Y_{i(i+1)}$, $\iota_{i+1}^+:\Sigma_{i+1}\to\partial Y_{i(i+1)}$
or cylindrical cobordisms $Y_{i(i+1)}=Z_{\phi_i}$ representing a diffeomorphism $\phi_i:\Sigma_i\to\Sigma_{i+1}$.
As in Lemma~\ref{le:field0}, functoriality then requires 
$$
\overline\cF([Y])=\bigl[\bigl(L_{\iota^-_Y}, L_{Y_{01}},L_{Y_{12}}, \ldots, L_{Y_{(n-1)n}} , L_{(\iota^+_Y)^{-1}} \bigr)\bigr]
$$ 
to be given by the algebraic composition in $\Symp$ of the corresponding Lagrangian submanifolds.
This fully determines $\overline\cF$, but to see that it is well defined we need to consider not just another Cerf decomposition of $Y$ -- for which the proof is exactly as in Lemma~\ref{le:field0} --
but also allow for a diffeomorphism $\Psi:Y\to Z$ that intertwines boundary identifications, $\Psi\circ\iota_Y^\pm = \iota^\pm_Z$.
The latter induces a Cerf decomposition $Z=\Psi(Y_{01})\cup_{\Psi(\Sigma_1)}\Psi(Y_{12})\cup\ldots\cup_{\Psi(\Sigma_{n-1})}\Psi(Y_{(n-1)n})$ with $\Sigma_i:=Y_{(i-1)i}\cap Y_{i(i+1)}\subset Y$, whose value under $\overline\cF$ is
\begin{align*}
\overline\cF([Z])
&=
\bigl[\bigl(L_{\Psi|_{\partial^-Y}\circ \iota^-_Y} , L_{\Psi(Y_{01})},L_{\Psi(Y_{12})}, \ldots, L_{\Psi(Y_{(n-1)n})} ,L_{(\Psi|_{\partial^+Y}\circ \iota^+_Y)^{-1}} \bigr)\bigr] \\
&=
\bigl[\bigl(L_{\iota^-_Y} \circ L_{\Psi|_{\partial^-Y}} ,
 L_{(\Psi|_{\partial^-Y})^{-1}}\circ L_{Y_{01}} \circ L_{\Psi|_{\Sigma_1}}, L_{(\Psi|_{\Sigma_1})^{-1}}\circ L_{Y_{12}}\circ L_{\Psi|_{\Sigma_2}}, \ldots\\
&\qquad\qquad\qquad\qquad\ldots
 L_{(\Psi|_{\Sigma_{n-1}})^{-1}}\circ L_{Y_{(n-1)n}} 
\circ L_{\Psi|_{\partial^+Y}} , L_{(\Psi|_{\partial^+Y})^{-1}} \circ L_{(\iota^+_Y)^{-1}} 
  \bigr)\bigr] \\
  &=
\bigl[\bigl(L_{\iota^-_Y} , L_{Y_{01}} , L_{Y_{12}} , \ldots , L_{Y_{(n-1)n}} 
, L_{(\iota^+_Y)^{-1}}  \bigr)\bigr] 
\;=\; \overline\cF([Y]) .
\end{align*}
This finishes the proof that the unique extension $\overline\cF$ is a well defined functor.

Finally, if $\overline\cF: \Bor^{\rm conn}_{d+1}\to\Symp^\tau$ takes values in a monotone symplectic category (for a monotonicity constant $\tau\ge0$; see Remark~\ref{rmk:monotone}), then it can be composed with the Yoneda functor $\Symp^\tau\to\Cat$ constructed in \cite{ww:cat} and Lemma~\ref{le:sympcat} below to induce a functor 
$\Bor^{\rm conn}_{d+1}\to\Cat$, as claimed.
Here the existence of the Yoneda functor follows from the fact that $\Symp^\tau$ extends to a 2-category. 
\end{proof}

A formal notion of $d+1$ Floer field theory should also include a notion of duality. However, the abstract categorical notion of duality requires a monoidal structure -- roughly speaking, an associative multiplication of objects that extends to a bifunctor.
While in the bordism category $\Bor_{d+1}$ a monoidal structure is naturally given by disjoint unions of objects and morphisms, an extension of the gauge theoretic examples in \S\ref{ss:ex} to disconnected bordisms remains elusive; see Remark~\ref{rmk:conn}. 
Instead, we work with the following practical notion of adjunctions, which will be part of an abstract notion of quilted 2-categories in Definition~\ref{def:adj}.

\begin{remark} \label{rmk:adj} \rm 
The {\bf adjoint of a cobordism} $[Y]\in\Mor_{\Bor_{d+1}}(\Sigma_0,\Sigma_1)$ with boundary embeddings $\iota^\pm_Y:\Sigma_i \to \partial Y$ is the cobordism $[Y^-]\in\Mor_{\Bor_{d+1}}(\Sigma_1,\Sigma_0)$ obtained by reversing the orientation and boundary embeddings $\iota^+_Y: \Sigma_1^- \to \partial Y^-$, $\iota^-_Y: \Sigma_0 \to \partial Y^-$. In particular, the adjoint of a $k$-handle attachment is a $d+1-k$-handle attachment.

The {\bf adjoint of a Lagrangian} $L\subset M_0^-\times M_1$ is $L^T:=\tau(L)\subset M_1^-\times M_0$ obtained by transposition $\tau(p_0,p_1):=(p_1,p_0)$.
For very simple morphisms -- cylindrical cobordisms and graphs of symplectomorphisms -- these adjoints are also inverse morphisms, but not in general.

In the category of categories, not every functor may have an adjoint, but there also is a notion of two functors $f:\cC\to\cD$ and $f^T:\cD\to\cC$ being adjoint; see Definition~\ref{def:adj}.
\end{remark}

With this we can somewhat formalize our notion of connected Floer field theories. We will keep the definition flexible to allow for current progress towards constructing more general symplectic 2-categories as discussed in Example~\ref{ex:infdim} and Remark~\ref{rmk:ainfty2symp}.

\begin{definition} \label{def:fft}
A {\bf d+1 connected Floer field theory} is an adjunction preserving functor $\Bor^{\rm conn}_{d+1}\to\cC$ to an algebraic category (such as $\cC=\Cat$) that arises as composition of a functor $\cF: \Bor^{\rm conn}_{d+1}\to\cS$ to a symplectic category (i.e.\ a category such as $\cS=\Symp^\tau$ whose objects are symplectic manifolds) 
with a Yoneda-type functor arising from a 2-categorical structure on $\cS$ that encodes Floer theory (such as the functor $\Symp^\tau\to\Cat$ constructed in Lemma~\ref{le:sympcat}).
\end{definition}

Here the Yoneda functor $\Symp^\tau\to\Cat$ arises from a quilted generalization of Floer homology which was developed in \cite{ww:qhf,ww:quilts,ww:cat} within a mononote symplectic category (see Remark~\ref{rmk:monotone}) that guarantees well behaved moduli spaces of pseudoholomorphic quilts; see \S\ref{ss:symp2}.
Since the composition with this functor is automatic (if it exists), we will sometimes also refer to a functor $\Bor^{\rm conn}_{d+1}\to\Symp$ (even if it does not take values in a monotone subcategory) as a Floer field theory -- because it reduces the question of constructing a functor $\Bor^{\rm conn}_{d+1}\to\Cat$ to ensuring that quilted Floer homology is well defined on its image.
One might be tempted to call a functor $\Bor_{d+1}\to\Symp$ a ``d+1 symplectic field theory'', but the label of SFT = symplectic field theory was given by \cite{egh} to a theory in which another symplectic category -- given by contact-type manifolds and symplectic cobordisms -- is the domain, not the target of a functor.

\subsection{2+1 Floer field theories arising from gauge theory} \label{ss:ex}

Working more specifically in dimensions 2+1, and making use of the adjunctions in Remark~\ref{rmk:adj}, we can specialize Lemma~\ref{le:field1} even further to observe that a 2+1 connected Floer field theory $\Bor^{\rm conn}_{2+1}\to\Cat$ in the sense of Definition~\ref{def:fft}
can be obtained by essentially just fixing symplectic data for one surface of each genus and attaching circles in these.
Here we will be somewhat cavalier about diffeomorphisms that are isotopic to the identity. These do not affect the representation spaces in Example~\ref{ex:rep}, but in general, e.g.\ in Example~\ref{ex:sym}, more vigilance such as in \cite{GayKirby, gww, perutz1} is required.

\begin{remark} \label{rmk:field2} \rm 
In order to construct a 2+1 connected Floer field theory $\Bor^{\rm conn}_{2+1}\to\Cat$, it suffices to 
construct a functor $\cF:\Bor^{\rm conn}_{2+1}\to\Symp^\tau$ that preserves adjunctions. The latter can be obtained as in Lemma~\ref{le:field1} by the following constructions.
\begin{enumerate}
\item
To a closed, connected, oriented surface $\Sigma$, associate a symplectic manifold $M_\Sigma$ (that is compact and $\tau$-monotone for a fixed $\tau\ge 0$; see Remark~\ref{rmk:monotone}).
\item
To a diffeomorphism $\phi:\Sigma_0\to\Sigma_1$ associate a symplectomorphism ${L_\phi: M_{\Sigma_0}\to M_{\Sigma_1}}$ such that $L_\phi \circ L_\psi=L_{\phi\circ\psi}$ (as maps) when $\phi,\psi$ are composable.
\item
To a 2-handle attaching cobordism $Y_\alpha\in\Mor_{\Bor^{\rm conn}_{2+1}}(\Sigma,\Sigma')$ between connected surfaces as in Remark~\ref{rmk:handleattach} associate a Lagrangian submanifold $L_\alpha\subset M_\Sigma^-\times M_{\Sigma'}$ (that is compact and $\tau$-monotone).
\item[(3')]
To the reversed 1-handle attachment $Y_\alpha^-\in\Mor_{\Bor^{\rm conn}_{2+1}}(\Sigma',\Sigma)$ associate the transposed Lagrangian $L_\alpha^T\subset M_{\Sigma'}^-\times M_\Sigma$.
\item
For attaching circles $\alpha, \phi(\alpha) \subset\Sigma$ related by a diffeomorphism $\phi:\Sigma\to\Sigma$, there is a diffeomorphism $\phi':\Sigma_\alpha \to \Sigma_{\phi(\alpha)}$ determined by $\phi'\circ\pi_\alpha = \pi_{\phi(\alpha)}\circ\phi$ 
such that the 3-cobordisms $Y_\alpha \simeq Y_{\phi(\alpha)}$ are diffeomorphic relative to $\phi,\phi'$ on the boundary.
Ensure that this is reflected by an identity of Lagrangians
$( L_\phi \times L_{\phi'}) (L_\alpha) = L_{\phi(\alpha)}$ 
via the symplectomorphisms given in 2.
\item
For disjoint attaching circles $\alpha,\beta\subset \Sigma$, denote by $\beta':=\pi_\alpha(\beta)\subset\Sigma_\alpha$ and $\alpha':=\pi_\beta(\alpha)\subset\Sigma_\beta$ the attaching circles in the outgoing boundary of $Y_\alpha$ resp.\ $Y_\beta$ that are obtained from $\beta$ resp.\ $\alpha$. 
Then there is a diffeomorphism $\phi'':(\Sigma_\alpha)_{\beta'} \to (\Sigma_\beta)_{\alpha'}$ between the outgoing boundaries of $Y_{\beta'}, Y_{\alpha'}$, determined by $\phi''\circ \pi_{\beta'}\circ\pi_\alpha = \pi_{\alpha'}\circ \pi_\beta$, such that the 3-cobordisms\footnote{
Here $\cup_{\phi''}$ denotes a gluing of the boundaries of $Y_{\beta'}, Y_{\alpha'}^-$ via the diffeomorphism $\phi''$.
} 
$Y^-_\alpha \cup_\Sigma Y_\beta \simeq Y_{\beta'} \cup_{\phi''} Y_{\alpha'}^-$ are diffeomorphic with fixed boundary, and the 3-cobordisms 
$Y_\alpha \cup_{\Sigma_\alpha} Y_{\beta'} \simeq Y_\beta \cup_{\Sigma_\beta} Y_{\alpha'}$ are diffeomorphic relative to $\id_\Sigma,\phi''$ on the boundary.
Ensure that this is reflected by embedded geometric compositions 
$L_\alpha^T \circ L_\beta$, $(\id\times \phi'')(L_{\beta'}) \circ L_{\alpha'}^T$, 
$L_\alpha \circ L_{\beta'}$, $L_\beta \circ L_{\alpha'}$ and identities 
\begin{equation} \label{eq:referee}
(\id \times L_{\phi''})(L_\alpha \circ L_{\beta'})=L_\beta \circ L_{\alpha'}, 
\qquad
L_\alpha^T \circ L_\beta = (\id\times \phi'')(L_{\beta'}) \circ L_{\alpha'}^T .
\end{equation}
\item
For attaching circles $\alpha,\beta\subset \Sigma$ with transverse intersection in a single point, the composition $Y_\alpha^-\cup_\Sigma Y_\beta \simeq Z_\phi$ is diffeomorphic with fixed boundary to the cylindrical cobordism of a diffeomorphism $\phi:\Sigma_\alpha\to\Sigma_\beta$ determined by $\phi\circ\pi_\alpha=\pi_\beta$ on $\Sigma\less(\alpha\cup\beta)$ and $\phi(\pi_\alpha(\beta))=\pi_\beta(\alpha)$.
Ensure that this is reflected by an embedded geometric composition
$L_\alpha^T \circ L_{\beta}={\rm gr}(L_{\phi})$.
\end{enumerate} 
While step 1 fixes the functor $\cF$ on all objects, steps 2 and 3 fix explicit Lagrangians $\cF([Y])=\underline L_Y$ only for simple morphisms $Y$ as $L_{Z_\phi}=L_\phi$ for cylindrical cobordisms, $L_{Y_\alpha}=L_\alpha$ for 2-handle attachments, and $L_{Y_\alpha^-}=L_\alpha^T$ for their adjoint 1-handle attachments.
To determine the value of the functor $\cF([Y])=[\underline L_Y]$ on a general cobordism $Y\in\Mor_{\Bor_{2+1}}(\Sigma,\Sigma')$, we choose a Cerf decomposition $Y=Y_{01}\cup_{\Sigma_1} Y_{12} \ldots \cup_{\Sigma_{k-1}}Y_{(k-1)k}$ into a composable chain of simple morphisms $Y_{ij}\in\Mor_{\Bor_{2+1}}(\Sigma_i,\Sigma_j)$ from $\Sigma_0=\Sigma$ to $\Sigma_k=\Sigma'$. Then functoriality requires 
$$
[\underline L_Y] = \cF([Y])= \cF([Y_{01}]) \circ  \cF([Y_{12}]) \ldots  \circ\cF([Y_{(k-1)k}]) = 
[L_{Y_{01}}] \circ  [L_{Y_{12}}]  \ldots \circ [L_{Y_{(k-1)k}}] , 
$$
and this is well defined since different Cerf decompositions of $[Y]$ are related by Cerf moves, which steps 4-6 guarantee to correspond to embedded geometric compositions, i.e.\ yield the same morphisms in the symplectic category. 
More precisely, steps 2,3 associate to a cobordism with Cerf decomposition (a factorization in $\Bor^{\rm conn}_{2+1}$) a morphism in the extended symplectic category of Definition~\ref{def:extsymp},
$$
Y=Y_{01}\cup_{\Sigma_1} Y_{12} \ldots \cup_{\Sigma_{k-1}}Y_{(k-1)k}
\quad\mapsto\quad
\underline L_Y =  L_{Y_{01}} \# L_{Y_{12}}  \ldots \# L_{Y_{(k-1)k}} .
$$
Then Cerf moves can be viewed as isomorphisms between different factorizations in $\Bor^{\rm conn}_{2+1}$, 
and steps 4-6 relate these to isomorphisms in $\Symp^\#$ given by the relation used in Definition~\ref{def:1symp} of the symplectic category as the quotient of $\Symp^\#$.
This could more precisely be phrased as a 2-functor between extensions of $\Bor^{\rm conn}_{2+1}$ to a bicategory as in Example~\ref{ex:1epsbor} and of $\Symp^\#$ to a 2-category as in Example~\ref{ex:ext0}.
\end{remark}

Since its first announcement in \cite{ww:fielda}, this Floer field philosophy has 
been applied to obtain various proposals for $2+1$ field theories, which are inspired from various gauge theories. Unfortunately, these are still preprints \cite{ww:fielda,ww:fieldb}, work in progress \cite{lp}, or published \cite{reza,MWcr,lekili,auroux_hf} but hinging on generalizations of the crucial isomorphism in Floer homology under geometric compositions beyond the (compact monotone) setting in which it was proven in \cite{ww:isom}; see Remarks~\ref{rmk:monotone}--\ref{rmk:LLBS}.
Instead of discussing the technicalities and possible obstructions, this section focusses on the motivations and thus presents both intuitive and naive reasonings why theories along these lines are to be expected.

The intuitive reason for an intimate connection between symplectic geometry and gauge theory in dimensions $2+1$ is the following example of a partial functor from $\Bor^{\rm conn}_{2+1}$ to a category of infinite dimensional symplectic Banach spaces and Lagrangian Banach-submanifolds. It provides the basic data from which one expects a 2+1+1 field theory which comprises Donaldson invariants and instanton Floer homology\footnote{
Donaldson invariants and instanton Floer homology are invariants for smooth 4- and 3-manifolds that were developed in the 1980s \cite{Don:inv,Floer:inst}; see \cite{DK,Don:book} for introductions. Similar to the symplectic versions of Floer homology, the 3-manifold invariant can be viewed as the Morse homology of the Chern-Simons functional on a space of connections (modulo gauge) on the 3-manifold, with the gradient flow recast as the ASD Yang-Mills PDE (whose stationary solutions are the flat connections).
}
for certain 4- and 3-manifolds, as discussed in \S\ref{ss:af}.

\begin{example}[Infinite dimensional Floer field theory from spaces of connections]   \rm\label{ex:infdim}
Fix a compact, connected, simply connected Lie group $G$, and let $\langle \cdot  , \cdot \rangle$ be a $G$-invariant inner product on the Lie algebra ${\mathfrak g}$. (The main and first nontrivial examples are $G=SU(r)$ for $r\ge 2$.)
The following constructions will use some basic notations from gauge theory, which can be found in e.g.\ \cite{W:book}.
These constructions also have natural extensions to nontrivial bundles -- such as the unique nontrivial $SO(3)$-bundles over surfaces and handle attachments used in \cite{ww:fielda}, which also serve to avoid issues of reducible connections.
\begin{enumerate}
\item
To each closed, connected, oriented surface $\Sigma$, we associate the  space of connections $\cA(\Sigma):=\Om^1(\Sigma,{\mathfrak g})$ on the trivial $G$-bundle over $\Sigma$. It has a natural symplectic structure given by $\omega( a_1 , a_2 ) = \int_\Sigma \langle a_1 \wedge a_2 \rangle$ for $a_i\in \cA(\Sigma)$; see \cite{AtiyahBott, Sa:AF, W:survey}.
Indeed, $\omega$ is bilinear and alternating (recall that $\alpha_1 \wedge \alpha_2 = - \alpha_2 \wedge \alpha_1$ for real-valued 1-forms), and it is nondegenerate since the Hodge star operator for any choice of metric on $\Sigma$ induces an $L^2$-metric $g(a_1,a_2)=\omega(a_1,* a_2)$ on $\cA(\Sigma)$. 

Note here that reversing the orientation of $\Sigma$ corresponds to reversing the sign of the symplectic form, i.e.\ $\cA(\Sigma^-)=\cA(\Sigma)^-$. 
Moreover, $*|_{\cA(\Sigma)}$ is in fact an $\omega$-compatible complex structure since $*^2=-\id$.
\item
To each diffeomorphism $\phi:\Sigma_0\to\Sigma_1$, we associate the push forward $L_\phi:=\phi_*: \cA(\Sigma_0)\to \cA(\Sigma_1)$ given by $(\phi_*a )(v) := a({\rm d}\phi^{-1}(v))$.
This is a symplectomorphism since for $a_1,a_2\in \cA(\Sigma_0)$ we have
\begin{align*}
\bigl(L_\phi^*\omega_{\cA(\Sigma_1)} \bigr)(a_1,a_2)
&= \tint_{\Sigma_1} \langle \phi_*(a_1) \wedge \phi_*(a_2) \rangle
= \tint_{\Sigma_1} \phi_* \langle a_1 \wedge a_2 \rangle \\
&=\tint_{\phi^{-1}(\Sigma_1)}  \langle a_1 \wedge a_2 \rangle 
=\omega_{\cA(\Sigma_0)}(a_1,a_2) .
\end{align*}
Moreover we have $L_\phi \circ L_\psi =\phi_* \circ \psi_* = (\phi\circ\psi)_* = L_{\phi\circ\psi}$ as required
when $\phi,\psi$ are composable.
\item
To each 2-handle attachment $Y_\alpha\in\Mor_{\Bor^{\rm conn}_{2+1}}(\Sigma,\Sigma')$, we associate the space of restrictions of flat connections on $Y_\alpha$ to the boundary components $\partial Y_\alpha = \Sigma^- \sqcup \Sigma'$,
$$
{\mathcal L}(Y_\alpha):=
\bigl\{\bigl(\tilde A |_{\Sigma}, \tilde A |_{\Sigma'}\bigr) \,|\, \tilde A \in \cA(Y_\alpha), F_{\tilde A}=0\} \; \subset\; \cA(\Sigma)^-\times \cA(\Sigma').
$$ 
This yields an isotropic of $\cA(\Sigma)^-\times \cA(\Sigma') \cong \cA(\Sigma^-\sqcup \Sigma')=\cA(Y_\alpha)$ since the linearization of curvature $\frac{\rm d}{{\rm d}t}\big|_{t=0} F_{\tilde A + t \tilde a}={\rm d}_{\tilde A} \tilde a$ at a connection $\tilde A$ is the associated differential, so that 
$\omega(\tilde a_1|_{\partial Y}, \tilde a_2|_{\partial Y}) = \int_Y \langle {\rm d}_{\tilde A} \tilde a_1 \wedge \tilde a_2 \rangle - \langle \tilde a_1 \wedge {\rm d}_{\tilde A} \tilde a_2 \rangle = 0$ by Stokes' theorem.
In appropriate Banach space completions, one can also show that ${\mathcal L}(Y_\alpha)$ is a Banach submanifold and coisotropic, hence a Lagrangian submanifold of $\cA(\Sigma)^-\times \cA(\Sigma')$.
(This is a direct generalization of \cite[Lemma~4.6]{W:Banach} which proves these claims for $Y_\alpha$ replaced by a handlebody.)

\item[(3')]
The analogous construction for the 1-handle attachment $Y_\alpha^-\in\Mor_{\Bor^{\rm conn}_{2+1}}(\Sigma',\Sigma)$ yields the transposed Lagrangian 
$$
{\mathcal L}(Y_\alpha^-):=
\bigl\{\bigl(\tilde A |_{\Sigma'}, \tilde A |_{\Sigma}\bigr) \,|\, \tilde A \in \cA(Y^-_\alpha), F_{\tilde A}=0\} 
\;=\; {\mathcal L}(Y_\alpha)^T .
$$ 

\item
To check $( \phi_* \times\phi'_*) (\cL(Y_\alpha)) = \cL(Y_{\phi(\alpha)})$ for a diffeomorphism $\phi:\Sigma\to\Sigma$, recall that $\phi,\phi'$ are the boundary restrictions of a diffeomorphism $\widetilde\phi:Y_\alpha \to Y_{\phi(\alpha)}$. Then the relation between the Lagrangians follows from the fact that the spaces of flat connections on $Y_\alpha$ and $Y_{\phi(\alpha)}$ are identified by pullback with $\widetilde\phi$.

\item[(5,6)]
For any composable pair of cobordisms $Y_{ij}\in\Mor_{\Bor^{\rm conn}}(\Sigma_i,\Sigma_j)$, we have
the Lagrangian for the composition of cobordisms given by the geometric composition of the Lagrangians for the separate cobordisms, 
\begin{align*}
& \cL(Y_{01}\cup_{\Sigma_1} Y_{12})  \\
&\; =
\bigl\{(A_0,A_2) \,|\, \exists\, \tilde A_{\rm flat} \in \cA(Y_{01}\cup_{\Sigma_1} Y_{12}), 
\tilde A |_{\Sigma_i}=A_i
\bigr\}  \\
&\;=
\bigl\{(A_0,A_2) \,|\, \exists\, \tilde A_{ij} \in \cA_{\rm flat}(Y_{ij}), 
\tilde A_{01}|_{\Sigma_1}=\tilde A_{12}|_{\Sigma_1}, 
% \\ &\qquad\qquad\qquad\qquad\qquad\qquad\quad
\tilde A_{01} |_{\Sigma_0}=A_0, \tilde A_{12} |_{\Sigma_2}=A_2
\bigr\} \\
&\;=
\pi_{\cA(\Sigma_0)\times\cA(\Sigma_2)}\bigl( \bigl(\cL(Y_{01}) \times \cL(Y_{12})\bigr)  \cap\bigl( \cA(\Sigma_0) \times \Delta_{\cA(\Sigma_1)} \times \cA(\Sigma_2)\bigr) \bigr) \\
&\;=
\cL(Y_{01}) \circ \cL(Y_{12}), 
\end{align*}
where we denote the sets of flat connections by ${\cA_{\rm flat}(Y):=\{ \tilde A \in \cA(Y) \,|\, F_{\tilde A}=0\}}$.
This proves all required identities of geometric compositions. However, these geometric compositions are never embedded since all restrictions of the connections to $\Sigma_1$ are flat, thus cannot span the complement of the diagonal.
\end{enumerate} 
While these constructions do not yield a functor $\Bor^{\rm conn}_{2+1}\to\Symp$ via the principle of Remark~\ref{rmk:field2}, we will explain in Example~\ref{ex:inst2cat} how one might use quilts (see \S\ref{ss:symp2}) made up of ASD instantons in place of pseudoholomorphic curves to extend this partial functor to a Floer field theory $\Bor^{\rm conn}_{2+1}\to\Cat$ that factors through a symplectic instanton 2-category whose objects are symplectic Banach spaces of connections.
\end{example}

The beginning of an instanton Floer field theory given above is the natural intermediate step in an expected relation between Chern-Simons theory on 3-manifolds and symplectic invariants arising from a choice of decomposition of the 3-manifold as formulated by Atiyah \cite{Atiyah} in terms of Floer homologies \cite{Floer:inst,Floer:Lag}; see also \cite{Sa:AF,W:survey} and \S\ref{ss:af}.
This symplectic invariant uses Heegaard splittings as explained before Example~\ref{ex:sym} and finite dimensional symplectic quotients of the above spaces of connections, as explained in the following remark.
Moreover, the Chern-Simons theory on 3-manifolds is naturally coupled with Donaldson-Yang-Mills theory on 4-manifolds; see \cite{Don:inv,DK,Don:book}. 
Thus the subsequent sketch of Floer field theories arising from representation spaces should be viewed as the beginning of a symplectic categorification of Donaldson-Yang-Mills theory (in various versions, depending on choice of group and twisting). It also serves as a purely symplectic explanation of the conjecture that the Floer homology arising from a decomposition of the 3-manifold is in fact a 3-manifold invariant, i.e.\ independent of the choice of decomposition; see \S\ref{ss:af} for details.

\begin{remark}[Finite dimensional reduction of instanton Floer field theory]  \rm \label{rmk:quotient}
While the spaces of connections in Example~\ref{ex:infdim} are infinite dimensional and tend to have a smooth structure, a symplectic reduction by the Hamiltonian action of the gauge group yields finite dimensional but generally singular spaces. 
Here the gauge group $\cG(\Sigma)=\cC^\infty(\Sigma,G)$ acts on $\cA(\Sigma)$ by pulling back connections with bundle isomorphisms, and its moment map is the curvature; see \cite{AtiyahBott, Sa:AF, W:survey}.
The symplectic quotient $M_\Sigma:=\cA(\Sigma)\squ{\mathcal G}(\Sigma)$ can thus be understood topologically as the space of representations of the fundamental group $\pi_1(\Sigma)$ in the Lie group $G$ -- given by the holonomies of flat connections -- modulo gauge symmetries represented by simultaneous conjugation of the holonomies.
The quotient\footnote{
The fact that we can take the quotient by the product of gauge groups is due to the identification
${\mathcal G}(\Sigma)\times {\mathcal G}(\Sigma') = \cC^\infty(\partial Y_\alpha, G) = \cC^\infty(Y_\alpha,G)|_{\partial Y_\alpha}$ with the boundary values of the gauge group $\cG(Y_\alpha)$, which uses the assumption of $G$ being connected and simply connected.} 
of the Lagrangian $L_{Y_\alpha}:={\mathcal L}(Y_\alpha)/{\mathcal G}(\Sigma)\times {\mathcal G}(\Sigma') \subset M_\Sigma^-\times M_{\Sigma'}$ is given by those representations that arise as the restriction of a representation of $\pi_1(Y_\alpha)$, i.e.\ yield the identity when evaluated on loops in $\partial Y_\alpha =\Sigma^-\sqcup\Sigma'$ that are contractible in~$Y_\alpha$.

Singularities in these spaces are due to reducible connections, corresponding to representations $\rho:\pi_1(\Sigma)\to G$ on which conjugation by $G$ acts with nondiscrete stabilizer $G_\rho=\{g\in G \,|\, g^{-1}\rho g=\rho\}$ (e.g.\ the stabilizer of the trivial representation is the whole group $G$).
These can be avoided by working on appropriately twisted bundles or making holonomy requirements around punctures in $\Sigma$ resp.\ tangles\footnote{A tangle in a cobordism is an embedded submanifold whose boundary coincides with given punctures on the boundary of the cobordism.} in $Y_\alpha$. (The latter usually yields field theories for cobordisms with tangles, but there are specific -- central in G -- holonomy requirements for which the position of puncture resp.\ tangle is irrelevant.)
Then the symplectic quotient by the gauge group $\cG(\Sigma)$ yields a finite dimensional Lagrangian submanifold $L_{Y_\alpha}\subset M_\Sigma^- \times M_{\Sigma'}$.
\end{remark}

Instead of discussing possible twisting constructions to avoid the reducibles noted above, the following example gives an idea of a finite dimensional Floer field theory in terms of sets rather than manifolds.
For abelian groups $G$, this will actually yield smooth symplectic and Lagrangian manifolds, but a field theory based on these would only capture homological information of the bordism category.

\begin{example}[Naive Floer field theory from representation spaces] \label{ex:rep} \rm
We will go through the Floer field theory construction outlined in Remarks~\ref{rmk:field2} in the example of representations of a compact, connected, simply connected Lie group $G$ such as $G=SU(2)$, which arise from trivial $G$-bundles in Example~\ref{ex:infdim} and Remark~\ref{rmk:quotient}.

\begin{enumerate}
\item
To each closed, connected, oriented surface $\Sigma$, associate the representation space
$$
M_\Sigma := \quotient{ \bigl\{ \rho \in \Hom(\pi_1(\Sigma), G ) \bigr\} }{\sim} 
\quad\text{with}\quad \rho\sim\rho' \, :\Leftrightarrow \; \exists g\in G : \rho' = g^{-1}\rho g .
$$
%for some choice of base point $z\in\Sigma$.
Any standard basis $(\alpha_1,\beta_1,\ldots,\alpha_g,\beta_g)$ for $\pi_1(\Sigma)$, i.e.\ loops that 
%(after deformation near $z$) 
are disjoint except for single transverse intersection points $\alpha_i\pitchfork\beta_i$ and whose concatenation $\prod_{i=1}^g \alpha_i\beta_i\alpha_i^{-1}\beta_i^{-1}$ is homotopic to the constant loop, yields an identification
$$
M_\Sigma \simeq \quotient{\bigl\{ (a_1,b_1,\ldots, a_g,b_g) \in G^{2g} \,\big|\, {\textstyle\prod_{i=1}^g} a_i b_i a_i^{-1} b_i^{-1} = \id\bigr\}}{ \sim } 
$$
(with $\id\in G$ denoting the identity),
modulo simultaneous conjugation
$$
(a_i,b_i)_{i=1,\ldots,g} \sim (g^{-1} a_i g, g^{-1} b_i g)_{i=1,\ldots,g} \qquad \forall g\in G .
$$
\item
To each diffeomorphism $\phi:\Sigma_0\to\Sigma_1$ associate the map $L_\phi: M_{\Sigma_0}\to M_{\Sigma_1}$ which maps $\rho\in M_{\Sigma_0}$ to the representation $L_\phi(\rho)\in M_{\Sigma_1}$
given by $[\gamma\,] \mapsto \rho([\phi^{-1}\circ\gamma\,])$ for any circle $\gamma:S^1\to\Sigma_1$. 
%(Here and in the following, we assume all circles to be based, $\gamma(1)=z$ at $1\in S^1$ and the given base point.)
Observe that $L_\phi \circ L_\psi=L_{\psi\circ\phi}$ when $\phi,\psi$ are composable.
\item
For each attaching circle $\alpha\subset\Sigma$ we use the bijection $\pi_\alpha:\Sigma\less\alpha\to\Sigma'\less\{\text{2 points}\}$ and a deformation of any loop $\gamma:S^1\to\Sigma'$ to avoid the special points to construct
$$
L_\alpha := \bigl\{ \bigl([\rho],[\rho']\bigr) \in M_\Sigma^-\times M_{\Sigma'} \,\big|\, 
\rho([\alpha])=\id , \forall \gamma:  \rho'([\gamma\,])=\rho([\pi_\alpha^{-1}\circ\gamma\,])\bigr\}.
$$ 
Note that this construction is independent of the choice of a parametrization $\alpha:S^1\to \Sigma$ of the attaching circle (and deformation to ${\alpha(1)=z}$).
In the identification obtained from a standard basis $(\alpha_i,\beta_i)_{i=1,\ldots,g}$ for $\Sigma$ with $[\alpha_1]=[\alpha]$ and the induced basis $(\pi_\alpha\circ\alpha_i,\pi_\alpha\circ\beta_i)_{i=2,\ldots,g}$ for $\Sigma'$ we have
$$
L_\alpha = \bigl\{ \bigl( [(a_i,b_i)_{i=1,\ldots,g}],  [(a'_i,b'_i)_{i=2,\ldots,g}] \bigr) \,\big|\, a_1=\id, \forall i\ge 2: a'_i=a_i, b'_i=b_i\bigr\}.
$$
\item[(3')]
The analogous construction for  the adjoint cobordism
$Y_\alpha^-\in\Mor_{\Bor^{\rm conn}_{2+1}}(\Sigma',\Sigma)$ 
yields the transposed Lagrangian 
$L_\alpha^T\subset M_{\Sigma'}^-\times M_\Sigma$.
\item
For any attaching circle $\alpha: S^1 \to \Sigma$ and diffeomorphism $\phi:\Sigma\to\Sigma$ we can rewrite $\rho'([\gamma\,])=\rho([\pi_{\phi(\alpha)}^{-1}\circ\gamma\,])$ in the construction of $L_{\phi(\alpha)}$ equivalently as $\rho'([\phi'\circ\tilde\gamma\,])=\rho([\phi\circ \pi_\alpha^{-1}\circ\tilde\gamma\,])$ for all loops $\tilde\gamma$ since $\pi_{\phi(\alpha)}\circ\phi = \phi' \circ\pi_\alpha$, and thus
\begin{align*}
L_{\phi(\alpha)} 
&= \bigl\{ \bigl([\rho],[\rho'] \bigr) \in M_\Sigma^-\times M_{\Sigma'} \,\big|\, 
\rho([\phi\circ\alpha])=\id , \rho'([\phi'\circ\gamma\,])=\rho([\phi\circ \pi_\alpha^{-1}\circ\gamma\,])\bigr\} \\
&= \bigl\{ \bigl(L_\phi([\tilde\rho]),L_{\phi'}([\tilde\rho']) \bigr) \in M_\Sigma^-\times M_{\Sigma'} \,\big|\, 
\tilde\rho([\alpha])=\id , \tilde\rho'([\gamma\,])=\tilde\rho([\pi_\alpha^{-1}\circ\gamma\,])\bigr\} \\
&= ( L_\phi \times L_{\phi'}) (L_\alpha) .
\end{align*}
\item
For disjoint attaching circles $\alpha\cap\beta=\emptyset$ we calculate the geometric composition
\begin{align*}
L_\alpha \circ L_{\beta'} 
&= \bigl\{ \bigl([\rho ] , [\rho'' ]\bigr) \,\big|\, \exists [\rho' ]\in M_{\Sigma_\alpha} : 
\bigl([\rho] , [\rho']\bigr) \in L_\alpha, \bigl([\rho' ] , [\rho'' ]\bigr) \in L_{\beta'} \bigr\} \\
&=\bigl\{ \bigl([\rho ] , [ \rho'' ]\bigr) \,\big|\, 
\rho([\alpha])=\rho([\beta])=\id, 
\rho''([\gamma\,])=\rho([(\pi_{\beta'}\pi_\alpha)^{-1}\circ\gamma\,])
\bigr\}
\end{align*}
by noting that $[\rho'']$ is determined from $[\rho]$ by
$$
\rho''([\gamma\,])=\rho'([\pi_{\beta'}^{-1}\circ\gamma\,])
=\rho([\pi_\alpha^{-1}\circ\pi_{\beta'}^{-1}\circ\gamma\,])
\qquad
\forall\; \gamma:S^1\to \Sigma'':=(\Sigma_\alpha)_{\beta'}
$$ 
and the additional requirement
$\id=\rho'([\beta'])=\rho([\pi_\alpha^{-1}\circ\beta'\,])$, 
where we have $\pi_\alpha^{-1}(\beta')=\beta$ because the attaching circles are disjoint.
Analogously, in the composition $L_\beta \circ L_{\alpha'}=\bigl\{ \bigl([\rho ] , [ \rho'']\bigr) \,\big|\, \ldots\bigr\}$
we have $\rho''([\gamma\,])=\rho([(\pi_{\alpha'}\pi_\beta)^{-1}\circ\gamma\,])$
for all $\gamma:S^1\to (\Sigma_\beta)_{\alpha'}$.
Using the diffeomorphism $\phi''$ given by 
$\phi''\circ \pi_{\beta'}\pi_\alpha = \pi_{\alpha'} \pi_\beta$, 
we rewrite this as 
$\rho''([\phi''\circ\tilde\gamma\,]) =\rho([(\pi_{\beta'}\pi_\alpha)^{-1}\circ \tilde\gamma\,])$
for all $\tilde\gamma=(\phi'')^{-1}\circ\gamma$ so that we obtain the first identity in \eqref{eq:referee},
\begin{align*}
L_\beta \circ L_{\alpha'} 
&=\bigl\{ \bigl([\rho ] , [ \rho'']\bigr) \,\big|\, 
\rho([\beta])=\rho([\alpha])=\id, 
\rho''([\phi''\circ\gamma\,]) =\rho([(\pi_{\beta'}\pi_\alpha)^{-1}\circ \gamma\,])
\bigr\} \\
&= (\id\times L_{\phi''})\bigl( L_\alpha \circ L_{\beta'} \bigr).
\end{align*}
The second identity between geometric compositions of Lagrangians is similar:
\begin{align*}
&(\id\times L_{\phi''})(L_{\beta'}) \circ L_{\alpha'}^T \\
&=\bigl\{ \bigl([\rho' ] , [\sigma' ]\bigr) \,\big|\, \exists [\rho'']\in M_{(\Sigma_\alpha)_{\beta'}} : 
\bigl([\rho'] , [\rho'']\bigr) \in L_{\beta'}, \bigl([\sigma'] , L_{\phi''}([\rho''])\bigr) \in L_{\alpha'} \bigr\} \\
&=\bigl\{ \bigl([\rho' ] , [\sigma' ]\bigr) \,\big|\,
\rho'([\beta'])=\sigma'([\alpha'])=\id, 
\rho'([\pi_{\beta'}^{-1}\circ\gamma\,]) =\sigma'([\pi_{\alpha'}^{-1}\circ\phi''\circ\gamma\,])
\;\forall \gamma \bigr\} \\
&=\bigl\{ \bigl([\rho'] , [\sigma']\bigr) \,\big|\, 
\rho([\alpha])=\rho([\beta]) =\id, 
 \rho'=\rho([\pi_\alpha^{-1}\circ\ldots\,]) , 
 \sigma'=\rho([\pi_\beta^{-1}\circ\ldots\,])  \bigr\} \\
&=\bigl\{ \bigl([\rho'] , [\sigma']\bigr) \,\big|\, \exists [\rho]\in M_\Sigma : 
\bigl([\rho] , [\rho']\bigr) \in L_\alpha, \bigl([\rho] , [\sigma'])\bigr) \in L_\beta \bigr\}
\; = \; L_\alpha^T \circ L_\beta ,
\end{align*}
where the first composition requires $\rho'([\beta'])=\id=\sigma'([\alpha'])$ in addition to 
$$
\rho'([\pi_{\beta'}^{-1}\circ\gamma\,]) = \rho''([\gamma\,])=\sigma'([\pi_{\alpha'}^{-1}\circ\phi''\circ\gamma\,])
\qquad
\forall\; \gamma:S^1\to (\Sigma_\alpha)_{\beta'}.
$$
Using $\phi''\circ \pi_{\beta'}\circ \pi_\alpha = \pi_{\alpha'} \circ\pi_\beta$ we can rewrite this as
$$
\rho'([\tilde\gamma]) =\sigma'([\pi_{\alpha'}^{-1}\circ\phi''\circ{\pi_\beta'}\circ\tilde\gamma\,])
=\sigma'([\pi_\beta\circ\pi_\alpha^{-1}\circ\tilde\gamma\,])
\qquad\forall\; \tilde\gamma:S^1\to \Sigma_\alpha \less \beta' ,
$$
i.e.\ the conditions in $L_\alpha^T \circ L_\beta$ for these loops, which also correspond to the loops in $\Sigma_\beta\less \alpha'$.
In addition, this second geometric composition requires 
$\rho'([\beta'])=\rho([\beta])=\id$, $\sigma'([\alpha'])=\rho([\alpha])=\id$, which identifies it with the first %geometric 
composition.

Note here that either one of the representations $\bigl([\rho'] , [\sigma']\bigr)\in L_{\beta}'^T \circ L_{\alpha'}$ of $\pi_1(\Sigma_\beta)$ or $\pi_1(\Sigma_\alpha)$ fully determines the intermediate representation $[\rho'']$ of $(\Sigma_\alpha)_{\beta'}$. This can also be seen from the fact that $\pi_{\beta'}$ (as well as $\pi_{\alpha'}$) acts surjectively on fundamental groups, in fact any loop in $Y_{\beta'}$ (not just in $(\Sigma_\alpha)_{\beta'}\subset\partial Y_{\beta'}$) can be homotoped into the boundary component $\Sigma_\alpha$ of higher genus.
This uniqueness of the intermediate representations proves injectivity of the projection in the geometric compositions, and -- if there was a smooth structure -- the corresponding infinitesimal fact would also prove transversality of the intersection, thus embeddedness of the geometric composition $L_{\beta'}^T \circ L_{\alpha'}$.

Embeddedness of $L_\beta \circ L_{\alpha'}$ resp.\ $L_\alpha \circ L_{\beta'}$ analogously follows from $\pi_1$-surjectivity of $\pi_\beta$ resp.\ $\pi_\alpha$. 
For the last geometric composition corresponding to the gluing of cobordisms $Y_\alpha^-\cup_\Sigma Y_\beta$ at the highest genus surface $\Sigma$, the fact that the intermediate representation $[\rho]$ on $\Sigma$ is determined by the representations $\bigl([\rho'] , [\sigma']\bigr)\in L_\alpha^T \circ L_\beta$ on the two lower genus surfaces $\Sigma_\alpha$, $\Sigma_\beta$, is not evident from the formulas. In fact, it is false if we allow $\alpha,\beta$ to be homologous. However, this is excluded by the assumption of all surfaces, in particular $(\Sigma_\alpha)_{\beta'}\simeq (\Sigma_\beta)_{\alpha'}$ being connected.
Thus we can choose a standard basis $(\alpha_1,\beta_1,\ldots,\alpha_g,\beta_g)$ for $\pi_1(\Sigma)$ with $\alpha_1=\alpha$ and $\beta_g=\beta$ to see that points in $L_\alpha^T  \circ L_\beta$ have the form $\bigl( [(a_i,b_i)_{i=2,\ldots,g}],  [(a_i,b_i)_{i=1,\ldots,g-1}]$, which determines the indermediate $[(a_i,b_i)_{i=1,\ldots,g}]\in M_\Sigma$ uniquely.

\item
For attaching circles $\alpha,\beta\subset \Sigma$ with unique transverse intersection point we can choose a standard basis $(\alpha_1,\beta_1,\ldots,\alpha_g,\beta_g)$ for $\pi_1(\Sigma)$ with $\alpha_1=\alpha$ and $\beta_1=\beta$. 
Then $L_\alpha^T \circ L_\beta$ is given by pairs $\bigl(\bigl[(a_i,b_i)_{i=2,\ldots g}\bigr] , \bigl[(a'_i,b'_i)_{i=2,\ldots g} \big] \bigr)\in M_{\Sigma_\alpha}^-\times M_{\Sigma_\beta}$
for which -- after conjugation of the representative $(a'_i,b'_i)_{i=2,\ldots g}$ -- there exists $[(a_i,b_i)]_{i=1,\ldots g} \in M_\Sigma$ such that $a_1 = b_1 = \id$ and $a_i=a'_i, b_i=b'_i$ for $i\ge 2$.
That is, in this basis $L_\alpha^T \circ L_\beta$ is identified with the diagonal over the identified representation spaces $M_{\Sigma_\alpha} \simeq M_{\Sigma_\beta}$.
Since this identification is by the map $L_\phi$, it shows the identity $L_\alpha^T \circ L_\beta = {\rm gr}(L_\phi)$.
Moreover, in the presence of a smooth structure, the geometric composition $L_\alpha^T \circ L_{\beta}$ would be embedded since the intermediate point $[(a_i,b_i)]_{i=1,\ldots g} \in M_\Sigma$ is uniquely determined.
\end{enumerate} 
\end{example}

\begin{remark}[Rigorous Floer field theories from representation spaces] \rm \label{rmk:rigrep}
Even for the simplest nonabelian group $G=SU(2)$, the representation space for the torus $\Sigma=T^2$ in Example~\ref{ex:rep} is the pillowcase $M_{T^2}\simeq S^1\times S^1/\Z_2$ (here $\Z_2$ acts on each factor $S^1$ by reflection with two fixed points), and more complicated representation spaces may not even be orbifolds. In some simple cases, e.g.\ in \cite{hedden} for knots represented by Lagrangians in the pillowcase, one can deal explicitly with these singularities. 
To obtain a full Floer field theory, \cite{ww:fielda} replaces moduli spaces of flat $G$-connections with moduli spaces of central-curvature connections on unitary bundles with fixed determinant and coprime rank $r$ and degree $d$. For $r=2$, $d=1$ this corresponds to flat connections on nontrivial $SO(3)$-bundles, which can also be viewed as taking the above representation spaces for $G=SU(2)$ on a punctured surface $\Sigma\less\pt$, 
and instead of holonomy $\id$ requiring $-\id$ around the puncture. This yields monotone symplectic manifolds
$$
\widehat M_\Sigma \simeq \quotient{\bigl\{ (a_i,b_i)_{i=1,\ldots,g} \in SU(2)^{2g} \,\big|\, 
{\textstyle \prod_{i=1}^g} a_i b_i a_i^{-1} b_i^{-1} = -\id\bigr \} }{\sim} .
$$ 
If instead of $-\id$ we replace $\id$ with a non-central element $k\in G$, then the representation spaces for the cobordisms are no longer independent of the choice of paths connecting the punctures on the surface (around which the holonomy is required to be conjugate to $k$). The corresponding Floer field theory in \cite{ww:fieldb} thus yields invariants for pairs of cobordisms with embedded tangles (though invariance under isotopies of the embedding is not yet discussed, so the field theory falls short of yielding knot or link invariants). 
\end{remark}

Just as dimensional reductions of Donaldson-Yang-Mills theory give rise to the Atiyah-Floer conjecture, the Seiberg-Witten theory for 4-manifolds motivated the development of Heegaard-Floer homology by Ozsv\'ath-Szab\'o \cite{OS1}. 
Since a 2-dimensional reduction of the Seiberg-Witten equations gives rise to vortex equations, whose moduli spaces of solutions can be identified with symmetric products of the ambient space \cite{Gar}, they arrived at a 3-manifold invariant that on a given 3-manifold $Y$ is constructed by choosing a so-called Heegaard splitting $Y=H_0^-\cup_\Sigma H_1$ into two handlebodies,\footnote{A handlebody is a 3-manifold $H$ with boundary $\partial H=\Sigma$  (i.e.\ a cobordism from $\Sigma$ to the empty set), which is obtained from handle attachments along a maximal number of disjoint attaching circles $\alpha_1,\ldots,\alpha_g\subset \Sigma$ that are homologically independent.} 
representing the handlebodies by Lagrangians $L_{H_i}\subset M_\Sigma={\rm Sym}^g(\Sigma)$ in the symmetric product of the dividing surface $\Sigma$, and taking Floer theoretic invariants of the pair $L_{H_0}, L_{H_1}\subset M_\Sigma$.
Here and throughout, $g$ will denote the genus of the present surface $\Sigma$.
Since Heegaard splittings are not unique by any means, Ozsv\'ath-Szab\'o had to explicitly compare holomorphic curves in symmetric products of different surfaces to prove that the Heegaard-Floer homology groups $HF(L_{H_0},L_{H_1})$ (with ``plus/minus/hat'' decorations arising from keeping track of intersections with a marked point in $\Sigma$) are in fact 3-manifold invariants, i.e.\ independent of the choice of splitting. 

There are several more conceptual explanations of this independence. Firstly, \cite{KLT} recently proved an Atiyah-Floer type identification of $HF(L_{H_0},L_{H_1})$ with monopole Floer homology -- the 3-manifold invariant arising directly from Seiberg-Witten gauge theory \cite{KM} . Secondly, as explained in \S\ref{ss:af}, an extension of Heegaard-Floer homology to a 2+1 Floer field theory would also reproduce the Heegaard-Floer 3-manifold invariant.
In addition, this would provide a symplectic categorification of Seiberg-Witten theory.
Perutz established the basics of such a theory by constructing Lagrangian matching invariants \cite{perutz2} for 4-manifolds equipped with broken Lefshetz fibrations, which are expected to be equal to the Seiberg-Witten invariants, in particular independent of the choice of broken fibration. 
The core of this approach is a construction in \cite{perutz1} of Lagrangians in symmetric products associated to simple 3-cobordisms, whose basic structure we explain in the following.

\begin{example}[Naive Floer field theory from symmetric products] \label{ex:sym} \rm
We will use the steps in Remark~\ref{rmk:field2} to outline the extension of Heegaard-Floer homology to a Floer field theory as proposed in \cite{perutz1,lekili,lp} for any fixed $n\ge 0$ (or $n<0$ with surfaces restricted to genus $g\ge -n$).
To avoid dealing with complex geometry, we will work with a naive version of symmetric products in which they are constructed as sets rather than smooth algebraic varieties.
The smooth, symplectic, and Lagrangian structures are discussed in \cite{perutz1}.

\begin{enumerate}
\item
To each closed, connected, oriented surface $\Sigma$, associate the symmetric product 
$$
M_\Sigma:={\rm Sym}^{g + n}\Sigma \;=\; \quotient{\Sigma^{g+n}}{S_{g+n}}
\;=\; \quotient{\Sigma\times\ldots\times\Sigma}{\small (z_1,\ldots,z_{g+n}) \sim (z_{\sigma(1)}, \ldots, z_{\sigma(g+n)})},
$$ 
where $g$ is the genus of $\Sigma$, and $S_{g+n}$ is the symmetric group acting by permutations $\sigma:\{1,\ldots,g+n\} \to\{1,\ldots,g+n\}$.
On the complement of the diagonal $\Delta\subset\Sigma^{g+n}$ (where two or more points coincide) this is a smooth quotient, but to obtain a global smooth structure it has to be viewed as the symmetric product of an algebraic curve. This requires the choice of a complex structure on $\Sigma$, and the symplectic structure is an additional choice -- induced by the broken fibration in \cite{perutz1} -- all of which we suppress here.
\item
To each diffeomorphism $\phi:\Sigma_0\to\Sigma_1$ associate the map
$$
L_\phi: M_{\Sigma_0}\to M_{\Sigma_1}, \quad \bigl[\bigl(z_1,\ldots,z_{g+n}\bigr)\bigr] \mapsto  \bigl[\bigl(\phi(z_1),\ldots,\phi(z_{g+n})\bigr)\bigr]
$$ 
and observe that $L_\phi \circ L_\psi=L_{\phi\circ\psi}$ when $\phi,\psi$ are composable.
This yields a smooth map when $\phi$ is holomorphic in the chosen complex structures on $\Sigma_i$, but in general this naive construction only yields the correct map outside of the diagonal.
\item
To each attaching circle $\alpha\subset\Sigma$ associate $L_\alpha \subset M_\Sigma^-\times M_{\Sigma'}$  given by
$$
L_\alpha := \bigl\{  \bigl( \bigl[(z_1,\ldots,z_{g+n})\bigr] , \bigl[(\pi_\alpha(z_2),\ldots,\pi_\alpha(z_{g+n}) )\bigr]\bigr) \,\big|\, z_1\in\alpha, z_2,\ldots,z_{g+n}\in \Sigma\less\alpha
\bigr\} .
$$ 
Note that this naively constructed subset is not even closed, let alone a smooth submanifold. However, \cite{perutz1} rigorously constructs Lagrangian submanifolds $\widehat V_\alpha$ that are smoothly isotopic to $L_\alpha$ on the subset $U_0\cup U_1\subset {\rm Sym}^{g + n}\Sigma$ given by tuples with up to one point in a given tubular neighbourhood $\widetilde\alpha\subset\Sigma$ of $\alpha$.
Thus it makes some sense to discuss the field theory construction in this model. 
\item[(3')]
To the adjoint 1-handle attachment 
$Y_\alpha^-\in\Mor_{\Bor^{\rm conn}_{2+1}}(\Sigma',\Sigma)$ 
we associate the transposed Lagrangian 
$L_\alpha^T\subset M_{\Sigma'}^-\times M_\Sigma$.
\item
For an attaching circle $\alpha \subset\Sigma$ and diffeomorphism $\phi:\Sigma\to\Sigma$ note that we have $( L_\phi \times L_{\phi'}) (L_\alpha) = L_{\phi(\alpha)}$ because $z'_i:=\phi(z_i)$ yields an identification
\begin{align*}
&
\bigl\{  \bigl( \bigl[\bigl(\phi(z_i)\bigr)_{i=1,\ldots g+n}\bigr] , \bigl[\bigl(\phi'(\pi_\alpha(z_i))\bigr)_{i=2,\ldots g+n} \bigr] \bigr) \,\big|\, z_1\in\alpha, z_{i\ge 2}\in \Sigma\less\alpha
\bigr\} \\
&= \bigl\{  \bigl( \bigl[ (z'_i )_{i=1,\ldots g+n}\bigr] , \bigl[\bigl(\pi_{\phi(\alpha)}(z'_i)\bigr)_{i=2,\ldots g+n} \bigr]\bigr) \,\big|\, z'_1\in\phi(\alpha), z'_{i\ge 2}\in \Sigma\less\phi(\alpha)
\bigr\}.
\end{align*}
In \cite[2.3.1]{perutz1}, actual symplectomorphisms are associated to diffeomorphisms $\phi$ that arise from parallel transport in a broken fibration.
\item
For disjoint attaching circles $\alpha\cap\beta=\emptyset$ the bijectivity of $\pi_\beta:\Sigma\less\beta \to \Sigma_\beta\less\{\text{2 points}\}$ implies 
$x_1\in\alpha \Leftrightarrow \pi_\beta(x_1)\in\alpha'$ and analogously 
${x_2\in\beta \Leftrightarrow \pi_\alpha(x_2)\in\beta'}$. Thus
\begin{align*}
L_\alpha \circ L_{\beta'} 
&= \bigl\{ \bigl([\, \ul x\, ] , [\, \ul z\, ]\bigr) \,\big|\, \exists [\, \ul y\, ]\in M_{\Sigma_\alpha} : 
\bigl([\, \ul x\,] , [\,\ul y\,]\bigr) \in L_\alpha, \bigl([\,\ul y\,] , [\,\ul z\,]\bigr) \in L_{\beta'} \bigr\}
\\
&=\bigl\{
 \bigl( \bigl[(x_i)_{i=1,\ldots g+n}\bigr] , \bigl[(z_i)_{i=3,\ldots g+n} \big]\bigr) \,\big|\, x_1\in\alpha, \pi_\alpha(x_2)\in\beta', 
z_i=\pi_{\beta'}(\pi_\alpha(x_i))
\bigr\}, \\
L_\beta \circ L_{\alpha'} 
&=\bigl\{
 \bigl( \bigl[(x_i)_{i=1,\ldots g+n}\bigr] , \bigl[(z_i)_{i=3,\ldots g+n} \big]\bigr) \,\big|\, x_2\in\beta, \pi_\beta(x_1)\in\alpha', z_i=\pi_{\alpha'}(\pi_\beta(x_i))
\bigr\}
\end{align*}
are related via $\id \times L_{\phi''}$ by the defining property 
$\phi''\circ \pi_{\beta'}\circ\pi_\alpha = \pi_{\alpha'}\circ \pi_\beta$ of $\phi''$.
The second identity between geometric compositions of Lagrangians is similar:
\begin{align*}
&(\id\times L_{\phi''})(L_{\beta'}) \circ L_{\alpha'}^T \\
&=\bigl\{ \bigl([\, \ul x\, ] , [\, \ul z\, ]\bigr) \,\big|\, \exists [\, \ul v\, ]\in M_{(\Sigma_\alpha)_{\beta'}} : 
\bigl([\, \ul x\,] , [\,\ul v\,]\bigr) \in L_{\beta'}, \bigl([\,\ul z\,] , L_{\phi''}([\,\ul v\,])\bigr) \in L_{\alpha'} \bigr\} \\
&=\bigl\{
 \bigl( \bigl[(x_i)_{i=2,\ldots g+n}\bigr] , \bigl[(z_i)_{i=2,\ldots g+n} \big] \bigr)\,\big|\, x_2\in\beta', 
 z_2 \in \alpha', \phi''(\pi_{\beta'}(x_i))=\pi_{\alpha'}(z_i) \;\forall i\ge 3 \bigr\} \\
&=\bigl\{ \bigl([\, \ul x\, ] , [\, \ul z\, ]\bigr) \,\big|\, \exists [\, \ul y\, ]\in M_\Sigma : 
\bigl([\, \ul y\,] , [\,\ul x\,]\bigr) \in L_\alpha, \bigl([\,\ul y\,] , [\,\ul z\,]\bigr) \in L_\beta \bigr\}
\; = \; L_\alpha^T \circ L_\beta .
\end{align*}
Indeed, we have 
$[\, \ul y\,] = [(y_1,\tilde x_2,\ldots,\tilde x_{g+n})] = [(y'_1,\tilde z_2,\ldots,\tilde z_{g+n})]$ for 
$\tilde x_i = \pi_\alpha^{-1}(x_i)$, $\tilde z_i = \pi_\beta^{-1}(z_i)$ and
some $y_1\in\alpha$, $y'_1\in\beta$. Since $\alpha,\beta$ are disjoint, this implies $y_1=\tilde z_i$ and $y'_1=\tilde x_j$ for some $i,j\ge 2$ which we can permute to $i=j=2$ to obtain $z_2=\pi_\beta(y_1)\in\alpha'$ and $x_2=\pi_\alpha(y'_1)\in\beta'$. Permutation also achieves $\tilde x_i=\tilde z_i$ for $i\ge 3$ and hence $x_i=\pi_\alpha(y_i)$, $z_i=\pi_\beta(y_i)$ for some $y_i\in\Sigma\less (\alpha\cup\beta)$, which can be rewritten as $\phi''(\pi_{\beta'}(x_i))=\pi_{\alpha'}(z_i)$ by the defining property of $\phi''$ applied to $y_i$.

While transversality cannot be discussed at the level of sets, note that the intermediate points $[\,\ul y\,]$ resp.\ $[\,\ul v\,]$ in the four geometric compositions above are uniquely determined by $\bigl([\, \ul x\,] , [\,\ul z\,]\bigr)$. This proves injectivity of the projection in the geometric composition, and the same infinitesimal fact in the presence of a smooth structure also proves transversality of the intersection, thus embeddedness of the geometric compositions.
For the true Lagrangian submanifolds, the corresponding identities -- up to Hamiltonian isotopy -- are conjectured in \cite[3.6.1]{perutz1}.
 
\item
For attaching circles $\alpha,\beta\subset \Sigma$ with unique transverse intersection point  we have
\begin{align*}
L_\alpha^T \circ L_\beta
&=\bigl\{ \bigl([\, \ul x\, ] , [\, \ul z\, ]\bigr) \,\big|\, \exists [\, \ul y\, ]\in M_\Sigma : 
\bigl([\, \ul y\,] , [\,\ul x\,]\bigr) \in L_\alpha, \bigl([\,\ul y\,] , [\,\ul z\,]\bigr) \in L_\beta \bigr\} \\
&=\bigl\{
 \bigl( \bigl[(x_i)_{i=2,\ldots g+n}\bigr] , \bigl[(z_i)_{i=2,\ldots g+n} \big] \bigr)\,\big|\, \text{\eqref{i2}},  \phi(x_i)=z_i \;\forall i\ge 3 \bigr\} 
 \;\simeq\;{\rm gr}(L_\phi),
\end{align*}
where the intermediate point
$[\, \ul y\,] = [(y_1,\tilde x_2,\ldots,\tilde x_{g+n})] = [(y'_1,\tilde z_2,\ldots,\tilde z_{g+n})]$ after permutation satisfies either $\tilde z_2=y_1\in\alpha$, $\tilde x_2=y'_1\in\beta$ or $y_1=y'_1\in \alpha\pitchfork\beta$, $\pi_\alpha^{-1}(x_2)=\pi_\beta^{-1}(z_2)$. In both cases $\pi_\alpha^{-1}(x_i)=\pi_\beta^{-1}(z_i)$  for $i\ge 3$ can be rewritten as $\phi(x_i)=z_i$ by $\phi\circ\pi_\alpha=\pi_\beta$. For $i=2$ we have
\begin{equation}
\label{i2}
x_2 \in \pi_\alpha(\beta), z_2 \in \pi_\beta(\alpha) \qquad\text{or}\qquad \pi_\alpha^{-1}(x_2)=\pi_\beta^{-1}(z_2) \in\Sigma\less(\alpha\cup\beta) .
\end{equation}
In view of the additional property $\phi(\pi_\alpha(\beta))=\pi_\beta(\alpha)$ of the diffeomorphism $\phi:\Sigma_\alpha\to\Sigma_\beta$ the expectation is that \eqref{i2} is equivalent (up to Hamiltonian isotopy of the Lagrangian) to $\phi(x_2)=z_2$.

Note moreover that the intermediate point $[\,\ul y\,]$ is uniquely determined by $\bigl([\, \ul x\,] , [\,\ul z\,]\bigr)$, which as before would proves embeddedness of the geometric composition
$L_\alpha^T \circ L_{\beta}$ if the same fact holds after adjustment to achieve a smooth structure.

In the true Lagrangian setting of \cite{perutz1}, this move has not been addressed yet.
\end{enumerate} 
\end{example}

\begin{remark}[Monoidal structures and gauge theory for disconnected surfaces] \rm \label{rmk:conn}
Note that the functor arising from infinite dimensional gauge theory in Example~\ref{ex:infdim} can equally be applied to disconnected surfaces and cobordisms and intertwines the disjoint union $\sqcup$ on $\Bor_{2+1}$ with a natural monoidal structure on the symplectic category -- the Cartesian product:
$$
\cA(\Sigma \sqcup \Sigma') = \cA(\Sigma) \times \cA(\Sigma') , \qquad
\cL(Y \sqcup Y') = \cL(Y) \times \cL(Y') .
$$
The same can be said for the representation spaces in Example~\ref{ex:rep}, but it no longer holds in the gauge theoretic settings in which we actually obtain smooth, finite dimensional symplectic manifolds and Lagrangians.
While the symmetric product of a disconnected surface at least is given by a union of Cartesian products, e.g.\
$$
{\rm Sym}^2(\Sigma \sqcup \Sigma') = {\rm Sym}^2(\Sigma) \sqcup {\rm Sym}^1(\Sigma) \times {\rm Sym}^1(\Sigma') \sqcup {\rm Sym}^2(\Sigma') ,
$$
the representation spaces of Remark~\ref{rmk:rigrep} become singular on disconnected surfaces.
Indeed, a puncture $\pt\in \Sigma \sqcup \Sigma'$ lies on only one of the connected components, w.l.o.g.\ $\pt\in\Sigma$, so that the holonomy of a flat connection yields an element of
$$
\Hom\bigl(\pi_1( (\Sigma \sqcup \Sigma') \less \pt ) , SU(2)\bigr)= \Hom\bigl(\pi_1( \Sigma \less \pt ), SU(2) \bigr) \times \Hom\bigl(\pi_1( \Sigma' ), SU(2) \bigr) ,
$$
and thus the moduli space of flat connections is
$\widehat M_{\Sigma \sqcup \Sigma'} = \widehat M_{\Sigma} \times M_{\Sigma'}$, 
where the second factor is the singular representation space from Example~\ref{ex:rep}.

Moreover, adding a requirement of compatibility with monoidal structures to our notion of Floer field theory, such as $\Sigma\sqcup\Sigma' \mapsto M_{\Sigma\sqcup \Sigma'} = M_\Sigma\times M_{\Sigma'}$ for a functor $\Bor_{2+1}\to\Symp$, only makes sense if we also have compatibility such as $M\times M' \mapsto \cC_{M\times M'} = \cC_{M} \otimes \cC_{M'}$ for the functor $\Symp\to\Cat$, i.e.\ a natural factorization of the category $\cC_{M\times M'}$ that is associated to a Cartesian product of symplectic manifolds. 
However, our construction of the symplectic 2-category and the induced functor in \S\ref{ss:symp2} is such that the objects of $\cC_{M\times M'}$ are general Lagrangian submanifolds of $M\times M'$, not just split Lagrangians $L\times L' \subset M\times M'$ arising from objects $L\in\Obj_{\cC_M}$ and $L'\in\Obj_{\cC_{M'}}$ in the categories associated to the factors of the Cartesian product.

Homological algebra allows one to formulate a sense in which refined versions of these categories may be equivalent,
$\widetilde\cC_{M\times M'} \sim \widetilde\cC_{M} \otimes \widetilde\cC_{M'}$, but it would likely require significant restrictions on the geometry of the symplectic manifolds $M,M'$.
\end{remark}

\subsection{Atiyah-Floer type conjectures for 3-manifold invariants} \label{ss:af}

This section discusses the invariants of 3-manifolds in the sense of \S\ref{sec:invariant} which arise abstractly from 2+1 connected Floer field theories as in Definition~\ref{def:fft}, and in the more specific examples surveyed in \S\ref{ss:ex}.
The notion of field theories originated with the idea of obtaining invariants for manifolds by decomposing them into simpler pieces. 
This also motivated the Atiyah-Floer conjecture in the context of Example~\ref{ex:rep} and Heegaard-Floer homology, in which a (conjectural) invariant $|I|:|Man_3|\to |{\rm Gr}|$ takes values in isomorphism classes of groups and is constructed roughly as follows (c.f.\ the outline before Example~\ref{ex:sym}).
\begin{enumerate}
\item
Choose a representative of $[Y]$ and a Heegaard splitting $Y=H^-_0\cup_\Sigma H_1$ along a surface $\Sigma$ into two handlebodies $H_i$ with $\partial H_i=\Sigma^-$. 
This is a special case of a decomposition of the morphism $[Y]\in\Mor_{2+1}(\emptyset,\emptyset)$ given by $[Y]=[H_0^-]\circ [H_1]$.
\item
Represent the dividing surface $\Sigma$ by a symplectic manifold $M_\Sigma$ and the two handlebodies by Lagrangians $L_{H_i}\subset M_\Sigma$, e.g.\ as follows in the Examples:

\smallskip
\noindent
Ex.\ref{ex:rep}:
Using the map $\pi_1(\Sigma)\to\pi_1(H)$ induced by inclusion $\Sigma\hookrightarrow H$ we set

\smallskip
$L_H:= \bigl\{\rho \,\big|\, \rho(\gamma)=\id \;\forall [\gamma]=0\in \pi_1(H) \bigr\} 
\;\subset\;
M_\Sigma :=  \Hom(\pi_1(\Sigma), G ) \big/\!\sim .$

\smallskip
\noindent
Ex.\ref{ex:sym}:
For $g$ the genus of $\Sigma$ and disjoint generators $\alpha_1,\ldots,\alpha_g\subset\Sigma\subset H$ of $\pi_1(H)$ (an additional choice that the invariant may depend on) we set 

\smallskip
$L_H:= T_{\underline \alpha}:= \bigl\{ \bigl[(z_1,\ldots,z_{g})\bigr]  \,\big|\, z_i\in \alpha_i \bigr\}
\;\subset\; 
M_\Sigma:={\rm Sym}^g\Sigma.$ 
\smallskip 

\item 
Take
 $|I|([Y])$ to be the isomorphism class of the Floer homology $HF(L_{H_0},L_{H_1})$.
% of the pair of Lagrangians.
\item
Check that different choices of representatives and Heegaard splittings yield isomorphic Floer homology groups.
\end{enumerate}
The last step of this program is a major challenge. In the context of Example~\ref{ex:rep}, this step would follow from the Atiyah-Floer conjecture below, since instanton Floer homology arises from the ASD Yang-Mills equation on $\R\times Y$, thus does not depend on the choice of a Heegaard splitting, and in fact yields a 3-manifold invariant, i.e.\ is independent -- up to isomorphism -- from other choices involved in the construction.
In the context of Example~\ref{ex:sym}, the invariance in Step~4 was proven as part of the construction \cite{OS1}, but also follows from the analogue of the Atiyah-Floer conjecture established in \cite{KLT}, which identifies the three flavours of Heegaard Floer homology with three flavours of monopole Floer homology. The latter arise from the Seiberg-Witten equation on $\R\times Y$ and were proven to be a 3-manifold invariant in~\cite{KM}.

\begin{conjecture}[Atiyah-Floer type conjectures for Heegaard splittings] \label{con:af}
For any Heegaard splitting $Y=H^-_0\cup_\Sigma H_1$ of a closed 3-manifold $Y$
there are isomorphisms

\smallskip
\noindent
Ex.\ref{ex:rep}:
$HF(L_{H_0},L_{H_1})\simeq HF_{\rm inst}(Y)$, if $Y$ is a homology 3-sphere\footnote{A closed 3-manifold is called (integral) homology 3-sphere if its homology groups with $\Z$-coefficients $H_*(Y;\Z)\simeq H_*(S^3;\Z)$ coincide with those of the 3-sphere. This assumption guarantees the absence of nontrivial reducible connections on $Y$.},

\smallskip
\noindent
Ex.\ref{ex:sym}: 
$HF^{\cdots}(L_{H_0},L_{H_1})\simeq HF^{\cdots}_{\rm mon}(Y)$ for the three versions $HF^+, HF^-,\widehat{HF}$.

\end{conjecture}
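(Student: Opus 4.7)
The plan is to leverage the Floer field theoretic framework of this section to recast each isomorphism as an equality of $2+1$ connected Floer field theories evaluated on $Y\in\Mor_{\Bor^{\rm conn}_{2+1}}(\emptyset,\emptyset)$ decomposed as $[Y]=[H_0^-]\circ[H_1]$. Concretely, in each setting I would construct a gauge-theoretic functor $\cF_{\rm gauge}: \Bor^{\rm conn}_{2+1} \to \Cat$ whose value on the closed $3$-manifold $Y$ (viewed as an endomorphism of $\emptyset$) reproduces $HF_{\rm inst}(Y)$, resp.\ $HF^{\cdots}_{\rm mon}(Y)$, and then identify it with the symplectic Floer field theory of Example~\ref{ex:rep}, resp.\ Example~\ref{ex:sym}, by comparing their values on the generating data prescribed in Remark~\ref{rmk:field2}.

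The first step is to verify that the gauge-theoretic functor assigns to each surface $\Sigma$ the representation variety $M_\Sigma$, resp.\ the symmetric product ${\rm Sym}^{g+n}\Sigma$. This would follow from a Morse-theoretic analysis of the Chern-Simons (resp.\ Seiberg-Witten) action functional near a cylindrical neck $[-N,N]\times\Sigma$ in the limit $N\to\infty$: the generators localize to translation-invariant configurations on $\R\times\Sigma$, which modulo gauge are flat connections, resp.\ vortex solutions whose moduli space is known to be the symmetric product. The second step is to check that the gauge-theoretic functor assigns to each handlebody $H$ precisely the Lagrangian $L_H$ specified in the conjecture: for instantons, by identifying the restriction map on flat connections on $H$ with the Lagrangian embedding $L_H \hookrightarrow M_\Sigma$; for monopoles, by matching the adiabatic-limit moduli on $H$ with the torus $T_{\underline\alpha}\subset {\rm Sym}^g\Sigma$.

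The analytical heart, and the main obstacle, is then the adiabatic-limit theorem: stretching the collar neighborhood $[-N,N]\times\Sigma \subset \R\times Y$ and letting $N\to\infty$, one must establish a bijection between ASD instantons (resp.\ Seiberg-Witten solutions) on $\R\times Y$ and pseudoholomorphic strips $\R\times[0,1]\to M_\Sigma$ with boundary on $(L_{H_0},L_{H_1})$, together with a transversality and gluing package identifying the differentials on the two Floer complexes. For the instanton case on $SU(2)$ homology $3$-spheres this program was pioneered by Dostoglou-Salamon in an $S^1$-fibered setting, and the principal remaining task would be to push it through in the singular target $M_\Sigma$; for the monopole case the cognate adiabatic analysis has been completed by Kutluhan-Lee-Taubes via an intermediate passage through ECH. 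Once either analysis is in place, independence of the Heegaard splitting follows \emph{a posteriori} from the functoriality of $\cF_{\rm gauge}$ together with the Cerf decomposition Theorem~\ref{thm:borCerf}, closing the loop.
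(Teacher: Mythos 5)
This statement is a \emph{conjecture}, not a theorem, and the paper offers no proof of it; §\ref{ss:af} instead surveys partial results (Taubes' Euler-characteristic computation, the Dostoglou--Salamon model case, the Kutluhan--Lee--Taubes resolution of the monopole variant) and sketches strategies. Your proposal is therefore comparable not to a proof but to that survey, and in that comparison it reproduces the right overall shape: an adiabatic-limit degeneration of the gauge-theoretic PDE over a stretched neck $[-N,N]\times\Sigma$, identification of limits with pseudoholomorphic data in the symplectic target, and a gluing/transversality package to match Floer differentials, together with the citation to KLT for the monopole flavor.

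However, you should be more explicit about two points where your outline silently elides the reasons the conjecture remains open. First, in the instanton case (Example~\ref{ex:rep}) the left-hand side $HF(L_{H_0},L_{H_1})$ is \emph{not currently defined}: the representation variety $M_\Sigma$ is a singular space and no adequate theory of pseudoholomorphic curves (and hence no Lagrangian Floer homology) exists there. The paper flags this explicitly as a gap that ``has not yet been approached.'' Saying the ``principal remaining task would be to push it through in the singular target $M_\Sigma$'' understates this: the issue is not analytical completion but that one side of the conjectured isomorphism lacks a definition. Second, the ``gauge-theoretic functor $\cF_{\rm gauge}\colon \Bor^{\rm conn}_{2+1}\to\Cat$'' you invoke is itself a conjectural object; the Donaldson 2-category in Example~\ref{ex:Donaldson} is only proposed, and its construction is obstructed by reducible connections. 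So the step ``independence follows a posteriori from functoriality of $\cF_{\rm gauge}$'' assumes a structure whose existence is part of what needs to be proven.

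It is also worth noting that the paper's preferred route for the $SU(2)$ case is not the direct adiabatic limit you emphasize, but the field-theoretic ``local-to-global'' argument of Remark~\ref{rmk:localtoglobal}: pass through a 2-category such as ${\rm InAF}$ or ${\rm AtFl}$ in which both $H$ and $\cL_H$ (or $L_H$) appear as 1-morphisms, prove a \emph{local} isomorphism $H \sim \cL_H$, and deduce the \emph{global} isomorphism of Floer homologies formally; this is the strategy of the Salamon--Wehrheim open--closed isomorphism \eqref{eq:AFinfdim}, proved in \cite{SW:openclosed}. Your proposal does not engage with that route at all. For a blind reconstruction your sketch is a reasonable account of the analytical program, but you should flag clearly that you are describing a research program rather than supplying a proof, and in particular distinguish the two examples: the monopole statement (Ex.~\ref{ex:sym}) is a theorem of KLT, while the instanton statement (Ex.~\ref{ex:rep}) is open and not even well-posed as stated.
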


In the context of Example~\ref{ex:rep}, a well defined part of this conjecture (equality of Euler characteristics) was proven by Taubes \cite{taubes:casson}. Defining the full Lagrangian Floer homology would require a notion of pseudoholomorphic curves in the singular representation space $M_\Sigma$, which has not yet been approached. Aside from this, a proof approach was outlined in \cite{Sa:AF,W:survey}, extending the proof in \cite{DS} of the well posed Atiyah-Floer type conjecture described in Remark~\ref{rmk:SO3}.
Another well defined version of the original Atiyah-Floer conjecture for trivial $SU(2)$-bundles was formulated by Salamon \cite{Sa:AF} in the context of the infinite dimensional Floer field theory outlined in Example~\ref{ex:infdim}. For Heegaard splittings of homology 3-spheres $Y=H^-_0\cup_\Sigma H_1$ it asserts the existence of an isomorphism that involves an instanton Floer homology for the pair of infinite dimensional Lagrangians $\cL_{H_0},\cL_{H_1}$ and was recently proven in \cite{SW:openclosed} with field theoretic methods. Roughly speaking, the existence of a 2-category which comprises both handlebodies $H$ and their associated Lagrangians $\cL_H$ as 1-morphisms allows us to express the notion of a ``local'' isomorphism $H\sim \cL_H$, which -- once proven -- implies more ``global'' isomorphisms (see Remark~\ref{rmk:localtoglobal}) such as
\begin{equation}\label{eq:AFinfdim}
HF_{\rm inst}([0,1]\times\Sigma, \cL_{H_0}\times\cL_{H_1})\simeq HF_{\rm inst}(Y) .
\end{equation}
In particular, this proves that the above Steps 1--3 applied to Example~\ref{ex:infdim} yield a well defined invariant for homology 3-spheres, i.e.\ the left hand side of \eqref{eq:AFinfdim} is independent of the choice of Heegaard splitting, as required in Step 4.

A more conceptual reason\footnote{
This reasoning is based on noting that Heegaard splittings $Y=H^-_0\cup_\Sigma H_1$ arise from special Cerf decompositions $Y=Y^-_{\alpha_1}\cup_{\Sigma_1} \ldots Y^-_{\alpha_n} \cup_\Sigma Y_{\beta_n} \ldots \cup_{\Sigma'_1}Y_{\beta_1}$ in which all handles of the same index are grouped together. Composing the handles of equal index yields the corresponding handlebodies $H_1=Y_{\beta_n} \ldots \cup_{\Sigma'_1}Y_{\beta_1}$ and
$H_0=\bigl(Y^-_{\alpha_1}\cup_{\Sigma_1} \ldots Y^-_{\alpha_n}\bigr)^-=Y_{\alpha_n} \ldots \cup_{\Sigma_1}Y_{\alpha_1}$, 
and the moves between Heegaard splittings can be expressed in terms of Cerf moves.
}
for the invariance in Step~4 would be given by an extension of the
constructions in Steps~1--3 to a 3-manifold invariant resulting from a (connected) 2+1 Floer field theory as outlined below, together with an extension of the symplectic category to a 2-category as in Example~\ref{ex:2symp}.

\begin{enumerate}
\item
The Heegaard splitting $Y=H^-_0\cup_\Sigma H_1$ is a decomposition of the morphism $[Y]\in\Mor_{\Bor^{\rm conn}_{2+1}}(\emptyset,\emptyset)$ given by $[Y]=[H_0^-]\circ [H_1]$.
\item
The representation by symplectic data can be viewed as determining parts of a functor $\cF:\Bor^{\rm conn}_{2+1}\to\Symp$ by associating to the empty set $\emptyset\in\Obj_{\Bor^{\rm conn}_{2+1}}$ the trivial symplectic manifold given by a point $\cF(\emptyset):=\pt\in\Obj_{\Symp}$, to nonempty surfaces $\Sigma\in\Obj_{\Bor^{\rm conn}_{2+1}}$ the given symplectic manifolds $\cF(\Sigma):=M_\Sigma$, and to handlebodies $H\in \Mor_{\Bor^{\rm conn}_{2+1}}(\emptyset,\Sigma)$ the Lagrangian $\cF(H_i):=L_{H_i}\subset\pt^- \times M_\Sigma$.
\item
The Floer homology $HF(L_{H_0},L_{H_1})=\Mor^2_{\Symp}(L_{H_0},L_{H_1})$ is the 2-morphism space for $L_{H_0},L_{H_1}\in\Mor^1_{\Symp}(\pt,M_\Sigma)$ in the symplectic 2-category.
\item
Check that the construction in 2. extends to a functor $\cF:\Bor^{\rm conn}_{2+1}\to\Symp$.
\end{enumerate}

Here the functoriality in Step~4 guarantees in particular that different Heegaard decompositions $\widetilde H^-_0\cup_{\widetilde\Sigma} \widetilde H_1 = Y=H^-_0\cup_\Sigma H_1$ of the same 3-manifold, i.e.\ different factorizations
$[\widetilde H_0^-]\circ [\widetilde H_1]=[Y]=[H_0^-]\circ [H_1]\in \Mor_{\Bor^{\rm conn}_{2+1}}(\emptyset,\emptyset)$ 
are mapped to equivalent composable chains of Lagrangians
$$
[{L}_{\widetilde H_0^-}] \circ [{L}_{\widetilde H_1}] = [\underline{L}_{Y}] =
[{L}_{H_0^-}] \circ [{L}_{H_1}]  \; \in\;  \Mor_{\Symp}(\pt,\pt) .
$$
Within the symplectic 2-category, this corresponds to isomorphic 1-morphisms
$$
{L}_{\widetilde H_0^-} \circ {L}_{\widetilde H_1} \; \sim \;  \underline{L}_{Y} \; \sim \; {L}_{H_0^-} \circ{L}_{H_1}
\qquad\text{in}\;\Mor^1_{\Symp}(\pt,\pt).
$$
Now the symplectic 2-morphism spaces extend to tuples using quilted Floer homology as explained in Remark~\ref{rmk:cyclic} and \S\ref{ss:symp2}.
These cyclic morphism spaces have a cyclic symmetry that in particular induces identifications 
$\Mor^2_{\Symp}(L_{H_0},L_{H_1}) = \Mor^2_{\Symp}(\id_{\pt} , L_{H_0^-}\circ L_{H_1})$
where $\id_{\pt}\in\Mor^1_{\Symp}(\pt,\pt)$ is the identity element given by the diagonal. With that, the isomorphism between 1-morphisms induces an isomorphism between the Floer homologies viewed as 2-morphism spaces, 
\begin{align}\label{eq:strategy}
HF(L_{H_0},L_{H_1}) & = \Mor^2_{\Symp}(\id_{\pt} , L_{H_0^-}\circ L_{H_1})  \\  \notag
&\simeq
\Mor^2_{\Symp}(\id_{\pt} , L_{\widetilde H_0^-}\circ L_{\widetilde H_1}) 
= HF(L_{\widetilde H_0},L_{\widetilde H_1}).
\end{align}
While this is a more conceptual explanation of the invariance of Heegaard Floer homology than the direct proof by Ozsvath-Szabo in \cite{OS1}, it is yet to be completed in the setting of Example~\ref{ex:sym}. In the above language, \cite[Lemma 3.17]{perutz1} shows that the geometric composition
$L_{Y_{\alpha_n}} \circ \ldots \circ L_{Y_{\alpha_1}}$ is smoothly isotopic to the Heegaard torus $L_{H_0}=T_{\underline \alpha}$ used in \cite{OS1}, and \cite{lekili} announces this to be a Hamiltonian isotopy, but a result along these lines is so far only proven for handle slides (changing the order between handle attachments and diffeomorphisms) in \cite{perutz3}.

These 2-categorical considerations do however lead to natural extensions of the Atiyah-Floer type conjectures to Cerf decompositions.

\begin{conjecture}[Atiyah-Floer type conjectures for (cyclic) Cerf decompositions] \label{con:afc} \hbox{}
For any Cerf decomposition $[Y]=[Y_{01}]\circ\ldots [Y_{(k-1)k}]$ of a closed 3-manifold $Y$
into simple cobordisms $Y_{i(i+1)}$ there are isomorphisms

\smallskip
\noindent
Ex.\ref{ex:rep}: 
$HF(L_{Y_{01}},\ldots, L_{Y_{(k-1)k}})\simeq HF_{\rm inst}(Y)$,

\smallskip
\noindent
Ex.\ref{ex:sym}:
$HF^{\cdots}(L_{Y_{01}},\ldots, L_{Y_{(k-1)k}})\simeq HF^{\cdots}_{\rm mon}(Y)$.
\end{conjecture}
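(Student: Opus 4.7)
The plan is to deduce Conjecture~\ref{con:afc} from Conjecture~\ref{con:af} by upgrading the partial data of \S\ref{ss:ex} to genuine Floer field theories $\cF:\Bor^{\rm conn}_{2+1}\to\Symp^\tau$ and invoking the key invariance of quilted Floer homology under embedded geometric composition from \cite{ww:isom}.

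First I would reduce a general Cerf decomposition to a Heegaard one. By Theorem~\ref{thm:borCerf} any two Cerf decompositions of $[Y]\in\Mor_{\Bor^{\rm conn}_{2+1}}(\emptyset,\emptyset)$ are related by a finite sequence of local Cerf moves, and by grouping all $1$-handle attachments before all $2$-handle attachments -- using critical point switches together with handle slides as discussed before Conjecture~\ref{con:af} -- one can connect any given decomposition to one arising from a Heegaard splitting $[Y]=[H_0^-]\circ[H_1]$. Assuming the construction principle of Remark~\ref{rmk:field2} produces a genuine Floer field theory in the sense of Definition~\ref{def:fft}, each such Cerf move corresponds to an embedded geometric composition identity in $\Symp^\tau$, as verified in set-theoretic form in the computations of Examples~\ref{ex:rep} and~\ref{ex:sym}.

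Second I would invoke the quilted Floer homology invariance of \cite{ww:isom}: whenever a composable pair $(L_{12},L_{23})$ has embedded geometric composition $L_{12}\circ L_{23}$, one has
\[HF(\ldots,L_{12},L_{23},\ldots)\simeq HF(\ldots,L_{12}\circ L_{23},\ldots).\]
Iterating along the Cerf moves that reduce $[Y]=[Y_{01}]\circ\cdots\circ[Y_{(k-1)k}]$ to the grouped Heegaard decomposition, and composing the Lagrangians associated to the $1$-handles (resp.\ $2$-handles) into the single handlebody Lagrangian $L_{H_0}$ (resp.\ $L_{H_1}$) along the lines of \cite[Lemma~3.17]{perutz1}, yields
\[HF(L_{Y_{01}},\ldots,L_{Y_{(k-1)k}})\simeq HF(L_{H_0},L_{H_1}).\]
Concatenation with Conjecture~\ref{con:af} -- proven in its rigorous $SO(3)$-twisted incarnation via \cite{DS} and, in the infinite dimensional formulation of Example~\ref{ex:infdim}, via \cite{SW:openclosed}; and for the monopole analogue of Example~\ref{ex:sym} via \cite{KLT} -- then completes the proof. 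Conceptually, as in \eqref{eq:strategy}, this chain of isomorphisms is most naturally phrased in the symplectic $2$-category by repeatedly using the cyclic identification $\Mor^2_{\Symp}(\id_{\pt},\underline{L}_Y)\cong\Mor^2_{\Symp}(\id_{\pt},\underline{L}'_Y)$ whenever two composable chains $\underline{L}_Y,\underline{L}'_Y$ of Lagrangians are related by embedded geometric composition moves.

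The hard part is that both the extension of the isomorphism theorem \cite{ww:isom} beyond the compact monotone setting and the upgrade of the naive geometric-composition identities in \S\ref{ss:ex} to Hamiltonian-isotopy identities for the \emph{true} Lagrangians remain open. In Example~\ref{ex:rep} the representation spaces are singular through reducibles, only partially sidestepped by the $SO(3)$-twist of Remark~\ref{rmk:rigrep} and requiring a theory of pseudoholomorphic curves on orbifold or stratified symplectic targets; in Example~\ref{ex:sym} the Perutz Lagrangians of \cite{perutz1} are exact rather than monotone, and the expected Hamiltonian-isotopy versions of the critical-point-cancellation and critical-point-switch moves are so far verified only for handle slides in \cite{perutz3}. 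Settling these analytical and geometric issues would simultaneously prove Conjecture~\ref{con:afc} and verify that the partial functors of Examples~\ref{ex:rep} and~\ref{ex:sym} really do extend to Floer field theories in the sense of Definition~\ref{def:fft}.
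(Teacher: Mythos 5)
The statement you were asked to prove is labeled a \emph{conjecture} in the paper, and the paper does not prove it. So there is no ``paper proof'' to compare against; the relevant comparison is to the informal discussion surrounding it, especially the two-step strategy around equation \eqref{eq:strategy} and the list of approaches in Remark~\ref{rmk:SO3}. You correctly treat the statement as open and you correctly isolate the two main obstructions -- the restriction of \cite{ww:isom} to the compact monotone setting and the gap between the naive set-theoretic Lagrangians and the genuine ones. In that respect your sketch is an honest and reasonable reduction outline, closest in spirit to the ``local to global'' argument the paper describes around \eqref{eq:strategy} and the open-closed approach referenced through \cite{SW:openclosed}.

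There is, however, a structural problem with your reduction step that the paper itself flags. You propose to collapse a general Cerf decomposition to a Heegaard splitting via critical point switches and handle slides, and then invoke Conjecture~\ref{con:af}. But the only setting in which Conjecture~\ref{con:afc} becomes genuinely well posed for Example~\ref{ex:rep} -- the nontrivial $SO(3)$-bundle version of Remark~\ref{rmk:rigrep} discussed in Example~\ref{ex:SO3} -- is exactly the one in which the empty set is no longer an object of the bordism category, so handlebodies and Heegaard splittings do not exist as morphisms and Conjecture~\ref{con:af} cannot even be formulated. In that setting one must work with cyclic Cerf decompositions, which cannot be grouped into a two-handlebody splitting, so your reduction collapses. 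In other words, for the rigorous twisted setting Conjecture~\ref{con:afc} is not a corollary of Conjecture~\ref{con:af}; it is, if anything, the more primitive statement. The paper's Remark~\ref{rmk:SO3} accordingly lists three independent routes toward it -- the Dostoglou-Salamon adiabatic limit with handle attachments carrying an extra scaling, the intermediate ``open'' instanton Floer homology $HF_{\rm inst}\bigl(\bigsqcup_j [0,1]\times\Sigma_j , (\cL_{Y_{ij}}) \bigr)$ combined with an open-closed isomorphism, and Fukaya's direct chain-map construction -- none of which pass through the Heegaard case. Your sketch would be strengthened by formulating it for cyclic $1$-morphisms throughout, using the cyclic $2$-morphism spaces of Remark~\ref{rmk:cyclic}, rather than anchoring the argument at $\Mor^2_{\Symp}(\id_{\pt},\cdot)$.

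A smaller point: for Example~\ref{ex:sym}, you are right that \cite[Lemma 3.17]{perutz1} gives only a smooth isotopy from the composed Lagrangian to the Heegaard torus and that the Hamiltonian version remains conjectural beyond handle slides \cite{perutz3}, but you should also note that the Perutz Lagrangians are open (not closed) in the naive construction and require the genuine broken-fibration construction to even be submanifolds, which is an additional gap not captured merely by the ``exact versus monotone'' distinction you mention.
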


Another version of this conjecture is an expected isomorphism between link invariants that arise from a Floer field theory for tangle categories in \cite{ww:fielda} (in which invariance under isotopy of the link embedding remains to be proven) and Floer homology invariants defined from singular instantons in \cite{km:kh}.  

Here the Cerf decomposition $Y=Y_{01}\cup_{\Sigma_1}\ldots \cup_{\Sigma_{k-1}} Y_{(k-1)k}$ arises from a Morse function $f:Y\to\R$ and choices of regular level sets $\Sigma_j=f^{-1}(b_j)$ which separate the critical points of $f$. Analogous conjectures are obtained by working with cyclic Cerf decompositions
$Y=\bigl(Y_{01}\cup_{\Sigma_1}\ldots \cup_{\Sigma_{k-1}} Y_{(k-1)k}\bigr)/{\Sigma_0\simeq \Sigma_k}$ which arise from $S^1$-valued Morse functions $f:Y\to S^1$ and regular level sets $\Sigma_j=f^{-1}(b_j)$ with a cyclic identification $\partial Y_{01} \supset\Sigma_0=f^{-1}(b_0=b_k)=\Sigma_k \subset \partial Y_{(k-1)k}$.
In this setting we can work as in Example~\ref{ex:sym} with symmetric products ${\rm Sym}^{g+n}(\Sigma)$, where $g$ is the genus of the surface $\Sigma$ and $n\in\Z$ is any fixed integer, by restricting consideration to Morse functions whose regular fibers have genus $\ge -n$ as laid out in \cite{perutz1,usher,lekili}.
Similarly, cyclic Cerf decompositions in the context of Example~\ref{ex:rep} allow us to work with nontrivial bundles as in Remark~\ref{rmk:rigrep}, and thus obtain well defined Atiyah-Floer type conjectures, most notably for nontrivial $SO(3)$ bundles as discussed below.

\begin{example}[Cyclic Atiyah-Floer type conjecture for nontrivial $SO(3)$ bundles]   \rm\label{ex:SO3}
The Floer field theory outlined in Example~\ref{ex:rep} is made rigorous in \cite{ww:fielda} by replacing trivial $SU(2)$-bundles with nontrivial $SO(3)$-bundles. However, this excludes the empty set (over which any bundle is trivial) from the objects and thus does not allow for handlebodies (cobordisms from the empty set to a surface) as morphisms. So in this context we cannot rigorously formulate an Atiyah-Floer type Conjecture~\ref{con:af} for Heegaard splittings, but Conjecture~\ref{con:afc} for cyclic Cerf decompositions does have a well defined meaning:
An identification between the instanton Floer homology of a 3-manifold $Y$ with cyclic Cerf decomposition and the Floer homology of a cyclic sequence of Lagrangians $\underline L_Y$ arising from this Cerf decomposition, 
$$
HF\bigl( \underline L_Y = (L_{Y_{(j-1)j}})_{j\in\Z_k} \bigr) \simeq HF_{\rm inst}\bigl(  Y= (Y_{01}\cup_{\Sigma_1}\ldots \cup_{\Sigma_{k-1}} Y_{(k-1)k})/{\Sigma_0\simeq \Sigma_k}  \bigr) .
$$
Here $HF(\underline L_Y)$ is defined in terms of quilted pseudoholomorphic cylinders in \cite{ww:qhf}, but is also directly identical to the standard Floer homology 
$$
HF(\underline L_Y) = HF\bigl( (L_{Y_{01}}\times \ldots L_{Y_{(k-1)k}})^T , \Delta_{M_{\Sigma_0}}\times \ldots \Delta_{M_{\Sigma_{k-1}}} \bigr)
$$ 
for a pair of Lagrangians in $M_{\Sigma_0}\times M_{\Sigma_0}^-\times \ldots M_{\Sigma_{k-1}}\times M_{\Sigma_{k-1}}^-$, where the first requires a permutation of factors
$(\ldots)^T: M_{\Sigma_0}^-\times \ldots M_{\Sigma_{k-1}}^- \times M_{\Sigma_0}  \to M_{\Sigma_0}\times M_{\Sigma_0}^-\times \ldots M_{\Sigma_{k-1}}^-$.
\end{example}

\begin{remark}[Approaches to Atiyah-Floer conjecture for nontrivial $SO(3)$ bundles]  \rm \label{rmk:SO3}
The cyclic version of Conjecture~\ref{con:afc} in Example~\ref{ex:SO3} was proven by 
Dostoglou-Salamon \cite{DS} for 3-manifolds equipped with a Morse function $f:Y\to S^1$ without critical points, so that all fibers $\Sigma_i\simeq\Sigma$ are diffeomorphic and thus $Y = ([0,1]\times \Sigma)/\phi$ is the mapping cylinder of a diffeomorphism $\phi:\Sigma\to\Sigma$ on a regular level set $\Sigma=f^{-1}(b_0)$ arising from the flow of $\nabla f$ on $Y\less f^{-1}(b_0)$. This gives rise to a symplectomorphism $L_\phi:M_\Sigma\to M_\Sigma$, for which the Floer homology $HF(L_\phi)=HF({\rm graph}\, L_\phi, \Delta_{M_\Sigma})$ can be constructed without boundary conditions.
Then the proof of the Atiyah-Floer type isomorphism $HF(L_\phi)\simeq HF_{\rm inst}(([0,1]\times \Sigma)/\phi)$ directly identifies the Floer complexes by an adiabatic limit in which the metric on $\Sigma$ is scaled by $\epsilon^2\to 0$.

In the presence of critical points this argument is expected to generalize by partitioning
$Y= (Y_{01}\cup_{\Sigma_1}\ldots \cup_{\Sigma_{k-1}} Y_{(k-1)k})/{\Sigma_0\simeq \Sigma_k}$
into thickenings of the surfaces $[0,1]\times\Sigma_j$ with metric ${\rm d}s^2 + \epsilon^2 g_{\Sigma_j}$
and handle attachments $Y_{ij}$ with metric $\epsilon^2 g_{Y_{ij}}$.
As a result, the volumes ${\rm Vol}([0,1]\times\Sigma_j)\sim \eps^2$ and ${\rm Vol}(Y_{ij})\sim \eps^3$ scale differently, which indicates different degenerations on these types of pieces. 
Here the absence of reducibles guarantees linear bounds $d\bigl(A, \cA_{\rm flat}\bigr) \leq C \|F_A\|$ of the distance of a connection (on $\Sigma_j$ or $Y_{ij}$) to the flat connections in terms of its curvature, so that the adiabatic limit analysis \cite{DS} can be combined with the analytic setup for boundary conditions in gauge theory \cite{W:ell1, W:ell2} to obtain a compactness result: Solutions of the ASD equation defining the instanton Floer differential converge for $\eps\to 0$ to pseudoholomorphic curves in the $M_{\Sigma_j}$, whose boundaries match up via the $L_{Y_{ij}}$, thus giving rise to a contribution to the (quilted) Lagrangian Floer differential.
This initial motivation for the conjecture was fleshed out with analytic details in \cite{W:talks} and recently explicitly put into quilted settings in \cite{duncan}, but the proof of a 1-1 correspondence between the differentials remains to be completed.

An alternative approach closer to completion is based on a strategy outlined in \cite{Sa:AF, W:survey} via an intermediate instanton Floer homology 
${HF_{\rm inst}\bigl(\bigsqcup_j [0,1]\times\Sigma_j , (\cL_{Y_{ij}}) \bigr)}$
associated to the manifold with boundary $\bigsqcup_j [0,1]\times\Sigma_j$ and boundary conditions given by infinite dimensional Lagrangians $\cL_{Y_{ij}}$ as in Example~\ref{ex:infdim}. 
Its identification with $HF\bigl( \underline L_Y = (L_{Y_{ij}}) \bigr)$ should follow from a direct generalization of the adiabatic limit analysis \cite{DS} to Lagrangian boundary conditions.
On the other hand, an isomorphism as in \eqref{eq:AFinfdim} between the ``closed'' and ``open'' instanton Floer homologies
$HF_{\rm inst}\bigl(\bigsqcup_j [0,1]\times\Sigma_j , (\cL_{Y_{ij}}) \bigr)\simeq HF_{\rm inst}(Y)$
could also be approached by degenerating only the metric on the handle attachments $Y_{ij}$.
Instead of using a metric degeneration, this can also be approached by a ``local to global'' field theoretic argument as explained above and in Remark~\ref{rmk:localtoglobal}, where the strategy is to prove an isomorphism $Y_{ij} \sim \cL_{Y_{ij}}$ in a 2-categorical setting, which then implies the desired isomorphism of Floer homologies similar to the argument for equation \eqref{eq:strategy} above.
In the case of trivial SU(2)-bundles, this approach is implemented in \cite{SW:openclosed} and directly transfers to any setting in which the fundamental classes $[L_{Y_{ij}}]$ of the associated finite dimensional Lagrangians induce well defined classes in the ``2-morphism spaces'' resp.\ ``localized Floer theories'' $HF_{\rm inst}(Y_{ij}, \cL_{Y_{ij}})$. 

Finally, a third approach pioneered by Fukaya \cite{fukaya} is to avoid adiabatic limit analysis and construct a direct chain map between the instanton and Lagrangian Floer chain complexes. 
This should again be understood as a ``local to global'' field theoretic argument, based on an implicit isomorphism $Y_{ij} \sim L_{Y_{ij}}$. 
In this case, an appropriate 2-categorical setting needs to combine the ASD and Cauchy-Riemann equation.
Aside from metric degeneration proposals in \cite{fukaya}, this can be achieved by Lagrangian seam conditions as discussed in Example~\ref{ex:superenhanced} and \cite{lipyanski}.
\end{remark}

\section{Extensions of Floer field theories} \label{s:ext}

\subsection{2-categories and bicategories} \label{ss:2cat}

In the construction of both the bordism and symplectic categories we introduced an equivalence relation between the morphisms in order to obtain a geometrically meaningful notion of composition. An algebraically cleaner way of phrasing the requirements on this relation -- in particular compatibility with the desired notion of composition -- is in terms of 2-morphisms between the morphisms, forming either a 2-category or the slightly weaker notion of bicategory. 
Once these notions and some examples are established, we will cast the construction of bordism and symplectic categories in these 2-categorical terms.

\begin{definition}\label{def:2cat}
A {\bf 2-category} $\cC$ is a category enriched in categories, i.e.\ consists of
\begin{itemize}
\item
a set $\Obj_\cC$ of {\bf objects},
\item
for each pair $x_1,x_2\in \Obj_\cC$ a category of {\bf morphisms} $\Mor_\cC(x_1,x_2)$, i.e.\ 
\begin{itemize}
\item
a set $\Mor_\cC^1(x_1,x_2)$ of {\bf 1-morphisms},
\item
for each pair $f,g\in \Mor_\cC^1(x_1,x_2)$ a set of {\bf 2-morphisms} $\Mor_\cC^2(f,g)$,
\item
for each triple $f,g,h\in\Mor_\cC^1(x_1,x_2)$ an associative {\bf vertical composition} 
$$
\Mor_\cC^2(f,g)\times \Mor_\cC^2(g,h)\to \Mor^2_\cC(g,h), \quad
(\alpha_{12},\beta_{12}) \mapsto \alpha_{12}\circ_{\rm v} \beta_{12},
$$
\item 
for each $f\in \Mor_\cC^1(x_1,x_2)$ an identity $\id_f\in\Mor_\cC^2(f,f)$ for $\circ_{\rm v}$,
\end{itemize}
\item
a {\bf composition functor} $\Mor_\cC(x_1,x_2)\times \Mor_\cC(x_2,x_3)\to \Mor_\cC(x_1,x_3)$ 
for each triple $x_1,x_2,x_3\in \Obj_\cC$, i.e.\ 
\begin{itemize}
\item
an associative {\bf horizontal composition on 1-morphisms} 
$$
\Mor_\cC^1(x_1,x_2)\times  \Mor_\cC^1(x_2,x_3)\to  \Mor_\cC^1(x_1,x_3), 
\quad
(f_{12},f_{23}) \mapsto f_{12}\circ_{\rm h} f_{23} ,
$$
\item
 for each $x\in \Obj_\cC$ an identity $1_x\in\Mor_\cC^1(x,x)$ for $\circ_{\rm h}$, that is 
 $1_x \circ_{\rm h} f= f$ for any $f\in\Mor_\cC^1(x,y)$ and  $g \circ_{\rm h} 1_x = g$ for any $g\in\Mor_\cC^1(w,x)$.
\item
for any $(f_{12},f_{23}), (g_{12},g_{23})\in \Mor_\cC^1(x_1,x_2)\times  \Mor_\cC^1(x_2,x_3)$
an associative {\bf horizontal composition on 2-morphisms},
\begin{align*}
\Mor_\cC^2(f_{12},g_{12})\times  \Mor_\cC^2(f_{23},g_{23}) &\;\longrightarrow\;  \Mor_\cC^2(f_{12}\circ_{\rm h} f_{23} ,g_{12}\circ_{\rm h} g_{23} ) \\
(\alpha_{12},\alpha_{23}) &\;\longmapsto\; \alpha_{12}  \circ_{\rm h} \alpha_{23} ,
\end{align*}
that is compatible with identities, i.e.\ for $f_{12}= g_{12} \in \Mor_\cC^1(x_1,x_2)$ and $f_{23}= g_{23} \in \Mor_\cC^1(x_2,x_3)$ we have
$$
\id_{f_{12}}  \circ_{\rm h} \id_{f_{23}} = \id_{f_{12}\circ_{\rm h}f_{23}} ,
$$
and is compatible with vertical composition, i.e.\ for $\alpha_{12}, \beta_{12}\in  \Mor_\cC^2(f_{12},g_{12})$ and 
$\alpha_{23} , \beta_{23} \in \Mor_\cC^2(f_{23},g_{23})$ we have
$$
( \alpha_{12} \circ_{\rm v} \beta_{12}   ) \circ_{\rm h} ( \alpha_{23}  \circ_{\rm v} \beta_{23}  )
=
( \alpha_{12} \circ_{\rm h} \alpha_{23}   ) \circ_{\rm v} ( \beta_{12}  \circ_{\rm h} \beta_{23}  ).
$$
\end{itemize}
\end{itemize}
\end{definition}

A graphical representation of the structure and axioms of 2-categories is by string diagrams, as discussed in \S\ref{ss:quilt}. In \cite{W:slides} these were motivated as a natural visualization of 4-dimensional manifolds with boundary and corners, as they appear in the extension of 2+1 bordism categories.
However, we will see in \S\ref{ss:bord} that instead of a 2-category this yields the following notion of bicategory, in which the horizontal unital and associativity requirements on 1-morphisms are relaxed.

\begin{definition} \label{def:bicategory}
A {\bf bicategory} $\cC$ consists of a set $\Obj_\cC$ of objects and categories of morphisms $\Mor_\cC(x_1,x_2)$ as in Definition~\ref{def:2cat} (i.e.\ 1-morphisms, 2-morphisms, vertical composition $\circ_{\rm v}$, and units $\id_f$), and for each triple $x_1,x_2,x_3\in \Obj_\cC$ a {\bf horizontal bifunctor} 
$\circ_{\rm h} : \bigl( \Mor_\cC(x_1,x_2) ,  \Mor_\cC(x_2,x_3) \bigr) \to \Mor_\cC(x_1,x_3)$ consisting of
\begin{itemize}
\item
a {\bf horizontal map on 1-morphisms} 
$$
\circ_{\rm h}^1 :
\Mor_\cC^1(x_1,x_2)\times  \Mor_\cC^1(x_2,x_3)\to  \Mor_\cC^1(x_1,x_3),
$$
\item
for any composable pairs $(f_{12},f_{23}), (g_{12},g_{23})$ a {\bf horizontal map on 2-morphisms}
$$
\circ_{\rm h}^2  : \Mor_\cC^2(f_{12},g_{12})\times  \Mor_\cC^2(f_{23},g_{23}) \to \Mor_\cC^2\bigl( \circ_{\rm h} (f_{12}, f_{23}) , \circ_{\rm h}(g_{12}, g_{23}) \bigr),
$$
\end{itemize}
that make $\circ_{\rm h}$ into a bifunctor and {\bf horizontal composition up to 2-isomorphism}, as follows:
\begin{itemize}
\item
$\circ_{\rm h}=(\circ_{\rm h}^1,\circ_{\rm h}^2)$ is compatible with vertical identities, i.e.\ 
$\circ_{\rm h}^2( \id_{f_{12}}  ,\id_{f_{23}} ) = \id_{\circ_{\rm h}^1(f_{12},f_{23})}$,
\item
$\circ_{\rm h}^2$ is associative and compatible with vertical composition, i.e.\ 
\begin{align*}
 \circ_{\rm h}^2 (  \circ_{\rm h}^2( \alpha_{12} , \alpha_{23} ) ,   \alpha_{34}  )
 &=
  \circ_{\rm h}^2 (   \alpha_{12} , \circ_{\rm h}^2( \alpha_{23}  ,   \alpha_{34} )  ) , 
\\
 \circ_{\rm h}^2 (  \alpha_{12} \circ_{\rm v} \beta_{12}  ,   \alpha_{23} \circ_{\rm v}\beta_{23}  )
&=
\circ_{\rm h}^2 ( \alpha_{12},\alpha_{23}   ) \; \circ_{\rm v} \; \circ_{\rm h}^2 ( \beta_{12}  ,\beta_{23}  ) ,
\end{align*}
\item
$\circ_{\rm h}^1$ is associative up to 2-isomorphism, i.e.\ 
$\circ_{\rm h}^1 \bigl( f, \circ_{\rm h}^1 ( g, h )  \bigr) \sim \circ_{\rm h}^1 \bigl( \circ_{\rm h}^1 ( f, g ) , h  \bigr)$,
where the relation $\sim$ on $\Mor_\cC^1$ is defined by
$$
k\sim k' 
\quad\Longleftrightarrow\quad
\exists \alpha, \beta\in\Mor_\cC^2 : \; 
\id_k=\alpha\circ_{\rm v}\beta, \; \id_{k'}=\beta\circ_{\rm v}\alpha ,
$$
\item
$\circ_{\rm h}^1$ is unital up to 2-isomorphism, i.e.\ for each $y\in \Obj_\cC$ there exists a (not necessarily unique) {\bf weak identity 1-morphism} $1_y\in\Mor_\cC^1(y,y)$ such that
$$
\circ_{\rm h}^1 ( f, 1_y  ) \sim  f \quad \forall f\in\Mor_\cC^1(x,y), 
\qquad
\circ_{\rm h}^1 ( 1_y , g ) \sim  g \quad \forall g\in\Mor_\cC^1(y,z).
$$
\end{itemize}
\end{definition}

An instructive non-example of a bicategory (or 2-category) is the attempt to extend the category of sets and maps by a notion of conjugacy as 2-morphisms.

\begin{example}[Categorical structure of sets, maps, and conjugacy] \label{nonex} \rm
The category of sets consists of sets as objects and maps $f_{12}:S_1\to S_2$ as 1-morphisms from $S_1$ to $S_2$, with horizontal composition $f_{12}\circ_{\rm h} f_{23}$ given by composition of maps.
One might want to add further structure to the sets and ask it to be preserved by the maps -- e.g.\ forming the linear category of vector spaces and homomorphisms -- but the underlying form of many categories is given by sets and maps.
In the linear category, a natural relation between homomorphisms arises from a change of basis, which is formalized as the conjugation with an isomorphism. Generally, for each pair of maps $f_{12},g_{12}\in \Mor_\cC^1(S_1,S_2)$ between the same two sets $S_1,S_2$, one would like to let 2-morphisms be given by conjugation with bijections, 
$$
\Mor_\cC^2(f_{12},g_{12}) := \bigl\{ \alpha_{12}=(\alpha_1,\alpha_2) \,\big|\, \alpha_i:S_i\to S_i \;\mbox{bijections}, \alpha_2^{-1}\circ f_{12}\circ\alpha_1 = g_{12} \bigr\} .
$$
This defines a category $\Mor_\cC(S_1,S_2)$ since conjugations have a well defined (and associative, unital) vertical composition: 
$$
\alpha_2^{-1}\circ f_{12}\circ\alpha_1 = g_{12}, \quad 
\beta_2^{-1}\circ g_{12}\circ\beta_1 = h_{12} 
\quad\Rightarrow\quad
(\alpha_2\circ\beta_2)^{-1}\circ f_{12}\circ (\alpha_1\circ\beta_1) = h_{12} .
$$
That is, setting $(\alpha_1,\alpha_2) \circ_{\rm v} (\beta_1,\beta_2) := (\alpha_1\circ\beta_1, \alpha_2\circ\beta_2)$ composes two conjugacies from $f_{12}$ to $g_{12}$ and from $g_{12}$ to $h_{12}$ to a conjugacy from $f_{12}$ to $h_{12}$.
In other words, conjugacy is an equivalence relation -- its transitivity corresponds to a well defined vertical composition.
Next, a well defined horizontal bifunctor would require
conjugacy to be compatible with composition of maps, that is
$$
\alpha_2^{-1}\circ f_{12}\circ\alpha_1 = g_{12}, \quad 
\tilde\alpha_3^{-1}\circ f_{23}\circ \tilde\alpha_2 = g_{23} 
\quad\Rightarrow\quad
\gamma_3^{-1}\circ ( f_{23}\circ f_{12} ) \circ \gamma_1 = g_{23}\circ g_{12} 
$$
for some bijections $(\gamma_1,\gamma_3):= \circ_{\rm h}\bigl((\alpha_1,\alpha_2), (\tilde\alpha_2,\tilde \alpha_3)\bigr)$. 
However, this implication generally only holds if we have $\alpha_2=\tilde\alpha_2$ in 
$$
\tilde\alpha_3^{-1}\circ f_{23}\circ \tilde\alpha_2 \circ \alpha_2^{-1}\circ f_{12}\circ\alpha_1 = g_{23}\circ g_{12} .
$$
Thus our constructions do not yield a 2-category or bicategory.
This corresponds to the fact that composition of maps does not descend to a well defined composition of conjugacy classes. In those terms, the above discussion shows that $[f_{12}]=[g_{12}]$ and $[f_{23}]=[g_{23}]$ does generally not imply $[f_{12}\circ f_{23}]=[g_{12}\circ g_{23}]$.
In fact, we will see in Remark~\ref{rmk:2cat-quotient} that a bicategorical structure is exactly what is needed to obtain a well defined composition on the level of equivalence classes of 1-morphisms.
\end{example}

A better behaved notion of conjugacy-type equivalence between map-type objects is the following notion of natural transformations between functors, which are an equivalence relation (define a category) and are compatible with composition (fit into a horizontal composition functor) as we show in the following Lemma. 
Here we moreover review the category of functors and a composition functor on it.

\begin{lemma} \label{lem:func}
The {\bf category of functors} $\Fun(\cC,\cD)$ is well defined as follows.
\begin{itemize}
\item
Objects are functors $\cF:\cC\to\cD$.
\item
Morphisms $\eta\in\Mor_{\Fun(\cC,\cD)}(\cF,\cG)$ are natural transformations $\eta: \cF \Rightarrow \cG$ given by a map $\eta: \Obj_\cC \to \Mor_\cD$ which takes each $x\in\Obj_\cC$ to a morphism $\eta(x)\in\Mor_\cD(\cF(x),\cG(x))$ such that we have
$$
k\in\Mor_\cC(x,y) \qquad\Longrightarrow \qquad \cF(k) \circ \eta(y) = \eta(x) \circ \cG(k).
$$
\item 
Composition of natural transformations $\eta: \cF \Rightarrow \cG$ and $\zeta: \cG \Rightarrow \cH$ is given by $(\eta\circ_{\rm v}\zeta): x \mapsto \eta(x) \circ \zeta(x)$ as in Figure~\ref{fig:verticalnat}, giving rise to a map
$$
\circ_{\rm v}:\Mor_{\Fun(\cC,\cD)}(\cF,\cG)\times \Mor_{\Fun(\cC,\cD)}(\cG,\cH)\to
\Mor_{\Fun(\cC,\cD)}(\cF,\cH) .
$$
\item 
The identity natural transformations $\id_\cF: \cF \Rightarrow \cF$ are given by 
$\id_\cF(x)= \id_{\cF(x)}$ for all $x\in\Obj_\cC$.
\end{itemize}

\begin{figure}[!h]
\centering
\includegraphics[width=5.5in]{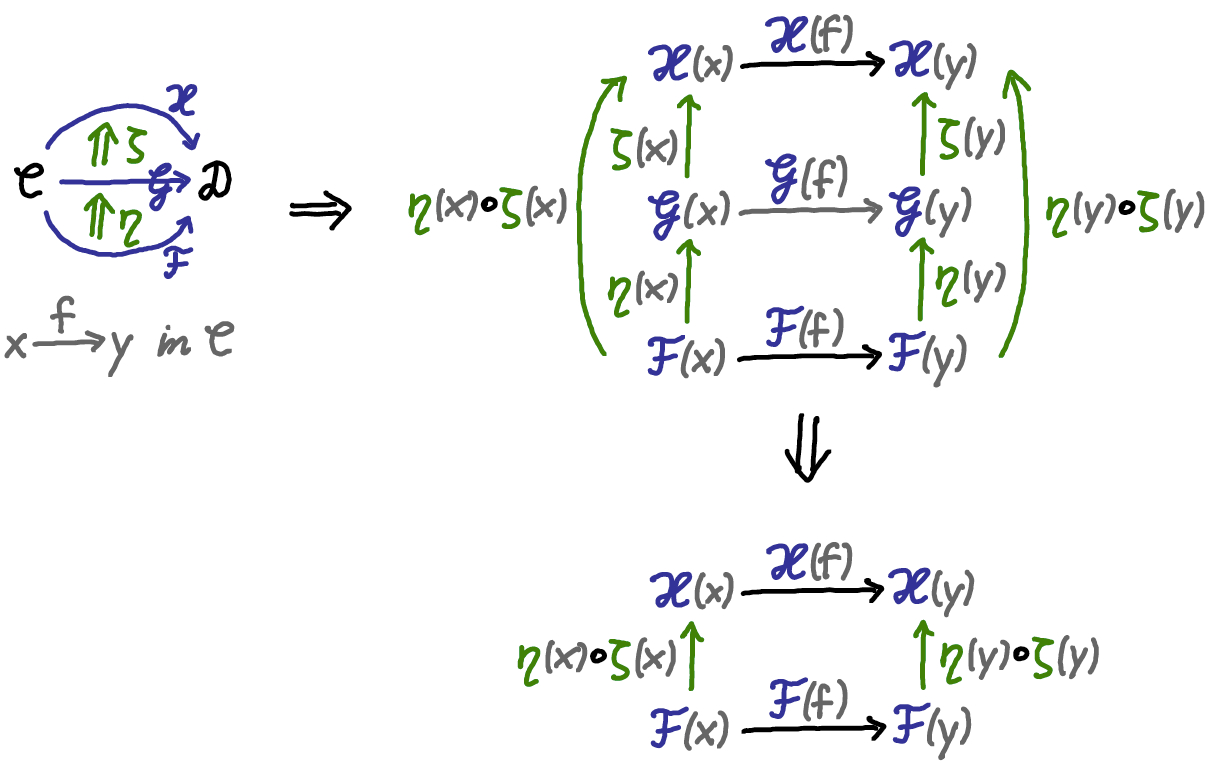}
\caption{Vertical composition of natural transformations.}
\label{fig:verticalnat}
\end{figure}

Moreover, for any triple of categories $\cC_0,\cC_1,\cC_2$, the {\bf horizontal composition functor} $\circ_{\rm h} : \Fun(\cC_0,\cC_1)\times\Fun(\cC_1,\cC_2)\to\Fun(\cC_0,\cC_2)$ is well defined as follows.
\begin{itemize}
\item
Composition of functors $(\cF_{01},\cF_{12}) \mapsto \cF_{01}\circ_{\rm h}\cF_{12}$ is given by composition of the maps $\Obj_{\cC_i}\to\Obj_{\cC_{i+1}}$ and $\Mor_{\cC_i}\to\Mor_{\cC_{i+1}}$ which make up $\cF_{i(i+1)}$ for $i=0,1$.
\item
The identities for this horizontal composition are given by the identity functors $1_\cC\in \Fun(\cC,\cC)$.
\item
For each pair of objects $(\cF_{01},\cF_{12}), (\cG_{01},\cG_{12}) \in \Fun(\cC_0,\cC_1)\times\Fun(\cC_1,\cC_2)$ in the product category, the horizontal composition of natural transformations, as illustrated in Figure~\ref{fig:nathor} is
\begin{align*}
&\circ_{\rm h}: \; \Mor_{\Fun(\cC_0,\cC_1)}(\cF_{01},\cG_{01}) \times \Mor_{\Fun(\cC_1,\cC_2)}(\cF_{12},\cG_{12}) \\
&\qquad\qquad \qquad\qquad\qquad\qquad\qquad\quad\;\to\; \Mor_{\Fun(\cC_0,\cC_2)}(\cF_{01}\circ\cF_{12},\cG_{01}\circ\cG_{12}) \\
%&\qquad\qquad\qquad\qquad\;
%(\eta_{01},\eta_{12}) \;\; \mapsto\; 
%\eta_{01}\circ_{\rm h}\eta_{12}
%\end{align*}
%is given by
%$
&\bigl( \eta_{01}\circ_{\rm h}\eta_{12}\bigr) (x) := \eta_{12}\bigl(\cF_{01}(x)\bigr) \circ \cG_{12}\bigl(\eta_{01}(x)\bigr)
=
\cF_{12}\bigl(\eta_{01}(x)\bigr) \circ \eta_{12}\bigl(\cG_{01}(x)\bigr)  .
\end{align*}
\end{itemize}
\end{lemma}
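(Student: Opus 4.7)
The plan is to verify each of the listed structures in turn, noting that nearly every step reduces to a single application of a naturality square in the target category. First I would check that $\Fun(\cC,\cD)$ is a category. Given $\eta:\cF\Rightarrow\cG$ and $\zeta:\cG\Rightarrow\cH$, the objectwise composite $(\eta\circ_{\rm v}\zeta)(x):=\eta(x)\circ\zeta(x)$ lies in $\Mor_\cD(\cF(x),\cH(x))$, and for any $k\in\Mor_\cC(x,y)$ two successive applications of naturality give
$$\cF(k)\circ\eta(y)\circ\zeta(y)\;=\;\eta(x)\circ\cG(k)\circ\zeta(y)\;=\;\eta(x)\circ\zeta(x)\circ\cH(k),$$
so that $\eta\circ_{\rm v}\zeta$ is again a natural transformation. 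Associativity of $\circ_{\rm v}$ and the unit axioms for $\id_\cF(x)=\id_{\cF(x)}$ then follow objectwise from the corresponding axioms in $\cD$.

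Next I would turn to horizontal composition. That the composition of functors is itself a functor is immediate from the fact that composing the underlying maps on objects and morphisms preserves units and composition, and the identity functors $1_\cC$ plainly satisfy $1_\cC\circ\cF=\cF$ and $\cG\circ 1_\cC=\cG$. For the horizontal composition of natural transformations $\eta_{01}:\cF_{01}\Rightarrow\cG_{01}$ and $\eta_{12}:\cF_{12}\Rightarrow\cG_{12}$, the two proposed formulas agree by the naturality square of $\eta_{12}$ applied to the morphism $\eta_{01}(x)\in\Mor_{\cC_1}(\cF_{01}(x),\cG_{01}(x))$, giving
$$\eta_{12}(\cF_{01}(x))\circ\cG_{12}(\eta_{01}(x))\;=\;\cF_{12}(\eta_{01}(x))\circ\eta_{12}(\cG_{01}(x)).$$
To see that the resulting assignment $x\mapsto(\eta_{01}\circ_{\rm h}\eta_{12})(x)$ is itself a natural transformation $\cF_{01}\circ\cF_{12}\Rightarrow\cG_{01}\circ\cG_{12}$, for $k\in\Mor_{\cC_0}(x,y)$ I would apply naturality of $\eta_{12}$ at the morphism $\cF_{01}(k)$, then use functoriality of $\cG_{12}$ together with naturality of $\eta_{01}$ at $k$ to rewrite the middle composite and obtain the required square.

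Finally I would verify that $\circ_{\rm h}=(\circ_{\rm h}^1,\circ_{\rm h}^2)$ is a bifunctor on $\Fun(\cC_0,\cC_1)\times\Fun(\cC_1,\cC_2)$. Compatibility with vertical identities, $\id_{\cF_{01}}\circ_{\rm h}\id_{\cF_{12}}=\id_{\cF_{01}\circ\cF_{12}}$, is immediate from the defining formula and the fact that $\cF_{12}$ preserves identities. The only step requiring genuine thought is the interchange law
$$(\eta_{01}\circ_{\rm v}\zeta_{01})\circ_{\rm h}(\eta_{12}\circ_{\rm v}\zeta_{12})\;=\;(\eta_{01}\circ_{\rm h}\eta_{12})\circ_{\rm v}(\zeta_{01}\circ_{\rm h}\zeta_{12}),$$
which I expect will be the ``hardest'' part, though it is still routine: expanding both sides at $x$ using the defining formulas for $\circ_{\rm h}^2$ and $\circ_{\rm v}$, and using functoriality of $\cH_{12}$ on the left, the identity reduces to
$$\zeta_{12}(\cF_{01}(x))\circ\cH_{12}(\eta_{01}(x))\;=\;\cG_{12}(\eta_{01}(x))\circ\zeta_{12}(\cG_{01}(x)),$$
which is precisely the naturality square of $\zeta_{12}$ at the morphism $\eta_{01}(x)$. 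Thus every assertion in the lemma is established by a diagram chase of at most two naturality squares, and there is no deeper obstacle beyond carefully tracking source and target functors through the formulas.
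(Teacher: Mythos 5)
Your proposal is correct and takes essentially the same approach as the paper: objectwise verification via naturality squares in the target category, with the interchange law reducing to naturality of $\zeta_{12}$ at $\eta_{01}(x)$ (exactly the pivot in the paper's chain of equalities). The paper writes out the explicit chains of equalities where you describe them in prose, but the diagram chases and the order of applying naturality/functoriality are identical.
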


\begin{figure}[!h]
\centering
\includegraphics[width=5.5in]{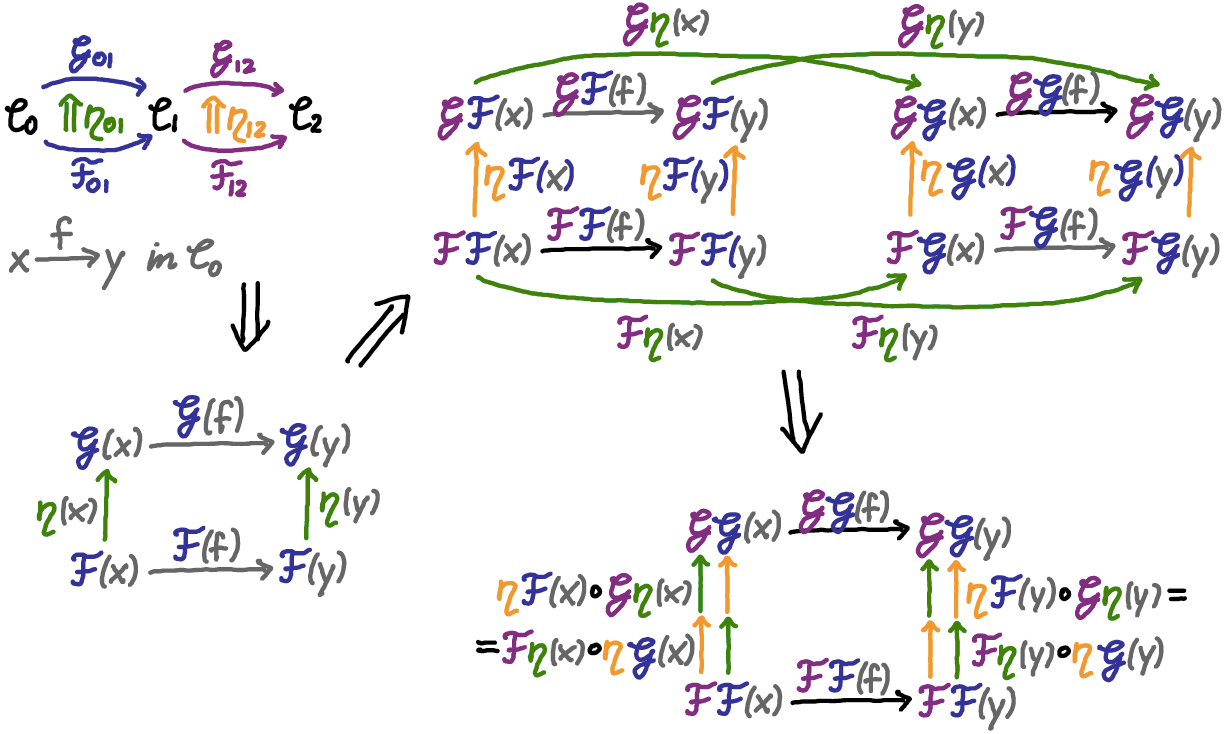}
\caption{Horizontal composition of natural transformations $\eta_{01}$ (in green) and $\eta_{12}$ (in orange).}
\label{fig:nathor}
\end{figure}

\begin{proof}
The (vertical) composition of natural transformations in $\Fun(\cC,\cD)$ is well defined since for all $k\in\Mor_\cC(x,y)$ we have 
\begin{align*}
\cF(k) \circ (\eta\circ\zeta)(y) 
 = \cF(k)  \circ \eta(y) \circ \zeta(y)
& = \eta(x) \circ \cG(k) \circ \zeta(y) \\
&= \eta(x) \circ \zeta(x) \circ \cH(k)
=  (\eta\circ\zeta)(x) \circ \cH(k) .
\end{align*} 
It is associative by associativity of the composition $\circ$ in $\cD$, and it is unital with $1_\cF: x\mapsto \id_{\cF(x)}$.
The (horizontal) composition of functors is well defined in the same way in which composition of maps is well defined.
The horizontal composition of natural transformations
$\eta_{01}:\cF_{01}\Rightarrow\cG_{01}$ and $\eta_{12}:\cF_{12}\Rightarrow\cG_{12}$
is well defined since for all $k\in\Mor_\cC(x,y)$ we have 
\begin{align*}
(\cF_{01}\circ \cF_{12})(k) \circ (\eta_{01}\circ_{\rm h} \eta_{12})(y) 
&= \cF_{12}\bigl(\cF_{01}(k) \bigr) \circ \eta_{12}\bigl(\cF_{01}(y)\bigr)  \circ \cG_{12}\bigl(\eta_{01}(y)\bigr)   \\
& = \eta_{12}\bigl(\cF_{01}(x)\bigr) \circ \cG_{12}\bigl(\cF_{01}(k) \bigr) \circ \cG_{12}\bigl(\eta_{01}(y)\bigr)  \\
& = \eta_{12}\bigl(\cF_{01}(x)\bigr) \circ \cG_{12}\bigl(\cF_{01}(k)  \circ \eta_{01}(y)\bigr)  \\
& = \eta_{12}\bigl(\cF_{01}(x)\bigr) \circ \cG_{12}\bigl(\eta_{01}(x) \circ \cG_{01}(k) \bigr)  \\
&=
\eta_{12}\bigl(\cF_{01}(x)\bigr) \circ \cG_{12}\bigl(\eta_{01}(x)\bigr) \circ \cG_{12}\bigl( \cG_{01}(k) \bigr) \\
&=   (\eta_{01}\circ_{\rm h} \eta_{12})(x) \circ (\cG_{01}\circ \cG_{12})(k) .
\end{align*}
Moreover, the identity 
$\eta_{12}\bigl(\cF_{01}(x)\bigr) \circ \cG_{12}\bigl(\eta_{01}(x)\bigr)= \cF_{12}\bigl(\eta_{01}(x)\bigr) \circ \eta_{12}\bigl(\cG_{01}(x)\bigr) $
follows from applying $\eta_{12}$ to the morphism $\eta_{01}(x): \cF_{01}(x)\to \cG_{01}$ in $\cC_1$. 

This horizontal composition is compatible with identities since for $\cF_{ij}:\cC_i\to\cC_j$ and $x\in\Obj_{\cC_0}$ we have
$$
\bigl( 1_{\cF_{01}}\circ_{\rm h}1_{\cF_{12}} \bigr) (x) = \id_{\cF_{12}(\cF_{01}(x))} \circ \cF_{12}\bigl(\id_{\cF_{01}(x)}\bigr)  = \id_{\cF_{12}(\cF_{01}(x))} = 1_{\cF_{12}\circ_{\rm h}\cF_{01}}(x), 
$$
and it is compatible with vertical composition since for $\eta_{ij}:\cF_{ij}\Rightarrow\cG_{ij}$ and  $\zeta_{ij}:\cG_{ij}\Rightarrow\cH_{ij}$ and $x\in\Obj_{\cC_0}$ we have
\begin{align*}
&\bigl( \eta_{01}\circ_{\rm v}\zeta_{01} \bigr) \circ_{\rm h} \bigl( \eta_{12}\circ_{\rm v}\zeta_{12}     \bigr) (x) 
= ( \eta_{12}\circ_{\rm v}\zeta_{12} )\bigl(\cF_{01}(x)\bigr) \circ \cH_{12}\bigl((\eta_{01}\circ_{\rm v}\zeta_{01})(x)\bigr) \\
&\qquad\qquad\qquad\qquad\quad
= \eta_{12}\bigl(\cF_{01}(x)\bigr) \circ \zeta_{12}\bigl(\cF_{01}(x)\bigr) \circ \cH_{12}\bigl(\eta_{01}(x)\bigr) \circ \cH_{12}\bigl(\zeta_{01}(x)\bigr) \\
&\qquad\qquad\qquad\qquad\quad
= \eta_{12}\bigl(\cF_{01}(x)\bigr) \circ \cG_{12}\bigl(\eta_{01}(x)\bigr)  \circ  \zeta_{12}\bigl(\cG_{01}(x)\bigr)\circ \cH_{12}\bigl(\zeta_{01}(x)\bigr) \\
&\qquad\qquad\qquad\qquad\quad
=
\bigl( \eta_{01}\circ_{\rm h}\eta_{12} \bigr) \circ_{\rm v} \bigl( \zeta_{01}\circ_{\rm h}\zeta_{12}     \bigr) (x) .
\end{align*}
\vskip-7mm
\end{proof}

The well defined category of functors and horizontal composition functor now yield an extension of the category of categories in Example~\ref{ex:1cat} to a 2-category.

\begin{example}\label{ex:2cat} \rm
The {\bf 2-category of categories} ${\rm Cat}$ consists of
\begin{itemize}
\item
objects given by categories $\cC$,
\item
the morphism category for any pair of categories $\cC_1,\cC_2$ given by the category of functors $\Fun(\cC_1,\cC_2)$, 
\item
the horizontal composition functor ${\circ_{\rm h} : \Fun(\cC_0,\cC_1)\times\Fun(\cC_1,\cC_2)\to\Fun(\cC_0,\cC_2)}$
for each triple of categories $\cC_0,\cC_1,\cC_2$.
\end{itemize}
\end{example}

Leading towards the next example of 2-categories, the construction of the symplectic category in Definition~\ref{def:1symp} can be understood as reconstructing a category from its simple morphisms and Cerf moves. For that purpose it is useful to express this Cerf data as the following ``resolution'' of the original category. 

\begin{example}[Resolution of a category with Cerf decompositions]  \label{ex:ext0} \rm
Given a category $\cC$ with Cerf decompositions into simple morphisms unique up to Cerf moves as in Definition~\ref{def:Cerf}, the 2-category $\cC^\#$ is defined by
\begin{itemize}
\item 
the set of objects $\Obj_{\cC^\#}:=\Obj_\cC$,
\item
the set of 1-morphisms $\Mor^1_{\cC^\#}$ given by finite composable chains of simple morphisms, with horizontal composition given by concatenation of chains,
\item
the set of 2-morphisms $\Mor^2_{\cC^\#}\subset \Mor^1_{\cC^\#} \times \Mor^1_{\cC^\#}$ given by
pairs of 1-morphisms that are related via a sequence of Cerf moves.
\end{itemize}
For this to define a 2-category, one should allow for empty chains as identity 1-morphisms. 
Vertical composition of 2-morphisms is well defined, associative and unital since relation via Cerf moves is an equivalence relation. Horizontal composition of 2-morphisms $(\uL_{12},\uL_{12}')\circ^2_{\rm h} (\uL_{23},\uL_{23}') := (\uL_{12}\#\uL_{23}, \uL_{12}'\#\uL_{23}')$ is compatible with vertical composition because the equivalence via Cerf moves is designed to be compatible with concatenation.
Now -- although the composition in $\cC$ is encoded in $\cC^\#$ only via the Cerf moves -- the original category $\cC$ can be reconstructed as the quotient $\cC\cong \cC^\#/\!\!\sim\; ={|\cC^\#|}$ defined in Remark~\ref{rmk:2cat-quotient} below.

This is exactly how the symplectic category was constructed in Definition~\ref{def:1symp}.
Indeed, for $\cC=\Symp$ the above construction reproduces Definition~\ref{def:extsymp} of the extended symplectic category $\cC^\#=\Symp^\#$ and extends it to a 2-category by adding the geometric composition moves as 2-morphisms, whose vertical and horizontal compositions are well defined due to the equivalence relation $\sim$ being compatible with the horizontal 1-composition $\circ^1_{\rm h}=\#$ by concatenation in $\Symp^\#$. 
If one wishes to avoid empty chains, it can be viewed as a bicategory in which horizontal composition is strictly associative, but the diagonals $\Delta_M\subset M^-\times M$ only provide identities up to 2-isomorphism given by the embedded compositions $L_{01}\circ\Delta_{M_1}=L_{01}$ and $\Delta_{M_1} \circ L_{12}=L_{12}$.
\end{example}

Conversely, any bicategory (not just those arising from Cerf decompositions as in Example~\ref{ex:ext0}) gives rise to a 1-category by the following quotient construction.

\begin{remark}[Quotient of a bicategory by 2-morphisms] \label{rmk:2cat-quotient} \rm 
Let $\cD$ be a bicategory.
Two 1-morphisms $f,g\in \Mor_\cD^1(x,y)$ are called {\bf isomorphic} $\mathbf{f\sim g}$ if there exist 2-morphisms $\alpha,\beta\in\Mor_\cD^2(f,g)$ whose vertical compositions 
$\alpha \circ_{\rm v} \beta = \id_f$ and $\beta \circ_{\rm v} \alpha = \id_g$ are the identites.
This defines an evidently symmetric relation, which is moreover transitive and reflexive because the vertical composition $\circ_{\rm v}$ is associative and unital.

Now the bicategory $\cD$ induces a quotient 1-category $|\cD|:=\cD/\!\!\sim\,$ with the same objects $\Obj_{|\cD|}:=\Obj_\cD$ and morphisms $\Mor_{|\cD|}:=\Mor^1_{\cD}/\!\!\sim\,$ given as 1-morphisms modulo the equivalence relation $\sim$.  
Here the horizontal 1-composition in $\cD$ descends to a well defined composition on the quotient $|\cD|$ due to its compatibility with the equivalence relation -- which is the content of the assumption that the horizontal bifunctor in $\cD$ is compatible with identities and vertical composition. 
\end{remark}

\subsection{Higher bordism categories} \label{ss:bord}

We begin with a more rigorous construction of the bordism category $\Bor_{d+1}$ in Example~\ref{ex:1bor} as the quotient (as in Remark~\ref{rmk:2cat-quotient}) of a bicategory $\Bor_{d+1+\eps}$ comprising $d$-manifolds, $(d+1)$-cobordisms, and diffeomorphisms of $(d+1)$-cobordisms.
After that, we restrict to the case $d=2$ and extend $\Bor_{2+1+\eps}$ to a rigorous construction of a bordism bicategory $\Bor_{2+1+1}$ comprising 2-manifolds, 3-cobordisms, and 4-dimensional manifolds with boundary and corners.

\begin{example} \label{ex:1epsbor} \rm 
The {\bf $\mathbf{d+1+}\boldsymbol{\epsilon}$ bordism bicategory ${\rm \mathbf{Bor}}_{\mathbf{d+1+\boldsymbol{\epsilon}}}$ of d-manifolds, cobordisms, and diffeomorphisms} is constructed as follows, with illustrations in Figures~\ref{fig:21eps} and ~\ref{fig:21epsprime}.
\begin{itemize}
\item
Objects in $\Obj_{\Bor_{d+1+\eps}}$ are closed, oriented, $d$-dimensional manifolds.
\item
1-morphisms in $\Mor^1_{\Bor_{d+1+\eps}}(\Sigma_-,\Sigma_+)$ are the representatives of morphisms in $\Bor_{d+1}$, that is triples $(Y,\iota^-,\iota^+)$ consisting of a compact, oriented, $(d+1)$-dimensional manifold $Y$ with boundary and orientation perserving embeddings $\iota^\pm:[0,1]\times\Sigma_\pm\to Y$ to tubular neighbourhoods of the boundary components $\partial Y=\iota^-(0,\Sigma_-)\sqcup \iota^+(1,\Sigma_+)$. For reasons that we will explain in item $\circ$ below, we also require the images of $\iota^\pm$ to be disjoint in $Y$.
\item
2-morphisms in $\Mor^2_{\Bor_{d+1+\eps}}((Y,\iota^\pm_Y),(Z,\iota^\pm_Z))$ between 1-morphisms $(Y,\iota^\pm_Y)$, $(Z,\iota^\pm_Z)\in \Mor^1_{\Bor_{d+1+\eps}}(\Sigma_-,\Sigma_+)$ are orientation preserving diffeomorphisms $\Psi:Y\to Z$ that intertwine the tubular neighbourhood embeddings, $\Psi\circ\iota^\pm_Y = \iota^\pm_Z$.
\begin{figure}[!h]
\centering
\includegraphics[width=5.5in]{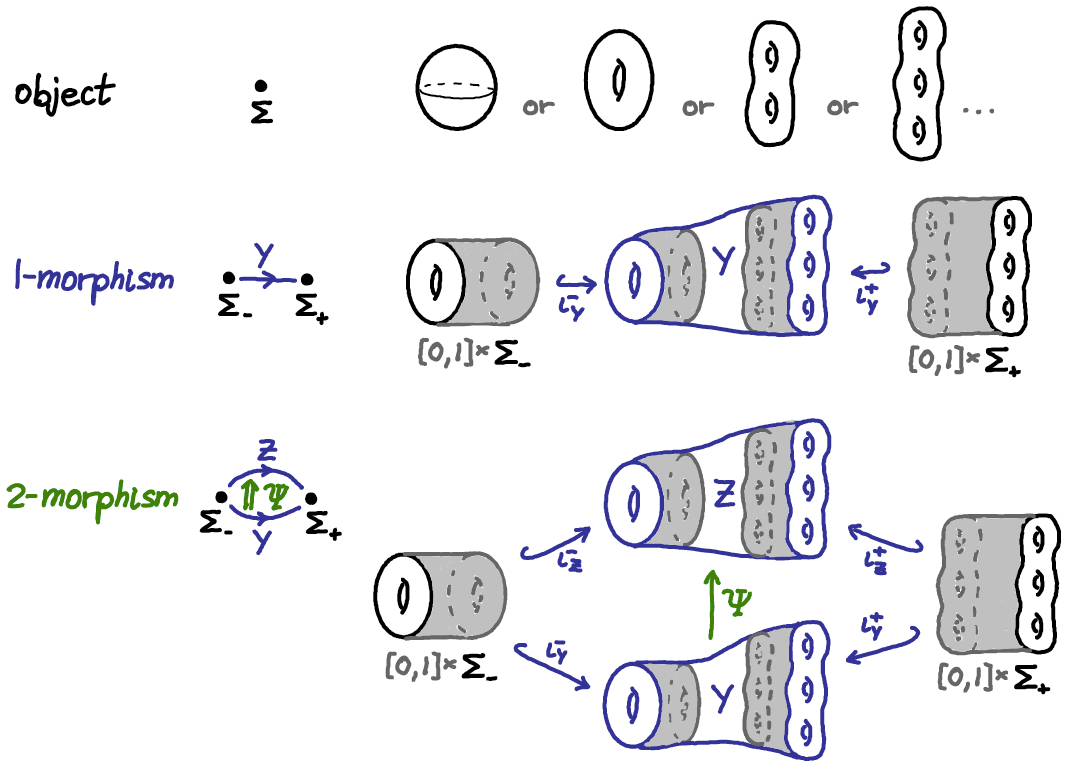}
\caption{Objects, 1-morphisms, and 2-morphisms of the bordism bicategory ${\rm Bor}_{2+1+\eps}$.}
\label{fig:21eps}
\end{figure}
\item
Vertical composition is 
% given 
by composition of diffeomorphisms ${\Phi \circ_{\rm v} \Psi= \Psi \circ \Phi}$. This is evidently associative and has units $\id_{(Y,\io^\pm)} = \id_Y$, so the set $\Mor^1_{\Bor_{d+1+\eps}}(\Sigma_-,\Sigma_+)$ of 1-morphisms between fixed objects $\Sigma_\pm$ forms a category.
\item
Horizontal 1-composition is given by the gluing operation\footnote{
Here we could allow overlapping tubular neighbourhoods, since $\iota^-_{01},\iota^+_{12}$ induce well defined tubular neighbourhoods in the glued cobordism even if their images are not disjoint from the gluing region $\im\, \iota^+_{01}\simeq \im\,\iota^+_{12}$.
}
$$
(Y_{01},\iota^-_{01},\iota^+_{01}) \circ^1_{\rm h} (Y_{12},\iota^-_{12},\iota^+_{12})
\,:=\;
\left(  \, \quo{Y_{01}\sqcup Y_{12}}{\iota_{01}^+(s,x)\sim \iota^-_{12}(s,x)} \, ,\iota^-_{01},\iota^+_{12} \right).
$$
\item 
Horizontal 2-composition
of $\Psi_{ij}\in \Mor^2_{\Bor_{d+1+\eps}}((Y_{ij},\iota^\pm_{Y_{ij}}),(Z_{ij},\iota^\pm_{Z_{ij}}) )$ for $ij= 01$ and $ij=12$ is given by gluing of the diffeomorphisms,
\begin{align*}
\Psi_{01} \circ^2_{\rm h} \Psi_{12} \,:\; 
 \quo{Y_{01}\sqcup Y_{12}}{\iota_{Y_{01}}^+(s,x)\sim \iota^-_{Y_{12}}(s,x)}
& \;\longrightarrow\;\;
 \quo{Z_{01}\sqcup Z_{12}}{\iota_{Z_{01}}^+(s,x)\sim \iota^-_{Z_{12}}(s,x)} \\
y\in Y_{ij} & \;\longmapsto\;\; \Psi_{ij}(y) ,
\end{align*}
which is well defined since for $\io^+_{Y_{01}}(s,x)\sim \io^-_{Y_{12}}(s,x)$ we have 
$\Psi_{01}(\io^+_{Y_{01}}(s,x)) = \io^+_{Z_{01}}(s,x) \sim
\io^-_{Z_{12}}(s,x) =\Psi_{12}( \io^-_{Y_{12}}(s,x))$. 
\begin{figure}[!h]
\centering
\includegraphics[width=5.5in]{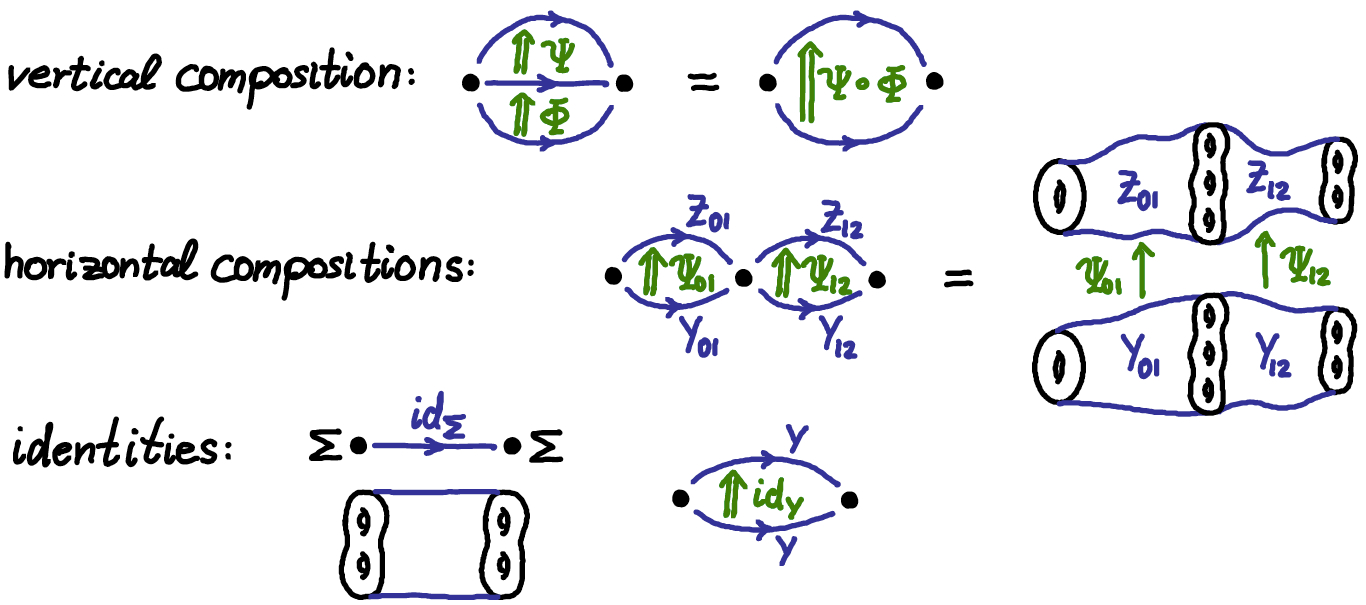}
\caption{Composition and identities in ${\rm Bor}_{2+1+\eps}$.}
\label{fig:21epsprime}
\end{figure}
\item
Horizontal composition is compatible with identities since for composable cobordisms $Y_{01},Y_{12}$ both 2-morphisms
$\id_{Y_{01}}\circ_{\rm h}^2 \id_{Y_{12}}$ and $\id_{Y_{01}\circ_{\rm h}^1 Y_{12}}$ are the identity map 
on $(Y_{01}\sqcup Y_{12})/\!\!\sim\,$.
\item
Horizontal 2-composition $\circ_{\rm h}^2$ is compatible with vertical composition since, when given diffeomorphisms $\Phi_{ij}\in \Mor^2_{\Bor_{d+1+\eps}}((Y_{ij},\iota^\pm_{Y_{ij}}),(Z_{ij},\iota^\pm_{Z_{ij}}) )$ 
for $ij=01, 12$, both $(\Phi_{01} \circ^2_{\rm h} \Phi_{12}) \circ_{\rm v} ( \Psi_{01} \circ^2_{\rm h} \Psi_{12})$ 
and $(\Phi_{01} \circ_{\rm v} \Psi_{01}) \circ^2_{\rm h} ( \Phi_{12} \circ_{\rm v} \Psi_{12})$ 
are given by the compositions $\Psi_{ij}\circ\Phi_{ij}$ on each part $Y_{ij} \subset \qu{Y_{01}\sqcup Y_{12}}{\iota^+\sim \iota^-}$.
\item
Horizontal 1-composition is strictly associative since for composable $(d+1)$-dimensional cobordisms $Y_{01},Y_{12},Y_{23}$ both $Y_{01} \circ_{\rm h}^1 \bigl( Y_{12} \circ_{\rm h}^1 Y_{23}  \bigr)$
and $\bigl( Y_{01} \circ_{\rm h}^1 Y_{12} \bigr)\circ_{\rm h}^1 Y_{23}$
are given by the disjoint union $Y_{01} \sqcup Y_{12} \sqcup Y_{23}$ 
modulo the equivalence relation\footnote{
If the embeddings $\iota^\pm_{12}$ had overlapping images, we could make the same construction by completing the equivalence relation with compositions
$\iota_{01}^+(s,x)\sim \iota^-_{12}(s,x)=\iota_{12}^+(s',x')\sim \iota^-_{23}(s',x')$.
} 
given by 
$\iota_{01}^+(s,x)\sim \iota^-_{12}(s,x)$, $\iota_{12}^+(s,x)\sim \iota^-_{23}(s,x)$.
\item
Horizontal 2-composition is associative since for composable $Y_{ij}$ and $Z_{ij}$ as above and 
$\Phi_{ij}\in \Mor^2_{\Bor_{d+1+\eps}}(Y_{ij},Z_{ij})$ both
$\Phi_{12}\circ^2_{\rm h}( \Phi_{23}\circ^2_{\rm h}\Phi_{34})$
and $(\Phi_{12}\circ^2_{\rm h}\Phi_{23}) \circ^2_{\rm h}\Phi_{34}$
are given by $\Phi_{ij}$ on each part 
$Y_{ij} \subset \qu{Y_{01}\sqcup Y_{12}\sqcup Y_{23}}{\iota_{01}^+\sim \iota^-_{12}, \iota_{12}^+\sim \iota^-_{23}}$.

\item[$\circ$]
Horizontal 1-composition $\circ_{\rm h}^1$ can also be made strictly unital, thus giving rise to a 2-category, if we allow the tubular neighbourhood embeddings to have overlapping image. Then $1_{\Sigma,1}:=([0,1]\times\Sigma,\io^\pm)$ with the canonical embeddings $\io^\pm=\id_{[0,1]\times\Sigma}$ would be a strict unit.
However, other cylindrical cobordisms
$$
1_{\Sigma,\delta}:=([0,1]\times\Sigma, \io^\pm_\delta)
\quad\text{with}\quad \io^-_\delta(s,x):=(\delta s,x),\;  \io^+_\delta(s,x):=(1 - \delta +\delta s ,x)
$$ 
for $0<\delta<1$ would have no 2-morphisms to the unit since such a diffeomorphism on $[0,1]\times\Sigma$ would be required to map $\im\, \io^-=[0,1]\times\Sigma$ to $\im\,\io^-_\delta=[0,\delta]\times\Sigma$ and $\im\, \io^+=[0,1]\times\Sigma$ to $\im\,\io^+_\delta=[1-\delta,1]\times\Sigma$. 
So this bordism 2-category $\Bor'_{d+1+\eps}$ would have too few 2-morphisms to achieve our topological vision of having just one morphism in $|\Bor'_{d+1+\eps}|= \Bor_{d+1}$ that is represented by $[0,1]\times\Sigma$ with the canonical boundary identifications.

By requiring the tubular neighbourhood embeddings to have disjoint images, we disallow $1_{\Sigma,\delta}$ for $\delta\ge\frac 12$ as 2-morphism. On the other hand, the cylindrical cobordisms for $0<\delta\neq\delta'<\frac 12$ are all equivalent,
$$
1_{\Sigma,\delta}\sim 1_{\Sigma,\delta'}
\quad\text{since}\quad 
\id_{[0,1]\times\Sigma}=\Psi\circ\Psi^{-1}, \; \id_{[0,1]\times\Sigma}=\Psi^{-1}\circ\Psi ,
$$
for any diffeomorphism $\Psi$ of $[0,1]\times\Sigma$ which extends
$$
(\iota^\pm_{\delta'})^{-1}\circ \iota^\pm_{\delta} :  
\bigl([0,\delta]\cup [1-\delta,1]\bigr)\times\Sigma \to \bigl([0,\delta']\cup [1-\delta',1]\bigr)\times\Sigma .
$$
\item
Horizontal 1-composition $\circ_{\rm h}^1$ in $\Bor_{d+1+\eps}$ is unital up to 2-isomorphism, with weak units for any manifold $\Sigma\in \Obj_{\Bor_{d+1+\eps}}$ given by the cylindrical cobordisms $1_{\Sigma,\delta}\in\Mor_{\Bor_{d+1+\eps}}^1(\Sigma,\Sigma)$ for any $0<\delta< \frac 12$. Indeed, for any appropriate
$(Y,\io^\pm_Y), (Z,\io^\pm_Z) \in\Mor_{\Bor_{d+1+\eps}}^1$ we have 
$$
(Y,\io^\pm_Y) \circ_{\rm h}^1 1_{\Sigma,\delta} \sim  (Y,\io^\pm_Y) , \qquad\qquad
1_{\Sigma,\delta}  \circ_{\rm h}^1 (Z,\io^\pm_Z) \sim  (Z,\io^\pm_Z)  
$$
via appropriate diffeomorphisms 
$$
\Psi: \quotient{Y\sqcup ([0,1]\times\Sigma)}{\io^+_Y\sim \io^-_\delta} \to Y,
 \qquad
\Phi: \quotient{([0,1]\times\Sigma) \sqcup Z}{\io^+_\delta \sim \io^-_Z} \to Z .
$$ 
Here $\Psi$ (and $\Phi$ similarly) is constructed as follows:
Extend $\iota^+_Y$ to an embedding $\tilde\io^+_Y:[-1,1]\times\Sigma\to Y\less\im\,\io^-_Y$.
Then we have a natural diffeomorphism 
$$
\quot{Y\sqcup ([0,1]\times\Sigma)}{\io^+_Y(s,\cdot)\sim \io^-_\delta(s,\cdot) \; \forall s\in[0,1]}
\simeq 
\quot{Y \sqcup ( [-\delta,1]\times\Sigma )}{\tilde\io^+_Y(s,\cdot)\sim \tilde\io^-_\delta(s, \cdot)
\; \forall s\in[-1,1]} 
$$
with $\tilde\io^-_\delta:[-1,1]\times\Sigma \to [-\delta,1]\times\Sigma, (s,x)\mapsto (\delta s,x)$.
Now we can construct $\Psi$ by the identity on $Y\less\im\,\tilde\io^+_Y$ and a diffeomorphism
$[-\delta,1]\times\Sigma \to \tilde\io^+_Y([-1,1]\times\Sigma)$ given by $\tilde\io^+_Y\circ(\tilde\io^-_\delta)^{-1}$ near $\{-\delta\}\times\Sigma \to \tilde\io^+_Y(-1,\Sigma)$ and $\io^+_Y\circ(\io^+_\delta)^{-1}$ on $[1-\delta,1]\times\Sigma \to \io^+_Y([0,1]\times\Sigma)$. It intertwines the boundary embeddings $\Psi\circ\io^+_\delta = \io^+_Y$ by construction, as required for a 2-morphism from $(Y,\io^\pm_Y) \circ_{\rm h}^1 1_{\Sigma,\delta}$ to $(Y,\io^\pm_Y)$. 
\end{itemize}
This finishes the construction of the bordism bicategory ${\rm Bor}_{d+1+\epsilon}$.
It particularly contains representatives of the cylindrical cobordism $Z_\phi\in\Mor_{\Bor_{d+1}}(\Sigma_0,\Sigma_1)$ associated to a diffeomorphism $\phi:\Sigma_0\to\Sigma_1$ in \eqref{eq:Zphi},  
for any $0<\delta<\frac 12$ given by
\[
\widehat Z_{\phi,\delta}:=\left(
\begin{aligned} 
[0,1]\times \Sigma_1 \,,\, 
& \io^-_\delta: (s,x)\mapsto (\delta s, \phi(x)) \,,\,  \\
& \io^+_\delta: (s,x)\mapsto (1-\delta+\delta s,x) 
\end{aligned} \right) 
 \in\Mor^1_{\Bor_{d+1}}(\Sigma_0,\Sigma_1) .
\]
These also reproduce the identity 1-morphisms $\widehat Z_{\id,\delta} = 1_{\Sigma,\delta} \in\Mor^1_{\Bor_{d+1}}(\Sigma,\Sigma)$.

The {\bf connected $\mathbf{d+1+}\boldsymbol{\epsilon}$ bordism bicategory ${\rm \mathbf{Bor}^{\rm \mathbf{conn}}}_{\mathbf{d+1+\boldsymbol{\epsilon}}}$ of connected d-manifolds, connected cobordisms, and diffeomorphisms} is constructed analogously, using the objects and representatives of morphisms of $\Bor^{\rm conn}_{d+1}$.
\end{example}

\begin{remark}[Quotient construction of the d+1 bordism category] \label{rmk:1epsbor} \rm 
Taking the quotient of the bordism bicategories $\Bor_{d+1+\eps}$ and $\Bor^{\rm conn}_{d+1+\eps}$ by their 2-morphisms (i.e.\ diffeomorphisms of $d+1$-cobordisms) as in Remark~\ref{rmk:2cat-quotient} now yields rigorous definitions of the (connected) bordism categories $\Bor_{d+1}:=\Bor_{d+1+\eps}/\!\!\sim\,$ and $\Bor^{\rm conn}_{d+1}:=\Bor^{\rm conn}_{d+1+\eps}/\!\!\sim\,$ 
outlined in Examples~\ref{ex:1bor} and \ref{ex:connbor}.

In particular, the cylindrical cobordism $Z_\phi$ associated in \eqref{eq:Zphi} to a diffeomorphism $\phi:\Sigma_0\to\Sigma_1$ is more rigorously defined as the equivalence class 
$Z_\phi := [\widehat Z_{\phi,\delta}]\in\Mor_{\Bor_{d+1}}(\Sigma_0,\Sigma_1)$,
which is independent of $0<\delta<\frac 12$.
This also reproduces the identity morphisms $Z_{\id} = [1_{\Sigma,\delta}]\in\Mor_{\Bor_{d+1}}(\Sigma,\Sigma)$ in $\Bor_{d+1}$.
\end{remark}

For our applications we will restrict to dimension $d=2$ and extend the above constructions to a bordism bicategory which includes ``cobordisms of cobordisms'' given by 4-manifolds with boundaries and corners.
Again, we give general constructions for $d\ge0$ and restrict to $d\ge 2$ to obtain a connected theory. 
We also use the illustrations in case $d=2$ to give a preview of the string diagram notation in \S\ref{ss:quilt}.

\begin{figure}[!h]
\centering
\includegraphics[width=5.5in]{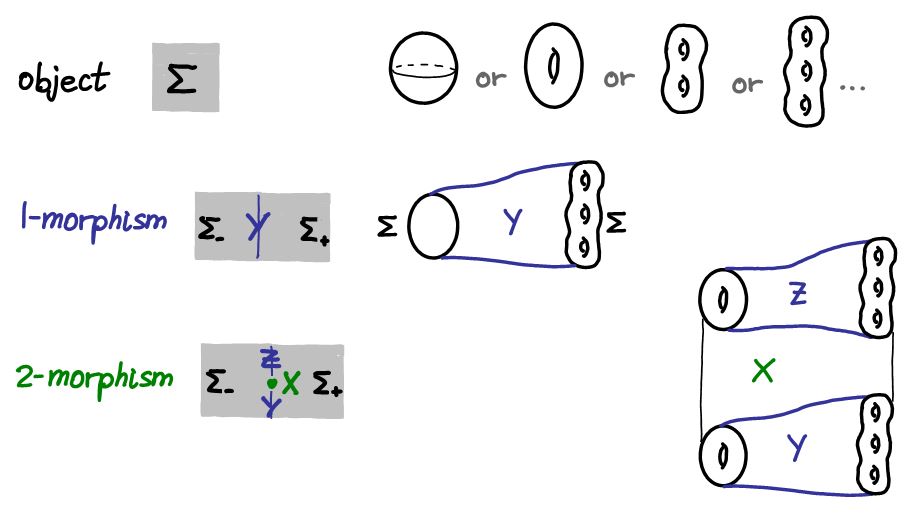}
%TODO - suboptimal resolution
\caption{Objects, 1-morphisms, and 2-morphisms of ${\rm Bor}_{2+1+1}$ and their string diagram notation. These basic diagrams represent 4-manifolds given by squares times surfaces, intervals times 3-cobordisms, and 4-cobordisms with corners.
}
\label{fig:2bordismstuff}
\end{figure}

\begin{example} \label{ex:2bor} \rm 
The {\bf bordism bicategory ${\rm \mathbf{Bor}}_{\mathbf{d+1+1}}$} for $d\ge 0$ consists of the following, with representation by string diagrams illustrated in Figures~\ref{fig:2bordismstuff} and \ref{fig:borcomp}.

\begin{itemize}
\item
Objects in $\Obj_{\Bor_{d+1+1}}$ are closed oriented d-manifolds $\Sigma$ as in $\Bor_{d+1+\eps}$.
\item
1-morphisms in $\Mor^1_{\Bor_{d+1+1}}(\Sigma_-,\Sigma_+)$ are triples $(Y,\iota^-,\iota^+)$ of a compact, oriented (d+1)-cobordism $Y$ with disjoint embeddings $\iota^\pm:[0,1]\times\Sigma_\pm\to Y$ to neighbourhoods of the boundary parts $\partial Y=\iota^-(0,\Sigma_-)\sqcup \iota^+(1,\Sigma_+)$
as in $\Bor_{d+1+\eps}$. 
\item
2-morphisms, i.e.\ morphisms in the category $\Mor^1_{\Bor_{d+1+1}}(\Sigma_-,\Sigma_+)$
are equivalence classes of tuples $\bigl[(X,\iota^+_X,\iota^-_X,\kappa^-,\kappa^+)\bigr]\in \Mor^2_{\Bor_{d+1+1}}((Y,\iota^\pm_Y),(Z,\iota^\pm_Z))$ consisting of a compact, oriented (d+2)-manifold $X$ with boundary and corners and four orientation preserving embeddings as indicated in Figure~\ref{fig:2bordism},
$$
\iota^\pm_X:  [0,1]\times[0,1] \times \Sigma_\pm \to X , \quad
\kappa^-:[0,1]\times Y\to X, \quad
\kappa^+:[0,1]\times Z\to X .
$$ 
\begin{figure}[!h]
\centering
\includegraphics[width=5.5in]{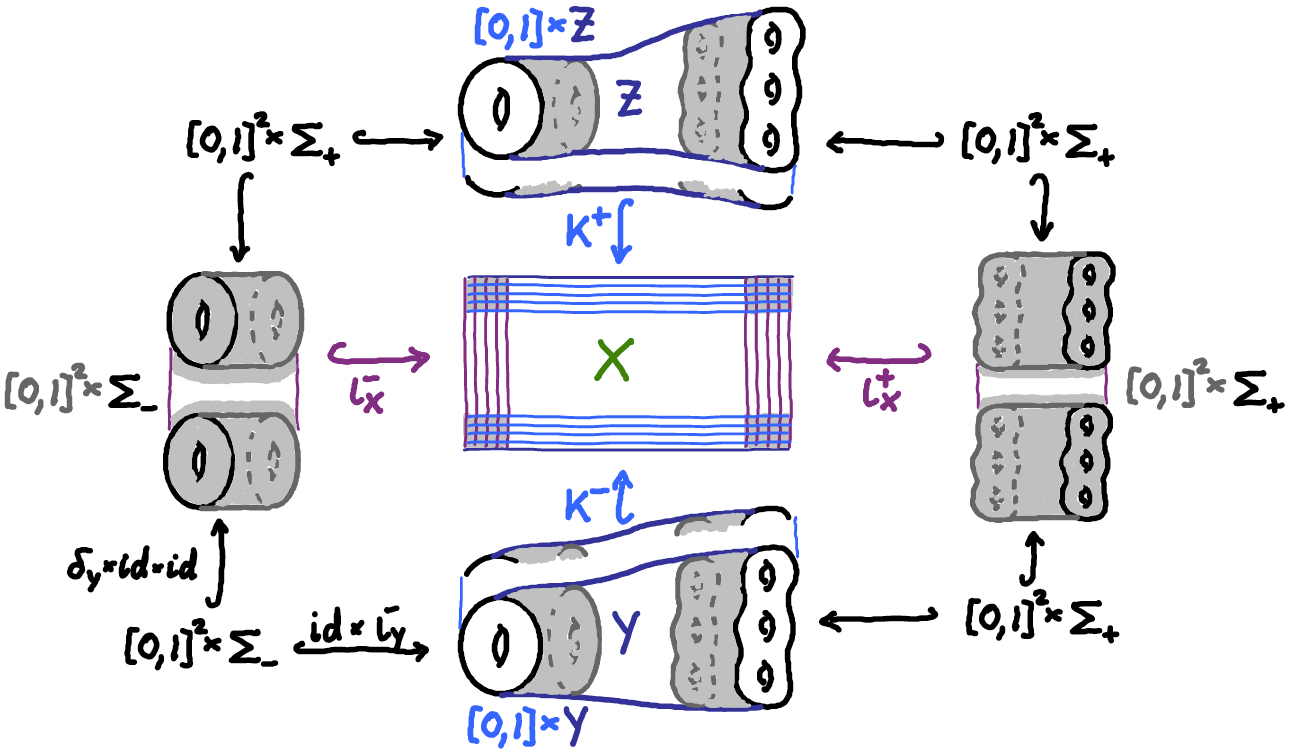}
\caption{A 2-morphism in ${\rm Bor}_{2+1+1}$ consists of a 4-manifold $X$ with a number of embeddings to collar neighbourhoods of its boundary strata, which are compatible near the corners.}
\label{fig:2bordism}
\end{figure}
The embeddings $\iota^\pm$ and $\kappa^\pm$ are required to cover the boundary 
$$
\partial X \,=\, \kappa^-(0,Y) \,\sqcup\, \kappa^+(1,Z) \,\sqcup\, \iota^-((0,1),0,\Sigma_-) \,\sqcup\, \iota^+((0,1),1,\Sigma_+)
$$
in such a way that both pairs $\kappa^\pm$ and $\iota^\pm$ have disjoint images, but we have mixed overlaps on which $\kappa^\pm$ intertwines $\iota^\pm$ with the boundary identifications $\iota^\pm_Y,\iota^\pm_Z$ in the sense that for some $0<\delta_Y^\pm,\delta_Z^\pm<\frac 12$ and all $s,t\in[0,1]$, $x\in\Sigma_\pm$ we have
\begin{align}\label{2borcomp}
\kappa^-\bigl( s, \iota^\pm_Y (t,x) \bigr) = \iota^\pm_X (\delta_Y^\pm s, t, x)  , 
\quad
\kappa^+ \bigl( s, \iota^\pm_Z (t,x) \bigr) = \iota^\pm_X (1-\delta_Z^\pm + \delta_Z^\pm s, t, x) .
\end{align}
Two such tuples are equivalent,
$\bigl(X_0,\iota^+_0,\iota^-_0,\kappa_0^-,\kappa_0^+ \bigr) \sim \bigl(X_1,\iota^+_1,\iota^-_1,\kappa_1^-,\kappa_1^+ \bigr)$, if there exists a diffeomorphism $F:X_0\to X_1$ that intertwines the embeddings, i.e.\ 
$F\circ \iota^\pm_0 =  \iota^\pm_1$ and $F\circ \kappa^\pm_0 =  \kappa^\pm_1$.
\item[$\circ$]
The 2-morphisms $\Psi:Y\to Z$ in $\Bor_{d+1+\eps}$ appear in $\Bor_{d+1+1}$
as the cylindrical cobordisms of cobordisms 
$I_\Psi:=\bigl[([0,1]\times Z,\iota^\pm,\kappa_\delta^\pm)\bigr]\in \Mor^2_{\Bor_{d+1+1}}$
with
$$
\io^\pm(s,t,x):=(s,\io^\pm_Z(t,x)), \quad
\kappa^-_\delta(s,y):=\bigl(\delta s, \Psi(y)\bigr),\quad 
\kappa^+_\delta(s,z):=\bigl(1-\delta+\delta s,z\bigr) .
$$
This is illustrated in Figure~\ref{fig:inclusion} and may help with understanding the compatibility conditions \eqref{2borcomp} for the embeddings, which are naturally satisfied by $\Psi\circ\iota^\pm_Y = \iota^\pm_Z$, 
\begin{align*}
\kappa^-_\delta\bigl( s, \iota^\pm_Y (t,x) \bigr) 
&= \bigl(\delta s, \Psi( \iota^\pm_Y (t,x) )\bigr)
= \bigl(\delta s, \iota^\pm_Z (t,x) \bigr)
=\iota^\pm (\delta s, t, z)   , \\
\kappa^+_\delta \bigl( s, \iota^\pm_Z (t,x) \bigr) 
&= \bigl(1-\delta+\delta s, \iota^\pm_Z (t,z)  \bigr)
= \iota^\pm_X (1-\delta + \delta s, t, x) .
\end{align*}
\begin{figure}[!h]
\centering
\includegraphics[width=5.5in]{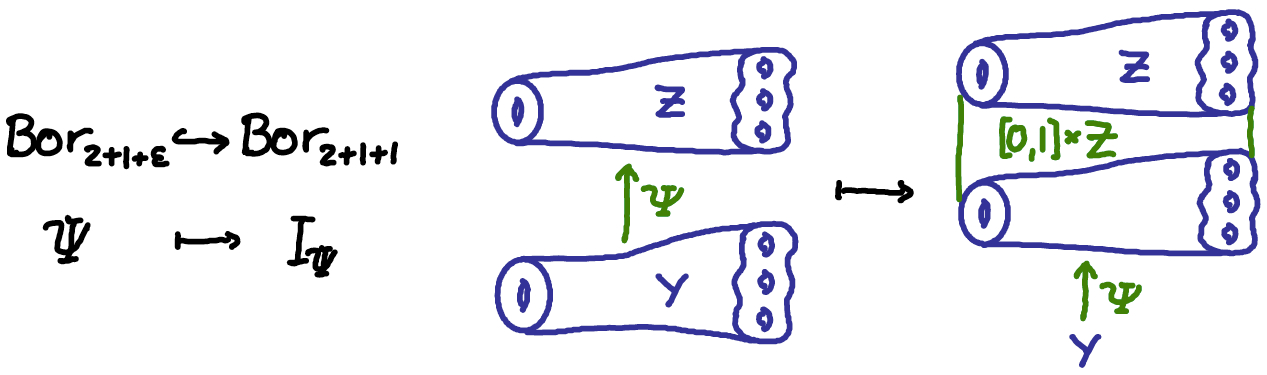}
\caption{Inclusion of $\Bor_{d+1+\eps}$ in $\Bor_{d+1+1}$.}
\label{fig:inclusion}
\end{figure}
\item
Vertical composition of $\bigl[\bigl(X^{ij},\io^\pm_{ij},\kappa^\pm_{ij}\bigr)\bigr]
\in \Mor^2_{\Bor_{d+1+1}}\bigl((Y^i,\iota_{Y^i}^\pm),(Y^j,\iota_{Y^j}^\pm))$ labeled by $ij=01$ and $ij=12$ 
between $(Y^i,\iota_{Y^i}^\pm)\in\Mor^1_{\Bor_{d+1+1}}(\Sigma_-,\Sigma_+)$ for $i=0,1,2$ is given by gluing the (d+2)-manifolds and embeddings as illustrated in
Figure~\ref{fig:borcomp}
$$
\bigl( X^{01},\iota^\pm_{01},\kappa^\pm_{01} \bigr) \circ_{\rm v} \bigl( X^{12},\iota^\pm_{12},\kappa^\pm_{12} \bigr)
\,:=\;
\left(  \, \quo{X^{01}\sqcup X^{12}}{\kappa_{01}^+(s,y)\sim \kappa^-_{12}(s,y)} \, ,
\iota^\pm_{01} \circ_{\rm h} \iota^\pm_{12}, \kappa^-_{01} , \kappa^+_{12} \right).
$$
\begin{figure}[!h]
\centering
\includegraphics[width=5.5in]{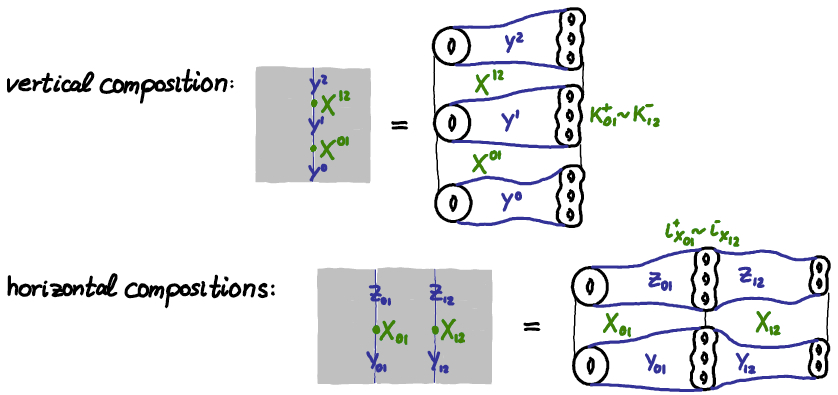}
\caption{Compositions of 2-morphisms in $\Bor_{2+1+1}$ and their string diagram notation. These more complicted diagrams represent constructions of 4-manifolds by gluing squares times surfaces, intervals times 3-cobordisms, and 4-cobordisms with corners along common boundary strata.
}
\label{fig:borcomp}
\end{figure}
Here the tubular neighbourhoods of the common boundaries $\partial Y^i \simeq \Sigma_-^-\sqcup \Sigma_+$ 
are glued in the same way as the horizontal 2-composition $\circ^2_{\rm h}$ in $\Bor_{d+1+\eps}$, that is
\begin{align*}
\io^\pm_{01} \circ_{\rm h} \io^\pm_{12} \,:\; 
\quo{ Q_\pm^{01}\sqcup Q_\pm^{12}}{\iota_{Q_\pm^{01}}^+\sim \iota^-_{Q_\pm^{12}}}
& \;\longrightarrow\;\;
 \quo{X^{01}\sqcup X^{12}}{\kappa_{01}^+\sim \kappa^-_{12}} \\
(s,t,x) \in Q_\pm^{ij} & \;\longmapsto\;\; \io^\pm_{ij}(s,t,x) 
\end{align*}
where $Q_\pm^{ij}:=[0,1]^2\times \Sigma_\pm$ are representatives of 2-morphisms with the embeddings 
$\iota^\pm_{Q_\pm^{ij}}:[0,1]\times [0,1]\times \Sigma \to Q_\pm^{ij}$
chosen so as to make the glued map well defined due to the compatibility conditions in \eqref{2borcomp}
for some $\delta^\pm_{ij}>0$,
\begin{align*}
 \kappa^+_{01} \bigl( s, \iota^\pm_{Y^1} (t,x) \bigr)
=
\iota^\pm_{01} (1-\delta^\pm_{01} + \delta^\pm_{01} s, t, x) 
&=:
\iota^\pm_{01} \bigl( \iota^+_{Q_\pm^{01}} (s,t,x) \bigr) \\
 \sim  \kappa^-_{12}\bigl( s, \iota^\pm_{Y^1} (t,x) \bigr) 
= 
\iota^\pm_{12} (\delta^\pm_{12} s, t, x)
&=:
\iota^\pm_{12} \bigl( \iota^-_{Q_\pm^{12}} (s,t,x) \bigr) .
\end{align*}
That is, we set
$\iota^+_{Q_\pm^{01}} (s,t,x) := (1-\delta^\pm_{01} + \delta^\pm_{01} s, t, x) $ 
and
$\iota^-_{Q_\pm^{12}} (s,t,x) := (\delta^\pm_{12} s, t, x)$.

This gluing construction on the level of representatives yields a well defined vertical composition of equivalence classes because the equivalences are given by diffeomorphisms which intertwine the embeddings that are used to glue. Associativity follows from direct associativity of gluing, and units are provided by the cylindrical cobordisms of cobordisms $\id_{(Y,\iota^\pm)}:=I_{\id_Y}$ associated above to the identity map $\Psi=\id_Y:Y\to Y$ just as for $\circ^1_{\rm h}$ in Example~\ref{ex:1epsbor}.
Thus $\Mor^1_{\Bor_{d+1+1}}(\Sigma_-,\Sigma_+)$ for fixed objects 
$\Sigma_\pm$ forms a category.
\item[$\circ$]
Note that the vertical composition of 2-morphisms arising from diffeomorphisms in $\Mor^2_{\Bor_{d+1+\eps}}$ is compatible with the vertical composition in $\Bor_{d+1+\eps}$, that is
\begin{equation}\label{eq:Ipsiphi}
I_{\Phi_{01}} \circ_{\rm v}  I_{\Phi_{12}} = I_{\Phi_{12}\circ\Phi_{01}} \qquad\forall
\Phi_{ij} \in \Mor^2_{\Bor_{d+1+\eps}}((Y_{i},\iota^\pm_{Y_{i}}),(Y_{j},\iota^\pm_{Y_{j}}) ).
\end{equation}
Indeed, we have
$I_{\Phi_{01}} \circ_{\rm v}  I_{\Phi_{12}} =\bigl[\bigl( [0,1]\times Y_1,\iota^\pm_{01} ,\kappa^\pm_{\delta,\Phi_{01}} \bigr)\bigr] \circ_{\rm v} \bigl[\bigl( [0,1]\times Y_2,\iota^\pm_{12},\kappa^\pm_{\delta,\Phi_{12}} \bigr)\bigr]$
with $\iota^\pm_{01} =\id_{[0,1]}\times \iota^\pm_{Y_1}$, $\iota^\pm_{12} =\id_{[0,1]}\times \iota^\pm_{Y_2}$, 
$\kappa^-_{\delta,\Psi}(s,y)=\bigl(\delta s, \Psi(y)\bigr)$, and 
$\kappa^+_{\delta,\Psi}(s,z)=\bigl(1-\delta+\delta s,z\bigr)$ 
is represented by the (d+2)-manifold
$$
\quo{\bigl([0,1]\times Y_1\bigr) \sqcup \bigl([0,1]\times Y_2\bigr) }{
(1-\delta+\delta s,y) \sim (\delta s, \Phi_{12}(y))} 
\;\simeq\;
[0,2-\delta]\times Y_2 ,
$$
via the diffeomorphism induced by $\id_{[0,1]} \times \Phi_{12} : [0,1]\times Y_1 \to [0,1]\times Y_2$
and
$(r\mapsto r + 1-\delta)\times \id_{Y_2}:  [0,1]\times Y_2 \to [1-\delta,2-\delta]\times Y_2$.
The corresponding embeddings are
\begin{align*}
\iota^\pm_{01} \circ_{\rm h} \iota^\pm_{12} 
&\;=\; 
\bigl(\id_{[0,1]}\times \iota^\pm_{Y_1}\bigr) \circ_{\rm h} \bigl(\id_{[0,1]}\times \iota^\pm_{Y_2}\bigr)
\;\simeq\; 
\id_{[0,2-\delta]} \times \iota^\pm_{Y_2} , \\
\kappa^-_{\delta,\Phi_{01}}(s,y) &\;=\; \bigl(\delta s, \Phi_{01}(y)\bigr)
\;\simeq\;  \bigl(\delta s , \Phi_{12}\bigl(\Phi_{01}(y)\bigr)\bigr), \\
\kappa^+_{\delta,\Phi_{12}}(s,y) &\;=\; \bigl(1-\delta+\delta s,z\bigr) 
\;\simeq\;  \bigl(2-2\delta+\delta s,z\bigr) .
\end{align*}
This is equivalent to the representative of $I_{\Phi_{12}\circ\Phi_{01}}$ with constant $0<\frac{\delta}{2-\delta}<\frac 12$ via linear rescaling in the first factor $[0,2-\delta]\times Y_2\simeq[0,1]\times Y_2$.
\item
Horizontal 1-composition is given by gluing as in $\Bor_{d+1+\eps}$, 
$$
(Y_{01},\iota^-_{01},\iota^+_{01}) \circ^1_{\rm h} (Y_{12},\iota^-_{12},\iota^+_{12})
\,:=\;
\left(  \, \quo{Y_{01}\sqcup Y_{12}}{\iota_{01}^+(s,x)\sim \iota^-_{12}(s,x)} \, ,\iota^-_{01},\iota^+_{12} \right) .
$$
\item 
Horizontal 2-composition of $\bigl[\bigl(X_{ij},\io^\pm_{ij},\kappa^\pm_{ij}\bigr)\bigr] \in \Mor^2_{\Bor_{d+1+1}}((Y_{ij},\iota^\pm_{Y_{ij}}),(Z_{ij},\iota^\pm_{Z_{ij}}) )$ between
$(Y_{ij},\iota^\pm_{Y_{ij}}),(Z_{ij},\iota^\pm_{Z_{ij}})\in\Mor^1_{\Bor_{d+1+1}}(\Sigma_i,\Sigma_j)$ 
for $ij= 01$ and $ij=12$ is given by gluing the (d+2)-manifolds and embeddings  as illustrated in Figure~\ref{fig:borcomp}
\begin{align*}
\bigl( X_{01},\iota^\pm_{01},\kappa^\pm_{01} \bigr) \circ^2_{\rm h} \bigl( X_{12},\iota^\pm_{12},\kappa^\pm_{12} \bigr)
&\,:=\;
\left(  \, \quo{X_{01}\sqcup X_{12}}{\iota_{01}^+\sim \iota^-_{12}} \, ,
\iota^-_{01}, \iota^+_{12}, \kappa^\pm_{01}  \circ_{\rm h}  \kappa^\pm_{12} \right),\\
\kappa^-_{01} \circ_{\rm h} \kappa^-_{12} \,:\; 
[0,1] \times \quo{ Y_{01}\sqcup Y_{12}}{\iota_{Y_{01}}^+\sim \iota^-_{Y_{12}}}
& \;\to\;
% \quo{X_{01}\sqcup X_{12}}{\io_{01}^+\sim \io^-_{12}} 
X_{01}\circ^2_{\rm h} X_{12}  , \quad
\bigl(s,y\in Y_{ij} \bigr) \;\mapsto\; \kappa^-_{ij}(s,y) , \\
\kappa^+_{01} \circ_{\rm h} \kappa^+_{12} \,:\; 
[0,1] \times \quo{ Z_{01}\sqcup Z_{12}}{\iota_{Z_{01}}^+\sim \iota^-_{Z_{12}}}
& \;\to\;
% \quo{X_{01}\sqcup X_{12}}{\io_{01}^+\sim \io^-_{12}} 
X_{01}\circ^2_{\rm h} X_{12} , \quad
\bigl(s,z\in Z_{ij} \bigr) \;\mapsto\; \kappa^+_{ij}(s,z).
\end{align*}
For the boundary embeddings $\kappa^\pm_{01}  \circ_{\rm h}  \kappa^\pm_{12}$ to be well defined, we need to take account of the scaling factors in \eqref{2borcomp} for the two cobordisms $X_{ij}$ to achieve
\begin{align*}
\kappa_{01}^-(s,\io^+_{Y_{01}}(t,x)) 
= \iota^+_{01} (\delta^+_{Y_{01}} s, t, x)
&\sim 
\iota^-_{12} (\delta^-_{Y_{12}} s, t, x)
=\kappa_{12}^-(s,\io^-_{Y_{12}}(t,x)) 
,\\
\kappa_{01}^+(s,\io^+_{Z_{01}}) 
= \iota^+_{01} (1-\delta_{Z_{01}}^+ + \delta_{Z_{01}}^+ s,..)
&\sim 
\iota^-_{12} (1-\delta_{Z_{12}}^- + \delta_{Z_{12}}^- s,..)
=\kappa_{12}^+(s,\io^-_{Z_{12}}) .
\end{align*}
That is, we define the relation $\sim$ in the construction of the glued (d+2)-manifold
$X_{01}\circ^2_{\rm h} X_{12}:=(X_{01}\sqcup X_{12})/\!\sim$ by
$\io_{01}^+(s,t,z)\sim \io^-_{12}(\phi(s),t,z)$ for some diffeomorphism $\phi:[0,1]\to[0,1]$ with
$\phi(r)=\delta^-_{Y_{12}} r / \delta^+_{Y_{01}}$ for $0\leq r \leq \delta^+_{Y_{01}}$ and
$\phi(1-r)=1- \delta^-_{Z_{12}} r / \delta^+_{Z_{01}}$ for $0\leq r \leq \delta^+_{Z_{01}}$.
Such $\phi$ exists since all $\delta$-factors in \eqref{2borcomp} are less than $\frac 12$. 
Finally, one needs to check that different choices of $\phi$ yield equivalent tuples of (d+2)-manifolds and  embeddings, and thus the same 2-morphism.
\item 
Horizontal composition is compatible with identities since for composable cobordisms $Y_{12},Y_{23}$ both
$\id_{Y_{12}}\circ_{\rm h}^2 \id_{Y_{23}} = I_{\id_{Y_{12}}}\circ_{\rm h}^2 I_{\id_{Y_{23}}}$
and
$\id_{Y_{12}\circ_{\rm h}^1 Y_{23}} = I_{\id_{Y_{12}\circ_{\rm h}^1 Y_{23}}}$
are represented by the (d+2)-manifold 
$$
\quo{( [0,1]\times Y_{12})\sqcup  ([0,1]\times Y_{23})}{\id_{[0,1]}\times\iota_{12}^+\, \sim \, \id_{[0,1]}\times\iota^-_{23}}
\;\simeq\; 
[0,1]\times \quo{Y_{12}\sqcup Y_{23}}{\iota_{12}^+\sim \iota^-_{23}}
$$
with embeddings -- arising from a universal choice of $\delta$ -- given by 
\begin{align*}
\iota^- &: \; (s,t,x) \mapsto \bigl(s,\io^-_{Y_{12}}(t,x) \bigr) =  \bigl(s,\io^-_{Y_{12}\circ_{\rm h}^1 Y_{23}}(t,x) \bigr) 
, \\
\iota^+ &: \; (s,t,x) \mapsto (s,\io^+_{Y_{23}}(t,x)) =  \bigl(s,\io^+_{Y_{12}\circ_{\rm h}^1 Y_{23}}(t,x) \bigr)
, \\
\kappa^-_\delta &: \; 
(s,y)\mapsto  \bigl( \delta\id_{[0,1]}\times \id_{Y_{12}} \bigr) \circ_{\rm h} \bigl( \delta\id_{[0,1]}\times \id_{Y_{23}}
\bigr) (s,y) =  \bigl(\delta s, \id_{Y_{12}\circ_{\rm h}^1 Y_{23}}(y)\bigr)
, \\ 
\kappa^+_\delta &: \; 
(s,z)\mapsto \bigl(1-\delta+\delta s, \bigl( \id_{Y_{12}} \circ_{\rm h} \id_{Y_{23}}\bigr) (z) \bigr)
 =  \bigl(1-\delta+\delta s, \id_{Y_{12}\circ_{\rm h}^1 Y_{23}}(z)\bigr) .
\end{align*}
\item 
Compatibility of horizontal 2-composition with vertical composition requires
$$
\bigl( [W_{01}] \circ^2_{\rm h} [W_{12}] \bigr) \circ_{\rm v} \bigl( [X_{01}] \circ^2_{\rm h} [X_{12}] \bigr)
=
\bigl( [W_{01}] \circ_{\rm v} [X_{01}] \bigr) \circ^2_{\rm h} \bigl( [W_{12}] \circ_{\rm v} [X_{12}]\bigr)
$$
for any
$\bigl[\bigl(W_{ij},\io^\pm_{W_{ij}},\kappa^\pm_{W_{ij}}\bigr)\bigr] \in \Mor^2_{\Bor_{d+1+1}}((V_{ij},\iota^\pm_{V_{ij}}),(Y_{ij},\iota^\pm_{Y_{ij}}) )$ and
$\bigl[\bigl(X_{ij},\io^\pm_{X_{ij}},\kappa^\pm_{X_{ij}}\bigr)\bigr] \in \Mor^2_{\Bor_{d+1+1}}((Y_{ij},\iota^\pm_{Y_{ij}}),(Z_{ij},\iota^\pm_{Z_{ij}}) )$, 
which form two pairs of equivalence classes  (d+2)-co\-bor\-disms of cobordisms for $ij=01, 12$ between
(d+1)-cobordisms $(V_{ij},\iota^\pm_{V_{ij}})$, $(Y_{ij},\iota^\pm_{Y_{ij}})$, $(Z_{ij},\iota^\pm_{Z_{ij}}) \in  \Mor^1_{\Bor_{d+1+1}}(\Sigma_i,\Sigma_j)$
for fixed surfaces $\Sigma_0,\Sigma_1,\Sigma_2\in\Obj_{\Bor_{d+1+1}}$.

Here both (d+2)-manifolds are of the form 
$\bigl(W_{01} \sqcup W_{12}\sqcup X_{01}\sqcup X_{12}\bigr)/\!\sim$, 
where in the first gluing, the equivalence relation $\sim$ is generated by 
$$
\iota_{W_{01}}^+\sim \iota^-_{W_{12}}\circ\phi_W,
\qquad
\iota_{X_{01}}^+\sim \iota^-_{X_{12}}\circ\phi_X,
\qquad
\kappa_{W_{01}}^+ \circ_{\rm h} \kappa_{W_{12}}^+\sim \kappa^-_{X_{01}} \circ_{\rm h} \kappa_{X_{12}}^- ,
$$
whereas in the second gluing, the equivalence relation $\sim$ is generated by 
$$
\kappa_{W_{01}}^+\sim \kappa^-_{X_{01}} ,
\qquad
\kappa_{W_{12}}^+\sim \kappa^-_{X_{12}} ,
\qquad
\iota_{W_{01}}^+\circ_{\rm h} \iota_{X_{01}}^+\sim \bigl(\iota^-_{W_{12}} \circ_{\rm h} \iota^-_{X_{12}} \bigr) \circ\phi_{WX}.
$$
This amounts to the same relation if we choose the diffeomorphism $\phi_{WX}$ of $[0,1]=[0,1]\circ_{\rm h}[0,1]$ as the gluing of $\phi_W$ and $\phi_X$.
The various embeddings are identified analogously.
\item 
Horizontal 1-composition is strictly associative as in $\Bor_{d+1+\eps}$.
\item
Horizontal 2-composition is associative since for composable 1-morphisms $Y_{ij}$ and $Z_{ij}$ and 
$\bigl[\bigl(X_{ij},\ldots\bigr] \in \Mor^2_{\Bor_{d+1+1}}(Y_{ij},Z_{ij})$ both 
$X_{12}\circ^2_{\rm h}( X_{23}\circ^2_{\rm h}X_{34})$
and $(X_{12}\circ^2_{\rm h}X_{23}) \circ^2_{\rm h}X_{34}$
are given by the same gluing of $(d+2)$-manifolds
$\bigl(X_{01} \sqcup X_{12}\sqcup X_{34}\bigr)/\!(\iota_{X_{01}}^+\sim \iota^-_{X_{12}}, \iota_{X_{12}}^+\sim \iota^-_{X_{23}})$ and the corresponding tubular neighbourhood embeddings. 
\item
Horizontal 1-composition is unital up to 2-isomorphism as in $\Bor_{d+1+\eps}$, that is for any surface 
$\Sigma\in \Obj_{\Bor_{d+1+1}}$ and $0<\delta< \frac 12$ the cylindrical cobordism $1_{\Sigma,\delta}\in\Mor_{\Bor_{d+1+1}}^1(\Sigma,\Sigma)$ is a weak unit. 
To prove the latter we start by proving equivalence 
$(Y,\io^\pm_Y) \circ^1_{\rm h} 1_{\Sigma,\delta}  \sim  (Y,\io^\pm_Y)$ in $\Mor^1_{\Bor_{d+1+1}}(\Sigma_0,\Sigma)$ for any $(Y,\io^\pm_Y) \in\Mor_{\Bor_{d+1+\eps}}^1(\Sigma_0,\Sigma)$. 
For that purpose we can use the diffeomorphism $\Psi$ constructed in Example~\ref{ex:1epsbor} to obtain 2-morphisms $I_\Psi, I_{\Psi^{-1}} \in \Mor^2_{\Bor_{d+1+1}}$ (represented by arrows below) whose vertical $\circ_{\rm v}$ compositions are the identities
\begin{align*}
(Y,\io^\pm_Y) 
\;\overset{I_{\Psi^{-1}}}{\longrightarrow}\;
(Y,\io^\pm_Y)  \circ^1_{\rm h} 1_{\Sigma,\delta} 
\;\overset{I_\Psi}{\longrightarrow} \;
(Y,\io^\pm_Y)  
\quad &= \quad
(Y,\io^\pm_Y) 
\; \overset{\id_{(Y,\iota^\pm)}}{\longrightarrow}\;
(Y,\io^\pm_Y)  , \\
(Y,\io^\pm_Y)  \circ^1_{\rm h} 1_{\Sigma,\delta} 
\;\overset{I_\Psi}{\longrightarrow} \;
(Y,\io^\pm_Y)  
\;\overset{I_{\Psi^{-1}}}{\longrightarrow}\;
(Y,\io^\pm_Y)  \circ^1_{\rm h} 1_{\Sigma,\delta} 
\quad &= \quad
\id_{(Y,\io^\pm_Y)  \circ^1_{\rm h} 1_{\Sigma,\delta} } .
\end{align*}
Indeed, we have 
$I_{\Psi^{-1}} \circ_{\rm v}  I_\Psi = I_{\Psi\circ\Psi^{-1}} = I_{\id_Y} = \id_{(Y,\iota^\pm)}$
due to \eqref{eq:Ipsiphi}, 
and similarly
$I_{\Psi} \circ_{\rm v}  I_{\Psi^{-1}} = I_{\Psi^{-1}\circ\Psi} = I_{\id_{(Y\cup [0,1]\times\Sigma)/\!\sim}} =
\id_{(Y,\io^\pm_Y)  \circ^1_{\rm h} 1_{\Sigma,\delta} }$.
This proves the claimed equivalence for any $Y \in\Mor_{\Bor_{d+1+1}}^1(\Sigma_0,\Sigma)$, and the other required equivalences $1_{\Sigma,\delta}  \circ^1_{\rm h} (Z,\io^\pm_Z) \sim  (Z,\io^\pm_Z)$ for $(Z,\io^\pm_Z) \in\Mor_{\Bor_{d+1+1}}^1(\Sigma,\Sigma_1)$ arise in the same way from the diffeomorphisms $\Phi: ([0,1]\times\Sigma \cup Z) /\! \sim \to Z$ constructed in Example~\ref{ex:1epsbor}.
\end{itemize} 
This finishes the construction of the bordism bicategory ${\rm Bor}_{d+1+1}$.
Moreover, the {\bf connected $\mathbf{d+1+1}$ bordism bicategory ${\rm \mathbf{Bor}^{\rm \mathbf{conn}}}_{\mathbf{d+1+1}}$} for $d\ge 2$ is constructed analogously, using the objects and representatives of morphisms of $\Bor^{\rm conn}_{d+1}$.
\end{example}

\subsection{Functors between bi- and 2-categories} \label{ss:funk}

The purpose of this section is to make sense of a notion of extending 2+1 Floer field theory to dimension $2+1+1=4$, which is the case $d=2$ of the following notion.

\begin{definition} \label{def:efft}
A {\bf (connected) d+1+1 Floer field theory} is a 2-functor $\Bor_{d+1+1}\to\Cat$ (resp.\ $\Bor_{d+1+1}^{\rm conn}\to\Cat$) that factorizes through a symplectic 2-category and preserves adjunctions.
\end{definition}

Here one should use the connected bordism bicategory $\Bor_{d+1+1}^{\rm conn}$ in order to fit the gauge theoretic examples from \S\ref{ss:ex}. An appropriate symplectic 2-category is constructed in \cite{ww:cat} and will be outlined in \S\ref{ss:symp2}. So it remains to spell out the functoriality requirements. We begin with 2-functors between 2-categories, and will develop the relevant notion for bicategories in Definition~\ref{def:bifunk}.

\begin{definition}\label{def:2funk}
A {\bf 2-functor} $\cF:\cC\to\cD$ between two 2-categories $\cC,\cD$ consists of
\begin{itemize}
\item
a map $\cF:\Obj_\cC\to \Obj_\cD$ between the sets of objects,
\item
functors $\cF_{x_1,x_2}:\Mor_\cC(x_1,x_2) \to \Mor_\cD(\cF(x_1),\cF(x_2))$ for each $x_1,x_2\in \Obj_\cC$,~i.e. 
\begin{itemize}
\item 
maps $\cF^1_{x_1,x_2}: \Mor^1_\cC(x_1,x_2) \to \Mor^1_\cD(\cF(x_1),\cF(x_2))$,
\item 
maps
$\cF^2_{x_1,x_2}: \Mor^2_\cC(f_{12},g_{12}) \to \Mor^2_\cD(\cF^1_{x_1,x_2}(f_{12}),\cF^1_{x_1,x_2}(g_{12}))$
for each pair $f_{12},g_{12}\in\Mor^1_\cC(x_1,x_2)$,
\item
compatibility with identities $\cF^2_{x_1,x_2}(\id_{f_{12}}) = \id_{\cF^1_{x_1,x_2}(f_{12})}$, 
\item
compatibility with vertical composition,
$$
\cF^2_{x_1,x_2}(f_{12}\circ_{\rm v} g_{12}) = \cF^2_{x_1,x_2}(f_{12}) \circ_{\rm v} \cF^2_{x_1,x_2}(g_{12}).
$$
\end{itemize}
\end{itemize}
These are required to intertwine the horizontal compositions in $\cC$ and $\cD$ as follows:
\begin{itemize}
\item 
$\cF$ is compatible with identities, $1_{\cF(x)} = \cF_{x,x}^1(1_x)$.
\item
$\cF$ is compatible with composition of 1-morphisms, i.e.\ for each $f_{ij}\in\Mor^1_\cC(x_i,x_j)$
$$
\cF^1_{x_1,x_3}(f_{12}\circ_{\rm h} f_{23}) = \cF^1_{x_1,x_2}(f_{12}) \circ_{\rm h} \cF^1_{x_2,x_3}(f_{23}).
$$ 
\item
$\cF$ is compatible with horizontal composition of 2-morphisms, i.e.\ for each tuple $f_{ij},g_{ij}\in\Mor^1_\cC(x_i,x_j)$ and 
$\alpha_{ij}\in\Mor^2_\cC(f_{ij},g_{ij})$
$$
\cF^2_{x_1,x_3}(\alpha_{12}\circ_{\rm h} \alpha_{23}) = \cF^2_{x_1,x_2}(\alpha_{12}) \circ_{\rm h} \cF^2_{x_2,x_3}(\alpha_{23}).
$$
\end{itemize}
\end{definition}

Before discussing the appropriate generalization of this notion to a 2-functor from a bicategory such as $\Bor_{2+1+1}$ to a 2-category such as $\Cat$ or $\Symp$, let us note that 2-categories such as $\Symp$ (with canonical base objects such as the symplectic manifold consisting of a point) come with natural 2-functors to $\Cat$. 
This reduces the construction of a 2+1+1 Floer field theory to the construction of a 2-functor $\Bor_{2+1+1}\to\Symp$, which can then be composed with the Yoneda functor $\Symp\to\Cat$ that is defined below and further discussed in Lemma~\ref{le:sympcat}.

\begin{lemma}\label{lem:Yoneda}
Let $\cC$ be a 2-category. Then any choice of distinguished object $x_0\in\Obj_\cC$ induces a 
{\bf Yoneda 2-functor} $\cY_{x_0} : \cC \to {\rm Cat}$ as follows.
\begin{itemize}
\item
To an object $x\in\Obj_\cC$ we associate the category $\cY_{x_0}(x):= \Mor_\cC(x_0,x)$. 
\item
To $f\in\Mor^1_\cC(x_1,x_2)$ we associate the functor $\cY_{x_0}(f):\cY_{x_0}(x_1) \to \cY_{x_0}(x_2)$
given by horizontal composition
with $f$ and its identity 2-morphism $\id_f\in \Mor^2_\cC(f,f)$, 
\begin{align*}
\Obj_{\cY_{x_0}(x_1)}=\Mor^1_\cC(x_0,x_1) &\,\longrightarrow\; \Mor^1_\cC(x_0,x_2) = \Obj_{\cY_{x_0}(x_2)},  \\
f_{01} &\,\longmapsto\; f_{01}\circ_{\rm h} f  ;\\
\Mor_{\cY_{x_0}(x_1)} \supset \Mor^2_\cC(f_{01},g_{01}) &\,\longrightarrow\; \Mor^2_\cC(f_{01}\circ_{\rm h} f ,
g_{01}\circ_{\rm h} f) \subset \Mor_{\cY_{x_0}(x_2)} , \\
\alpha &\,\longmapsto\; \alpha \circ_{\rm h} \id_f .
\end{align*}
\item
To a 2-morphism $\beta\in \Mor^2_\cC(g_{12},h_{12})$ between $g_{12},h_{12}\in \Mor^1_\cC(x_1,x_2)$ we associate the natural transformation $\cY_{x_0}(\beta): \cY_{x_0}(g_{12}) \Rightarrow \cY_{x_0}(h_{12})$
which takes each $f_{01}\in\Obj_{\cY_{x_0}(x_1)}=\Mor^1_\cC(x_0,x_1)$ to $\id_{f_{01}}\circ_{\rm h} \beta \in\Mor^2_\cC(f_{01}\circ_{\rm h}g_{12},f_{01}\circ_{\rm h}h_{12} )\subset \Mor_{\cY_{x_0}(x_2)}$. 
\end{itemize}
\end{lemma}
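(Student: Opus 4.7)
The plan is to verify, item by item, the three kinds of structural compatibility required of a 2-functor: (a) that $\cY_{x_0}(f)$ is a functor between the morphism categories of $\Cat$; (b) that $\cY_{x_0}(\beta)$ is a natural transformation, so that $\cY_{x_0}$ restricts to a functor $\Mor_\cC(x_1,x_2)\to\Fun(\cY_{x_0}(x_1),\cY_{x_0}(x_2))$; and (c) that $\cY_{x_0}$ is compatible with identity 1-morphisms and with horizontal composition, both on 1-morphisms and on 2-morphisms (per Definition~\ref{def:2funk}). The definitions of vertical and horizontal composition of natural transformations given in Lemma~\ref{lem:func} will be the targets to match.

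Each verification will reduce to three algebraic facts from the 2-category axioms in Definition~\ref{def:2cat}: associativity of $\circ_{\rm h}$ (for 1- and 2-morphisms), compatibility of $\circ_{\rm h}$ with identities, i.e.\ $\id_f\circ_{\rm h}\id_g = \id_{f\circ_{\rm h} g}$, and the \emph{interchange law} $(\alpha\circ_{\rm v}\beta)\circ_{\rm h}(\gamma\circ_{\rm v}\delta) = (\alpha\circ_{\rm h}\gamma)\circ_{\rm v}(\beta\circ_{\rm h}\delta)$. For instance, functoriality of $\cY_{x_0}(f)$ on vertical composition of 2-morphisms in $\Mor_\cC(x_0,x_1)$ is precisely the interchange law applied with $\gamma=\delta=\id_f$, while preservation of the identity 2-morphism $\id_{f_{01}}$ is the identity compatibility $\id_{f_{01}}\circ_{\rm h}\id_f=\id_{f_{01}\circ_{\rm h} f}$. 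Similarly, the naturality square for $\cY_{x_0}(\beta):\cY_{x_0}(g_{12})\Rightarrow\cY_{x_0}(h_{12})$ — which demands that $(\alpha\circ_{\rm h}\id_{g_{12}})\circ_{\rm v}(\id_{f_{01}'}\circ_{\rm h}\beta) = (\id_{f_{01}}\circ_{\rm h}\beta)\circ_{\rm v}(\alpha\circ_{\rm h}\id_{h_{12}})$ for $\alpha\in\Mor^2_\cC(f_{01},f_{01}')$ — follows immediately from the interchange law, since both sides equal $\alpha\circ_{\rm h}\beta$.

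For the 2-functor axioms, I would compute directly: $\cY_{x_0}(1_x)$ acts on objects by $f_{01}\mapsto f_{01}\circ_{\rm h}1_x = f_{01}$ and on 2-morphisms by $\alpha\mapsto\alpha\circ_{\rm h}\id_{1_x}=\alpha$, which is the identity functor $1_{\cY_{x_0}(x)}$. Compatibility with horizontal composition of 1-morphisms $\cY_{x_0}(f\circ_{\rm h} g) = \cY_{x_0}(f)\circ_{\rm h}\cY_{x_0}(g)$ reduces, on objects and morphisms separately, to associativity of $\circ_{\rm h}$ together with $\id_{f\circ_{\rm h} g}=\id_f\circ_{\rm h}\id_g$. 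For horizontal composition of 2-morphisms, one must show that $\cY_{x_0}(\beta\circ_{\rm h}\beta')$ equals the horizontal composition $\cY_{x_0}(\beta)\circ_{\rm h}\cY_{x_0}(\beta')$ of natural transformations as defined in Lemma~\ref{lem:func}. Evaluating on a generic $f_{01}$, the latter is $(\id_{f_{01}\circ_{\rm h} g_{12}}\circ_{\rm h}\beta')\circ_{\rm v}((\id_{f_{01}}\circ_{\rm h}\beta)\circ_{\rm h}\id_{h_{23}})$, which after rewriting $\id_{f_{01}\circ_{\rm h} g_{12}}=\id_{f_{01}}\circ_{\rm h}\id_{g_{12}}$ and applying the interchange law together with associativity becomes $\id_{f_{01}}\circ_{\rm h}\bigl((\id_{g_{12}}\circ_{\rm h}\beta')\circ_{\rm v}(\beta\circ_{\rm h}\id_{h_{23}})\bigr)=\id_{f_{01}}\circ_{\rm h}(\beta\circ_{\rm h}\beta')$, matching the former.

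The main obstacle is not conceptual but purely bookkeeping: there are two distinct descriptions of horizontal composition of natural transformations in Lemma~\ref{lem:func} (going around the naturality square either way), and one must be careful to choose the description that makes the interchange law land in the right form. A secondary subtlety is tracking identities correctly — the proof uses $\id_f\circ_{\rm h}\id_g=\id_{f\circ_{\rm h} g}$ at several points to reduce horizontal composites with implicit identities to explicit ones. Once these conventions are pinned down, every step is a direct application of one of the three axioms listed above.
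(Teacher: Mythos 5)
Your proposal matches the paper's proof almost exactly: both prove naturality of $\cY_{x_0}(\beta)$ via the interchange law (both sides of the naturality square reduce to $\alpha\circ_{\rm h}\beta$), and both verify the 2-functor axioms by direct pointwise computation using $\id_f\circ_{\rm h}\id_g=\id_{f\circ_{\rm h}g}$, associativity of $\circ_{\rm h}$, and interchange. The one point to sharpen is the ``so that'' in your item (b): showing $\cY_{x_0}(\beta)$ is a natural transformation does not by itself make $\cY_{x_0}$ a functor $\Mor_\cC(x_1,x_2)\to\Fun(\cY_{x_0}(x_1),\cY_{x_0}(x_2))$ --- you must separately check that $\cY_{x_0}$ preserves identity 2-morphisms and vertical composition (the paper devotes a paragraph to this), though this again follows from the identity compatibility and interchange law you already invoke.
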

\begin{proof}
%later TODO: check and fill in details at end
$\cY_{x_0}(x)$ is a category and $\cY_{x_0}(f)$ is a functor by Definition~\ref{def:2cat} of a 2-category.
$\cY_{x_0}(\beta)$ is a natural transformation since the required diagram for $\alpha\in\Mor_\cC(f_{01},f'_{01})$ commutes by compatibility of horizontal and vertical composition, 
\begin{align*}
\bigl(\alpha\circ_{\rm h} \id_{g_{12}} \bigr) \circ_{\rm v} \bigl(\id_{f'_{01}} \circ_{\rm h} \beta \bigr)
&= \bigl(\alpha\circ_{\rm v} \id_{f'_{01}}  \bigr) \circ_{\rm h} \bigl(\id_{g_{12}}  \circ_{\rm v} \beta \bigr) 
= \alpha \circ_{\rm h} \beta \\
&=  \bigl(\id_{f_{01}} \circ_{\rm v} \alpha\bigr) \circ_{\rm h} \bigl(  \beta \circ_{\rm v} \id_{h_{12}} \bigr) 
= \bigl(\id_{f_{01}} \circ_{\rm h} \beta \bigr) \circ_{\rm v} \bigl(\alpha\circ_{\rm h} \id_{h_{12}} \bigr).
\end{align*}
Next, we need to check that $\cF:=\cY_{x_0}:\Mor_\cC(x_1,x_2) \to \Mor_{\Cat}(\cF(x_1),\cF(x_2))$ is a functor for each $x_1,x_2\in \Obj_\cC$.
It is compatible with identities since both $\cF(\id_{f_{12}})$ and $\id_{\cF(f_{12})}$ are the natural transformation 
$\cF(f_{12}) \Rightarrow \cF(f_{12})$ which takes $f_{01}\in\Mor^1_\cC(x_0,x_1)$ to 
$\id_{f_{01}}\circ_{\rm h} \id_{f_{12}} = \id_{f_{01}\circ_{\rm h}f_{12}}$.
It is compatible with vertical composition since for $\alpha_{12},\beta_{12}\in \Mor^2_\cC(g_{12},h_{12})$
both $\cF(\alpha_{12}\circ_{\rm v} \beta_{12})$ and $\cF(\alpha_{12}) \circ_{\rm v} \cF(\beta_{12})$ are the natural transformation $\cF(g_{12}) \Rightarrow \cF(h_{12})$
which takes $f_{01}\in\Mor^1_\cC(x_0,x_1)$ to 
$$
\id_{f_{01}}\circ_{\rm h} (\alpha_{12}\circ_{\rm v} \beta_{12}) 
= 
( \id_{f_{01}}\circ_{\rm h} \alpha_{12} ) \circ_{\rm v} ( \id_{f_{01}}\circ_{\rm h}\beta_{12}) .
$$ 
Finally, we check compatibility with the horizontal composition.
\begin{itemize}
\item 
Both $1_{\cF(x)}$ and $\cF(1_x)$ are the functor $\Mor_\cC(x_0,x) \to \Mor_\cC(x_0,x)$ given by 
$f_{01} \mapsto f_{01} = f_{01}\circ_{\rm h} 1_x$ and
$\alpha \mapsto \alpha = \alpha \circ_{\rm h} \id_{1_x}$.
\item
For $f_{ij}\in\Mor^1_\cC(x_i,x_j)$ both 
$\cF(f_{12}\circ_{\rm h} f_{23})$ and $\cF^1_{x_1,x_2}(f_{12}) \circ_{\rm h} \cF^1_{x_2,x_3}(f_{23})$
are the functor $\Mor_\cC(x_0,x_1) \to \Mor_\cC(x_0,x_2)$ given by 
$f_{01} \mapsto f_{01}\circ_{\rm h} (f_{12}\circ_{\rm h} f_{23}) =  (f_{01}\circ_{\rm h} f_{12} )\circ_{\rm h} f_{23}$
and 
$\alpha \mapsto \alpha \circ_{\rm h} \id_{f_{12}\circ_{\rm h} f_{23}}
= (\alpha \circ_{\rm h} \id_{f_{12}} ) \circ_{\rm h}  \id_{f_{23}}$.
\item
For each tuple $g_{ij},h_{ij}\in\Mor^1_\cC(x_i,x_j)$ and $\alpha_{ij}\in\Mor^2_\cC(g_{ij},h_{ij})$, 
both $\cF(\alpha_{12}) \circ_{\rm h} \cF(\alpha_{23})$ and 
$\cF(\alpha_{12}\circ_{\rm h} \alpha_{23})$ are the
natural transformation $\cG:=\cF(g_{12}\circ_{\rm h} g_{23}) \Rightarrow \cH:=\cF(h_{12}\circ_{\rm h} h_{23})$
which takes $f_{01}\in\Mor^1_\cC(x_0,x_1)$ to 
%maybe TODO
%\begin{align*}
%&\bigl(\id_{\cG(f_{01})} \circ_{\rm h} \alpha_{23}\bigr)
%\circ_{\rm v}
%\cH\bigl(\id_{f_{01}} \circ_{\rm h} \alpha_{12} \bigr) \\
%&= 
%\bigl(\id_{f_{01}\circ g_{12}\circ_{\rm h} g_{23}} \circ_{\rm h} \alpha_{23}\bigr)
%\circ_{\rm v}
%\bigl( (\id_{f_{01}} \circ_{\rm h} \alpha_{12} ) \circ_{\rm h} \id_{h_{12}\circ_{\rm h} h_{23}} \bigr) \\
%&= 
%\bigl(\id_{f_{01}\circ g_{12}\circ_{\rm h} g_{23}} \circ_{\rm v} (\id_{f_{01}} \circ_{\rm h} \alpha_{12} ) \bigr)
%\circ_{\rm h}
%\bigl( \alpha_{23} \circ_{\rm v} \id_{h_{12}\circ_{\rm h} h_{23}} \bigr) \\
%&= 
%\id_{f_{01}}\circ_{\rm h} (\alpha_{12}\circ_{\rm h} \alpha_{23}) .
%\end{align*}
%
%\marginpar{maybe use $\eta_{12}\bigl(\cY_{01}(x)\bigr) \circ \cG_{12}\bigl(\eta_{01}(x)\bigr)= \cY_{12}\bigl(\eta_{01}(x)\bigr) \circ \eta_{12}\bigl(\cG_{01}(x)\bigr) $}
%
$\id_{f_{01}}\circ_{\rm h} (\alpha_{12}\circ_{\rm h} \alpha_{23})$.
\end{itemize}
\vskip-5mm
\end{proof}

\begin{remark}[Yoneda 2-functor for bicategories]\label{rmk:bifunk} \rm 
If $\cC$ is a bicategory, then the Yoneda construction in Lemma~\ref{lem:Yoneda} still yields
categories $\cY_{x_0}(x)= \Mor_\cC(x_0,x)$, 
functors $\cY_{x_0}(f)$ given by horizontal composition with $f$ and $\id_f$, 
and natural transformations $\cY_{x_0}(\beta)$ given by $f_{01}\mapsto  \id_{f_{01}}\circ_{\rm h} \beta$, 
in such a way that $\cY_{x_0}:\Mor_\cC(x_1,x_2) \to \Mor_{\Cat}(\cF(x_1),\cF(x_2))$ is a functor.
However, $\cY_{x_0}$ is compatible with horizontal composition only up to isomorphisms in $\Cat$ since unitality $f_{01}\circ_{\rm h} 1_x \sim f_{01}$ and associativity $f_{01}\circ_{\rm h} (f_{12}\circ_{\rm h} f_{23}) \sim (f_{01}\circ_{\rm h} f_{12} )\circ_{\rm h} f_{23}$ only hold up to 2-isomorphism in $\cC$.
Thus $\cY_{x_0}:\cC\to\Cat$ can still be viewed as a 2-functor between bicategories in the sense of Definition~\ref{def:bifunk} below.
\end{remark}

To make the Yoneda construction for bicategories as well as our notion of 2+1+1 Floer field theory in Definition~\ref{def:efft} precise, we define the notion of a 2-functor between bicategories $\cC,\cD$ by weakening Definition~\ref{def:2funk} to allow compatibility with the horizontal composition up to isomorphisms. 

\begin{definition}\label{def:bifunk}
A {\bf 2-functor} $\cF:\cC\to\cD$ between two bicategories $\cC,\cD$ consists of
\begin{itemize}
\item
a map $\cF:\Obj_\cC\to \Obj_\cD$ between the sets of objects,
\item
functors $\cF_{x_1,x_2}:\Mor_\cC(x_1,x_2) \to \Mor_\cD(\cF(x_1),\cF(x_2))$ for each $x_1,x_2\in \Obj_\cC$,
\end{itemize}
which are compatible with the horizontal composition in the following sense:
\begin{itemize}
\item 
$1_{\cF(x)} \sim \cF_{x,x}^1(1_x)$ are equivalent 1-morphisms in $\cD$ for any choice of weak units associated to $x\in\Obj_\cC$ and $\cF(x)\in\Obj_\cD$.
\item
$\cF^1_{x_1,x_3}(f_{12}\circ_{\rm h} f_{23}) \sim \cF^1_{x_1,x_2}(f_{12}) \circ_{\rm h} \cF^1_{x_2,x_3}(f_{23})$ 
are equivalent 1-morphisms in $\cD$.
\item
For each tuple $f_{ij},g_{ij}\in\Mor^1_\cC(x_i,x_j)$ and 
$\alpha_{ij}\in\Mor^2_\cC(f_{ij},g_{ij})$ we have
$$
\cF^2_{x_1,x_3}(\alpha_{12}\circ_{\rm h} \alpha_{23}) = \cF^2_{x_1,x_2}(\alpha_{12}) \circ_{\rm h} \cF^2_{x_2,x_3}(\alpha_{23}).
$$
\end{itemize}
\end{definition}

\subsection{Adjunctions, quilt diagrams, and quilted bicategories} \label{ss:quilt}

This section will generalize the notion of string diagrams, which are graphical representations of the structure and axioms of 2-categories, as surveyed in e.g.\  \cite[\S1.1]{willerton}, \cite{youtube}, and \cite{W:slides} in the example of topological and symplectic 2-categories. Then we introduce a notion of quilted bicategory, in which not only string diagrams but the more general quilt diagrams define 2-morphisms, and show how bordism bicategories naturally fit into this notion.

\begin{figure}[!h]
\centering
\includegraphics[width=5.5in]{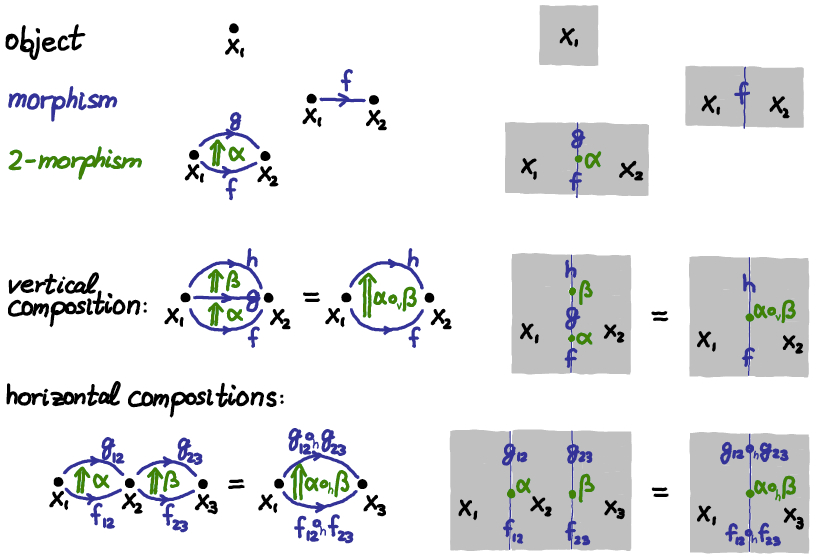}
\caption{The structures of a 2-category or bicategory in string diagram notation.}
\label{fig:2catstructure}
\end{figure}

\begin{remark}[String diagrams] \rm 
Roughly speaking, a string diagram in a bicategory $\cC$ consists of vertical lines drawn in the plane, punctures on the line, and labels in $\cC$. These in turn represent the structure of the bicategory as indicated in Figure~\ref{fig:2catstructure}.
More precisely, the lines separate the plane into connected components, called ``patches'', and the punctures separate the lines into connected components, called ``seams'', each of which lies in the intersection of the closures of exactly two patches; see the left side of Figure~\ref{fig:string2mor} for illustration. The patches / seams / punctures of the diagram are labeled with objects / 1-morphisms / 2-morphisms in $\cC$ in a coherent manner: a seam is labeled by a 1-morphism between the objects associated to the two adjacent patches, and a puncture is labeled by a 2-morphism between the 1-morphisms associated to the two adjacent seams.
Now any such string diagram can be translated into horizontal and vertical compositions of the involved 2-morphisms, and defines a new 2-morphism between the 1-morphisms obtained from composing the labels of the seams running to $+\infty$ resp.\ $-\infty$. Here we read from left to right and from bottom to top, with different choices of order of composition yielding the same result due to associativity and compatibility of horizontal and vertical composition; see the right of Figure~\ref{fig:string2mor} for examples.

\begin{figure}[!h]
\centering
\includegraphics[width=5.5in]{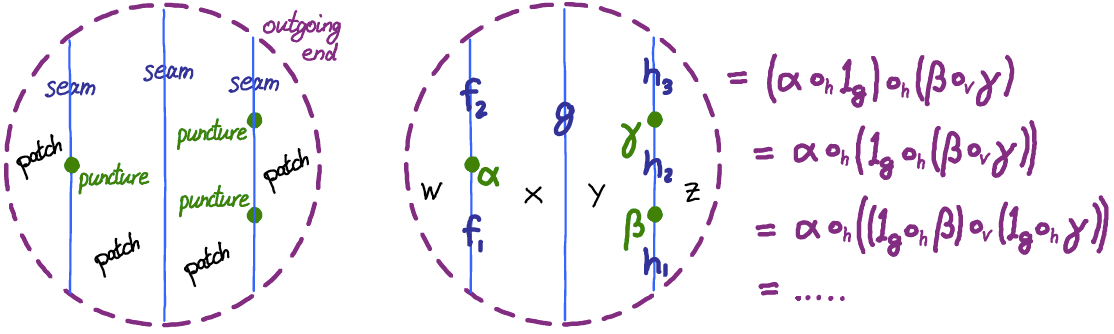}
\caption{String diagrams represent well defined 2-morphisms given by iterated horizontal and vertical compositions applied to 2-morphisms given by labels and identity 2-morphisms.}
\label{fig:string2mor}
\end{figure}

After compactifying the plane to a sphere, we may interpret the punctures in the plane as incoming ends -- at which the 2-morphisms are prescribed -- and the puncture at infinity as the outgoing end -- at which the resulting 2-morphism is read off.
The axioms of a 2-category or bicategory can then also be represented by string diagrams: identities between different diagrams, or the fact that diagrams have invariant meaning -- independent of the order in which composition is being read off.
See Figure~\ref{fig:2cataxioms} for a list of the 2-category axioms as string diagrams, %with compatibility of horizontal and vertical composition spelled out in detail in Figure~\ref{fig:2catcomp} .
\end{remark}

\begin{figure}[!h]
\centering
\includegraphics[width=5.5in]{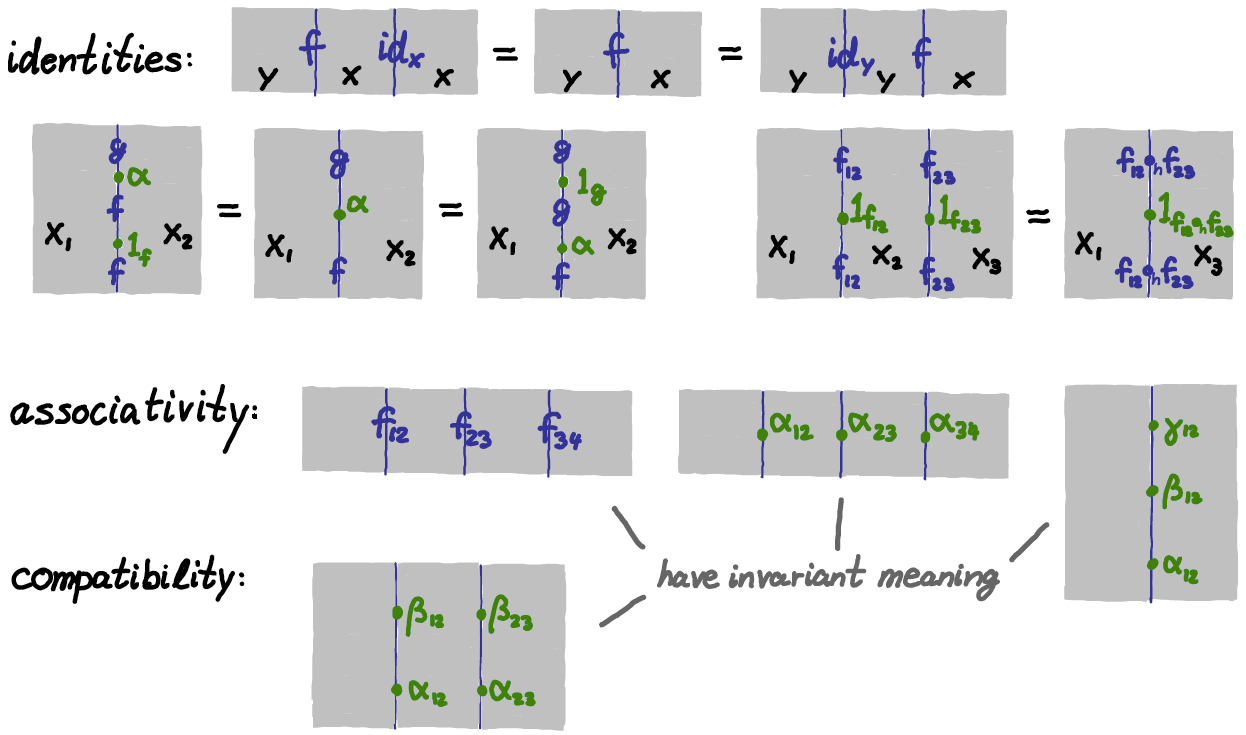}
\caption{2-category axioms in string diagram notation. Also see Figure~\ref{fig:2catcomp} for compatibility.}
\label{fig:2cataxioms}
\end{figure}

\begin{figure}[!h]
\centering
\includegraphics[width=5.5in]{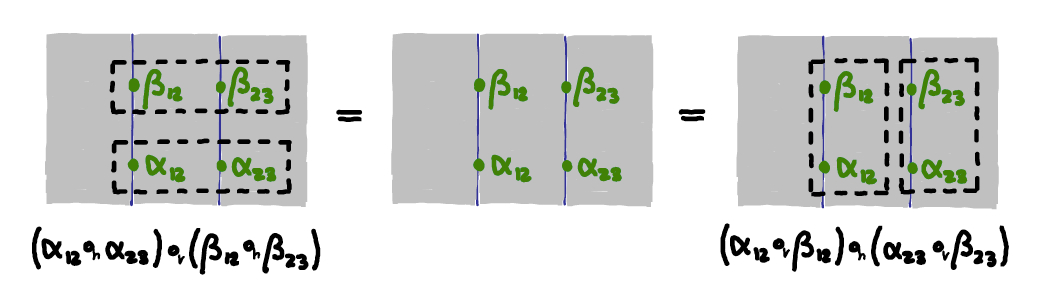}
\caption{Compatibility between horizontal and vertical composition in string diagram notation.}
\label{fig:2catcomp}
\end{figure}

The symplectic 2-category will have string diagrams -- represented by pseudoholomorphic quilts \cite{ww:quilts} described in Remark~\ref{rmk:quilts} -- which lie on more general surfaces (not just the sphere), can have any number of seams running into the punctures, do not require a left/right or top/bottom orientation, but still have exactly one outgoing end and the same meaning as a string diagram:
If we prescribe Floer homology classes (the 2-morphisms) at each incoming end, then the diagram defines a Floer homology class at the outgoing end. These relative quilt invariants are applied to the basic string diagrams in \cite{ww:cat} to construct the symplectic 2-category, but they are defined in higher generality and satisfy algebraic identities arising from forgetting the vertical/horizontal structure of string diagrams.

The purpose of this section is to cast this additional structure on the symplectic 2-category into abstract terms -- giving rise to a notion akin to that of spherical 2-categories developed in \cite{spherical}, but expressing the algebraic properties in a graphical language rather than via monoidal structure. This is useful for a variety of reasons: First, this structure simply exists naturally, not just for the symplectic 2-category but also the bordism bicategories (see Lemma~\ref{lem:borquilt}) and other gauge theoretic categories that can be constructed via PDE's associated to quilt diagrams (see \S\ref{ss:gauge2}). Second, this structure can be expressed without reference to a monoidal structure, which is problematic both in the gauge theoretic and symplectic context (see Remark~\ref{rmk:conn}), and thus also leads us to work with connected bordism categories -- which lack the monoidal structure given by disjoint union. Third, quilt diagrams naturally appear in a generalization of Cerf decompositions from $\Bor_{d+1}$ to $\Bor_{d+1+1}$ which arise from the diagrams of Morse 2-functions in e.g.\ \cite{GayKirby}, as sketched in \cite{W:slides}. These ``quilted Cerf decompositions'' lie at the core of the extension principle for Floer field theories \cite{W:ext}, as outlined in Conjecture~\ref{con:fftext}.

In order to make sense of the labeling in a quilt diagram we will need some symmetry properties of the bicategory, which we will introduce before going into the actual notion of quilt diagram.
First, dropping the distinguished horizontal direction in string diagrams loses the ``from left to right'' designation which determines that a seam is to be labeled by a 1-morphism from the object associated to the left adjacent patch to the object associated to the right adjacent patch. Instead, we will define left/right based on a choice of orientation of each seam and label the two orientations of each seam with adjoint pairs of 1-morphisms.
For that purpose, the following makes the adjunction notion from Remark~\ref{rmk:adj} rigorous.

\begin{definition}\label{def:adj}
A {\bf 2-category with adjoints} is a 2-category $\cC$ as in Definition~\ref{def:2cat} together with an adjunction map $\Mor^1_\cC\to \Mor^1_\cC, Y\mapsto Y^T$ that associates to each $Y\in\Mor^1_\cC(\Sigma_0,\Sigma_1)$ its {\bf adjoint} $Y^T\in\Mor^1_\cC(\Sigma_1,\Sigma_0)$ and satisfies:
% the following:
\begin{itemize}
\item
Adjunction is reflexive, i.e.\ $(Y^T)^T=Y$. 
\item
Adjoint morphisms are dual to each other in the sense that for $Y\in\Mor^1_\cC(\Sigma_0,\Sigma_1)$ there exist $X_Y\in\Mor_\cC^2(1_{\Sigma_0}, Y\circ^1_{\rm h} Y^T)$ and 
$X_Y^T\in\Mor_\cC^2( Y^T\circ^1_{\rm h} Y, 1_{\Sigma_1})$ 
satisfying identities that are illustrated in Figure~\ref{fig:dual}, 
\begin{equation}\label{eq:dual}
\bigl(X_Y\circ^2_{\rm h} \id_Y\bigr) \circ_{\rm v} \bigl(\id_Y \circ^2_{\rm h} X_Y^T\bigr)  = \id_Y, 
\quad
\bigl(\id_{Y^T}\circ^2_{\rm h}X_Y \bigr) \circ_{\rm v} \bigl( X_Y^T\circ^2_{\rm h} \id_{Y^T}\bigr)  = \id_{Y^T}.
\end{equation}
\end{itemize}
A {\bf bicategory with adjoints} is a bicategory $\cC$ as in Definition~\ref{def:bicategory} together with an adjunction map as above, whose duality property holds for all choices of weak identity morphisms $1_{\Sigma_0}, 1_{\Sigma_1}$.
\end{definition}

\begin{figure}[!h]
\centering
\includegraphics[width=5.5in]{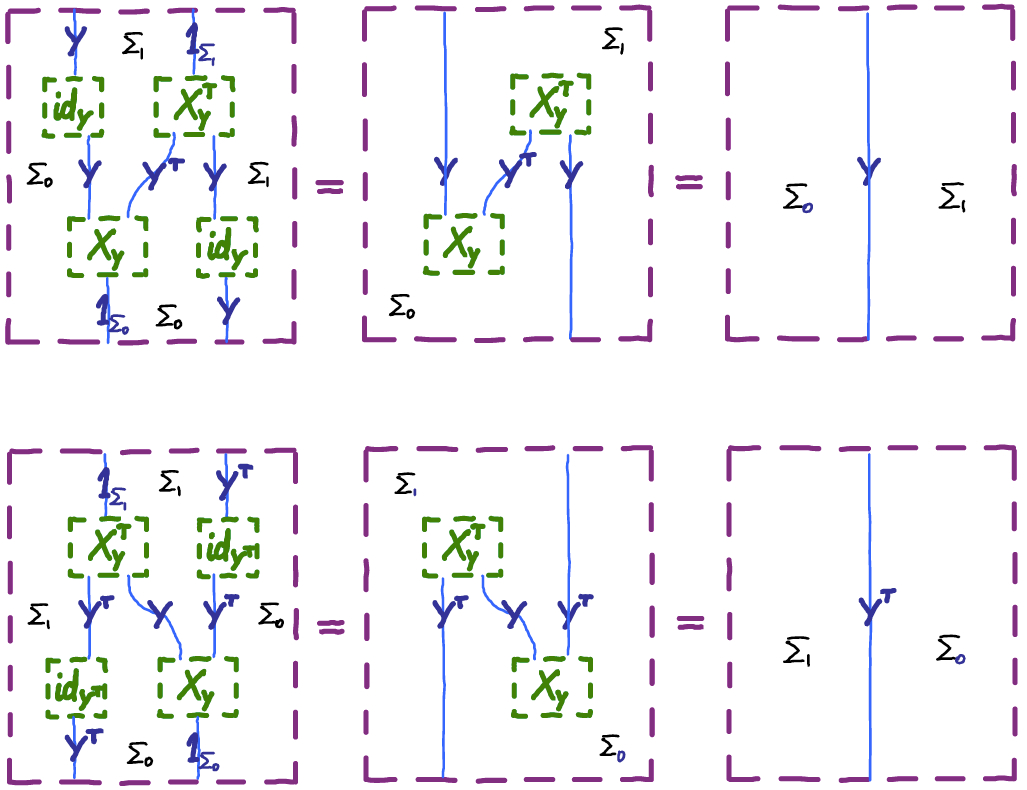}
\caption{The duality identities \eqref{eq:dual} can be represented by slightly generalized string diagrams.}
\label{fig:dual}
\end{figure}

\begin{figure}[!h]
\centering
\includegraphics[width=5.5in]{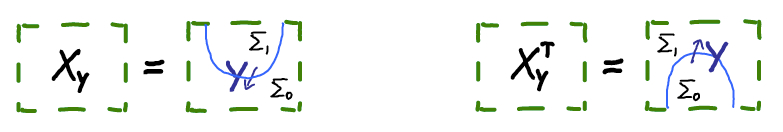}
\caption{In a quilted bicategory (see \S\ref{ss:quilt} below) the adjunction 2-morphisms arise from quilted structure maps that are represented by the above quilt (i.e.\ generalized string) diagrams.
}
\label{fig:quiltadjoints}
\end{figure}

\begin{remark} \label{rmk:adj2} \rm 
\begin{itemize}
\item[(a)]
Adjoints in $\Bor_{d+1+1}$ are obtained by orientation reversal of the 1-morphisms, as sketched for $\Bor_{d+1}$ in Remark~\ref{rmk:adj}.
More precisely, the adjoint of a $(d+1)$-cobordism $(Y, \iota^-_Y,\iota^+_Y) \in\Mor^1_{\Bor_{d+1+1}}(\Sigma_0,\Sigma_1)$ is the cobordism $(Y, \iota^-_Y,\iota^+_Y)^T:= (Y^-, \iota^+_Y\circ\rho_1,\iota^-_Y\circ\rho_0)\in\Mor^1_{\Bor_{d+1+1}}(\Sigma_1,\Sigma_0)$ obtained by reversing the orientation on $Y$, switching the tubular neighbourhood embeddings, and precomposing each with the orientation reversing diffeomorphism $\rho_i(t,z)=(1-t,z)$ of $[0,1]\times\Sigma_i$.

With this reflexive operation established, the adjunction 2-morphisms $X_Y$ and $X_Y^T$ that are required for the duality in Definition~\ref{def:adj} can be constructed from the further generalized string diagrams indicated in Figure~\ref{fig:quiltadjoints}. For example,  $X_Y$ is obtained from a half disk times $\Sigma_1$, a square minus a half disk times $\Sigma_0$, and an interval times $Y$, glued along matching boundary components. 
This and the analogous construction for $X_Y^T$ yields the required 2-morphisms, which satisfy \eqref{eq:dual} since gluing them into the string diagrams in Figure~\ref{fig:dual} yields 4-manifolds with boundary and corners that are diffeomorphic relative to the boundary.

\item[(b)] 
In the symplectic category $\Symp$, the adjoint of a Lagrangian $L\subset M_0^-\times M_1$ is $L^T:=\tau(L)\subset M_1^-\times M_0$ obtained by transposition $\tau(p_0,p_1):=(p_1,p_0)$, and the adjoint of a general 1-morphism 
$\uL=(L_{01},\ldots,L_{(k-1)k})$ is $\uL^T=(L_{(k-1)k}^T,\ldots,L_{01}^T)$.
Again, the adjunction 2-morphisms $X_Y$ and $X_Y^T$ can be obtained from the fact that the generalized string diagrams in Figure~\ref{fig:quiltadjoints} have invariant meaning; see Remark~\ref{rmk:sympquilt}.
\end{itemize}
\end{remark}

A second symmetry property of a bicategory that is required to formalize quilt diagrams comes from the fact that dropping the distinguished vertical direction in string diagrams loses the ``from bottom to top'' designation which determines that a puncture is to be labeled by a 2-morphism from the 1-morphism associated to the bottom adjacent seam to the 1-morphism associated to the top adjacent patch. 
Instead, we are allowing any number of seams to intersect in a puncture of a quilt diagram, and will associate to these seams -- with counterclockwise order induced from an overall orientation of the diagram --  a cyclic 2-morphism space, from which the label for this puncture will be chosen.
This is based on the following cyclic symmetry of the 2-morphisms in a bicategory with adjoints. Here and in the following, we will use $\Z_N:=\qu{\Z}{N\Z}$ to index cyclically ordered sets of $N$ elements with no distinguished first element.

\begin{remark}\label{rmk:cyclic} 
A {\bf cyclic 1-morphism} in a bicategory $\cC$ is a cyclic sequence of 1-morphisms $\underline f=(f_i)_{i\in\Z_N} : \Z_N\to \Mor^1_\cC$ that is composable in the sense that we have $f_i\in\Mor^1_\cC(x_i,x_{i+1})$ for a cyclic sequence of objects $\underline x=(x_i)_{i\in\Z_N}:\Z_N\to \Obj_\cC$.
This implies that the compositions $f_i\circ f_{i+1}\circ\ldots\circ f_{i+k}\in\Mor^1_\cC(x_i,x_{i+k})$ are well defined for every $i\in\Z_N$, $k\in\N$, and in particular 
$f_i\circ f_{i+1}\circ\ldots\circ f_{i+N-1}\in\Mor^1_\cC(x_i,x_{i+N}=x_i)$.

If the bicategory $\cC$ moreover has adjoints in the sense of Definition~\ref{def:adj}, then we can associate to every cyclic 1-morphism $\underline f=(f_i)_{i\in\Z_N}$ a {\bf cyclic 2-morphism space}  
$$
\Mor^2_\cC( \underline f) :=  \Mor^2_\cC(f_i^T , f_{i+1}\circ\ldots\circ f_{i+N-1}) ,
$$
which is independent of the choice of $i\in\Z_N$ and can also be identified with the 2-morphism space
$ \Mor^2_\cC\bigl( (f_i \circ\ldots\circ f_j)^T , f_{j+1}\circ\ldots\circ f_{i-1} \bigr)$
for other partitions of the cyclic 1-morphism.
%later TODO - make isomorphisms precise
%$$
%\Mor^2_\cC(f_i^T , f_{i+1}\circ\ldots\circ f_{i+N-1}) 
%\to  
%\Mor^2_\cC(f_{i+1}^T , f_{i+2}\circ\ldots\circ f_{i+N})
% $$
%
\end{remark}

As a tangential note -- useful for identifying different field theories as in the Atiyah-Floer type conjectures -- 
the following remark explains an algebraic method for localizing proofs of isomorphisms between cyclic 1-morphisms or their associated cyclic 2-morphism spaces. 

\begin{remark}[A ``local to global'' principle for cyclic 1-morphisms] \rm \label{rmk:localtoglobal}
In a 2-category with adjoints, any ``local'' isomorphism between $f,g\in\Mor^1_\cC(x_i,x_{i+1})$ implies ``global isomorphisms'' between any cyclic 1-morphisms that differ by replacing $f$ with $g$, 
$$
f\sim g \quad\Longrightarrow\quad 
( \ldots f_{i-1}, f_i=f , f_{i+1} \ldots ) \sim ( \ldots f_{i-1}, f_i=g , f_{i+1} \ldots ).
$$ 
Here the local isomorphism is given by an invertible 2-morphism $\alpha\in\Mor^2_\cC(f,g)$, i.e.\ 
$\alpha\circ_{\rm v} \alpha^{-1} = 1_f$ and $\alpha^{-1}\circ_{\rm v} \alpha = 1_g$ for some 
$\alpha^{-1}\in\Mor^2_\cC(g,f)$.
It induces global isomorphisms 
$\underline f_j := (f_{j}, \ldots , f_i=f , \ldots , f_{j-1}) \sim (f_{j}, \ldots , f_i=g , \ldots , f_{j-1}) =: \underline g_j$  in $\Mor^1(x_j,x_j)$ for any $j\in\Z_N$, 
in the sense that there exist 2-morphisms given by 
\begin{align*}
\underline \alpha &:= \id_{f_j} \circ_{\rm h} \ldots \id_{f_{i-1}} \circ_{\rm h} \alpha \circ_{\rm h} \id_{f_{i+1}}\ldots  \circ_{\rm h} \id_{f_{j-1}} \;\in\; \Mor^2_\cC(\underline f_j,\underline g_j), \\
\underline \alpha^{-1} &:= \id_{f_j} \circ_{\rm h} \ldots \id_{f_{i-1}} \circ_{\rm h} \alpha^{-1} \circ_{\rm h} \id_{f_{i+1}}\ldots  \circ_{\rm h} 
\id_{f_{j-1}} \;\in\; \Mor^2_\cC(\underline g_j,\underline f_j),
\end{align*}
which satisfy $\underline \alpha \circ_{\rm v} \underline \alpha^{-1} = \id_{\underline f_j}$ and 
$\underline \alpha^{-1} \circ_{\rm v} \underline \alpha = \id_{\underline g_j}$. Indeed, the first (and similarly the second) follows from compatibility of horizontal and vertical composition with each other as well as identities, 
\begin{align*}
\underline \alpha \circ_{\rm v} \underline \alpha^{-1} 
&= 
( \id_{f_j}\circ_{\rm v} \id_{f_j} ) \circ_{\rm h} \ldots ( \id_{f_{i-1}}\circ_{\rm v} \id_{f_{i-1}} )  \circ_{\rm h} (\alpha \circ_{\rm v} \alpha^{-1}) \\
&\qquad\qquad\qquad\qquad\qquad\qquad
\circ_{\rm h} ( \id_{f_{i+1}}\circ_{\rm v} \id_{f_{i+1}} ) \ldots  \circ_{\rm h} ( \id_{f_{j-1}}\circ_{\rm v} \id_{f_{j-1}} ) \\
&= 
\id_{f_j} \circ_{\rm h} \ldots  \id_{f_{i-1}}  \circ_{\rm h} \id_f \circ_{\rm h} \id_{f_{i+1}} \ldots  \circ_{\rm h} \id_{f_{j-1}} \\
&= \id_{f_{j}  \circ_{\rm h} \ldots  f_{i-1}  \circ_{\rm h} f \circ_{\rm h} f_{i+1} \ldots  \circ_{\rm h} f_{j-1}} 
= \id_{\underline f_j}.
\end{align*}
Moreover the local isomorphism also implies an identification between the cyclic 2-morphism spaces,
$$
f\sim g \quad\Longrightarrow\quad 
\Mor^2_\cC( \ldots f_{i-1}, f_i=f , f_{i+1} \ldots ) \simeq \Mor^2_\cC( \ldots f_{i-1}, f_i=g , f_{i+1} \ldots ).
$$ 
%later TODO proof
\end{remark}

Finally, we introduce quilt diagrams by phrasing the notions of ``quilted surface'' and ``Lagrangian boundary conditions'' from \cite[\S3]{ww:quilts} in abstract terms.

\begin{definition}
\label{def:quilt}
A {\bf quilt} is a tuple $\cQ:=(q_0,Q_0,Q_1,Q_2)$ consisting of a closed oriented surface $Q_2$,
a finite subset of points $Q_0\subset Q_2$, a 1-dimensional submanifold $Q_1\subset Q_2\less Q_0$, and one distinguished point $q_0\in Q_0$. We moreover require 
$Q_1\subset Q_2\less Q_0$ to be a closed subset with finitely many connected components, as illustrated in Figure~\ref{fig:quiltsurf}.

\begin{figure}[!h]
\centering
\includegraphics[width=5.5in]{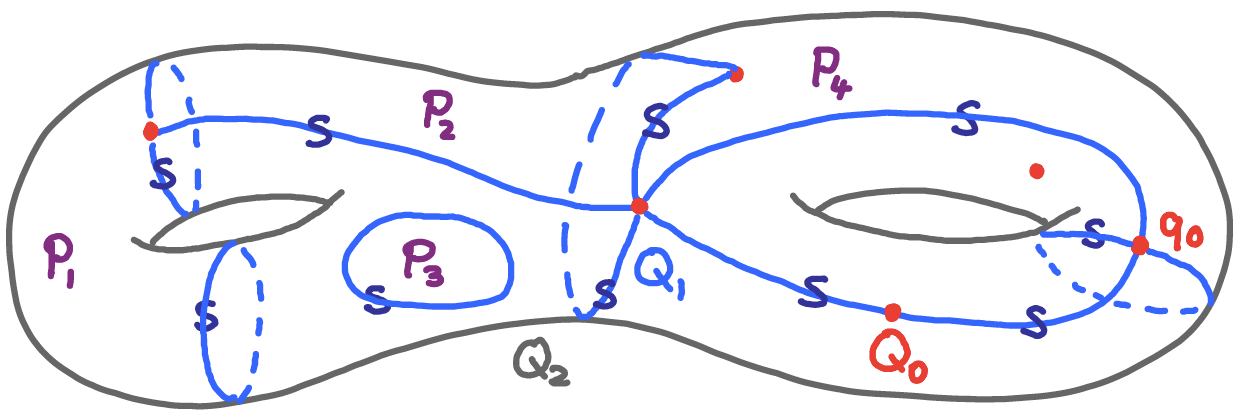}
\caption{A quilt (or quilted surface) is given by a closed surface $Q_2$ and submanifolds $Q_0,Q_1,q_0$. These specify patches (with arbitrary enumeration $P_i$ above) and seams (all labeled by $S$ above).}
\label{fig:quiltsurf}
\end{figure}

\begin{itemize}
\item
The {\bf patches} $P\in\cP_\cQ\cong \pi_0(Q_2\less (Q_0\cup Q_1))$ of $\cQ$ are the connected components $P\subset Q\less (E\cup S)$. 
\item
The {\bf seams} $S\in\cS_\cQ\cong \pi_0(Q_1)$ of $\cQ$ are the connected components $S\subset Q_1$.
The {\bf oriented seams} $S\in\cS^{\rm or}_\cQ\simeq \cS_\cQ\times\Z_2$ are pairs of seams with orientations.
\item
For  $S\in\cS^{\rm or}_\cQ$ we denote by $P^-_S,P^+_S\in\cP_\cQ$ the {\bf adjacent patches} whose oriented boundary contains $S^-$ resp.\ $S$ (i.e.\ which lie to the right resp.\ left of $S$), as illustrated in Figure~\ref{fig:quiltends}.
\item
The {\bf outgoing end} of $\cQ$ is $\cE^+_\cQ:=\{e^+\}:=\{q_0\}$, and the {\bf incoming ends} of $\cQ$ are the points $e\in\cE_\cQ^- :=Q_0\less\{q_0\}$.
\end{itemize}
\end{definition}

\begin{remark} \label{rmk:quilt} \rm 
Each seam is either a circle or an open interval embedded in $Q_2\less Q_0$, and cannot intersect itself or other seams by the submanifold property of $Q_1$. Moreover, the closure of $Q_1\subset Q_2\less Q_0$ implies that the boundary of an interval seam lies in $Q_0$, i.e.\ the seam is a closed interval immersed in $Q_2$ with endpoints mapping to ends in $Q_0$ which may or may not coincide.
We had to add the finiteness condition to avoid ``Hawaiian earrings'' -- sequences of interval seams converging to a puncture.

The finiteness condition for the sets of seams $\cS_\cQ \cong \pi_0(Q_1)$ and ends $\cE^+_\cQ \cup \cE^-_\cQ = Q_0$ also implies finiteness for the set of patches $\cP_\cQ\cong \pi_0(Q_2\less (Q_0\cup Q_1))$. Moreover, each patch is an open subset $P\subset Q_2\less Q_0$, whose boundary $\overline P \less P$ is given by a union of seams. The embedding $Q_0\less Q_2$ gives $\overline P$ the structure of an oriented 2-manifold with boundary, though some seams may lie in its interior -- namely the seams which have $P$ adjacent on both sides, i.e.\ the oriented seams $S\in\cS^{\rm or}_\cQ$ with $P^-_S=P^+_S=P$ (which is equivalent to $P^+_{S^-}=P^+_S=P$).
By cutting along these seams and adding two copies of the seam we obtain another oriented 2-manifold $\widehat P$ with boundary given by the union of all oriented seams $S\in\cS^{\rm or}_\cQ$ with $P^+_S=P$; see Figure~\ref{fig:quiltends} for examples. This ``refinement of the closure in $Q_2\less Q_0$ of each patch'' comes with a natural immersion $\widehat P\to Q_0 \less Q_2$ with image $\overline P$ and self-intersections on the seams in the interior of $\overline P$.
\end{remark}

\begin{figure}[!h]
\centering
\includegraphics[width=5.5in]{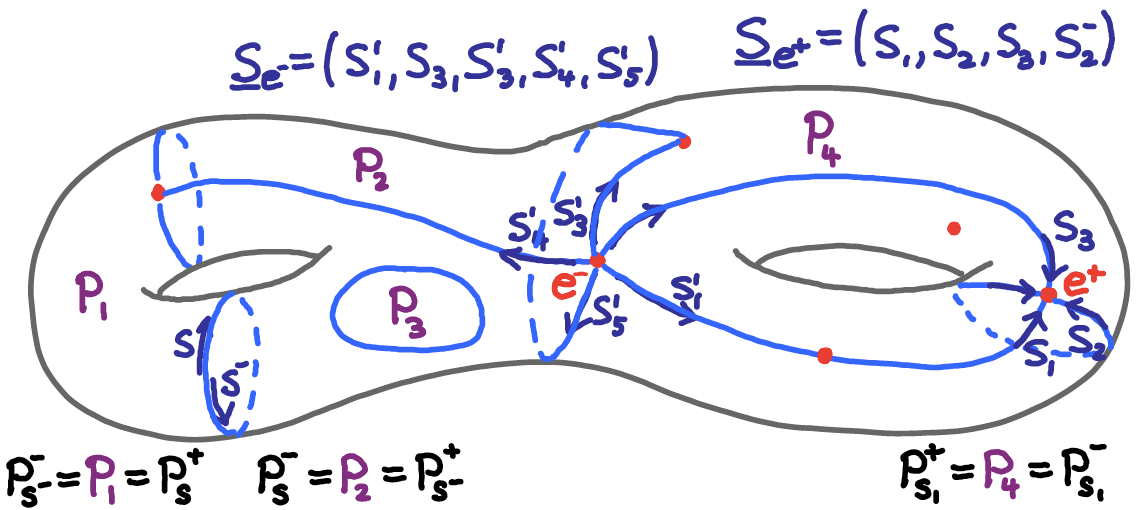}
\caption{A quilt with some examples of oriented seams and their adjacent patches, and cyclic sequences of oriented seams $\ul S_{e^\pm}$ associated to ends $e^\pm\in\cE^\pm_\cQ$. 
While the patch $P_3$ is an open disk, its refined closure $\widehat P_3$ simply is a closed disk. The case of $\widehat P_2$ -- a closed annulus minus one boundary puncture -- shows that these refined closures are usually not compact. 
Finally, $\widehat P_4$ is an example in which the immersion to $Q_2\less Q_0$ is not injective. Here $P_4$ is an open disk with one interior puncture, and its closure $\overline P_4\subset Q_2\less Q_0$ is the complement of a disk in a torus, minus 2 punctures on the boundary and 3 punctures in the interior. However, $\widehat P_4$ is a closed 10-gon minus the corners and one interior puncture, with oriented boundary components $S_2, S_1^-, {S_1'}^-, S_5', {S_3'}^-, S_3, S_2^-, S_3^-, S_1', S_1$.}
\label{fig:quiltends}
\end{figure}

\begin{definition}
\label{def:quiltdiagram}
A {\bf quilt diagram} $\cQ\cD=\bigl(\cQ,(\Sigma_P)_{P\in\cP_\cQ}, (Y_S)_{S\in\cS_\cQ} \bigr)$ in a bicategory $\cC$ with adjoints consists of a quilt $\cQ$ with labels in $\cC$ as follows, and illustrated in Figure~\ref{fig:quiltdiagram}.
\begin{itemize}
\item
Each patch $P\in\cP_\cQ$ is labeled by an object $\Sigma_P\in \Obj_\cC$.
\item
Each oriented seam $S\in\cS^{\rm or}_\cQ$ of $\cQ$ is labeled by a morphism $Y_S\in\Mor_\cC^1(P^-_S, P^+_S)$ such that seams of opposite orientation are labeled with adjoint morphisms, i.e.\ $Y_{S^-}=(Y_S)^T\in\Mor_\cC^1(P^+_S, P^-_S)$, since $P^\pm_{S^-}=P^\mp_S$.
\end{itemize}
\end{definition}

\begin{figure}[!h]
\centering
\includegraphics[width=5.5in]{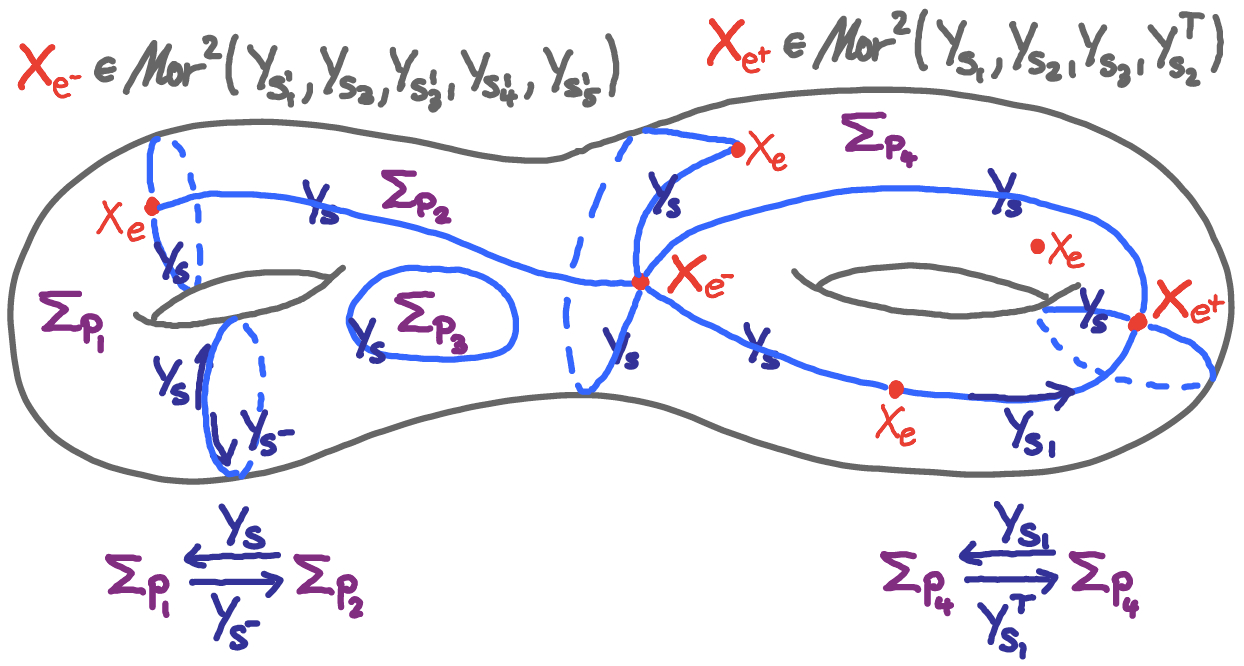}
\caption{In a quilt diagram, each patch $P_i$ is labeled by an object $\Sigma_{P_i}$ and each seam $S$ is labeled by a pair of adjoint 1-morphisms $Y_S,Y_S^T$ (corresponding to the different orientations of the seam). 
One could in addition label each end $e$ by a 2-morphism $X_e$ in the corresponding cyclic 2-morphism space, however, these will instead be viewed as inputs or outputs of a quilted composition map induced by the quilt diagram.}
\label{fig:quiltdiagram}
\end{figure}

To turn a quilt diagram into a generalized string diagram, we should in addition label each incoming end $e\in\cE^-_\cQ$ by a 2-morphism $X_e\in\Mor_\cC^2(\underline{Y}_e)$, and at the outgoing end $e^+=q_0$ have the quilt diagram define a 2-morphism $X_{e^+}\in\Mor_\cC^2(\underline{Y}_{e^+})$ in the {\bf cyclic 2-morphism spaces associated to each end} as follows:
\begin{itemize}
\item
For the outgoing end, we define a cyclic sequence of oriented seams $\underline{S}_{e^+}=(S_i)_{i\in\Z_{N_{e^+}}} : \Z_{N_{e^+}}\to\cS^{\rm or}_\cQ$ given by the oriented seams $S_i\simeq\R$ with $+\infty$-limit $e^+$, ordered by their intersection with a counterclockwise circle around $e^+=q_0\in Q_2$; see Figure~\ref{fig:quiltends} for an example.
Then $\underline{Y}_{e^+}:=\bigl( Y_{S_i} \bigr)_{i\in\Z_{N_{e^+}}} : \Z_{N_{e^+}}\to\Mor^1_\cC$ is a cyclic 1-morphism of $\cC$ in the sense of Remark~\ref{rmk:cyclic}, with a well defined cyclic 2-morphism space $\Mor_\cC^2(\underline{Y}_{e^+})$.
\item
For each incoming end $e$, we obtain the cyclic 1-morphism $\underline{Y}_e$ analogously from the oriented seams $S_i\simeq\R$ with $-\infty$-limit $e$; again see
Figure~\ref{fig:quiltends} for an example.
\item
If an incoming or outgoing end $e$ lies in the interior of a patch $P$, i.e.\ has no adjacent seams, then we associate to it the cyclic 1-morphism $\underline{Y}_e:=1_{\Sigma_P}$.
(In the case of a bicategory $\cC$ one should either disallow ends without seams or ensure identifications between the cyclic 2-morphism spaces associated to different choices of weak identity 1-morphisms.)
\end{itemize}
However, instead of fixing these labels, we will view the quilt diagrams as inducing maps between the cyclic 2-morphism spaces associated to the ends, as indicated in Figure~\ref{fig:quiltmap}. Another example of a quilt map is given in Figure~\ref{fig:quiltdiagramforHFiso}.
In particular, string diagrams already induce such maps via horizontal and vertical composition.
Now we define a quilted 2-category to be a 2-category in which not only the string diagrams but general quilt diagrams define maps on 2-morphism spaces. The analogous definition is made for bicategories.
Here one could make various further specifications such as fixing the genus of the quilt diagram. (For example, spherical 2-categories as in \cite{spherical} could be conjectured to correspond to 2-categories in which quilt diagrams of genus 0 yield well defined maps.)

\begin{figure}[!h]
\centering
\includegraphics[width=5.5in]{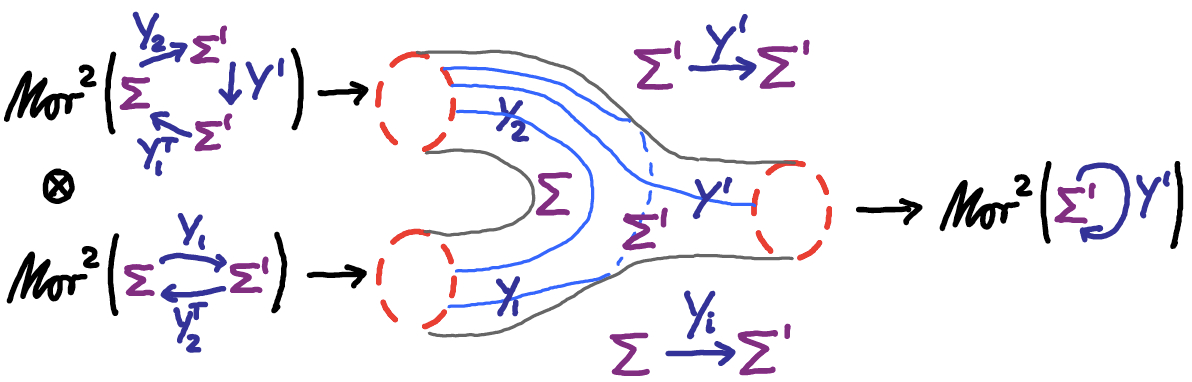}
\caption{Quilt diagrams induce quilted composition maps which -- except in simple cases reducing to string diagrams -- cannot be expressed in terms of the horizontal and vertical composition of 2-morphisms.}
\label{fig:quiltmap}
\end{figure}

\begin{definition} \label{def:quilted}
A {\bf quilted bicategory/2-category} is a bicategory/2-category $\cC$ with adjoints in the sense of Definition~\ref{def:adj} and with quilted composition maps\footnote{
Here and below we write a tensor product $\otimes$ to indicate a Cartesian product of sets which can be replace by a tensor product in the case of 2-morphism spaces given by Floer homology groups. 
}
$$
\Phi_{\cQ\cD}:\otimes_{e\in\cE_\cQ^-} \Mor^2_\cC(\ul Y_e) \to \Mor^2_\cC(\ul Y_{e^+})
$$
for each quilt diagram
${\cQ\cD}=\bigl(\cQ,(\Sigma_P)_{P\in\cP_\cQ}, (Y_S)_{S\in\cS_\cQ}\bigr)$
that satisfy the following:

\smallskip
\noindent
{\bf Deformation Axiom:}
Isomorphic quilt diagrams $\cQ\cD\simeq \cQ\cD'$ as in Figure~\ref{fig:isomorphic} give rise to the same quilted composition maps $\Phi_{\cQ\cD}=\Phi_{\cQ\cD'}$. Here an isomorphism 
$$
\bigl(\cQ,(\Sigma_P)_{P\in\cP_\cQ}, (Y_S)_{S\in\cS_\cQ}\bigr)\simeq \bigl(\cQ',(\Sigma'_{P'})_{P'\in\cP_{\cQ'}}, (Y'_{S'})_{S'\in\cS_{\cQ'}}\bigr)
$$ 
is a homeomorphism $Q_2\simeq Q_2'$ that restricts to an orientation preserving diffeomorphism $Q_2\less Q_0\simeq Q'_2\less Q'_0$ and 
identifies the ends $Q_0\simeq Q_0'$, in particular $q_0\simeq q_0'$, and seams $Q_1\simeq Q_1'$ in such a way that the labels $\Sigma_{P}=\Sigma'_{P'}$ and $Y_S=Y'_{S'}$ coincide under the induced identification of patches $\cP_\cQ \simeq \cP_{\cQ'}$ and oriented seams $\cS^{\rm or}_\cQ \simeq \cS^{\rm or}_{\cQ'}$. 
The identity $\Phi_{\cQ\cD}=\Phi_{\cQ\cD'}$ is with respect to the identification of ends $\cE^\pm_\cQ \simeq \cE^\pm_{\cQ'}$ induced by the bijection $Q_0\simeq Q_0'$. 
\begin{figure}[!h]
\centering
\includegraphics[width=5.5in]{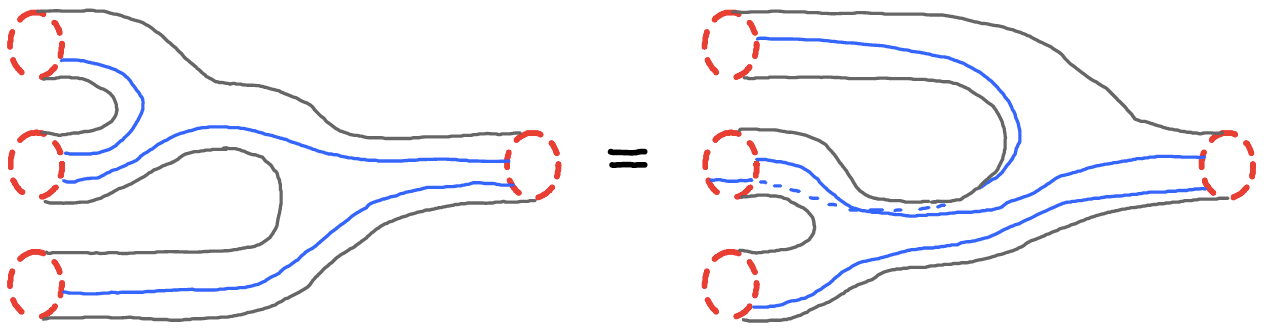}
\caption{Isomorphic quilt diagrams yield the same quilted 
%composition
maps.}
\label{fig:isomorphic}
\end{figure}

\noindent
{\bf Cylinder Axiom:}
The invariant associated to a quilted cylinder as in Figure~\ref{fig:cylinder} -- that is $Q_2\less Q_0 \simeq \R\times S^1$ with parallel seams $Q_1\simeq\R\times\{s_1,\ldots,s_N\}$ -- is the identity map on the associated cyclic morphism space.
\begin{figure}[!h]
\centering
\includegraphics[width=5.5in]{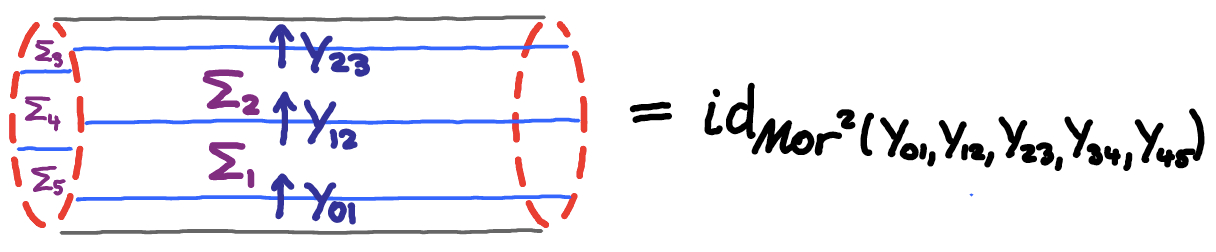}
\caption{Cylindrical quilt diagrams yield the identity map}
%on the corresponding cyclic morphism space.}
\label{fig:cylinder}
\end{figure}

\noindent
{\bf Gluing Axiom:}
Gluing of quilt diagrams as in Figure~\ref{fig:gluing} -- identifying the outgoing end of one diagram with an incoming end of another diagram -- corresponds to composition of the associated quilted composition maps.
\begin{figure}[!h]
\centering
\includegraphics[width=5.5in]{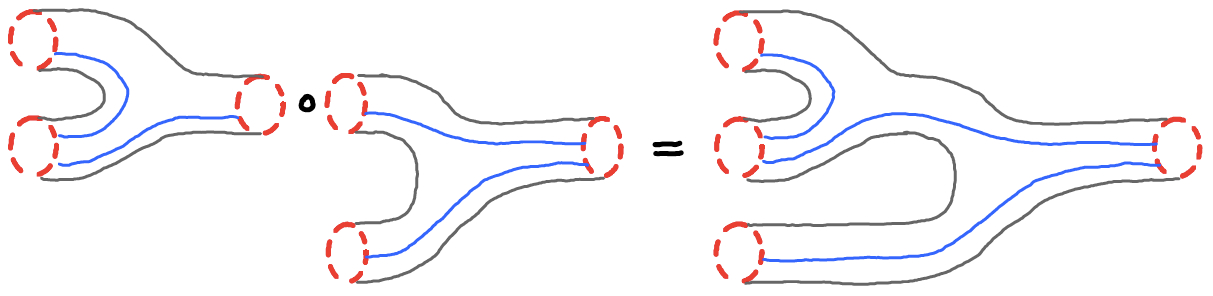}
\caption{Composition of quilted composition maps corresponds to gluing of the quilt diagrams.}
\label{fig:gluing}
\end{figure}

\noindent
{\bf Strip shrinking Axiom:}
Strip or annulus shrinking as in Figure~\ref{fig:shrink} -- removing a patch $P\simeq \R\times S^1$ or $P\simeq [0,1]\times S^1$ and replacing the its two adjacent seams $S,S'$ by a single seam labeled with the composed 1-morphism $Y_S\circ Y_{S'}$ (and its adjoint) -- corresponds to an equality of quilted composition maps.
\begin{figure}[!h]
\centering
\includegraphics[width=5.5in]{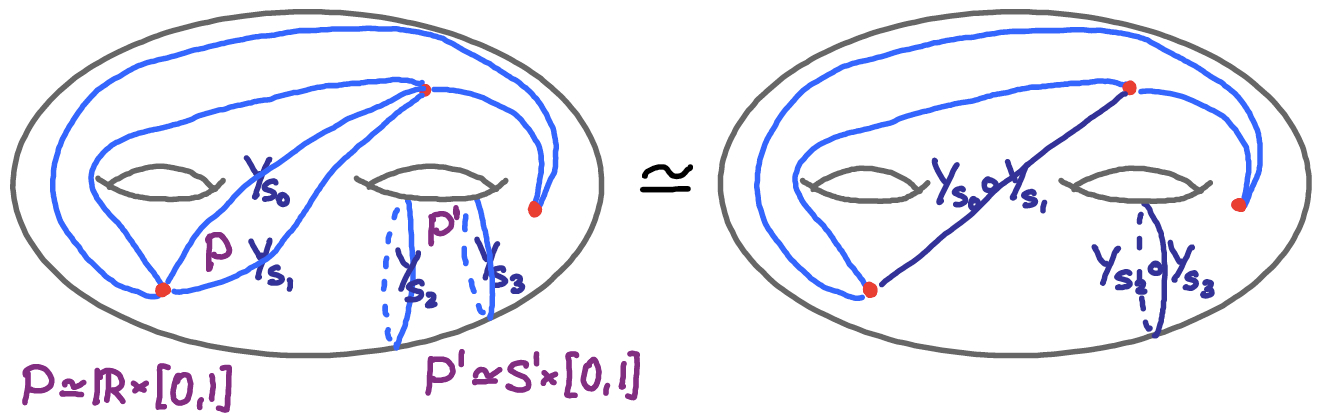}
\caption{Quilt diagrams related by annulus shrinking yield the same quilted composition maps; for strip shrinking they are intertwined via isomorphisms between the cyclic morphism spaces.}
\label{fig:shrink}
\end{figure}
\end{definition}

%later TODO: check that this is coherent with string diagrams for identities and associativity etc of 2-compositions

An example of using the axioms to make graphical calculations for quilt maps is given in Figure~\ref{fig:quiltdiagram-calculation}.

\begin{remark} \label{rmk:quiltcat} \rm 
The adjunction 2-morphisms between adjoint 1-morphisms (see Definition~\ref{def:adj}) are in practice often constructed from quilted composition maps corresponding to Figure~\ref{fig:quiltadjoints} with no incoming ends; e.g.\ as in Remarks~\ref{rmk:adj2} and \ref{rmk:sympquilt}. 
A more fitting notion of quilted bicategory might thus be to require only the reflexive operation on 1-morphisms in Definition~\ref{def:adj} together with well defined cyclic 2-morphism spaces as in Remark~\ref{rmk:cyclic} and quilted composition maps satisfying the same axioms as in Definition~\ref{def:quilted}.
\end{remark}

In \S\ref{ss:symp2} and \S\ref{ss:gauge2} we will see (sketches of) examples in which the quilted composition maps arise from counting solutions to a nonlinear PDE. In those settings, the strip and annulus shrinking is highly nontrivial -- requiring the identification of solution spaces under a degeneration of the PDE as in \cite{ww:isom}.
On the other hand, bordism bicategories have natural quilted composition maps given by appropriate gluing of manifolds with boundaries and corners, in which also strip and annulus shrinking is naturally satisfied. We give a rough explanation here in dimension $d=2$, though more care would be required to construct the cyclic 2-morphism spaces and smooth structures coherently and check the axioms.

\begin{lemma} \label{lem:borquilt}
$\Bor_{2+1+1}$ is a quilted bicategory with adjoints as in Remark~\ref{rmk:adj2}. 
%given by $(Y,\io^-,\io^+)^T= (Y^-, \io^+\circ\rho_1,\io^-\circ\rho_0)$.
\end{lemma}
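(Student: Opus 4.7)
The plan is to realize the cyclic 2-morphism spaces and quilted composition maps of $\Bor_{2+1+1}$ via 4-manifolds with corners assembled from the building blocks $\widehat P\times\Sigma_P$ (one for each patch) and $S\times Y_S$ (one for each seam), glued along common boundary strata according to the combinatorics of the quilt. The adjoint structure is already provided by Remark~\ref{rmk:adj2} via orientation reversal of 3-cobordisms together with the adjunction 2-morphisms built from the elementary quilts in Figure~\ref{fig:quiltadjoints}.

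First, for a composable cyclic sequence $\underline Y=(Y_{i(i+1)})_{i\in\Z_N}$ with $Y_{i(i+1)}\in\Mor^1_{\Bor_{2+1+1}}(\Sigma_i,\Sigma_{i+1})$, set $Y_{\underline Y}:=(Y_{01}\cup_{\Sigma_1}\cdots\cup_{\Sigma_{N-1}}Y_{(N-1)N})/(\Sigma_0\sim\Sigma_N)$, a closed oriented 3-manifold obtained by cyclic gluing. I would define $\Mor^2_{\Bor_{2+1+1}}(\underline Y)$ as equivalence classes of compact oriented 4-manifolds with corners $X$ equipped with a collar embedding of $S^1\times Y_{\underline Y}$ on a distinguished boundary piece, modulo diffeomorphism relative to this collar. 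Cutting $S^1$ at a marked point between two consecutive seams produces the reindexing identification with $\Mor^2_{\Bor_{2+1+1}}(Y_{i(i+1)}^T,\,Y_{(i+1)(i+2)}\circ\cdots\circ Y_{(i-1)i})$ required by Remark~\ref{rmk:cyclic}, matching the standard 2-morphism spaces of Example~\ref{ex:2bor}.

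Second, the quilted composition map $\Phi_{\cQ\cD}$ is constructed as follows. Given representatives $X_e$ of input 2-morphisms at each incoming end, take the disjoint union of $\widehat P\times\Sigma_P$ over patches $P\in\cP_\cQ$ and of collar thickenings of $S\times Y_S$ over seams $S\in\cS_\cQ$, identify them along the boundary identifications $\iota^\pm_{Y_S}:[0,1]\times\Sigma_{P^\pm_S}\to Y_S$ wherever a seam abuts a patch, and then glue in each $X_e$ along the $S^1\times Y_{\underline Y_e}$ collar at each incoming end. After smoothing corners, the remaining boundary neighbourhood around the outgoing end $e^+$ represents the class $\Phi_{\cQ\cD}(\otimes_e[X_e])\in\Mor^2_{\Bor_{2+1+1}}(\underline Y_{e^+})$.

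Third, all four axioms are geometrically transparent. Deformation follows because an isomorphism of quilt diagrams lifts to a diffeomorphism of the assembled 4-manifolds. The Cylinder Axiom holds because a cylindrical quilt $\R\times S^1$ with parallel seams assembles into precisely the cylindrical cobordism of cobordisms $I_{\id_{Y_{\underline Y}}}$ from Example~\ref{ex:2bor}, which represents the identity 2-morphism. The Gluing Axiom is immediate since gluing quilt diagrams at an end is defined by the same boundary gluing as vertical composition of 2-morphisms in $\Bor_{2+1+1}$. Strip and annulus shrinking both reduce to the tautological diffeomorphism $\widehat P\times\Sigma_P\cup(S\times Y_S)\cup(S'\times Y_{S'})\;\simeq\;(S\cup S')\times(Y_S\cup_{\Sigma_P}Y_{S'})$ available whenever $\widehat P$ is a strip or annulus. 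The main obstacle will be the bookkeeping of smooth structures and collars: refined closures $\widehat P$ can be non-compact and immerse into $Q_2$ with self-intersections (as illustrated by $\widehat P_4$ in Figure~\ref{fig:quiltends}), the tubular neighbourhood embeddings $\iota^\pm,\kappa^\pm$ required by Example~\ref{ex:2bor} must be chosen coherently across patches, seams, and ends, and well-definedness requires the standard uniqueness-up-to-isotopy of collars and corner-smoothings. This is precisely the ``more care'' flagged in the lemma's preamble, paralleling the collar-rescaling argument already needed to establish weak unitality in Example~\ref{ex:2bor}.
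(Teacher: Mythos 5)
Your proposal takes essentially the same route as the paper's proof sketch: both assemble the quilted composition map by gluing the 4-manifolds $\widehat P\times\Sigma_P$ (patches) and $S\times Y_S$ (seams) along matching boundary strata, then glue in the input 2-morphisms $X_e$ at incoming ends and read off the output at $e^+$, with the axioms following tautologically from the gluing description. Your explicit verification of the Cylinder Axiom via $I_{\id}$ and your flagging of the collar/corner bookkeeping are slightly more detailed than the paper's one-line closing remark, but the underlying argument is identical.
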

\begin{proof}[Sketch]
Since adjunction 2-morphisms $X_Y,X_Y^T$ are constructed from quilt diagrams in Remark~\ref{rmk:adj2}, it remains to associate to a quilt diagram $\cQ\cD=\bigl(\cQ,(\Sigma_P)_{P\in\cP_\cQ}, (Y_S)_{S\in\cS_\cQ}\bigr)$ 
and labels $\bigl(X_e \in \Mor^2_{\Bor_{2+1+1}}(\ul Y_e)\bigr)_{e\in\cE^+_\cQ}$ of the incoming ends a 4-cobordism in the cyclic 2-morphism space $\Mor^2_{\Bor_{2+1+1}}(\ul Y_{e^+})$ associated to the outgoing end. We do so by gluing 4-manifolds as shown in Figures~\ref{fig:2bordismstuff} and \ref{fig:borcomp}:
\begin{itemize}
\item
A patch $P\in\cP_\cQ$ that is labeled by a surface $\Sigma$ is represented by the oriented 4-manifold $X_P:=\widehat P\times\Sigma$ with boundary $\partial X_P = \bigcup_{S\in\cS^{\rm or}_\cQ, P^+_S=P} S \times \Sigma$.
\item
A seam $S\in\cS_\cQ$, i.e.\ a pair of oriented seams $\{S,S^-\}\in\cS^{\rm or}_\cQ$, that is labeled by a 3-cobordism $Y_S\in\Bor_{2+1}(\Sigma_{P_S^-},\Sigma_{P_S^+})$ and its adjoint $Y_{S^-}=Y_S^-$ is represented by the oriented 4-manifold $X_S:=S\times Y_S$ with boundary 
$$
\partial X_S = \bigl( S \times \Sigma_{P_S^-} \bigr)  \cup \bigl(  S \times \Sigma_{P_S^+}^-\bigr)
= \bigl( S \times \Sigma_{P_S^-} \bigr)  \cup \bigl(  S^- \times \Sigma_{P^-_{S^-}}\bigr).
$$ 
(Note that this is independent of the choice of orientation on $S\in\cS_\cQ$.)
\item
We now glue the 4-manifold $\bigsqcup_{P\in\cP_\cQ} X_P$ with boundary $\bigcup_{S\in\cS^{\rm or}_\cQ} S \times \Sigma_{P^+_S}$ to the 4-manifold $\bigsqcup_{S\in\cS_\cQ} X_S$ with boundary $\bigsqcup_{S\in\cS^{\rm or}_\cQ} S \times \Sigma_{P_S^-}$
via the orientation reversing diffeomorphisms 
$$
\partial X_S \;\supset\; S \times \Sigma_{P_S^-} 
\;\longrightarrow\; S^- \times \Sigma_{P_S^-} = S^- \times \Sigma_{P^+_{S^-}} \;\subset\; \partial X_{P^+_{S^-}} = \partial X_{P^-_S} .
$$
\end{itemize}
If we extend the smooth structure by gluing with appropriate collar neighbourhoods, then this yields an oriented 4-manifold $X_{\cQ\cD}$ without boundary, which is compact up to cylindrical ends $\R^\pm\times Y_e \subset X_{\cQ\cD}$ for each end $e\in\cE^\pm_\cQ$.
If we now delete a little neighbourhood of $e\in Q_0\subset Q_2$ from all patches and seams, then each cylindrical end is replaced by boundary and corners as follows:

The boundary strata near $e$ are the 3-cobordisms 
$Y_{S_i}\in\Mor^1_{\Bor_{2+1+1}}(\Sigma_{P_i^-},\Sigma_{P_i^+})$ 
in the cyclic 1-morphism $\underline{Y}_e=\bigl( Y_{S_i} \bigr)_{i\in\Z_{N_e}}$ and identity cobordisms $1_{\Sigma_{P_i^\pm}} = [0,\delta_i]\times \Sigma_{P_i^\pm}$.
The corners are formed by identifications $\im\, \iota_{Y_{S_i}}^-(0,\cdot) \sim \{1\}\times \Sigma_{P_i^-}$ and $\im\, \iota_{Y_{S_i}}^+(1,\cdot) \sim \{0\}\times \Sigma_{P_i^+}$.
This boundary\&corner structure corresponds (with reversed orientations) to the boundary\&corners of $X_e\in \Mor^2_{\Bor_{2+1+1}}(\ul Y_e)$, so that we can glue in these 4-manifolds at each incoming end to obtain a 4-manifold with boundary and corners arising from the outgoing end. This defines the result of the quilted composition map 
$\Phi_{\cQ\cD}\bigl(\otimes_{e\in\cE_\cQ^-} X_e \bigr) \in \Mor^2_\cC(\ul Y_{e^+})$.

This construction is fairly evidently compatible with isomorphisms of quilt diagrams, gluing, and strip shrinking, thus satisfies the axioms required in Definition~\ref{def:quilted} of a quilted bicategory.
\end{proof}

The notion of quilted 2-categories now allows us to formulate the following extension principle which we will further discuss in \cite{W:ext}. Its proof is outlined in \cite{W:slides} and makes crucial use of the fact that, as a result of the theory of Morse 2-functions (see e.g.\ \cite{GayKirby}), $\Bor_{2+1+1}$ is not just a quilted bicategory but in an appropriate sense is quilt-generated by the Cerf decompositions of $\Bor_{2+1}$ of Theorem~\ref{thm:borCerf}.

\begin{conjecture}[Extension principle for Floer field theories] \label{con:fftext}
Let $\cC$ be a quilted 2-category as in Definition~\ref{def:quilted} whose underlying 1-category has Cerf decompositions as in Definition~\ref{def:Cerf}. 
Then any {\rm Cerf}-compatible partial functor as in Lemma~\ref{le:field0},
$\cF: (\Obj_{\Bor_{2+1}},\SMor_{\Bor_{2+1}}) \to (\Obj_\cC,\SMor_\cC)$,
 which preserves adjunctions as in Remark~\ref{rmk:field2} and satisfies a quilted naturality axiom (see \cite{W:slides}), 
has a natural extension to a 2-functor $\Bor_{2+1+1}\to\cC$.
\end{conjecture}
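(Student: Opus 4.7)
The plan is to mirror the structure of Lemma~\ref{le:field1} at the 2-categorical level, leveraging the theory of Morse 2-functions \cite{GayKirby} to associate quilt diagrams to 4-manifolds with corners, and then use the quilted axioms of $\cC$ to check well-definedness.

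On objects and 1-morphisms, one first invokes Lemma~\ref{le:field0} (which uses only the 1-categorical Cerf data and thus goes through in the underlying 1-category $|\cC|$ from Remark~\ref{rmk:2cat-quotient}) to obtain a functor $\overline\cF:\Bor_{2+1}\to|\cC|$. Lifting this to the 2-categorical level means that for each 3-cobordism $Y$ with Cerf decomposition $Y=Y_{01}\cup\ldots\cup Y_{(k-1)k}$ into simple cobordisms, one sets
$\cF(Y):=\cF(Y_{01})\circ_{\rm h}\ldots\circ_{\rm h}\cF(Y_{(k-1)k})\in\Mor^1_\cC$;
Cerf compatibility of the partial functor sends every local Cerf move to an identity of simple-morphism chains in $\cC$, and hence provides canonical 2-isomorphisms between the 1-morphisms associated to different Cerf decompositions, as in the proof of Lemma~\ref{le:field1}. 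Adjunction preservation then ensures $\cF(Y^T)\cong\cF(Y)^T$.

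The main construction is on 2-morphisms. Given a class $[(X,\iota^\pm_X,\kappa^\pm)]\in\Mor^2_{\Bor_{2+1+1}}$ one chooses a generic Morse 2-function $f:X\to[0,1]^2$ compatible with the boundary collars $\iota^\pm_X,\kappa^\pm$. The Cerf-graphic of $f$ in $[0,1]^2$ then determines a quilt $\cQ_f$: its edges are the seams (labelled by the simple 3-cobordisms associated to the corresponding 1-parameter families of handle attachments in the fibres), its vertices are the ends, and its patches are the complementary regions (labelled by the associated surface fibres). This produces a quilt diagram $\cQ\cD_X:=\cQ\cD_{f}$ in $\cC$, and one sets
$$
\cF(X)\;:=\;\Phi_{\cQ\cD_X}\bigl(\otimes_{e\in\cE^-_{\cQ_f}}\id_e\bigr)\;\in\;\Mor^2_\cC\bigl(\underline Y_{e^+}\bigr),
$$
where the incoming ends, coming from interior critical vertices of $f$, are filled with the canonical adjunction 2-morphisms of $\cC$ (constructed as in Remark~\ref{rmk:adj2} from the quilt diagrams of Figure~\ref{fig:quiltadjoints}), and the cyclic 2-morphism space at the outgoing end is identified via the collars $\kappa^\pm$ with $\Mor^2_\cC(\cF(Y),\cF(Z))$ for the appropriate input/output 3-cobordisms.

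Verifying that $\cF(X)$ is independent of the chosen Morse 2-function is the key technical point, and is where the quilted naturality axiom enters. By Gay--Kirby any two generic Morse 2-functions with the same boundary behaviour are connected by a finite sequence of standard moves in the Cerf graphic, and each move must translate into an instance of one of the four quilted axioms of Definition~\ref{def:quilted} applied to $\cF$ of the local simple morphisms: isotopies and eye/swallowtail births/deaths give the deformation axiom, cylindrical insertions give the cylinder axiom, index permutations and crossing bifurcations give the strip/annulus shrinking axiom combined with Cerf compatibility, and the remaining parametrized bifurcations (handle slides in 1-parameter families, which relate two quilted Cerf decompositions of the same 3-cobordism) are precisely covered by the quilted naturality axiom. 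Given this, functoriality reduces to the gluing axiom: stacking 4-cobordisms vertically or horizontally corresponds to gluing the Morse graphics along an end, so $\Phi$ of the glued diagram factors through the two separate $\Phi$'s; identity 2-morphisms arise from cylindrical quilts and are handled by the cylinder axiom, and the weak units $1_{\Sigma,\delta}$ of $\Bor_{2+1+1}$ are sent to weak units in $\cC$ up to the canonical quilt-isomorphisms already built into the 1-morphism construction.

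The hard part is thus twofold: first, producing and correctly labelling the quilt diagram $\cQ\cD_f$ from a Morse 2-function so that its fibred 3- and 4-dimensional geometry matches the gluing geometry of the quilted composition maps $\Phi_{\cQ\cD}$ in $\cC$; and second, matching the full list of Cerf-graphic moves with the four quilted axioms plus Cerf and quilted naturality of the partial functor. This is essentially a 4-dimensional refinement of Cerf's theorem viewed through the lens of quilted bicategories, and the precise formulation of the quilted naturality axiom is designed exactly so as to close this final gap; the details are the content of \cite{W:ext}.
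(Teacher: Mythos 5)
The statement you are asked to prove is labeled \emph{Conjecture}~\ref{con:fftext} in the paper, and the paper does not give a proof: it only says that the proof ``is outlined in \cite{W:slides} and makes crucial use of the fact that, as a result of the theory of Morse 2-functions (see e.g.\ \cite{GayKirby}), $\Bor_{2+1+1}$ is \ldots\ quilt-generated by the Cerf decompositions of $\Bor_{2+1}$,'' and defers the details to the forthcoming \cite{W:ext}. So there is no argument in the paper against which to compare yours step by step; the best one can do is check consistency with the indicated strategy.

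Your outline is consistent with that strategy and fills it out plausibly: build the 1-morphism level from Lemma~\ref{le:field0} and the Cerf-compatible partial functor, translate a 4-cobordism with corners into a quilt diagram via the Cerf graphic of a Morse 2-function, evaluate via the quilted composition maps $\Phi_{\cQ\cD}$ of $\cC$, and reduce independence of choices to matching Gay--Kirby moves on Cerf graphics against the quilted axioms plus the quilted naturality hypothesis. This is exactly the ``quilt-generation'' idea the paper gestures at.

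Two points deserve more care than you give them, and are likely exactly where \cite{W:ext} must do work. First, 1-morphisms in $\Bor_{2+1+1}$ are honest 3-cobordisms with collars, not diffeomorphism classes, so defining $\cF$ on 1-morphisms by choosing a Cerf decomposition is only well defined up to 2-isomorphism; that is acceptable for a 2-functor between bicategories (Definition~\ref{def:bifunk}), but you then have to carry a coherent system of comparison 2-isomorphisms through the rest of the argument, in particular into the definition and well-definedness of $\cF$ on 2-morphisms. Second, your step of ``filling the interior critical vertices with the canonical adjunction 2-morphisms'' is too quick: cusps and fold crossings in a Cerf graphic do not all reduce to the duality 2-morphisms $X_Y$, $X_Y^T$ of Definition~\ref{def:adj}, and matching the full list of Cerf-graphic bifurcations (births/deaths, swallowtails, fold crossings, 1-parameter handle slides) against the deformation, cylinder, gluing, and strip-shrinking axioms together with the partial functor's Cerf compatibility and the quilted naturality axiom is precisely the open content that makes this a conjecture rather than a theorem. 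Your sketch is honest that this is where the gap lies, which is appropriate; just be aware that this step, not the construction of $\cQ\cD_f$, is the crux, and as stated the paper provides no proof for you to be measured against.
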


An analogous extension principle can be formulated for bordism bicategories $\Bor_{d+1+1}$ in any dimension $d\ge 0$ and connected bordism bicategories $\Bor^{\rm conn}_{d+1+1}$ in dimension $d\ge 0$.
We propose to apply this principle to the Floer field theories outlined in \S\ref{ss:ex}, where $\cC$ the symplectic 2-category oulined in \S\ref{ss:symp2} or other gauge theoretic 2-categories outlined in \S\ref{ss:gauge2}. 
It should yield ``2+1+1 Floer field theories'' $\Bor_{2+1+1}\to\Cat$ by composition with the Yoneda 2-functor $\cC\to\Cat$  from Lemma~\ref{lem:Yoneda}.
We moreover expect equivalences between these field theories, as phrased in the quilted Atiyah-Floer Conjecture~\ref{con:qAF}.

\subsection{The symplectic 2-category} \label{ss:symp2}

This section gives a brief overview of the construction of a symplectic 2-category in \cite{ww:cat}. 
Conceptually, it can be thought of as starting with the construction of a 2-category in Example~\ref{ex:ext0} from the Cerf decompositions in the extended symplectic category $\Symp^\#$ of Definition~\ref{def:extsymp}, and then replacing the 2-morphisms that were defined from the abstract Cerf moves by a geometrically more meaningful notion, while preserving the isomorphisms $(L_{12},L_{23}) \sim L_{12}\circ L_{23}$ in the sense of Remark~\ref{rmk:2cat-quotient}, as mentioned at the end of \S\ref{ss:symp}.

In the following sketch of the symplectic 2-category we use the same horizontal 1-composition $\circ^1_{\rm h}$ as in $\Symp^\#$, the 2-morphisms are given by (quilted) Floer homology groups as defined in \cite{ww:qhf}. These arise from a complex whose differential is constructed from moduli spaces of solutions of an elliptic PDE that is closely connected to the PDE that we associate to quilt diagrams in Remark~\ref{rmk:quilts}. We also use these moduli spaces to construct the vertical and horizontal 2-composition $\circ_{\rm v}, \circ^2_{\rm h}$ from their respective string diagrams. In order to obtain well defined structures, we have to make further restrictions on the allowable symplectic objects and morphisms as in Remark~\ref{rmk:monotone}, or generalize the notion of 2-category, as discussed in Remark~\ref{rmk:ainfty2symp}.

\begin{example} \label{ex:2symp} \rm 
The {\bf symplectic 2-category ${\rm\mathbf{Symp}}$} roughly consists of the following.
\begin{itemize}
\item
Objects are symplectic manifolds $M$.
\item
For each pair $M,N\in \Obj_{\Symp}$ the category of 1-morphisms is the following Donaldson-Fukaya category of generalized Lagrangians $\Mor_{\Symp}(M,N)$.
\begin{itemize}
\item
1-morphisms $\uL=(L_{01},\ldots,L_{(k-1)k})\in\Mor^1_{\Symp}(M,N)$ are the composable chains of simple Lagrangians $L_{ij}\subset M_i^- \times M_j$ between symplectic manifolds $M=M_0, M_1, \ldots, M_k=N$.
\item
2-morphisms between $\uL,\uL'\in \Mor^1_{\Symp}(M,N)$ are the elements of the quilted Floer homology group $\Mor_{\Symp}^2(\uL,\uL')=HF(\uL,\uL')$; see Remark~\ref{rmk:quilts}.
\item
Vertical composition
$\circ_{\rm v}: HF(\uL,\uL')\otimes HF(\uL',\uL'')\to HF(\uL,\uL'')$
for $\uL,\uL',\uL''\in \Mor^1_{\Symp}(M,N)$ 
arises from counts of pseudoholomorphic quilts representing the associated string diagram.
It is associative by a gluing theorem as in \cite{mau}.
\item 
The identity $\id_{\uL}\in\Mor_{\Symp}^2(\uL,\uL)$ for $\uL\in \Mor^1_{\Symp}(M,N)$ 
arises from counts of pseudoholomorphic quilts representing the associated string diagram.
\end{itemize}
\item
The composition functor $\Mor_{\Symp}(M,N)\times \Mor_{\Symp}(N,P)\to \Mor_{\Symp}(M,P)$ is defined as follows.
\begin{itemize}
\item
Horizontal composition of 1-morphisms
$$
\circ_{\rm h}: \; \Mor_{\Symp}^1(M,N)\times  \Mor_{\Symp}^1(N,P)\to  \Mor_{\Symp}^1(M,P), 
\quad
(\uL,\uL') \mapsto \uL \# \uL'
$$
is given by the evidently associative concatenation
$$
(L_{01},\ldots,L_{(k-1)k}) \# (L'_{01},\ldots,L'_{(k'-1)k'})
:= (L_{01},\ldots,L_{(k-1)k},L'_{01},\ldots,L'_{(k'-1)k'}).
$$
\item
The identities $1_M= (\;) \in\Mor_{\Symp}^1(M,M)$ for $\circ_{\rm h}$ are given by the 
trivial chains.
\item
Horizontal composition of 2-morphisms arises from counts of pseudoholomorphic quilts representing the associated string diagram,
\begin{align*}
 \circ_{\rm h} : \; HF(\uL_{12},\uL'_{12}) \times HF(\uL_{23},\uL'_{23})  &\;\longrightarrow\;  HF(\uL_{12}\#\uL_{23},\uL'_{12}\#\uL'_{23}) .
\end{align*}
Compatibility with identities and vertical composition follows from gluing theorems as in \cite{mau}.
\end{itemize}
\end{itemize}
While this gives a well defined symplectic 2-category, we still have to relate it to the symplectic category defined in \S\ref{ss:symp}, in which horizontal composition of morphisms is given by the geometric composition of Lagrangians -- if the latter is embedded. 
Example~\ref{ex:ext0} shows how the same can be achieved up to isomorphism in a 2-categorical setting. However, the 2-morphisms in the present 2-category are quilted Floer homology classes, so the following becomes a nontrivial result -- proven in \cite{ww:isom} as isomorphism of Floer homologies, which is formulated categorically in \cite{ww:cat}.
\begin{itemize}
\item
For any pair of Lagrangians $L_{12}\subset M_1^- \times M_2$, $L_{23}\subset M_2^- \times M_3$ with embedded geometric composition $L_{12}\circ L_{23}\subset M_1^- \times M_2$ as defined in \eqref{eq:embedded}, the 1-morphisms $L_{12}\circ_{\rm h} L_{23} = L_{12}\# L_{23} \sim L_{12}\circ L_{23}$ are isomorphic in $\Mor_{\Symp}^1(M_1,M_3)$ in the sense of Remark~\ref{rmk:2cat-quotient}:
We have $\alpha \circ_{\rm v} \beta = \id_{L_{12}\# L_{23}}$ and 
$\beta \circ_{\rm v} \alpha = \id_{L_{12}\circ L_{23}}$
for some
2-morphisms 
$\alpha\in\Mor^2_{\Symp}(L_{12}\# L_{23}, L_{12}\circ L_{23})$,
$\beta\in\Mor^2_{\Symp}(L_{12}\circ L_{23}, L_{12}\# L_{23})$.
\end{itemize}
The last item ensures that the symplectic category $\Symp^1:=\Symp^\#/\!\!\sim\,$ of Definition~\ref{def:1symp} and Example~\ref{ex:ext0} and the
quotient $|\Symp|=\Symp/\!\!\sim\,$ as in Remark~\ref{rmk:2cat-quotient} of the symplectic 2-category by isomorphisms are related by a functor
$$
\Symp^1\to |\Symp|, \qquad
M \mapsto M, \qquad  
\quo{\Mor_{\Symp^\#}}{\sim} \ni [\uL] \mapsto [\uL] \in \quo{\Mor^1_{\Symp}}{\sim}
$$
since equivalence $\sim$ in $\Mor_{\Symp^\#}$ implies isomorphism $\sim$ in $\Symp$.
This functor is full, i.e.\ surjective on morphism spaces, but it is not faithful, i.e.\ injective, since generalized Lagrangians $\uL$ in the symplectic 2-category may be Floer-theoretic isomorphic without being related by embedded geometric compositions. In fact, the difference can already be seen for simple Lagrangians $L,L'\subset M^-\times N$, which are equivalent in $\Mor_{\Symp^\#}(M,N)$ only if they are identical, but whenever $L'=\phi(L)$ is the image of $L$ under a Hamiltonian symplectomorphism $\phi:M\to M$, then standard Floer theoretic arguments show that $L\sim L'$ are isomorphic as 1-morphisms in $\Mor^1_{\Symp}(M,N)$.
\end{example}

\begin{remark}[Adjoints and quilted composition maps in $\Symp$] \label{rmk:sympquilt}\rm 
The symplectic 2-category has adjoints as follows:
\begin{itemize}
\item
For a Lagrangian\footnote{
Note that an overall sign change of the symplectic form does not affect the Lagrangian property.
} 
$L\subset M_0^-\times M_1$ the adjoint 1-morphism $L^T\subset M_1^-\times M_0$ 
is given by the image of $L$ under transposition of factors $M_0\times M_1\to M_1\times M_0$.
\item
For a general 1-morphism $\uL=(L_{01},\ldots,L_{(k-1)k})\in\Mor^1_{\Symp}(M,N)$ the adjoint is given by reversal and transposition, $\uL^T=(L_{(k-1)k}^T, \ldots, L_{01}^T)\in\Mor^1_{\Symp}(N,M)$.
\item
Duality for $\uL\in\Mor^1_{\Symp}(M,N)$ is guaranteed by the identity elements
$$
X_{\uL} := \id_{\uL} = \id_{\uL^T} \in HF(\uL\#\uL^T) ,\qquad
X_{\uL}^T := \id_{\uL} = \id_{\uL^T} \in HF(\uL^T\#\uL) 
$$
since the quilted Floer homology in \cite{ww:qhf} has canonical cyclic symmetries 
\begin{align*}
HF(\uL\#\uL^T) & = \Mor_\cC^2(1_{M}, \uL\circ_{\rm h} \uL^T) = \Mor_\cC^2(\uL,\uL)  \\
= HF(\uL^T\#\uL) & = \Mor_\cC^2(\uL^T\circ_{\rm h} \uL, 1_{N}) =  \Mor_\cC^2(\uL^T,\uL^T)   
\end{align*}
which identify these morphism spaces and their identity elements $\id_{\uL}, \id_{\uL^T}$ defined in \cite{ww:cat}, so that the required identities reduce to the compatibility of horizontal and vertical composition with identities, 
$$
\bigl(X_{\uL}\circ_{\rm h} \id_{\uL}\bigr) \circ_{\rm v} \bigl(\id_{\uL} \circ_{\rm h} X_{\uL}^T\bigr)  = \id_{\uL},
\qquad
\bigl(\id_{{\uL}^T}\circ_{\rm h}X_{\uL} \bigr) \circ_{\rm v} \bigl( X_{\uL}^T\circ_{\rm h} \id_{\uL^T}\bigr)  = \id_{\uL^T}.
$$
\end{itemize}
Moreover, $\Symp$ is a quilted 2-category whose quilted composition maps 
$$
\Phi_{\cQ\cD}:\otimes_{e\in\cE_\cQ^-} HF(\ul L_e) \to HF (\ul L_{e^+}) 
$$
are defined in \cite{ww:quilts}, which also proves the axioms for quilted cylinders and gluing of diagrams by standard Floer theoretic arguments.
However, strip and annulus shrinking -- as required in Definition~\ref{def:quilted} for a quilted 2-category -- requires the adiabatic limit analysis in \cite{ww:isom}, which may be obstructed by a novel ``codimension 0 in the boundary'' singularity -- figure eight bubbles.
\end{remark}

\begin{remark}[PDE associated to quilt diagrams in $\Symp$]  \label{rmk:quilts}\rm 
The key step in the construction \cite{ww:quilts} of quilted composition maps is to associate to every quilt diagram 
$\cQ\cD=\bigl(\cQ,(M_P)_{P\in\cP_\cQ}, (\uL_S)_{S\in\cS_\cQ}\bigr)$ an elliptic PDE as follows:
\begin{itemize}
\item
A patch $P$ labeled by a symplectic manifold $M_P$ is represented by a pseudoholomorphic map $u_P:\widehat P\to M_P$ whose domain is the oriented 2-manifold $\widehat P$ that covers the closure $\overline P\subset Q_2\less Q_0$ as in Remark~\ref{rmk:quilt}.
\item
A seam $S$ labeled by a Lagrangian submanifold $L_S\subset M_{P_S^-}^- \times M_{P_S^+}$ is represented by a Lagrangian seam condition: The map $u_{P_S^-}|_S \times u_{P_S^+}|_S:S\to M_{P_S^-}^- \times M_{P_S^+}$ induced by boundary restrictions of the pseudoholomorphic maps associated to the adjacent patches is required to take values in $L_S$.
\item 
A seam $S\simeq \R$ labeled by a sequence of Lagrangians $L_{i(i+1)}\subset M_i^-\times M_{i+1}$ which form a general 1-morphism $\uL=(L_{01},\ldots,L_{(k-1)k}\in \Mor_{\Symp}^1(M_{P_S^-}, M_{P_S^+})$, 
represents pseudoholomorphic strips $u_i:\R\times [0,1]\to M_i$ for $i=1,\ldots,k-1$ with Lagrangian seam conditions $(u_i|_{\{1\}\times\R} \times u_{i+1}|_{\{0\}\times\R})(\R)\subset L_{i (i+1)}$ and
$$
(u_{P_S^-}|_S \times u_1|_{\{0\}\times\R} ) (\R)\subset L_{01}, \qquad
(u_{k-1}|_{\{1\}\times\R} \times u_{P_S^+}|_S )(\R)\subset L_{(k-1)k} .
$$
\item 
A seam $S\simeq S^1$ labeled by a general 1-morphism $\uL$ represents pseudoholomorphic annuli $u_i:S^1\times [0,1]\to M_i$ with the analogous seam conditions.% (replacing $\R$ with $S^1$ above).
\end{itemize}
The tuple $(u_P)_{P\in\cP_\cQ}$ (together with the additional maps $(u_i)_{i=1,\ldots k-1}$ from each seam with generalized Lagrangian label) is called a {\bf pseudoholomorphic quilt}. 

This notion generalizes pseudoholomorphic maps $u:Q\to M$ -- which arise from quilt diagrams $\cQ\cD=(Q,M,\emptyset)$ that consist of a closed Riemann surface $Q_2=Q$ without seams or punctures, labeled by a symplectic manifold $M$ -- as well as pseudoholomorphic maps with Lagrangian boundary conditions 
$u:(Q,\partial Q) \to (M,L)$.
To build in boundary, we can for example represent the latter by a quilt diagram $\cQ\cD=(\cQ,M,L)$ whose quilted surface has patches $Q$ and $\overline Q$ (with reversed orientation), a seam for each boundary component of $Q$ (identified with the corresponding boundary component of $\overline Q$), labels $M$ for $Q$, $\pt$ for $\overline Q$, and $L$ for each seam.

We can moreover build in any number of punctures on boundaries, seams, or in the interior.
Note in particular that interior punctures on a patch $P$ are associated to the cyclic 2-morphism space $\Mor_{\Symp}^2(1_{M_P})= HF(\Delta_{M_P})\cong HF(M_P)$, which can be identified with the Hamiltonian Floer homology of the symplectic manifold $M_P$. 
For an introduction to Floer homology see e.g.\ \cite{Sa:lecture, seidel}. These also provide good introductions to the technique of ``counting'' (very specific) moduli spaces of PDEs to construct Floer chain complexes and chain maps between them -- whose homology and induced map on homologies are independent of choices (most notably of perturbations that are chosen to regularize the moduli spaces).
\end{remark}

\begin{remark}[Monotonicity assumptions]  \label{rmk:monotone}\rm 
Moduli spaces of pseudoholomorphic quilts -- just as moduli spaces of pseudoholomorphic curves -- are rarely compact and often do not carry a smooth structure which allows us to ``count'' or ``integrate over'' them to define the structure maps in the symplectic 2-category. 
While the ``Gromov compactification'' of these spaces (in terms of breaking of Floer trajectories and bubbling trees of pseudoholomorphic spheres and disks; see e.g.\ \cite{ms:jhol,seidel}) is well understood, the regularization of the compactified moduli spaces still remains a challenge in general settings (see \cite{MW} for a survey). In fact, bubbling gives actual obstructions to the algebraic requirements for a 2-category -- beginning with disk bubbling obstructing the definition of the 2-morphism spaces (since the Floer differential may fail to square to zero), via additional algebraic terms in the structure equations arising from disk bubbles, to figure eight bubbles obstructing the desired isomorphism $L_{12} \# L_{23} \sim L_{12} \circ L_{23}$ for embedded geometric composition.

The present state of the art is that a rigorous symplectic 2-category $\Symp^\tau$ is constructed in \cite{ww:cat} by restriction to monotone or exact symplectic manifolds and oriented Lagrangian submanifolds with minimal Maslov index $\ge 3$.
While the latter assumption is made to ensure that the Floer differential squares to zero (so that Floer homology is well defined), the monotonicity requires that the Maslov index $I(\ul u)$ and symplectic area $A(\ul u)$ of the quilted maps $\ul u=(u_P)_{P\in\cP_\cQ}$ are proportional $I(\ul u)=\tau A(\ul u)$ via a constant $\tau\ge 0$.
This helps with excluding bubbling because it relates Fredholm indices (i.e.\ expected dimension of moduli spaces) to the energy of the solutions, so that bubbling (i.e.\ loss of energy) forces loss of Fredholm index -- which in the relevant moduli spaces would yield solutions of negative expected dimension. Once these are ruled out by appropriate regularization, the bubbling can be excluded without actually constructing the compactified moduli space.
The same argument is used to exclude bubbling in the strip and annulus shrinking of \cite{ww:isom} to prove the isomorphism $L_{12}\# L_{23} \sim L_{12} \circ L_{23}$ in $\Symp^\tau$ when the latter geometric composition is embedded. 
\end{remark}

\begin{remark}[Generalized notions of symplectic 2-categories]  \label{rmk:ainfty2symp}\rm
In order to extend the construction of a symplectic 2-category to non-monotone settings, 
and more generally study the relationship between the algebraic and geometric compositions 
$L_{12}\# L_{23}$ and $L_{12} \circ L_{23}$, 
a Gromov compactification for strip and annulus shrinking -- involving multi-level trees of pseudoholomorphic disks, figure eights, and spheres -- is constructed in \cite{BW:squiggly} with the help of removable singularity results for the figure eight bubble in \cite{B:remsing}. 
By analyzing the boundary strata of the resulting compactified moduli spaces, and supported by the upcoming Fredholm theory \cite{B:Fred} for moduli spaces of figure eights, we then predict a 2-categorical structure that comprises all (compact) symplectic manifolds and Lagrangians, and in which composition of 1-morphisms is given by geometric composition of Lagrangians. It takes the form of a curved $A_\infty$ 2-category, 
and in fact motivates the definition of this new algebraic notion in \cite{BT}.
\end{remark}

We end this section by disclosing the categorical ignorance in the first publications on the symplectic 2-category in \cite{ww:cat}. While that paper painstakingly constructs a 2-functor $\Symp^\tau\to\Cat$, this directly coincides with the Yoneda construction.

\begin{lemma}\label{le:sympcat}
The functor $\Symp^\tau\to\Cat$ constructed in \cite{ww:cat} is identical to the functor $\cF_{\pt}$ given by Lemma~\ref{lem:Yoneda} with the distinguished object $x_0=\pt$.
\end{lemma}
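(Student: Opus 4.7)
The plan is to unpack both constructions on objects, 1-morphisms, and 2-morphisms of $\Symp^\tau$ and observe that they agree term by term once the basepoint $x_0=\pt$ is fixed.

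First I would compare the functors on objects. By Lemma~\ref{lem:Yoneda}, the Yoneda 2-functor assigns to $M\in\Obj_{\Symp^\tau}$ the category $\cY_{\pt}(M)=\Mor_{\Symp^\tau}(\pt,M)$, whose objects are composable chains $\uL=(L_{01},\ldots,L_{(k-1)k})$ of Lagrangians starting at $\pt$ (equivalently, a Lagrangian $L_{01}\subset M_1$, followed by Lagrangians $L_{(i-1)i}\subset M_{i-1}^-\times M_i$ ending at $M_k=M$) and whose morphism spaces are the quilted Floer homology groups $HF(\uL,\uL')$ with vertical composition given by the pair-of-pants quilt. This is exactly the Donaldson-Fukaya-type category $\cF_\pt(M)$ associated to $M$ in \cite{ww:cat}: both the objects and the Floer-theoretic morphism spaces with their composition were defined there by the same formulas.

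Next I would compare on 1-morphisms. Given $\uL_0\in\Mor^1_{\Symp^\tau}(M,N)$, the Yoneda construction assigns to it the functor $\cY_{\pt}(\uL_0):\cY_{\pt}(M)\to \cY_{\pt}(N)$ acting on objects as $\uL\mapsto \uL\circ_{\rm h}\uL_0 = \uL\#\uL_0$ and on morphisms as $\alpha\mapsto \alpha \circ_{\rm h}\id_{\uL_0}$, where the horizontal composition of 2-morphisms is the one built in Example~\ref{ex:2symp} from pseudoholomorphic quilts. The functor constructed in \cite{ww:cat} assigns to $\uL_0$ the same ``concatenation with $\uL_0$'' functor on objects, and on morphisms it is defined by the quilted composition map obtained from the string diagram representing $\alpha\circ_{\rm h}\id_{\uL_0}$; hence the two maps on morphism spaces agree by definition.

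Finally I would compare on 2-morphisms. A 2-morphism $\beta\in\Mor^2_{\Symp^\tau}(\uL_0,\uL_0')=HF(\uL_0,\uL_0')$ is sent by $\cY_{\pt}$ to the natural transformation whose component at $\uL$ is $\id_{\uL}\circ_{\rm h}\beta\in HF(\uL\#\uL_0,\uL\#\uL_0')$. The construction in \cite{ww:cat} sends $\beta$ to the natural transformation with exactly the same components, since it is defined precisely as horizontal composition with the identity 2-morphism on $\uL$. Naturality in both cases is the same compatibility between $\circ_{\rm v}$ and $\circ_{\rm h}$ proven in Lemma~\ref{lem:Yoneda}.

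The only real content of the proof is therefore the observation that the horizontal compositions $\circ_{\rm h}^1$ and $\circ_{\rm h}^2$ used throughout Example~\ref{ex:2symp} coincide with the ad hoc formulas used in \cite{ww:cat} to define the Donaldson-Fukaya category, the action of Lagrangian correspondences on it, and the induced natural transformations. Once these identifications are made, compatibility with identities and compositions, and preservation of adjoints, are automatic from the fact that $\cY_{\pt}$ is a 2-functor. The main (notational) obstacle will be matching the quilt-theoretic definitions in \cite{ww:cat} with the abstract categorical operations $\circ_{\rm v}$, $\circ_{\rm h}^1$, $\circ_{\rm h}^2$ of the present symplectic 2-category --- a verification which is essentially tautological once both definitions are written in terms of the underlying pseudoholomorphic quilts.
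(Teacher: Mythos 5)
Your proposal is correct and takes essentially the only available route — a direct unpacking of both constructions on objects, $1$-morphisms, and $2$-morphisms. The paper does not actually supply a proof for this lemma: it is preceded by the remark that the painstaking construction in \cite{ww:cat} ``directly coincides with the Yoneda construction'' and then stated without further justification, so your careful component-by-component comparison is exactly what a written-out proof would consist of, with the conclusion being, as you correctly identify, essentially tautological once the quilt-theoretic definitions of $\circ_{\rm h}$, $\circ_{\rm v}$ in \cite{ww:cat} and Example~\ref{ex:2symp} are matched up.
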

%maybe TODO  proof 

Similarly, \cite{ww:qhf} proves isomorphisms between quilted Floer homology groups for cyclic 1-morphisms in $\Symp$ which are related by a geometric composition by arguing that the adiabatic limit analysis in \cite{ww:isom} transfers directly. In the 2-categorical setup with adjoints, this can now be proven more directly by the categorical ``local to global'' argument of Remark~\ref{rmk:localtoglobal}.

\begin{remark}[Isomorphisms of Floer homology under geometric composition] \rm\label{rmk:localtoglobal-Lag}
The ``local to global'' principle discussed in \S\ref{ss:af} and Remark~\ref{rmk:localtoglobal} translates to the quilted Floer homology groups via identifications
$$
HF\bigl( \uL=(L_{i(i+1)})_{i\in\Z_N} \bigr) = \Mor^2_{\Symp} \bigl( \uL=(L_{i(i+1)})_{i\in\Z_N} \bigr) . 
$$
So for purely algebraic reasons (which are interpreted geometrically in Remark~\ref{rmk:LLBS}), we obtain the implications 
\begin{align*}
L_{12} \# L_{23} \sim L_{12} \circ L_{23}  \quad&\Longrightarrow\quad 
( \ldots , L_{12} \# L_{23},  \ldots ) \sim ( \ldots L_{12} \circ L_{23}  \ldots )  \\
&\Longrightarrow\quad 
HF( \ldots , L_{12} , L_{23},  \ldots ) \simeq HF( \ldots L_{12} \circ L_{23}  \ldots )  .
\end{align*}
Thus to prove that quilted Floer homology is invariant (up to isomorphism) under embedded geometric composition, it suffices to prove that any embedded geometric composition $L_{12} \circ L_{23}$ as defined in \eqref{eq:embedded} gives rise to an isomorphism $L_{12} \# L_{23} \sim {L_{12} \circ L_{23}}=: L_{13}$ between algebraic and geometric compositions.
Such local isomorphisms require the construction of quilted Floer homology classes 
\begin{align*}
\alpha &\in \Mor^2_{\Symp}(L_{12} \# L_{23} , L_{12} \circ L_{23}) = HF(L_{12}, L_{23}, L_{13}^T),  \\
\alpha^{-1} &\in \Mor^2_{\Symp}(L_{12} \circ L_{23} , L_{12} \# L_{23}) = HF(L_{13}, L_{23}^T, L_{12}^T) 
\end{align*}
that satisfy 
$$
\alpha\circ_{\rm v} \alpha^{-1} = \id_{L_{12} \# L_{23}}, \qquad
\alpha^{-1}\circ_{\rm v} \alpha = \id_{L_{13}} .
$$ 
To find such classes suppose that we have isomorphisms of the ``local'' quilted Floer homologies under embedded geometric composition (with $L_{23}^T\circ L_{12}^T  = (L_{12}\circ L_{23})^T$), 
\begin{align*}
HF(L_{12}, L_{23}, L_{13}^T) &\overset{\sim}{\to} HF(L_{12}\circ L_{23}, L_{13}^T) = HF(L_{13}, L_{13})= \Mor^2_{\Symp}(L_{13},L_{13}) , \\ 
HF(L_{13}, L_{23}^T, L_{12}^T) &\overset{\sim}{\to}  HF(L_{13}, L_{23}^T\circ L_{12}^T ) = HF(L_{13}, L_{13})= \Mor^2_{\Symp}(L_{13},L_{13}), \\
HF(L_{12}, L_{23}, L_{13}^T) &\overset{\sim}{\to} HF(L_{12},  L_{23}, L_{23}^T, L_{12}^T) = \Mor^2_{\Symp}(L_{12}\#L_{23},L_{12}\#L_{23}) , \\ 
HF(L_{13}, L_{23}^T, L_{12}^T) &\overset{\sim}{\to}  HF(L_{12}, L_{23}, L_{23}^T , L_{12}^T )  = \Mor^2_{\Symp}(L_{12}\#L_{23},L_{12}\#L_{23}) ,
\end{align*}
and suppose that these isomorphisms are compatible with identities and products.
Then we may pull back the identities  $\id_{L_{13}}\in \Mor^2_{\Symp}(L_{13},L_{13})$ and 
$\id_{L_{12}\#L_{23}}\in Mor^2_{\Symp}(L_{12}\#L_{23},L_{12}\#L_{23})$ to obtain two well defined 
classes $\alpha,\alpha^{-1}$ as required,
$$
\alpha\circ_{\rm v} \alpha^{-1} =  \id_{L_{12} \# L_{23}} \circ_{\rm v}  \id_{L_{12} \# L_{23}} =  \id_{L_{12} \# L_{23}}, \qquad
\alpha^{-1}\circ_{\rm v} \alpha = \id_{L_{13}} \circ_{\rm v}  \id_{L_{13}} =  \id_{L_{13}}.
$$
\end{remark}

Finally, we will clarify some confusions regarding the generality and possible obstructions to the isomorphism of Floer homology under geometric composition. 

\begin{remark}[Genearlized Floer isomorphisms under geometric composition]  \rm\label{rmk:LLBS}
The isomorphism $L_{12} \# L_{23} \sim {L_{12} \circ L_{23}}$ should generalize directly to exact noncompact settings as long as the Lagrangians have a conical structure near infinity that allows one to use maximum principles to guarantee compactness. An application to the construction of a Floer field theory that extends the link invariants \cite{SS} was proposed in \cite{reza} but unfortunately seems to be lacking this conical structure. 

On the other hand, extensions of this isomorphism to negative monotone settings announced in \cite{ll}  overlooked obstructions arising from Morse-Bott trajectories.\footnote{
The published version of \cite{ll} erroneously claims in Lemmas 11, 14 that ``bubbling at the Morse-Bott end'' is captured in topologically in terms of a pair of disks with boundary on $L_{01}$ and $L_{12}$. 
This is generally false, with the first mistake being that a resolution of the $L_{02}=L_{01}\circ L_{12}$ seam yields a quilt with seam conditions in the order $(L_{01},L_{12},L_{12}^T,L_{01}^T)$ instead of $(L_{01},L_{12},L_{01},L_{12})$ which in fact is nonsensical unless $M_0=M_2$. Second, one should note that folding of the quilt indeed yields a strip in ``$\underline M = M_0\times M_1\times M_1\times M_2$'' with both boundary conditions given by Lagrangian embeddings of $L_{01}\times L_{12}$, but the correct symplectic structure on $\underline M$ is $(\omega_0,-\omega_1,-\omega_1,\omega_2)$, and the two embeddings differ by a permutation of the two $M_1$ factors. 
Finally, even if torsion assumptions would allow to deform a Morse-Bott trajectory of positive symplectic area into a sum of disk classes, these are generally no longer pseudoholomorphic. Now one should note that $a+b>0\not\Rightarrow a,b>0$, so that even in the torsion case it does not suffice to exclude disks of positive area. 

Thus the published arguments in \cite{ll} are insufficient to exclude breaking at the Morse-Bott end in any case other than exactness, and its Theorem 3 -- the isomorphism in the new case of negative monotonicity -- is in the corrigendum only claimed under the additional assumption of ``absence of quantum contributions to the Morse-Bott differential'' as discussed below.
}
In fact, these obstructions are homotopically identical to the figure eight bubbles conjectured in \cite{ww:isom} and established in \cite{BW:squiggly}, so that true generalizations of this isomorphism are expected only from the compactification and Fredholm theory for figure eight bubbles in \cite{B:remsing,B:Fred,BW:squiggly}, towards capturing the obstructions algebraically. 
It is however worthwhile to discuss the approach by Matthias Schwarz which \cite{ll} attempted to implement: It is a geometric version of the ``local to global'' approach in Remark~\ref{rmk:localtoglobal-Lag}, which aims for an explicit construction of a direct homomorphism 
\begin{equation}\label{eq:schwarz}
HF( \ldots , L_{12} , L_{23},  \ldots ) \to HF( \ldots , L_{13} ,  \ldots )
\end{equation}
from a relative quilt invariant with canonical asymptotics at a Morse-Bott end for the cyclic 1-morphism $(L_{12} , L_{23}, L_{13}^T)$. 
This corresponds to a map $\beta \mapsto \Phi_{\cQ\cD}(\beta,\alpha)$ given by plugging a canonical element $\alpha$ into a quilted composition map 
$$
\Phi_{\cQ\cD}: HF( \ldots , L_{12} , L_{23},  \ldots ) \otimes HF( L_{12} , L_{23}, L_{13}^T) 
\to HF( \ldots , L_{13} , \ldots ) 
$$
that arises from the quilt diagram in Figure~\ref{fig:quiltdiagramforHFiso}.
\begin{figure}[!h]
\centering
\includegraphics[width=5.5in]{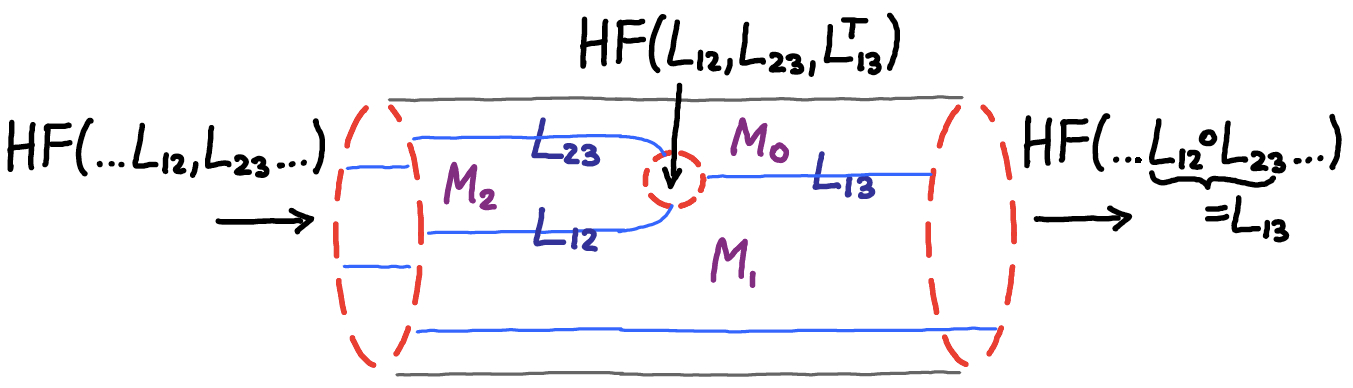}
\caption{
The isomorphism $HF( \ldots , L_{12} , L_{23},  \ldots ) \simeq HF( \ldots L_{12} \circ L_{23}  \ldots )$ in Remark~\ref{rmk:localtoglobal-Lag} arises from a quilted Floer homology class $\alpha \in HF(L_{12}, L_{23} , L_{13}^T)$ via the quilted composition map induced by the above quilt diagram.
}
\label{fig:quiltdiagramforHFiso}
\end{figure}

Using the classes $\alpha,\alpha^{-1}$ from Remark~\ref{rmk:localtoglobal-Lag}, we obtain an inverse $\gamma \mapsto \Phi_{\cQ\cD'}(\gamma,\alpha^{-1})$ to \eqref{eq:schwarz} from 
$$
\Phi_{\cQ\cD'}: HF( \ldots , L_{13},  \ldots ) \otimes HF( L_{13} , L_{23}^T, L_{12}^T) 
\to HF( \ldots ,L_{12} , L_{23} ,   \ldots ) ,
$$
a quilted composition map arising from another quilt diagram $\cQ\cD'$ that is obtained by reflecting $\cQ\cD$.
Indeed, $\cQ\cD,\cQ\cD'$ glue -- in two orders, one of which is shown in Figure~\ref{fig:quiltdiagram-calculation} -- to diagrams 
which also correspond to the gluing of quilt diagrams $\cQ\cD'', \cQ\cD'''$ with the string diagram for $\circ_{\rm v}$. Moreover, if in the latter gluings we replace the $\circ_{\rm v}$ diagram with the string diagram for the corresponding identity, then the glued diagram is a quilted cylinder.
\begin{figure}[!h]
\centering
\includegraphics[width=5.5in]{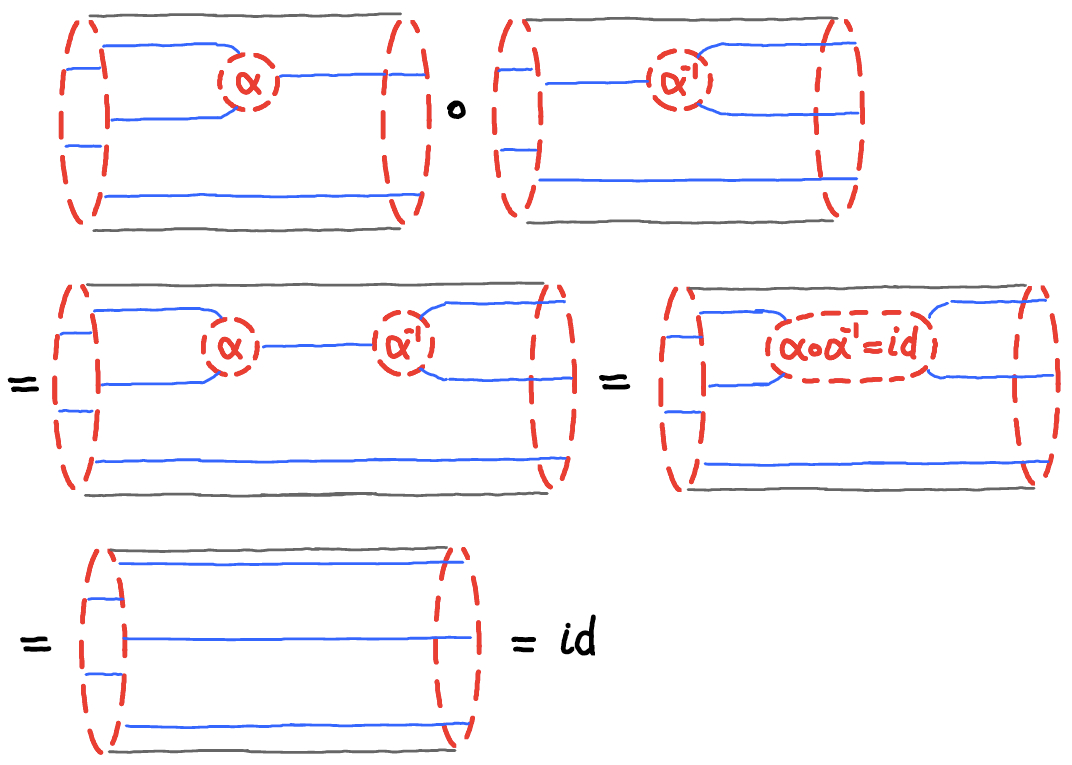}
\caption{
The identity 
$\Phi_{\cQ\cD'}( \Phi_{\cQ\cD}(\beta,\alpha),\alpha^{-1})=\beta$ follows from applying the gluing, deformation, and cylinder axioms for the quilted composition maps and the identity 
$\alpha\circ_{\rm v} \alpha^{-1} =  \id_{L_{12} \# L_{23}} $.
}
\label{fig:quiltdiagram-calculation}
\end{figure}
Now the gluing and cylinder axioms imply
\begin{align*}
\Phi_{\cQ\cD'}\bigl( \Phi_{\cQ\cD}(\beta,\alpha) , \alpha^{-1} \bigr) &= 
 \Phi_{\cQ\cD''}(\beta,\alpha \circ_{\rm v} \alpha^{-1} ) =  \Phi_{\cQ\cD''}(\beta, \id_{L_{12} \# L_{23}} ) = \beta , \\
 \Phi_{\cQ\cD}\bigl( \Phi_{\cQ\cD'}(\gamma,\alpha^{-1}) , \alpha \bigr) &= 
 \Phi_{\cQ\cD'''}(\gamma,\alpha^{-1} \circ_{\rm v} \alpha ) =  \Phi_{\cQ\cD'''}(\beta, \id_{L_{13}} ) = \gamma ,
\end{align*}
which proves that \eqref{eq:schwarz} is an isomorphism.
The Morse-Bott end amounts to an implicit construction of $\alpha:=[L_{13}]\in HF( L_{12} , L_{23}, L_{13}^T)$ from the fundamental class of $L_{13}$ and an identification of the chain groups (but not the differentials) which yield the Floer homology resp.\ the Morse homology of the Lagrangian intersection,
$$
CF( L_{12} , L_{23}, L_{13}^T) \simeq CM(\cap ( L_{12} , L_{23}, L_{13}^T) )  \simeq  CM (L_{13}) .
$$
The resulting Morse homology $HM (L_{13})$ is isomorphic to singular homology of $L_{13}$ since the Lagrangian intersection is diffeomorphic to $L_{13}= L_{12}\circ L_{23}$,
\begin{align*}
\cap ( L_{12} , L_{23}, L_{13}^T) &\;=\; ( L_{12}\times L_{23} \times L_{13}^T ) \cap (\Delta_{M_1}\times \Delta_{M_2}\times \Delta_{M_3})^T  \\
&\;\cong\; ( ( L_{12}\circ L_{23}) \times L_{13}^T ) \cap (\Delta_{M_1}\times \Delta_{M_3})^T  \;\cong\; L_{13} .
\end{align*}
Schwarz proposed to prove that \eqref{eq:schwarz} is an isomorphism by arguing that it has ``upper triangular form'', but also observed that the crucial step is to construct a chain map in the first place that induces \eqref{eq:schwarz}, which amounts to showing that $\alpha=[L_{13}]$ lies in the kernel of the Floer differential.
The only cases beyond the monotone case covered in \cite{ww:isom} in which this is claimed to be known at this point (after correction of \cite{ll}) are those in which not just the generators but also the differentials of the chain complexes 
$CF(L_{13}, L_{23}^T, L_{12}^T)$,
$CF(L_{12},  L_{23}, L_{23}^T, L_{12}^T)$,
$CF(L_{12},  L_{23}, L_{13}^T)$, 
$CF(L_{13}, L_{13})$
all agree with the Morse chain complex $CM(L_{13})$. 
In other words, we assume absence of quantum contributions to the Floer-Bott differential.\footnote{
In the corrigendum to \cite{ll}, this assumption is misleadingly labeled ``additional monotonicity''. 
While (exact / positive / negative) monotonicity assumptions for the relevant quilted Floer cylinders also had to be added, the crucial extra assumption in the negative monotone case is that solutions of positive energy (i.e.\ possible quantum differentials) have sufficiently negative Fredholm index -- exactly such that their occurrence in the Floer-Bott differential can be excluded by transversality. 
}
On the one hand, this allows one to define $\alpha:=[L_{13}]\in HF( L_{12} , L_{23}, L_{13}^T)$ and thus obtain a homomorphism \eqref{eq:schwarz}. On the other hand, this also completes the two previous algebraic arguments for the isomorphism in simple ways that require neither \cite{ww:isom} nor \cite{ll}.
In Remark~\ref{rmk:localtoglobal-Lag}, an absence of quantum differentials yields the required identification (compatible with composition and identities) of Floer homologies (in a Morse-Bott setup in which only Morse trajectories contribute)
$HF(L_{13}, L_{23}^T, L_{12}^T)\simeq 
HF(L_{12},  L_{23}, L_{23}^T, L_{12}^T)
\simeq HF(L_{12},  L_{23}, L_{13}^T)
\simeq HF(L_{13}, L_{13})$.
In the above construction of a direct homomorphism \eqref{eq:schwarz}, the absence of quantum differentials yields $\alpha$ (as above) and $\alpha^{-1}:=[L_{13}]\in HF(L_{13}, L_{23}^T, L_{12}^T)$ so that 
$\alpha\circ_{\rm v} \alpha^{-1} = [L_{13}] \cap [L_{13}] = [L_{13}]= \id_{L_{12} \# L_{23}}$ and
$\alpha^{-1}\circ_{\rm v} \alpha = [L_{13}] \cap [L_{13}] = [L_{13}]=  \id_{L_{13}}$.

The bottom line is that we do not get around proving -- implicitly or explicitly -- the isomorphism $L_{12} \# L_{23} \sim L_{12} \circ L_{23}$ as 1-morphisms in the symplectic 2-category.
\end{remark}

\subsection{Gauge theoretic 2-categories and quilted Atiyah Floer conjectures} \label{ss:gauge2}

This section takes the quilt diagram approach in the construction of the symplectic 2-category and applies it to the gauge theoretic ASD Yang-Mills PDE to obtain proposals for various 2-categories which mix gauge theoretic, symplectic, and topological data.
On the one hand, this categorical framework allows us to rigorously apply the abstract "local to global" approach of Remark~\ref{rmk:localtoglobal} to Atiyah-Floer type conjectures, as already sketched in Remark~\ref{rmk:SO3}; also see \eqref{eq:AFinfdim}. On the other hand, it yields various approaches to constructing 2+1+1 Floer-type field theories $\Bor^{\rm conn}_{2+1+1}\to\Cat$, which in turn leads us to formulate quilted Atiyah-Floer conjectures relating them.

Throughout, we fix a compact Lie group G and should also fix bundle types via characteristic classes. Ideally, this would avoid reducible connections as in the case of nontrivial $SO(3)$-bundles over 3-manifolds. However, this cannot generally be achieved in a coherent fashion when manifolds are decomposed to yield a field theory.
Moreover, the Donaldson invariants of 4-manifolds \cite{Don:inv,DK} -- defined for $G=SU(2)$ or $G=SO(3)$ -- successfully deal with reducibles by encoding them as ends of the ASD moduli spaces, which yields a polynomial structure.
On the other hand, instanton Floer homology for 3-manifolds \cite{Floer:inst} is currently only constructed in the absence of reducibles (using trivial $SU(2)$-bundles over homology 3-spheres, or nontrivial $SO(3)$-bundles), and thought to require an equivariant theory to deal with reducibles. For the following we will assume that such theories can be constructed from the same ASD moduli spaces.
Then the 3+1 field theory outlined by Donaldson \cite{Don:book} for 4-cobordisms between appropriate 3-manifolds should have a refinement to 2+1+1 dimensions which can be cast as the following 2-category.

\begin{example} \label{ex:Donaldson} \rm 
The {\bf Donaldson 2+1 bordism 2-category ${\rm\mathbf{DBor}}$} should consist of:
\begin{itemize}
\item
Objects in $\Obj_{{\rm DBor}}:=\Obj_{\Bor_{2+1+1}}$ are closed oriented surfaces $\Sigma$.
\item
1-morphisms in $\Mor^1_{{\rm DBor}}(\Sigma_+,\Sigma_-):=\Mor^1_{\Bor_{2+1+1}}(\Sigma_+,\Sigma_-)$ are 3-cobordism $Y$ with boundary collars $\iota^\pm:[0,1]\times\Sigma_\pm\to Y$ as in Example~\ref{ex:1epsbor}.
\item
Horizontal 1-composition is gluing
$Y_{01} \circ_{\rm h} Y_{12} :=\bigl(Y_{01}\sqcup Y_{12}\bigr)/ {\iota_{01}^+(s,x)\sim \iota^-_{12}(s,x)}$
as in Example~\ref{ex:1epsbor}.
\item
2-morphisms in $\Mor_{\rm Dbor}^2(Y,Y'):= HF_{\rm inst}(\#(Y,Y'))$ for $Y,Y'\in\Mor^1_{{\rm DBor}}(\Sigma_+,\Sigma_-)$ are instanton Floer homology classes constructed analogous to \cite{Floer:inst,Don:book} on the closed 3-manifold $\#(Y,Y'):= \bigl(Y^-\sqcup Y'\bigr)/ {\iota_Y^\pm\sim \iota^\pm_{Y'}}$ obtained by reversing the orientation of $Y$ and gluing at both incoming and outgoing boundaries.
\item
Vertical and horizontal composition of 2-morphisms and $\id_{Y}\in\Mor_{\Symp_G}^2(Y,Y)$
arise from the ASD moduli spaces representing the associated string diagrams; see below.
\end{itemize}
Here we associate to every quilt diagram 
$\cQ\cD=\bigl(\cQ,(\Sigma_P)_{P\in\cP_\cQ}, (Y_S)_{S\in\cS_\cQ}\bigr)$ an elliptic PDE as follows:
\begin{itemize}
\item
A patch $P$ labeled by a surface $\Sigma$ is represented by a connection $\Theta_P\in\cA(\widehat P\times\Sigma)$ satisfying the ASD equation $F_{\Theta_P}+*F_{\Theta_P}=0$.
Here $\widehat P$ is an oriented 2-manifold that covers the closure $\overline P\subset Q_2\less Q_0$ as in Remark~\ref{rmk:quilt}.
\item
A seam $S$ labeled by a 3-cobordism $Y_S\in\Bor_{2+1}(\Sigma_{P_S^-},\Sigma_{P_S^+})$ is represented 
by a connection $\Theta_S\in\cA(S\times Y)$ satisfying the ASD equation $F_{\Theta_S}+*F_{\Theta_S}=0$
and diagonal seam condition: The restrictions of the connections $\Theta_{P_S^-}, \Theta_{P_S^+}$ for the adjacent patches $P_S^\pm \in\cP_{\cQ'}$ to the boundary slices $\{s\}\times \Sigma_{P_S^\pm}$ for $s\in S$ are required to coincide with $\Theta_S|_{\{s\}\times \partial Y}$ over $\partial Y = \Sigma_{P_S^-}^- \sqcup \Sigma_{P_S^+}$ .
\end{itemize}
Up to challenges with reducibles, this should yield a quilted 2-category $\rm DBor$ with adjunction given by orientation reversal as in Lemma~\ref{lem:borquilt}.
Note in particular that the matching conditions for the connections at the seams, after applying an appropriate gauge transformation, simply become a smooth extension on the glued 4-manifolds. Thus the moduli space constructed here is the moduli space of ASD connections on the 4-manifold constructed in Lemma~\ref{lem:borquilt} from the quilt diagram $\bigl(\cQ,(\Sigma_P)_{P\in\cP_\cQ}, (Y_S)_{S\in\cS_\cQ}\bigr)$ in $\Bor_{2+1+1}$.
We may replace the boundary and corners of this 4-manifold at each end $e\in\cE^\pm_\cQ$ by a cylindrical end over the 3-manifold $\#(\ul Y_e):= \bigl(\bigsqcup_{i\in\Z_{N_e}} Y_{S_i}\bigr)/{\io_{Y_{S_i}}^+\sim \io_{Y_{S_{i+1}}}^-}$ that is obtained by gluing the components of the cyclic 1-morphism $\underline{Y}_e=( Y_{S_i} )_{i\in\Z_{N_e}}$ associated to this end. 
Then the quilted composition map should be given by the
relative Donaldson invariant for the resulting 4-manifold with cylindrical ends, 
$$
\Phi_{\cQ\cD}:\otimes_{e\in\cE_\cQ^-} \Mor^2_\cC(\ul Y_e)=HF(\#(\ul Y_e)) \to HF(\#(\ul Y_{e^+})) = \Mor^2_\cC(\ul Y_{e^+}) .
$$
\end{example}

Next, we build an analogous target 2-category for the infinite dimensional Floer field theory outlined in Example~\ref{ex:infdim}. In order to obtain differentiable structures this requires the choice of an integrability constant $p>2$.

\begin{example} \label{ex:inst2cat} \rm 
The {\bf symplectic instanton 2-category ${\rm\mathbf{SIn}}$} should consist of:
\begin{itemize}
\item
Objects are closed, oriented surfaces $\Sigma$ -- thought to represent the symplectic Banach space 
$\cA(\Sigma)=L^p(\Sigma,{\rm T}^*\Sigma\otimes\cg)$ of trivial G-connections on $\Sigma$.
\item
1-morphisms $\underline{\cL}=(\cL_{01},\ldots,\cL_{(k-1)k})\in\Mor^1_{\rm SIn}(\Sigma,\Sigma')$ are chains of Lagrangian submanifolds $\cL_{ij}\subset \cA(\Sigma_i)^-\times\cA(\Sigma_j) = \cA(\Sigma_i^-\sqcup\Sigma_j)$ in the symplectic spaces of connections over a chain of surfaces $\Sigma=\Sigma_0,\Sigma_1,\ldots,\Sigma_k=\Sigma'$, which are gauge invariant, $\cG(\Sigma_i^-\sqcup\Sigma_j)^*\cL_{ij} = \cL_{ij}$.
\item
Horizontal composition of 1-morphisms
$\underline{\cL}\circ_{\rm h}\underline{\cL}' := \underline{\cL} \# \underline{\cL}'$
is defined by concatenation
$(\ldots,\cL_{(k-1)k}) \# (\cL'_{01},\ldots) := (\ldots,\cL_{(k-1)k},\cL'_{01},\ldots)$, 
with identities $1_\Sigma= (\;) \in\Mor_{\rm SIn}^1(\Sigma,\Sigma)$ for $\circ_{\rm h}$ given by 
trivial chains.
\item
2-morphisms in $\Mor_{\rm SIn}^2(\underline{\cL},\underline{\cL}'):= HF_{\rm inst}(\underline{\cL},\underline{\cL}')$  are the elements of quilted instanton Floer homology groups \cite{SW} outlined below.
\item
Vertical and horizontal composition of 2-morphisms and $\id_{\underline{\cL}}\in\Mor_{\rm SIn}^2(\underline{\cL},\underline{\cL})$
arise from ASD quilts representing the associated string diagrams; see below.
\end{itemize}
Up to challenges with reducibles, this approach should yield a quilted 2-category with adjunction given by transposition analogous to Remark~\ref{rmk:sympquilt}.
As in Remark~\ref{rmk:quilts} the key step is to associate elliptic PDEs to quilt diagrams 
$\cQ\cD=\bigl(\cQ,(\Sigma_P)_{P\in\cP_\cQ}, (\underline{\cL}_S)_{S\in\cS_\cQ}\bigr)$.
For that purpose we first replace any seam labeled by a sequence of Lagrangians ${\cL_{i(i+1)}\subset \cA(\Sigma_i)^- \times \cA(\Sigma_{i+1})}$ with strips resp.\ annuli labeled by the $\Sigma_i$ and seams between them labeled by the simple Lagrangians $\cL_{i(i+1)}$. 
Then the new quilt diagram $\cQ\cD'=(\cQ',\ldots )$ determines a moduli space of {\bf ASD quilts} $(\Theta_P)_{P\in\cP_{\cQ'}}$, which satisfy the following PDE:
\begin{itemize}
\item
A patch $P$ labeled by a surface $\Sigma$ is represented by a connection $\Theta_P\in\cA(\widehat P\times\Sigma)$ satisfying the ASD equation $F_{\Theta_P}+*F_{\Theta_P}=0$.
\item
A seam $S$ labeled by a Lagrangian submanifold $\cL_S\subset \cA(\Sigma_{P_S^-})^- \times \cA(\Sigma_{P_S^+})$ is represented by a Lagrangian seam condition: The restrictions of the connections $\Theta_{P_S^-}, \Theta_{P_S^+}$ for the adjacent patches $P_S^\pm \in\cP_{\cQ'}$ to the boundary slices $\{s\}\times \Sigma_{P_S^\pm}$ for $s\in S$ induce connections $(\Theta_{P_S^-}, \Theta_{P_S^+})_s \in \cA(\Sigma_{P_S^-})^- \times \cA(\Sigma_{P_S^+})$, which are required to lie in $\cL_S$.
\end{itemize}
These moduli spaces can be given compactifications and Fredholm descriptions by the nonlinear elliptic analysis for ASD connections with Lagrangian boundary conditions that is developed in \cite{W:ell1, W:ell2} for 4-manifolds with boundary space-time splitting such as $\bigsqcup_{P\in\cP_{\cQ'}} P\times \Sigma_P$. They should hence induce quilted composition maps
$$
\Phi_{\cQ\cD}:\otimes_{e\in\cE_\cQ^-} \Mor^2_{\rm SIn}(\ul \cL_e) \to \Mor^2_{\rm SIn}(\ul \cL_{e^+}) , 
$$
where the cyclic 2-morphism spaces $\Mor^2_{\rm SIn}(\ul\cL)= HF_{\rm inst}(\ul\cL)$ for cyclic 1-morphisms $\ul \cL =(\cL_{i(i+1)})_{i\in\Z_N}$ are defined to be the quilted instanton Floer homology. The latter is the homology of a Floer complex whose differential arises from moduli spaces of ASD quilts on a quilted cylinder with seam conditions in the $\cL_{i(i+1)}$ (modulo an overall $\R$-shift).
This also defines the usual 2-morphisms of pairs of 1-morphisms $\ul\cL=( \cL_{(i-1)i})_{i=1\ldots k}, \ul\cL'=( \cL'_{(i-1)i})_{i=1\ldots k'}\in \Mor^1_{\rm SIn}(\Sigma,\Sigma')$,
$$
\Mor^2_{\rm SIn}(\ul\cL,\ul\cL') := HF_{\rm inst}(\underline{\cL},\underline{\cL}')
:= HF_{\rm inst}( \#(\underline{\cL},\underline{\cL}')),
$$ 
by concatenation to a cyclic 1-morphism indexed by $\Z_{k+k'}$
$$
\#(\underline{\cL},\underline{\cL}'):= \bigl( \cL_{(k-1)k}^T, \ldots,\cL_{01}^T , \cL'_{01},\ldots, \cL'_{(k'-1)k'} \bigr) .
$$ 
A first case of instanton Floer theory with Lagrangian boundary conditions is developed in \cite{SW} to construct a Floer homology $HF_{\rm inst}(Y,\cL)$ for pairs of a 3-manifold $Y$ with boundary and a Lagrangian $\cL\subset\cA(\partial Y)$, using ASD connections $\Theta\in\cA(\R\times Y)$ with boundary conditions $\Theta|_{\{s\}\times\partial Y}\in\cL\;\forall s\in\R$.
It requires an exclusion of nontrivial reducible connections, as is guaranteed for pairs $(Y,\cL_H)$ when $Y\cup_\Sigma H$ is a homology 3-sphere (i.e.\ has the same homology with integer coefficients). For more general pairs, an equivariant Floer homology would be required to deal with the reducible flat connections on $Y$ with boundary restriction in $\cL$. 
Apart from dealing with the reducibles, the quilted setup above with $\cL_{i(i+1)}\subset\cA(\Sigma_i)^-\times\cA(\Sigma_{i+1})$ can be reformulated as the instanton Floer theory with Lagrangian boundary conditions 
$HF_{\rm inst}(\ul\cL) = HF_{\rm inst}\bigl( \sqcup_{i=0}^{N-1} [0,1]\times \Sigma_i , (\cL_{i(i+1)})_{i\in\Z_N} \bigr)$.

For the quilted Atiyah-Floer Conjectures \ref{con:qAF} we will restrict to the topologically generated part of this 2-category: 
The {\bf symplectic instanton 2+1 bordism 2-category ${\rm\mathbf{SIn}}_{\mathbf{2+1}}$} is given as above with the restriction that the gauge invariant Lagrangian submanifolds $\cL_{ij}=\cL(Y_{ij})\subset \cA(\Sigma_i)^-\times\cA(\Sigma_j) = \cA(\Sigma_i^-\sqcup\Sigma_j)$ must be those associated in Example~\ref{ex:infdim} to handle attachments $Y_{ij}\in\Mor^1_{\Bor_{2+1+1}}(\Sigma_i,\Sigma_j)$.
\end{example}

The analogous 2-category associated to the finite dimensional Floer field theory arising from G-representation spaces as outlined in Example~\ref{ex:rep} is a restriction of the symplectic 2-category as follows.

\begin{example} \label{ex:21bord} \rm
The {\bf symplectic 2+1 bordism 2-category of G-representations}, ${\rm\mathbf{Symp}}^{\mathbf G}_{\mathbf{2+1}}$, is given by the topologically generated part of $\Symp$ in Example~\ref{ex:2symp}, as follows.
\begin{itemize}
\item
Objects are the symplectic representation spaces $M_\Sigma$ of Example~\ref{ex:rep}.
\item
1-morphisms $\uL=(L_{01},\ldots,L_{(k-1)k})\in\Mor^1_{\Symp^G_{2+1}}(M_\Sigma,M_{\Sigma'})$ are chains of Lagrangian submanifolds $L_{ij}=L_{Y_{ij}}\subset M_{\Sigma_i}^-\times M_{\Sigma_j}$ which were associated in Example~\ref{ex:rep} to handle attachments $Y_{ij}\in\Mor^1_{\Bor_{2+1+1}}(\Sigma_i,\Sigma_j)$.
\item
Horizontal composition of 1-morphisms $\uL\circ_{\rm h}\uL' := \uL \# \uL'$ is concatenation as in $\Symp$, 
with identities $1_{M_\Sigma}= (\;) \in\Mor_{\Symp^G_{2+1}}^1(M_\Sigma,M_\Sigma)$ given by 
trivial chains.
\item
2-morphism spaces $\Mor_{\Symp^G_{2+1}}^2(\uL,\uL'):= HF(\uL,\uL')$ are quilted Floer homology.
\item
Vertical and horizontal composition of 2-morphisms and $\id_{\uL}\in\Mor_{\Symp^G_{2+1}}^2(\uL,\uL)$
arise from pseudoholomorphic quilts representing the associated string diagrams as in Remark~\ref{rmk:quilts}.
\end{itemize}
Here the challenge of reducibles is more severe since it leads to singular symplectic spaces $M_\Sigma$. However, working for example with nontrivial bundles as in Remark~\ref{rmk:rigrep} yields a quilted 2-category with adjunction given by transposition as in Remark~\ref{rmk:sympquilt} -- as a subcategory of the monotone symplectic 2-category $\Symp^\tau$.
\end{example}

Along with these three gauge theoretic 2-categories $\cC= \rm DBor, \rm SIn_{2+1}, \Symp^G_{2+1}$ we have
three proposals for Floer field theories via functors $\Bor_{2+1}\to \cC$, to the 1-category level of one of these 2-categories:
\begin{itemize}
\item $\Bor_{2+1}\to\Symp^G_{2+1}$ is determined by the G-representation spaces $\Sigma \mapsto M_\Sigma$ and $Y\mapsto L_Y$ in Example~\ref{ex:rep}.
\item
$\Bor_{2+1}\to{\rm SIn}_{2+1}$ is determined by the G-connection spaces $\Sigma \mapsto \cA(\Sigma)$ and $Y\mapsto \cL_Y$  in Example~\ref{ex:infdim} .
\item 
$\Bor_{2+1}\to{\rm DBor}_{2+1}$ is determined by $\Sigma \mapsto \Sigma$ and $Y\mapsto Y$.
\end{itemize}
The Floer field theory extension principle outlined in Conjecture~\ref{con:fftext} should yield natural extensions $\Bor_{2+1+1}\to\cC\to\Cat$ for each of these, after making coherent choices of bundles as in Remark~\ref{rmk:rigrep} or otherwise resolving the challenge of reducibles. (This may require a restriction to the connected bordism category.)
Now the natural extension of the Atiyah-Floer Conjectures~\ref{con:af} and \ref{con:afc} for Heegaard splittings and cyclic Cerf decompositions is the following for Donaldson theory.

\begin{conjecture}[Quilted Atiyah-Floer conjecture] \label{con:qAF}
The three extended Floer field theories 
$\Bor_{2+1+1}\to\Symp^G_{2+1}$,
$\Bor_{2+1+1}\to{\rm SIn}_{2+1}$, 
$\Bor_{2+1+1}\to{\rm DBor}_{2+1}$
arising from appropriate G-bundles
induce isomorphic 2-functors $\Bor_{2+1+1}\to\Cat$.
\end{conjecture}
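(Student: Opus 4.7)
The plan is to combine the extension principle of Conjecture~\ref{con:fftext} with the local-to-global principle of Remark~\ref{rmk:localtoglobal} (and its Lagrangian instance Remark~\ref{rmk:localtoglobal-Lag}) to reduce the statement to proving explicit local 1-isomorphisms at the level of simple 3-cobordisms in a hybrid quilted 2-category that contains all three target 2-categories as full sub-2-categories. Once such local equivalences $Y_{ij}\sim \cL_{Y_{ij}}\sim L_{Y_{ij}}$ are established for simple handle attachments and cylindrical cobordisms, horizontal 1-composition inside any Cerf decomposition will propagate them to arbitrary 3-cobordisms, and the Yoneda 2-functors $\cC\to\Cat$ from Lemma~\ref{lem:Yoneda} will convert these into a natural 2-isomorphism between the three induced 2-functors $\Bor_{2+1+1}\to\Cat$.

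First I would construct the hybrid quilted 2-category $\cH$, with objects closed oriented surfaces and 1-morphism categories freely generated by the three types of 1-morphism (3-cobordisms $Y$, infinite-dimensional Lagrangians $\cL\subset\cA(\Sigma)^-\times\cA(\Sigma')$, and finite-dimensional Lagrangians $L\subset M_\Sigma^-\times M_{\Sigma'}$), subject only to concatenation as horizontal composition. The 2-morphism spaces and quilted composition maps would be defined from mixed ASD/pseudoholomorphic quilts in the spirit of Remarks~\ref{rmk:quilts} and~\ref{rmk:SO3} and Example~\ref{ex:inst2cat}: a patch labelled by a surface carries either an ASD connection (for patches adjacent to $Y$- or $\cL$-type seams) or a pseudoholomorphic map to $M_\Sigma$ (for patches adjacent to $L$-type seams), with seam conditions that impose matching of flat boundary restrictions at $Y$--$\cL$ transitions and symplectic quotient by $\cG(\Sigma)$ at $\cL$--$L$ transitions. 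The analytic foundation is the boundary elliptic theory for ASD connections with Lagrangian boundary conditions of \cite{W:ell1,W:ell2}, together with the adiabatic analysis of \cite{DS}; quilt axioms are verified by standard gluing and removal-of-singularities arguments. Verifying that $\cH$ is a quilted 2-category with adjoints, and that each of ${\rm DBor}$, ${\rm SIn}_{2+1}$, and $\Symp^G_{2+1}$ embeds as a sub-2-category with compatible quilted composition maps, is the first main technical task.

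Next I would establish the key local 1-isomorphisms, working throughout with nontrivial $SO(3)$-bundles as in Remark~\ref{rmk:rigrep} to avoid nontrivial reducibles. The isomorphism $Y_{ij}\sim \cL_{Y_{ij}}$ arises from an adiabatic limit argument: the fundamental class $[\cL_{Y_{ij}}]$ defines an element $\alpha\in\Mor^2_\cH(Y_{ij},\cL_{Y_{ij}})=HF_{\rm inst}(Y_{ij},\cL_{Y_{ij}})$, and its inverse $\alpha^{-1}$ comes from an analogous relative Donaldson invariant on the adjoint pair; the identities $\alpha\circ_{\rm v}\alpha^{-1}=\id_{Y_{ij}}$ and $\alpha^{-1}\circ_{\rm v}\alpha=\id_{\cL_{Y_{ij}}}$ are verified as in \cite{SW:openclosed} using the gluing and cylinder axioms (cf.\ Figure~\ref{fig:quiltdiagram-calculation}). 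The isomorphism $\cL_{Y_{ij}}\sim L_{Y_{ij}}$ instead reflects the symplectic reduction $L_{Y_{ij}}=\cL_{Y_{ij}}/(\cG(\Sigma)\times\cG(\Sigma'))$ from Remark~\ref{rmk:quotient}, and the 2-isomorphisms can be constructed quilt-diagrammatically from heat-flow/finite-dimensional-reduction data analogous to the adiabatic limit of~\cite{DS} but restricted to strips and handles rather than full 3-cobordisms. Once both families of local 1-isomorphisms exist, Remark~\ref{rmk:localtoglobal} implies $\underline Y\sim\underline{\cL}_Y\sim\underline L_Y$ in $\cH$ for every Cerf decomposition, and hence isomorphic values of the three 2-functors on all 1-morphisms; the Yoneda translation of Lemma~\ref{lem:Yoneda} then yields the asserted natural 2-isomorphism of functors $\Bor_{2+1+1}\to\Cat$.

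The main obstacle will be the analytical construction and verification of the $\cL_{Y_{ij}}\sim L_{Y_{ij}}$ isomorphism for handle attachments: the Dostoglou--Salamon adiabatic limit was carried out for mapping tori whose fibers are all diffeomorphic, but the handle-attachment case requires simultaneously shrinking the metric on $Y_{ij}$ (which has different scaling from collar neighborhoods of its boundary, as noted in Remark~\ref{rmk:SO3}) while keeping Lagrangian boundary conditions well posed. Two related difficulties compound this: the possible formation of figure-eight bubbles at transitions between the $\cL$- and $L$-regimes, which would obstruct the naive compactness argument and require input from \cite{BW:squiggly,B:remsing,B:Fred}; and the need for the fundamental class $[L_{Y_{ij}}]$ to descend coherently from $[\cL_{Y_{ij}}]$ through the symplectic quotient, which at present is only understood for a restricted class of handle attachments. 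Conditional on resolving these analytic issues, the categorical bookkeeping and the local-to-global propagation are straightforward, so the conjecture reduces to the twofold adiabatic limit problem and the exclusion (or algebraic capture) of figure-eight bubbles in the hybrid setting.
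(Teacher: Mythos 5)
The statement is a conjecture, and the paper does not prove it; immediately after Conjecture~\ref{con:qAF} the text \emph{outlines} the machinery --- the hybrid 2-categories $\rm InAF$ and $\rm AtFl$ of Examples~\ref{ex:enhanced} and~\ref{ex:superenhanced}, together with the local-to-global principle of Remark~\ref{rmk:localtoglobal} --- within which any such proof would be organized. Your proposal tracks this framework almost exactly: your hybrid $\cH$ is in substance the paper's $\rm AtFl$ (the paper encodes the ``type'' into a mixed object set $\{\Sigma, M_\Sigma\}$ rather than into the 1-morphism chains, but the labelled data is the same); the local isomorphisms $Y_{ij}\sim\cL_{Y_{ij}}$ and $\cL_{Y_{ij}}\sim L_{Y_{ij}}$ are exactly those Remark~\ref{rmk:SO3} identifies as the core content of the generalized Atiyah--Floer program (the first via \cite{SW:openclosed}-type arguments, the second via Dostoglou--Salamon adiabatic limits); and your list of obstacles --- mixed-scaling adiabatic limits, figure-eight bubbles, reducibles --- matches the flags in Remarks~\ref{rmk:SO3}, \ref{rmk:LLBS}, and~\ref{rmk:rigrep}. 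To be clear: neither you nor the paper has supplied a proof; you have supplied (as the paper does) a plausible and well-informed plan of attack.

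One step you glide over deserves a flag. The Yoneda construction of Lemma~\ref{lem:Yoneda} produces 2-functors $\cC\to\Cat$ internal to a single target 2-category $\cC$, so comparing the three functors $\Bor_{2+1+1}\to\Cat$ through their embeddings into $\cH$ requires each embedding $\cC\hookrightarrow\cH$ to be \emph{full} on 2-morphism spaces --- that $\Mor^2_\cC$ agree with $\Mor^2_\cH$ restricted to 1-morphisms coming from $\cC$ --- or else the Yoneda 2-functor of $\cC$ and the restriction of the Yoneda 2-functor of $\cH$ will disagree. Geometrically this is the statement that mixed ASD/pseudoholomorphic quilt invariants, when all seam labels happen to come from a single one of the three theories, coincide with the pure invariants of that theory. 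The paper's ``convex span'' language implicitly presupposes this, and it is nontrivial (it subsumes, for instance, the claim that quilted instanton Floer homology with only 3-cobordism labels agrees with ordinary instanton Floer homology). Making this fullness explicit should be an early step in any rigorous implementation of the plan.
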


Note here that the last extended Floer field theory $\Bor_{2+1+1}\to{\rm DBor}_{2+1}\to\Cat$ should comprise the Donaldson invariants of 4-manifolds, as is visible not from the essentially trivial generating Floer field theory $\Bor_{2+1}\to{\rm DBor}_{2+1}$, but from the 2-morphism level of the target 2-category ${\rm DBor}_{2+1}$.
Analogous conjectures can be made for Seiberg-Witten theory and have been partially proven in \cite{KLT}.

\begin{remark}[Quilted Atiyah-Floer conjecture for Seiberg-Witten-Heegaard-Floer theory] \label{rmk:qAF} \rm
Example \ref{ex:sym} should induce a Floer field theory
$\Bor_{2+1}\to\Symp^{\rm symm}_{2+1}$ 
to a subcategory of $\Symp^\tau$; 
both being generated by symmetric products $\Sigma \mapsto {\rm Sym}^{g+n}\Sigma$ and $Y_\alpha\mapsto L_\alpha$.
Another Floer feld theory
$\Bor_{2+1}\to{\rm SWBor}_{2+1}$ should arise from the partial functor $\Sigma \mapsto \Sigma$ and $Y\mapsto Y$ to a 2-category defined as in Example~\ref{ex:Donaldson}, with instanton Floer theory resp.\ Donaldson invariants replaced by monopole Floer theory resp.\ Seiberg-Witten invariants.

Now the two resulting extended Floer field theories 
$\Bor_{2+1+1}\to\Symp^{\rm symm}_{2+1}$ and
$\Bor_{2+1+1}\to{\rm SWBor}_{2+1}$ should
induce isomorphic 2-functors $\Bor_{2+1+1}\to\Cat$.
\end{remark}

Finally, we will outline two further 2-categories which will serve to compare the above 2-categories and related Floer field theories, by embedding both into a ``convex span''. The first will serve to compare $\Don$ with $\rm SIn$.

\begin{example} \label{ex:enhanced} \rm 
The {\bf instanton Atiyah-Floer 2-category ${\rm\mathbf{InAF}}$} should consist of:
\begin{itemize}
\item
Objects are closed, oriented surfaces $\Sigma$.
\item
$\Mor_{\rm InAF}(\Sigma,\Sigma')$ combines $\Mor_{\rm SIn}(\Sigma,\Sigma')$ and 
$\Mor_{\Don}(\Sigma,\Sigma')$ as follows:
\begin{itemize}
\item
1-morphisms are chains $\ul f = (f_{i(i+1)})_{i=0,\ldots k-1}$ of morphisms between surfaces $\Sigma=\Sigma_0,\Sigma_1,\ldots,\Sigma_k=\Sigma'$, where for each $i=0,\ldots {k-1}$ we either have $f_{i(i+1)}=\cL_{i(i+1)}\in \Mor^1_{\rm SIn}(\Sigma_i,\Sigma_{i+1})$ a gauge invariant Lagrangian submanifold of $\cA(\Sigma_i)^-\times\cA(\Sigma_{i+1})$, or $f_{i(i+1)}=Y_{i(i+1)}\in \Mor^1_{\Don}(\Sigma_i,\Sigma_{i+1})$
a  3-cobordism.
\item
$\Mor_{\rm InAF}^2(\ul f,\ul g):= HF_{\rm inst}(\ul f, \ul g)$ is the quilted instanton Floer homology group
arising from quilted cylinders with seams labeled by the entries of $\ul f, \ul g$.
\item
Vertical composition $\circ_{\rm v}$ and its identities $\id_{\ul f}\in\Mor_{\rm InAF}^2(\ul f, \ul f)$
arise from moduli spaces of ASD quilts representing the associated string diagrams.
\end{itemize}
\item
The composition functor $\Mor_{\rm InAF}(\Sigma,\Sigma')\times \Mor_{\rm InAF}(\Sigma',\Sigma'')\to \Mor_{\rm InAF}(\Sigma,\Sigma'')$ is defined by concatenation $\ul f \circ_{\rm h}\ul f' := \ul f\# \ul f'$ on 1-morphisms, 
with identities given by trivial chains $1_\Sigma= (\;) \in\Mor_{\rm InAF}^1(\Sigma,\Sigma)$, and horizontal 2-composition arises from moduli spaces of ASD quilts representing the associated string diagram.
\end{itemize}
The quilt diagrams here are represented by the same moduli spaces of ASD quilts as in Example~\ref{ex:inst2cat}, where as in Example~\ref{ex:Donaldson} a seam $S$ labeled by a 3-cobordism $Y_S$ represents an ASD connection $\Theta_S\in\cA(S\times Y)$ that matches (slice-wise, or completely after gauge) with the restrictions of the connections $\Theta_{P_S^-}, \Theta_{P_S^+}$ for the adjacent patches. 
Up to challenges with reducibles, this should yield a quilted 2-category with adjunction given by transposition as in Remark~\ref{rmk:sympquilt}.
\end{example}

The final outline of a 2-category is the ``convex span'' of $\Don$ with $\Symp^G_{2+1}$, after which one can easily imagine a combination of $\rm SIn$ with $\Symp^G_{2+1}$, or a 2-category comprising all three of the basic gauge theoretic 2-categories $\Don$, $\rm SIn$, $\Symp^G_{2+1}$.

\begin{example} \label{ex:superenhanced} \rm 
The {\bf Atiyah-Floer 2-category ${\rm\mathbf{AtFl}}$} roughly consists of the following.
\begin{itemize}
\item
Objects in $\Obj_{\rm AtFl} = \Obj_{\Don} \cup \Obj_{\Symp^G_{2+1}}$ are either closed, oriented surfaces $\Sigma$ or symplectic representation spaces $M_\Sigma$ associated to a surface as in Example~\ref{ex:rep}.
\item
$\Mor_{\rm AtFl}(\Sigma,\Sigma')$ extends $\Mor_{\Symp^G_{2+1}}(M_{\Sigma},M_{\Sigma'})$ and 
$\Mor_{\Don}(\Sigma,\Sigma')$ as follows:
\begin{itemize}
\item
Simple morphisms all arise from 3-cobordisms $Y\in \Mor^1_{\Bor_{2+1+1}}(\Sigma,\Sigma')$, but depending on the type of objects they relate, they appear as
\begin{itemize}
\item 3-cobordisms $Y \in \SMor_{\rm AtFl}(\Sigma,\Sigma') := \Mor^1_{\Don}(\Sigma,\Sigma')$, 
\item
Lagrangians 
$L_Y \in \SMor_{\rm AtFl}(M_{\Sigma},M_{\Sigma'}) := \Mor^1_{\Symp^G_{2+1}}(M_{\Sigma},M_{\Sigma'})$, 
\item
Lagrangians
$\cL_Y/\cG(\Sigma') \subset \cA(\Sigma)^-\times M_{\Sigma'}$
in $\SMor_{\rm AtFl}(\Sigma,M_{\Sigma'})$,
\item
Lagrangians
$\cL_Y/\cG(\Sigma) \subset M_{\Sigma}^-\times \cA(\Sigma')$
in $\SMor_{\rm AtFl}(M_\Sigma,\Sigma')$.
\end{itemize}
Here the three types of Lagrangians are only associated to handle attachments $Y$, and the last two types are projections of the gauge invariant Lagrangian $\cL_Y\subset \cA(\Sigma)\times \cA(\Sigma')$ from Example~\ref{ex:infdim}, which by construction lies in the flat connections, $\cL\subset \cA_{\rm flat}(\Sigma)\times \cA_{\rm flat}(\Sigma')$, so that quotienting by the gauge group on the first factor yields a projection to the representation space $M_\Sigma= \cA_{\rm flat}(\Sigma)/\cG(\Sigma)$. The same goes for projection in the second factor, and projecting in both factors yields $L_Y= \cL_Y/(\cG(\Sigma)\times \cG(\Sigma'))$.
\item
1-morphisms are chains $\ul f = (f_{i(i+1)})_{i=0,\ldots k-1}$ which consist of simple morphisms 
$f_{i(i+1)}\in \SMor_{\rm AtFl}(x_i,x_{i+1})$ for a sequence $x_0,\ldots, x_{k-1}\in \Obj_{\rm AtFl}$ of objects, 
i.e.\ a sequence in which each entry is of one of the types $Y_{i(i+1)}$, $L_{Y_{i(i+1)}}$, $\cL_{Y_{i(i+1)}}/\cG(\Sigma_i)$, $\cL_{Y_{i(i+1)}}/\cG(\Sigma_{i+1})$ for a chain of 1-morphisms $Y_{i(i+1)}\in\Mor^1_{\Bor_{2+1+1}}(\Sigma_i,\Sigma_{i+1})$. 
\item
$\Mor_{\rm AtFl}^2(\ul f,\ul g):= HF_{\rm inst}(\ul f, \ul g)$ is the quilted instanton Floer homology group
arising from quilted cylinders with seams labeled by the entries of $\ul f, \ul g$.
\item
Vertical composition $\circ_{\rm v}$ and its identities $\id_{\ul f}\in\Mor_{\rm AtFl}^2(\ul f, \ul f)$
arise from moduli spaces of ASD quilts representing the associated string diagrams.
\end{itemize}
\item
The composition functor $\Mor_{\rm AtFl}(x,x')\times \Mor_{\rm AtFl}(x',x'')\to \Mor_{\rm AtFl}(x,x'')$ is defined by concatenation $\ul f \circ_{\rm h}\ul f' := \ul f\# \ul f'$ on 1-morphisms, 
with identities given by trivial chains $1_\Sigma= (\;) \in\Mor_{\rm AtFl}^1(x,x)$, and horizontal 2-composition arises from moduli spaces of ASD quilts representing the associated string diagram.
\end{itemize}
The quilt diagrams here are represented by a coupling of the pseudoholomorphic and ASD moduli spaces in Examples~\ref{ex:2symp} and \ref{ex:Donaldson} via Lagrangian seam conditions similar to those in Example~\ref{ex:inst2cat}.
Combining the PDE representations from those constructions, it remains to give PDE meaning to seams labeled with simple morphisms in $\SMor_{\rm AtFl}(M_\Sigma,\Sigma')$ or $\SMor_{\rm AtFl}(\Sigma,M_{\Sigma'})$.
Since these are related by transposition, it suffices to consider the first:
\begin{itemize}
\item
A seam $S$ with adjacent patches $P_S^\pm \in\cP_{\cQ'}$ labeled by a Lagrangian submanifold $\cL_{Y_S}/\cG(\Sigma_{P_S^-}) \subset M_{\Sigma_{P_S^-}}^-\times \cA(\Sigma_{P_S^+})$ is represented by a seam condition between the
pseudoholomorphic map $u_{P_S^-}:\widehat{P_S^-} \to M_{\Sigma_{P_S^-}}$ and the ASD connection $\Theta_{P_S^+}\in\cA(\widehat{P_S^+}\times\Sigma_{P_S^+})$.
Their restrictions to the seam induce a map \\
$\displaystyle \phantom{yes} \qquad\qquad
S\to M_{\Sigma_{P_S^-}}^- \times \cA(\Sigma_{P_S^+}), \qquad
s\mapsto (u_{P_S^-}(s), \Theta_{P_S^+}|_{\{s\}\times \Sigma_{P_S^+}}),
$\\
which is required to take values in $\cL_{Y_S}/\cG(\Sigma_{P_S^-})$.
\end{itemize}
For this to rigorously define a quilted 2-category with adjunction given by transposition as in Remark~\ref{rmk:sympquilt}, one again has to resolve the challenge of reducibles by e.g.\ working with nontrivial bundles as in Remark~\ref{rmk:rigrep}.
Once the symplectic spaces $M_\Sigma$ are all smooth, the analytic setup for ASD connections with Lagrangian boundary conditions in Example~\ref{ex:inst2cat} directly transfers to prove the basic Fredholm and compactness properties for these moduli spaces.\footnote{
The only missing pieces are regularity and estimates for the pair $(u,A)$ near a seam $S$ labeled by a Lagrangian $\cL/\cG(\Sigma) \subset M_{\Sigma}^-\times \cA(\Sigma')$. 
This could be achieved by the analytic setup developed in \cite{W:Banach,W:ell1,W:ell2}, which proceeds by splitting $\Theta_P=\Phi {\rm d}s + \Psi {\rm d}t + A$ into functions $\Phi,\Psi:P\to \cg$ in a neighbourhood $U=\{(s,t)|t\ge 0\}$ of the seam $S=\{t=0\}$, and a map $A:U\to\cA(\Sigma')$. Coulomb gauge fixing conditions include a Dirichlet boundary condition $\Psi|_{t=0}=0$ and -- via the flatness part of the Lagrangian boundary condition $F_A|_{t=0}=0$ -- induce a Neumann condition $\partial_t\Phi|_{t=0}=0$. 
Thus estimates for $\Phi,\Psi$ are obtained from Dirichlet resp.\ Neumann problems with lower order contributions from $A$. Once these are established, the map $A:U\to\cA(\Sigma')$ satisfies a  Cauchy-Riemann equation with respect to the complex structure given by the Hodge $*$ on $\Sigma$ and lower order or controlled inhomogeneous terms. Moreover, it is coupled with the pseudoholomorphic map $u^-:U\to M_{\Sigma}^-$ (obtained from $u$ by reflection on the seam) by a Lagrangian seam condition. Thus estimates on $(u,A)$ in a neighbourhood of the seam would follow from the general theory for the Cauchy-Riemann equation in Banach spaces \cite{W:Banach}.
}
An exposition of the compactness results in an explicitly quilted setting can be found in \cite{lipyanski}.
\end{example}

\section*{Acknowledgements}
I would like to credit and thank Denis Auroux, Chris Douglas, Dan Freed, David Gay, Robert Lipshitz, Tim Perutz, Dietmar Salamon, Chris Schommer-Priess, Peter Teichner, and Chris Woodward for illuminations of various aspects of the ideas presented here.
Moreover, thanks to the organizers and participants of the 2015 AWM Symposium for sharing an amazing breadth of quality math and inspiring me to try and make the Floer field theory ideas rigorous.
Finally, thanks to the diligent and speedy referees for help with cleaning up various details and encouragement to make known issues known.

\bibliographystyle{alpha}

\begin{thebibliography}{99.}

%
%\bibitem{acw} M.\ Akveld, A.\ Cannas da Silva, K.\ Wehrheim, {\it Figure eight bubbling in real life}, work in progress.

\bibitem{AFW}
M.\ Aschenbrenner, S.\ Friedl, H.\ Wilton, {\it 3-manifold groups}, arXiv:1205.0202. 

\bibitem{atiyah:field} M.F.\ Atiyah,
Topological quantum field theories, {\it Publications Mathematiques de l'IHES} 68, 175--186.

 \bibitem{Atiyah} M.F.\ Atiyah,
 New invariants of three and four dimensional manifolds,
 {\it Proc.Symp.Pure Math.} {48} (1988).
 
 \bibitem{AtiyahBott} M.F.\ Atiyah, R.\ Bott,
 The Yang Mills equations over Riemann surfaces,
 {\it Phil.Trans. R.Soc.Lond.A} {308} (1982), 523--615.

\bibitem{auroux_hf}
D.\ Auroux, Fukaya categories of symmetric products and bordered Heegaard-Floer homology, {\it  J.\ G\"okova Geom.\ Topol.} 4 (2010), 1--54.

\bibitem{B:remsing} N.\ Bottman, {\it Pseudoholomorphic quilts with figure eight singularity}, arXiv:1410.3834.

\bibitem{B:Fred} N.\ Bottman, {\it Fredholm property for figure eight quilts}, in preparation.

\bibitem{BT} N.\ Bottman, H.\ Tanaka, {\it An $A_\infty$ 2-nerve}, in preparation.

\bibitem{BW:squiggly} N.\ Bottman, K.\ Wehrheim, {\it Gromov compactness for squiggly strip shrinking in pseudoholomorphic quilts}, arXiv:1503.03486.

\bibitem{willerton}
A.\ Caldararu, S.\ Willerton, 
The Mukai pairing. I: A categorical approach.
{\it The New York Journal of Mathematics} Vol.16 (2010), 61--98.

\bibitem{cannas} A.\ Cannas da Silva, {\it Lectures on Symplectic Geometry}, Springer, 2001.

\bibitem{c-lag} A.\ Cannas da Silva, {\it A Chiang-type Lagrangian in $\CP^2$}, 
arXiv:1511.02041.

\bibitem{ce:st}
J.\ Cerf, La stratification naturelle des espaces de fonctions diff\'erentiables r\'eelles et le th\'eor\`eme de la pseudo-isotopie, {\it Inst.\ Hautes \'Etudes Sci.\ Publ.\ Math.} (39): 5D173, 1970.

\bibitem{donaldson} S.K.\ Donaldson,
An application of gauge theory to four-dimensional topology. 
{\it J.\ Differential Geom.} 18 (1983), no.2, 279--315.

\bibitem{Don:inv} S.K.\ Donaldson,
Polynomial invariants of smooth four-manifolds, 
{\it Topology} {29} (1990), 257--315.

\bibitem{Don:book} S.K.\ Donaldson,
{\it Floer Homology Groups in Yang-Mills Theory}, 
Cambridge University Press,
2002.

\bibitem{DK}
S.K.\ Donaldson, P.\ Kronheimer.
{\it The geometry of four-manifolds},
Oxford University Press, 1990.

 \bibitem{DS} S.\ Dostoglou, D.A.\ Salamon,
 Self-dual instantons and holomorphic curves,
 {\it Annals of Mathematics} {139} (1994), 581--640.
[Note: Known issues resolved in \cite{ds:err}.]

\bibitem{ds:err} S.\ Dostoglou, D.A.\ Salamon,
Corrigendum: Self-dual instantons and holomorphic curves,
{\it Annals of Mathematics} {165} (2007), 665--673.  

\bibitem{duncan}
D.L.\ Duncan,
{\it Compactness results for the quilted Atiyah-Floer conjecture}, 
arxiv:1212.1547.

\bibitem{egh}
Y.\ Eliashberg, A.B.\ Givental, H.\ Hofer, Introduction to symplectic field theory, {\it Geom.\ Funct.\ Anal.} 10 (2000), 560--673.

 \bibitem{Floer:inst} A.\ Floer,
 Instanton invariant for $3$-manifolds,
 {\it Comm.Math.Phys.} {118} (1988), 215--240.

 \bibitem{Floer:Lag} A.\ Floer,
 Morse theory for Lagrangian intersections,
 {\it J.Diff.Geom.} {28} (1988), 513--547.

\bibitem{fukaya} K.\ Fukaya,
{\it SO(3)-Floer homology of 3-manifolds with boundary 1},
arXiv:1506.01435.
%
%\bibitem{FO} K.\ Fukaya, K.\ Ono,
%Arnold conjecture and Gromov-Witten invariant,
%{\it Topology} {38} (1999), no.5, 933--1048.

\bibitem{Gar} O.\ Garcia-Prada, A direct existence proof for the vortex equations over a compact Riemann surface, {\it Bull.\ London Math.\ Soc.} 26 (1994), no.1, 88--96.

\bibitem{GayKirby} D.\ Gay, R.\ Kirby, Indefinite Morse 2-functions; broken fibrations and generalizations, {\it Geometry \& Topology} 19 (2015) 2465--2534.

\bibitem{gww}  D.\ Gay, K.\ Wehrheim, C.T.\ Woodward,
{\it Connected Cerf theory}, \href{http://www.math.berkeley.edu/~katrin/papers/cerf.pdf}{link to preprint}.

 \bibitem{G} M.\ Gromov,
 Pseudo holomorphic curves in symplectic manifolds,
 {\it Invent.Math.} {82} (1985), 307--347.

\bibitem{gu:rev}
V.\ Guillemin, S.\ Sternberg,
The moment map revisited,
{\it J. Differential Geom.}, 
69(1):137--162, 2005.

\bibitem{hatcher}
A.\ Hatcher,
{\it The Classification of 3-Manifolds -- A Brief Overview},
talk at the 2004 Cornell Topology Festival, available at
\href{http://www.math.cornell.edu/~hatcher/Papers/3Msurvey.pdf}{www.math.cornell.edu/$~$hatcher}.

\bibitem{HatcherWagoner}
A.\ Hatcher, J.\ Wagoner,
Pseudo-isotopies of compact manifolds, {\it AstŽrisque} No.\ 6, SociŽtŽ MathŽmatique de France, Paris, 1973. 

\bibitem{hedden}
M.\ Hedden, C.\ Herald, P.\ Kirk,
The pillowcase and perturbations of traceless representations of knot groups,
{\it Geometry \& Topology} 18 (2014), 211--287. 
%
%\bibitem{joyce}
%D.\ Joyce, 
%{\it Kuranishi homology and Kuranishi cohomology}, 
%arXiv:0707.3572.

\bibitem{johnson}
D.L.\ Johnson,  {\it Presentations of groups}, Cambridge University Press, 1990. 


\bibitem{ki:calc}
R.\ Kirby, A calculus for framed links in $S^3$, {\it Invent.\ Math.}, 45(1):35D56, 1978.

\bibitem{KM} P.B.~Kronheimer, T.S.~Mrowka,
{\it Floer Homology for Seiberg-Witten monopoles},
Cambridge Univ.\ Press, 2007.

\bibitem{km:kh}
P.B.\ Kronheimer, T.S.\ Mrowka,
Knot homology groups from instantons,
{\it J.\ Topol.} 4 (2011), no.4, 835--918.

\bibitem{KLT} C.~Kutluhan, Y.-J.~Lee, C.H.~Taubes, 
HF=HM I--V : Heegaard Floer homology and Seiberg--Witten Floer homology, arXiv:1007.1979; 
Reeb orbits and holomorphic curves for the ech/Heegaard-Floer correspondence, arXiv:1008.1595; Holomorphic curves and the differential for the ech/Heegaard Floer correspondence,  arXiv:1010.3456; The Seiberg-Witten Floer homology and ech correspondence,  arXiv:1107.2297; Seiberg-Witten-Floer homology and handle addition, arXiv:1204.0115.

\bibitem{lekili} Y.\ Lekili,
Heegaard Floer homology of broken fibrations over the circle, {\it Adv.\ Math.} 244 (2013), 268--302.
[Note: Issues arising from the correction to \cite{ll} are presently unresolved.]

\bibitem{ll} Y.\ Lekili, M.\ Lipyanskiy, 
Geometric composition in quilted Floer theory, 
{\it Adv.\ Math.} 236 (2013), 1--23. 
[Note: Assumptions added in a 2015 corrigendum at arxiv:1003.4493v7 resolve known issues but reduce the result to known resp.\ essentially trivial cases; see Remark~\ref{rmk:LLBS}.]

\bibitem{lp}
Y.\ Lekili, T.\ Perutz,
{\it Lagrangian correspondences and invariants of three-manifolds with boundary}, 
In preparation.

\bibitem{lipshitz}
R.\ Lipshitz, A cylindrical reformulation of Heegaard-Floer homology, {\it Geom.\ Topol.} 10 (2006), 955--
1096.
[Note: Known issues resolved in \cite{lip:corr}.]

\bibitem{lip:corr}
R.\ Lipshitz, 
Errata to 'A cylindrical reformulation of Heegaard Floer homology', 
{\it Geom.\ Topol.} 18 (2014) 17--30.

\bibitem{lipyanski}
M.\ Lipyanskiy, 
{\it Gromov-Uhlenbeck Compactness},  arxiv:1409.1129.

\bibitem{lurie}
J.\ Lurie, On the Classification of Topological Field Theories, {\it Current developments in mathematics}, 2008, 129--280.

\bibitem{spherical}
M.\ Mackaay, 
Spherical 2-Categories and 4-Manifold Invariants,
{\it Advances in Mathematics},
143 (1999), 288--348.

\bibitem{MWcr} C.\ Manolescu, C.T.\ Woodward, 
Floer homology on the extended moduli space, 
{\it Progr.\ Math.} 296 (2012), 283--329.

\bibitem{mar:sing}
J.\ Martinet, {\it Singularities of smooth functions and maps}, Cambridge University Press, Cambridge, 1982. 

\bibitem{mau} S.\ Ma$'$u,
{\it Gluing pseudoholomorphic quilted disks}, 
arXiv:0909.3339.

\bibitem{mww:ainfty} S.Ma$'$u, K.Wehrheim, C.T.Woodward,
{\it $A_\infty$ functors for Lagrangian correspondences}, \href{http://math.berkeley.edu/~katrin/papers/ainfty.pdf}{link to preprint}.

\bibitem{MW} D.\ McDuff, K.\ Wehrheim, {\it Kuranishi atlases with trivial isotropy}, arxiv:1208.1340.

\bibitem{ms} D.\ McDuff, D.A.\ Salamon,
{\it Introduction to Symplectic Topology},
 Oxford U.\ Press, 1995.

\bibitem{ms:jhol} D.\ McDuff, D.A.\ Salamon,
 {\it J-Holomorphic Curves and Symplectic Topology},
 AMS 2012.

\bibitem{milnor}
J.\ Milnor, A procedure for killing homotopy groups of differentiable manifolds, {\it Proc.\ Sympos.\ Pure Math.} Vol.III, 39--55.

\bibitem{milnor:hcob}
J.\ Milnor, {\it Lectures on the h-cobordism theorem},  Notes by L.\ Siebenmann and J.\ Sondow Princeton University Press, Princeton NJ, 1965.


\bibitem{morgan} J.W.\ Morgan, {\it The Seiberg-Witten Equations and Applications to the Topology of Smooth Four-Manifolds}, Princeton University Press, 1995.

%\bibitem{munkres}
%J.R.\ Munkres, {\it Topology}, Prentice-Hall, Englewood Cliffs NJ, 1975.

\bibitem{OS1} P.\ Ozsv\'ath, Z.\ Szab\'o,
Holomorphic disks and topological invariants for closed 3-manifolds,
{\it Annals of Mathematics} 159 (2004), no.3, 1027--1158.

\bibitem{perutz1}
T.\ Perutz, Lagrangian matching invariants for fibred four-manifolds I, 
{\it Geometry \& Topology} 11 (2007). 

\bibitem{perutz2}
T.\ Perutz, Lagrangian matching invariants for fibred four-manifolds II, 
{\it Geometry \& Topology} 12 (2008). 

\bibitem{perutz3}
T.\ Perutz,
Hamiltonian handleslides for Heegaard Floer homology,
{\it  J.\ G\"okova Geom.\ Topol.} 2007, 15--35. 

\bibitem{reza} R.Rezazadegan, Seidel-Smith cohomology for tangles, {\it Selecta Math.} 15:3 (2009), 487--518.
[Note: Known issues (e.g.\ conical structure at infinity) not resolved at present.]

 \bibitem{Sa:lecture} D.A.\ Salamon,
 Lectures on Floer Homology,
{\it Park City Math.\ Series} {7} (1999), 145 --229.

 \bibitem{Sa:AF} D.A.\ Salamon,
 Lagrangian intersections, 3-manifolds with boundary, and the 
 Atiyah--Floer conjecture, 
 {\it Proceedings of the ICM, Z\"urich 1994}, 
 Vol.~1, 526--536.

\bibitem{SW} D.A.\ Salamon, K.\ Wehrheim,
Instanton Floer homology with Lagrangian boundary conditions, 
{\it Geometry \& Topology} 12 (2008), 745-918.

\bibitem{SW:openclosed} D.A.\ Salamon, K.\ Wehrheim,
{\it An open-closed isomorphism for instanton Floer homology}, in preparation.

\bibitem{wild}
A.\ Scorpan,
{\it The Wild World of 4-Manifolds}, AMS 2005, 
\href{http://www.maths.ed.ac.uk/~aar/papers/scorpan.pdf}{www.maths.ed.ac.uk/$\sim$aar/papers}.

\bibitem{segal} G.\ Segal, Topological structures in string theory, {\it The Royal Society} 359, 1389--1398.

\bibitem{seidel} P.\ Seidel,
{\em Fukaya categories and {P}icard-{L}efschetz theory}, Z\"urich Lectures in Advanced Mathematics, European Mathematical Society (EMS), Z\"urich, 2008.

\bibitem{SS} P.\ Seidel, I.\ Smith, 
A link invariant from the symplectic geometry of nilpotent slices,
{\it Duke Math.\ J.} 134 (2006), no.3, 453--514.

\bibitem{taubes:casson} C.H.\ Taubes,
 Casson's invariant and gauge theory,
 {\it J.Diff.Geom.} {31} (1990), 547--599.

\bibitem{taubes:sw}
C.H.\ Taubes, Seiberg-Witten invariants, self-dual harmonic 2-forms and the Hofer-Wysocki-Zehnder formalism, {\it Surveys in differential geometry} 7 (2002), 625--672.

\bibitem{usher}
M.\ Usher,
Vortices and a TQFT for Lefschetz fibrations on 4-manifolds,
{\it Algebraic \& Geometric Topology} 6 (4), 1677--1743.

\bibitem{W:book} K.\ Wehrheim, {\it Uhlenbeck Compactness}, EMS Series of Lectures in Mathematics, 2004. 

\bibitem{W:Banach} K.\ Wehrheim,
Banach space valued Cauchy-Riemann equations with totally real boundary conditions, 
{\it Comm.\ Cont.\ Math.} 6:4 (2004), 601--635.

\bibitem{W:ell1} K.\ Wehrheim,
Anti-self-dual instantons with Lagrangian boundary conditions I: Elliptic theory, 
{\it Comm.\ Math.\ Phys.} 254:1 (2005), 45--89.

\bibitem{W:ell2} K.\ Wehrheim,
Anti-self-dual instantons with Lagrangian boundary conditions II: Bubbling, 
{\it Comm.\ Math.\ Phys.} 258:2 (2005), 275--315.

\bibitem{W:survey} K.\ Wehrheim,
Lagrangian boundary conditions for anti-self-dual 
connections and the Atiyah-Floer conjecture,
{\it J.\ Symp.\ Geom.} 3 (2005), no.4, 703--747.

\bibitem{W:talks} K.\ Wehrheim,
{\it Bubble zoology for large structure limit
from anti-self-dual connections to pseudoholomorphic curves},
talks at Columbia, UW Madison, fall 2005.

\bibitem{W:slides} K.\ Wehrheim,
{\it String diagrams in Topology, Geometry, and Analysis},
slides from AWM symposium 2015, 
\href{https://math.berkeley.edu/~katrin/slides/AWMslides/AWMslides}{https://math.berkeley.edu/~katrin/slides/AWMslides/AWMslides}.

\bibitem{W:ext} K.\ Wehrheim,
{\it Extensions of Floer field theories}, work in progress.

\bibitem{ww:qhf}
K.~Wehrheim, C.T.~Woodward, 
Quilted {F}loer cohomology, 
{\em Geometry \& Topology}, 14:833--902, 2010.
[Note: Known issues resolved in \cite{ww:const}.]

\bibitem{ww:const}
K.~Wehrheim, C.T.~Woodward, 
Quilted Floer trajectories with constant components, 
{\em Geometry \& Topology}, 16:127--154, 2012.

\bibitem{ww:quilts}
K.~Wehrheim, C.T.~Woodward.
Pseudoholomorphic quilts,
to appear in {\em J.~Symp.~Geom.},
arXiv:0905.1369.

\bibitem{ww:isom}
K.~Wehrheim, C.T.~Woodward, 
Floer cohomology and geometric composition of Lagrangian correspondences, 
{\it Advances in Mathematics} 230:1 (2012), 177--228.

\bibitem{ww:cat}
K.~Wehrheim, C.T.~Woodward,
Functoriality for {L}agrangian correspondences in {F}loer theory.
{\em Quantum Topol.}, 1(2):129--170, 2010.

\bibitem{ww:fielda} K.~Wehrheim, C.T.~Woodward,
{\it Floer field theory},
\href{http://math.berkeley.edu/~katrin/papers/fielda.pdf}{link to preprint}.

\bibitem{ww:fieldb} K.~Wehrheim, C.T.~Woodward,
{\it Floer field theory for tangles},
arxiv:1503.07615.

\bibitem{weinstein}
A.\ Weinstein, The symplectic ``category'', {\em Differential geometric methods in mathematical physics}, volume 905 (1982) of {\em Lecture Notes in Math.}, 45--51.

\bibitem{youtube}
S.\ Willerton, {\it String diagrams},
\href{https://www.youtube.com/view_play_list?p=50ABC4792BD0A086}{youtube playlist}, 2007.

\bibitem{williams}
J.\ Williams, The h-principle for broken Lefschetz fibrations, {\it Geometry \& Topology} 14 (2010) 1015--1061.

\bibitem{witten:morse}
E.\ Witten, Supersymmetry and Morse theory, {\it J.\ Differential Geom.} 17 (1982), no. 4, 661--692.

\bibitem{witten:field}
E.\ Witten, Topological quantum field theory, {\it Communications in Mathematical Physics} 117 (3), 353--386.

\bibitem{witten}
E.\ Witten, 
Quantum field theory and the {J}ones polynomial,
{\it Comm.\ Math.\ Phys.}, 121(3):351--399, 1989.

\end{thebibliography}

\end{document}